\renewcommand{\baselinestretch}{1.2}
\newcommand{\myarrowlength}{10pt}
\tikzset{mytip/.tip={Butt Cap[black, length=\myarrowlength, sep=-1.6pt]>[black]},
    std/.style={white, text=black, #1, decoration={transform={xshift=.5*\myarrowlength}, markings, mark=at position .5 with {\arrow{mytip}}}, postaction=decorate},
    myarrow/.default={}}
\theoremstyle{definition}
\newtheorem{defn}[subsubsection]{Definition}
\newtheorem{lemdefn}[subsubsection]{Lemma-Definition}
\theoremstyle{theorem}
\newtheorem{prop}[subsubsection]{Proposition}
\newtheorem{theorem}[subsubsection]{Theorem}
\newtheorem{lem}[subsubsection]{Lemma}
\newtheorem{cor}[subsubsection]{Corollary}
\newcommand{\pseudoSmallCaps}[1]{\scalebox{0.8}{\MakeUppercase{#1}}}
\newcommand{\Gm}{\textbf{\textup{G}}_m\xspace}
\newcommand{\Ga}{\textbf{\textup{G}}_a\xspace}
\newcommand{\BGm}{\textup{B}\textbf{\textup{G}}_m\xspace}
\newcommand{\BG}{\textup{B}G\xspace}
\newcommand{\Endl}{\mathcal{E}nd\xspace}
\newcommand{\Homl}{\mathcal{H}om\xspace}
\newcommand{\Md}{\text{-}\textup{Mod}\xspace}
\newcommand{\CoMd}{\text{-}\textup{CoMod}\xspace}
\newcommand{\FactMd}{\text{-}\textup{FactMod}\xspace}
\newcommand{\rank}{\textup{rk}\xspace}
\newcommand{\Ag}{\text{-}\textup{Alg}\xspace}
\newcommand{\CoAg}{\text{-}\textup{CoAlg}\xspace}
\newcommand{\FactCt}{\text{-}\textup{FactCat}\xspace}
\newcommand{\FactAg}{\text{-}\textup{FactAlg}\xspace}
\newcommand{\FactCoAg}{\text{-}\textup{FactCoAlg}\xspace}
\newcommand{\defeq}{\vcentcolon=}
\newcommand{\protosbt}{\,\begin{picture}(-0.5,1)(-0.5,-2.2)\circle*{2.5}\end{picture}\ }
\newcommand{\sbt}{{\protosbt}}
\newcommand{\overbar}[1]{\mkern 1.5mu\overline{\mkern-1.5mu#1\mkern-1.5mu}\mkern 1.5mu}
\NewDocumentCommand{\massdefine}{m}
 {
  \clist_map_inline:nn { #1 }
   {
    \cs_new_protected:cpn { ##1 } { \operatorname{##1}\xspace }
   }
 }
\NewDocumentCommand{\massdefinemathcal}{m}
 {
  \clist_map_inline:nn { #1 }
   {
    \cs_new_protected:cpx { ##1 } { \exp_not:N \mathcal { \tl_range:nnn { ##1 } { 1 } { -2 } } }
   }
 }
\NewDocumentCommand{\massdefinemathfrak}{m}
 {
  \clist_map_inline:nn { #1 }
   {
    \cs_new_protected:cpx { ##1 } { \exp_not:N \mathfrak { \tl_range:nnn { ##1 } { 1 } { -2 } } }
   }
 }
 \NewDocumentCommand{\massdefinetext}{m}
 {
  \clist_map_inline:nn { #1 }
   {
    \cs_new_protected:cpx { ##1 } { \exp_not:N \textup { \tl_range:nnn { ##1 } { 1 } { -2 } } }
   }
 }
\NewDocumentCommand{\textbftextup}{m}{%
  \textbf{\textup{#1}}%
}
 \NewDocumentCommand{\massdefinetextbf}{m}
 {
  \clist_map_inline:nn { #1 }
   {
    \cs_new_protected:cpx { ##1 } { \exp_not:N \textbftextup { \tl_range:nnn { ##1 } { 1 } { -2 } } }
   }
 }
 \def\l@subsection{\@tocline{2}{0pt}{4pc}{6pc}{}}
\def\l@subsubsection{\@tocline{3}{0pt}{8pc}{8pc}{}}
\begin{document}

\setcounter{section}{-1}
\setcounter{subsection}{0}

\author{Alexei Latyntsev}
\title{Factorisation Quantum Groups}
\maketitle

\begin{adjustwidth}{30pt}{30pt}
  \small{A\pseudoSmallCaps{bstract}: We develop vertex and factorisation algebra analogues of the theory of quasitriangular bialgebras. Analogously to the classical theory, we prove their categories of representations are controlled by spectral $R$-matrices. In the vertex algebra case this generalises previous notions due to Etingof-Kazhdan and Frenkel-Reshetikhin. Finally we give examples, including Borcherds twists and homology vertex algebras.}
\end{adjustwidth}
  
\vspace{10mm}

\renewcommand{\baselinestretch}{1.0} 
\tableofcontents
\renewcommand{\baselinestretch}{1.2}

\newpage

\section{Introduction}

\subsection{Preamble} 
Quantum groups are algebraic structures whose category of representations is \textit{braided monoidal}: we may take the iterated tensor product of any representations and there is an isomorphism to the tensor product taken in a different order. Equivalently, its category of representations forms a cosheaf of categories on the space of unordered tuples of points on $\Rb^2$:
\begin{center} \centering 
  \begin{minipage}{.5\textwidth} \centering
    \begin{center}
    \begin{tikzpicture}[every node/.style={font=\small}]
      \node at (-3cm, -0.825cm) {\large{$\Rb$}};
      \node at (0.75cm, -3cm) {\large{$\Rb$}};
      \node at (0.75cm, -3.3cm) {$\ $};
      \node at (0, 0) {\includegraphics[width=6cm]{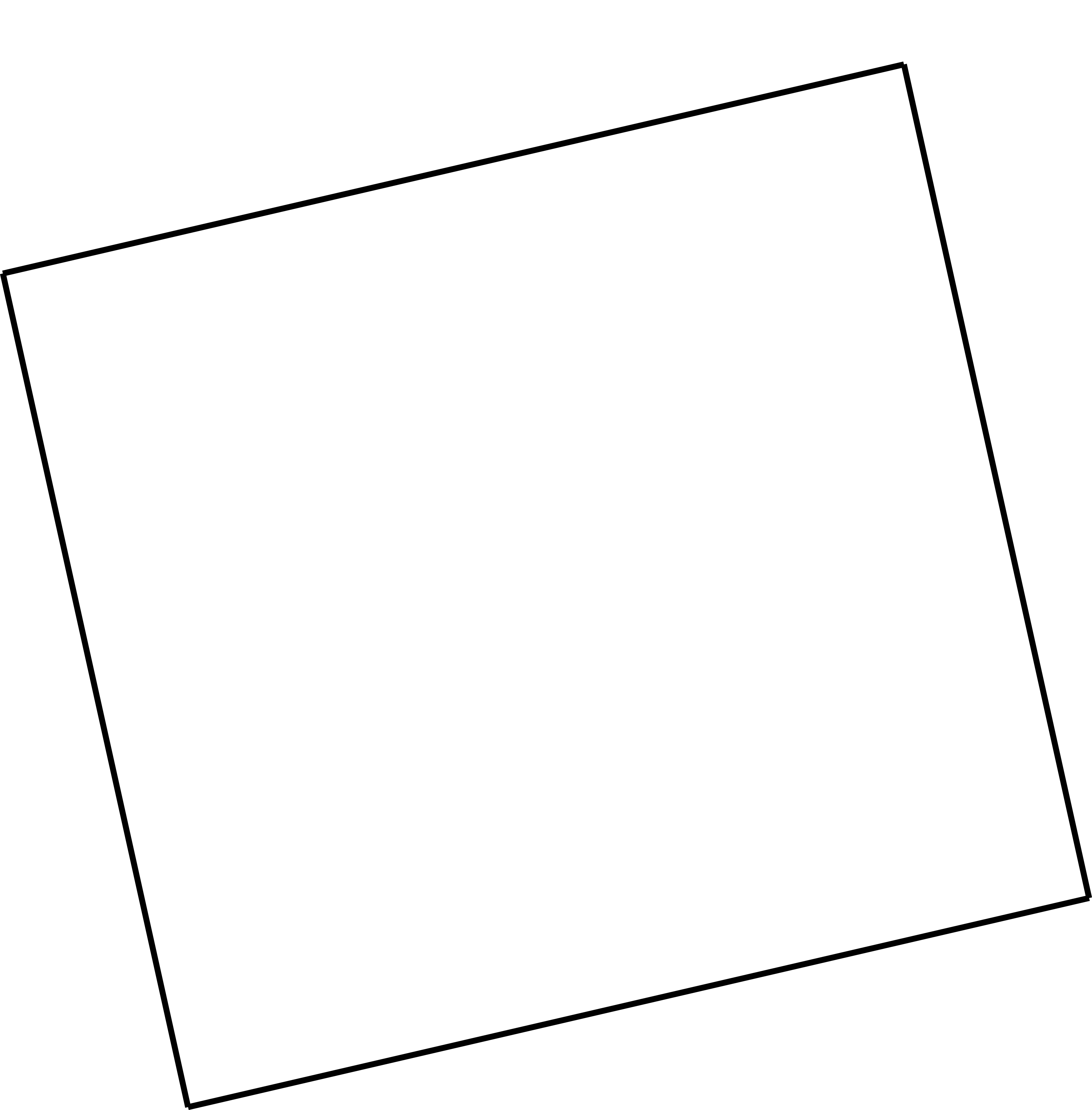}};

      \draw [fill] (1cm,-1cm) circle [radius=2pt] node[above] {$T$};
      \draw [fill] (-1cm,-0.4cm) circle [radius=2pt] node[above] {$U$};
      \draw [fill] (-0.5cm,0.6cm) circle [radius=2pt] node[above] {$V$};
      \draw [fill] (0.7cm,0.4cm) circle [radius=2pt] node[above] {$W$};
      \end{tikzpicture}
  \end{center}
   \end{minipage}%
  \begin{minipage}{.5\textwidth} \centering 
  
    \begin{tikzpicture}[every node/.style={font=\small}]
      \begin{knot}[
        clip width=5,
        flip crossing=2,
        flip crossing=5,
        flip crossing=7,
        flip crossing=8,
      ]
      \strand[black,ultra thick] (2,0) node[above,yshift=-16pt] {$T$} .. controls +(0,1) and +(0,-1) .. (0,3) node[above] {$T$};
      \strand[black,ultra thick] (1,0) node[above,yshift=-16pt] {$U$} .. controls +(0,1) and +(0,-1) .. (0,1.5);
      \strand[black,ultra thick] (0,1.5) .. controls +(0,1) and +(0,-1) .. (1,3) node[above] {$U$};
      \strand[black,ultra thick] (3,0) node[above,yshift=-16pt] {$V$} .. controls +(0,1.5) and +(0,-1.5) .. (2,3) node[above] {$V$};
      \strand[black,ultra thick] (0,0) node[above,yshift=-16pt] {$W$} .. controls +(0,1.5) and +(0,-1.5) .. (3,3) node[above] {$W$};
      \end{knot}
    \end{tikzpicture}
\end{minipage} 
\end{center}
The most well-known examples of quantum groups are deformations $U_q(\gk)$ of universal enveloping algebras of finite dimensional Lie algebras, the Drinfeld-Jimbo quantum groups of \cite{Dr2}.

This paper develops the theory of \textit{factorisation quantum groups}: algebraic structures whose categories of representations ``live over $\Sigma\times \Rb$", where now $\Sigma$ is a Riemann surface and the dependence on the $\Sigma$-direction is holomorphic:
\begin{center} \centering 
  \begin{minipage}{.5\textwidth} \centering
    \begin{center}
    \begin{tikzpicture}[every node/.style={font=\small}]
      \node at (-3cm, -0.825cm) {\large{$\Rb$}};
      \node at (0.75cm, -3cm) {\large{$\Sigma$}};
      \node at (0, 0) {\includegraphics[width=6cm]{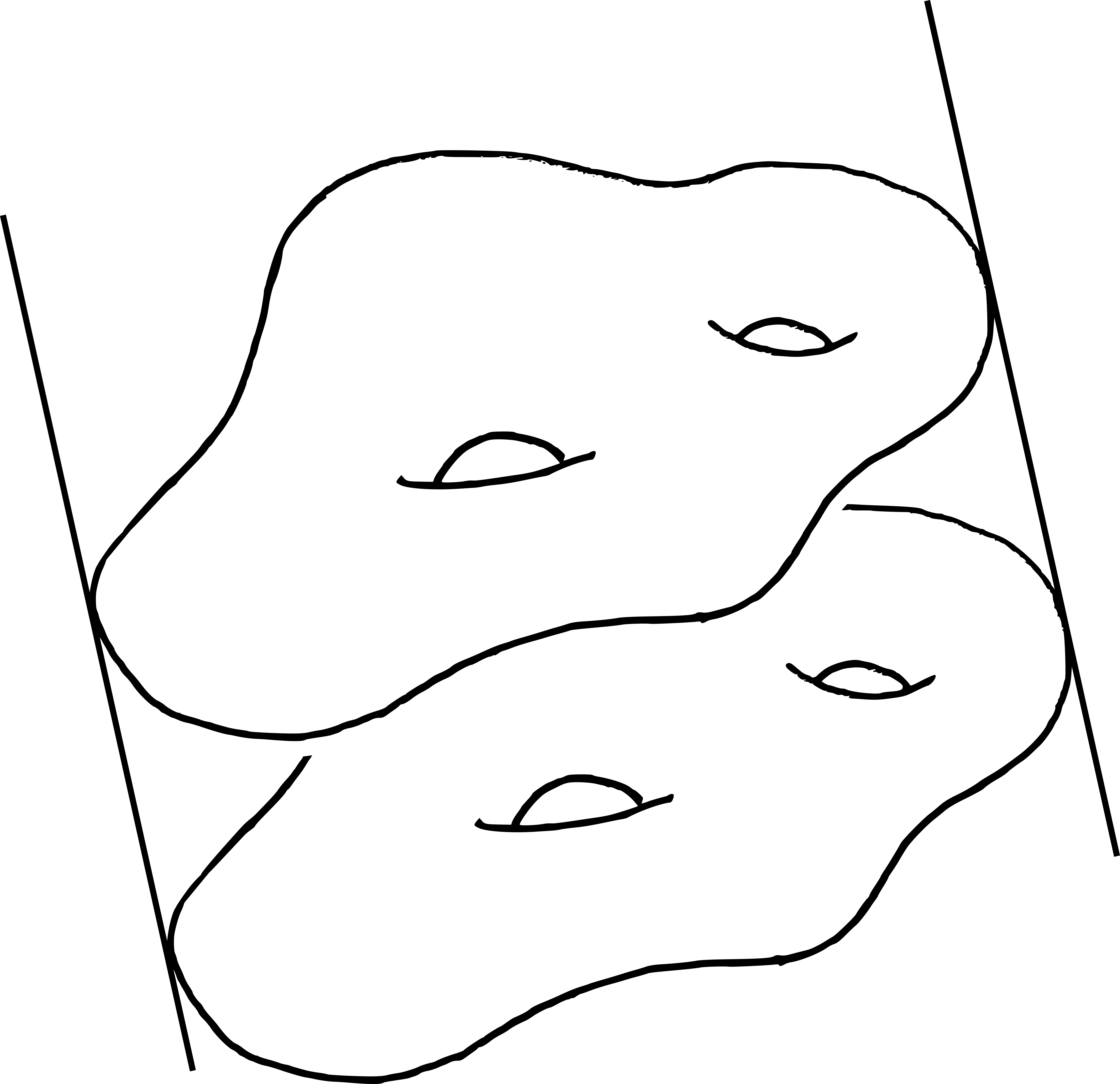}};

      \draw [fill] (1.9cm,-0.35cm) circle [radius=2pt] node[above] {$T$};
      \draw [fill] (-1.5cm,-0.2cm) circle [radius=2pt] node[above] {$U$};
      \draw [fill] (-0.8cm,1.2cm) circle [radius=2pt] node[above] {$V$};
      \draw [fill] (0.7cm,0.4cm) circle [radius=2pt] node[above] {$W$};
      \end{tikzpicture}
  \end{center}
   \end{minipage}%
  \begin{minipage}{.5\textwidth} \centering 
  
    \begin{tikzpicture}[every node/.style={font=\small}]
      \begin{knot}[
        clip width=5,
        flip crossing=2,
        flip crossing=5,
        flip crossing=7,
        flip crossing=8,
      ]
      \strand[black,ultra thick] (2,0) node[above,yshift=-16pt] {$T$} .. controls +(0,1) and +(0,-1) .. (0,3) node[above] {$T$};
      \strand[black,ultra thick] (1,0) node[above,yshift=-16pt] {$U$} .. controls +(0,1) and +(0,-1) .. (0,1.5);
      \strand[black,ultra thick] (0,1.5) .. controls +(0,1) and +(0,-1) .. (1,3) node[above] {$U$};
      \strand[black,ultra thick] (3,0) node[above,yshift=-16pt] {$V$} .. controls +(0,1.5) and +(0,-1.5) .. (2,3) node[above] {$V$};
      \strand[black,ultra thick] (0,0) node[above,yshift=-16pt] {$W$} .. controls +(0,1.5) and +(0,-1.5) .. (3,3) node[above] {$W$};
      \end{knot}
      \node at (-0.3, 1.5) {$z_1$};
      \node at (0.9, 1.95) {$z_2$};
      \node at (1.9, 1.40) {$z_3$};
      \node at (2.8, 1.5) {$z_4$};
    \end{tikzpicture}
\end{minipage} 
\end{center}
We now motivate this, first from a mathematics then from a physics point of view, then describe the contents of this paper in detail.

\newpage 

\subsubsection*{Mathematical motivation} Firstly, a theory of factorisation quantum groups $A$ should give a framework for studying quantum groups, $q$-vertex algebras, affine quantum groups, Maulik-Okounkov Yangians, elliptic quantum groups, $\ldots$ , using the language  of \textit{factorisation} objects due to \cite{AF,BD}, i.e. functorial assignments 
$$U\ \subseteq\ X\hspace{10mm}\rightsquigarrow\hspace{10mm}\Al(U)$$
to each open set a vector space, category, etc.  satisfying a \textit{factorisation} condition 
$$\Al(U_1\sqcup\cdots\sqcup U_n)\ \simeq\  \Al(U_1)\otimes\cdots\otimes\Al(U_n).$$
The relevant $\Al$ will be the category of representations of the above listed objects $A$, or (the Koszul dual of) $A$ itself. 

To be more precise, the category of representations is controlled by an element $R(z)\in A\otimes A((z))$, multiplying by which allows us to \textit{braid} representation of $A$:
$$(M\otimes N)((z))\ \stackrel{\sim}{\to}\ (N\otimes M)((z)).$$
Moreover, the object $A$ itself may be reconstructed using partial information about $R(z)$ and a small amount of extra data $T(z)$, via the \textit{RTT construction} of \cite{MO}. Thus, understanding these \textit{spectral $R$-matrices} conceptually is an important task to which a lot of this paper is devoted.

To illustrate the sort of answer we will give, consider the case of quantum groups as in \cite{Dr2}, where there is no $z$-dependence in the above. The category of representations is thus braided monoidal, i.e. is a locally constant factorisation category over $X=\Rb^2$, where choosing any line gives the category $\Al(D^2)=\Rep A$ a monoidal structure and half-rotating the disks gives a braiding:
\begin{center}
  \begin{tikzpicture}[scale=0.7]
    \draw (-2,-2) rectangle (2,2);
    
    \draw [very thick] (-2,-1.7) -- (2,1.7) node[above] at (0.6*2,0.6*1.7+0.1){};

    \draw[black,pattern=north west lines] (0.2*2,0.2*1.7) circle [radius=0.3];
    \draw[black, pattern=north west lines] (-0.2*2,-0.2*1.7) circle [radius=0.3];
    \draw[thick] (0,0) circle [radius=1.2];

    \node at (0,-2.7) {$\otimes_\Al\ :\ \Al(D^2)\otimes \Al(D^2)\ \to\ \Al(D^2)$};

    \begin{scope}[shift={(10,0)}]
    \draw (-2,-2) rectangle (2,2);
    
    \draw [very thick] (-2,-1.7) -- (2,1.7) node[above] at (0.6*2,0.6*1.7+0.1){};

    \draw[black,pattern=north west lines] (0.2*2,0.2*1.7) circle [radius=0.3];
    \draw[black, pattern=north west lines] (-0.2*2,-0.2*1.7) circle [radius=0.3];
    \draw[thick] (0,0) circle [radius=1.2];
    \draw[->] (0,0) ++(-130:1.5) arc (-130:30:1.5);
    \draw[->] (0,0) ++(50:1.5) arc (50:210:1.5);

    \node at (0,-2.7) {$\beta\ :\ \otimes_\Al \ \stackrel{\sim}{\to} \ \otimes_\Al\cdot\sigma$};
    \end{scope}
\end{tikzpicture} 
\end{center}
One then shows that a module category is braided monoidal if and only if $A$ is a bialgebra with an element $R\in A\otimes A$ satisfying among other things the \textit{Yang-Baxter equation}:
$$R_{12}R_{13}R_{23}\ =\ R_{23}R_{13}R_{12}.$$
Thus clearly the most important structure here is the braided monoidal structure on $\Rep A$.

To continue to a non-locally-constant example, we consider the \textit{affine quantum group} $U_q(\hat{\gk})$ from \cite{Dr1}, which contains the Drinfeld-Jimbo quantum group $U_q(\gk)$ as a subalgebra:
\begin{center}
\begin{tikzcd}[row sep = {20pt}, column sep = {20pt}]
    U(\hat{\gk}) \ar[r,hook]& U_q(\hat{\gk}) \\ 
   U(\gk) \ar[r,hook] \ar[u,hook]& U_q(\gk)\ar[u,hook]
\end{tikzcd}
\end{center}
To begin, assume $\gk$ is the Lie algebra of simple algebraic group $G$. Then the affine Lie algebra $\hat{\gk}$ is a central extension of the Lie algebra of the group of \textit{loops}
$$S^1\ \to\ G$$
where as a model for $S^1$ we take the punctured formal disk $\Spec \Cb((z))$, as in \cite{KV}, and its universal enveloping algebra $U(\hat{\gk})$ is a subalgebra of modes of a vertex algebra $V(\gk)$. 

Vertex algebras were introduced by Borcherds \cite{Bo1}, but later discovered by Beilinson-Drinfeld \cite{BD} to be holomorphic factorisation algebras over $X=\Cb$, thus connecting them to the geometry of $\Ab^1=\Spec \Cb[z]$. This is defined as a holomorphic analogue of Lurie's equivalence \cite{Lu1} between locally constant factorisation algebras on $X$ and constructible cosheaves on the space $\Ran X$ of finite subsets of $X$.

It is thus reasonable to expect $U_q(\hat{\gk})$ to be a subalgebra of modes of a vertex quantum group $A$.\footnote{We leave the construction of $A$ to future work, instead most  examples in this paper will be Borcherds twists. Also, we expect $U_q(\gk)$ should come from a multiplicative vertex quantum group, but we ignore this distinction in the introduction, e.g. we only write the additive spectral Yang-Baxter equation.} The structure on its category of representations will be defined in the main text, but it is loosely speaking a ``holomorphic-topological factorisation category over $\Sigma\times \Rb$'', so restricting to the Riemann surface $\Sigma$ we get a tensor product depending holomorphically on a pair of points, and a braiding induced by the topological direction, which we \textit{heuristically} sketch using the diagrams in $\Sigma\times\Rb$:
\begin{center}
  \begin{tikzpicture}[scale=0.7]
    \draw (-2,-2) rectangle (2,2);
    
    \draw [very thick] (-2,-0) -- (2,0) node[above] at (0.6*2,0.6*1.7+0.1){};

    \draw[black,pattern=north west lines] (0.5257,0) circle [radius=0.3];
    \draw[black, pattern=north west lines] (-0.5257,-0) circle [radius=0.3];
    \draw[thick] (0,0) circle [radius=1.2];

   \node at (0,-2.7) {$\otimes_{\Al,z,w}\ :\ \Al(D^2)\otimes \Al(D^2)\ \to\ \Al(D^2)$};
    \node at (-1.7,0.4) {$\Sigma$};

    \begin{scope}[shift={(10,0)}]
    \draw (-2,-2) rectangle (2,2);
    
    \draw [very thick] (-2,-0) -- (2,0) node[above] at (0.6*2,0.6*1.7+0.1){};

    \draw[black,pattern=north west lines] (0.5257,0) circle [radius=0.3];
    \draw[black, pattern=north west lines] (-0.5257,-0) circle [radius=0.3];
    \draw[thick] (0,0) circle [radius=1.2];
    \draw[->] (0,0) ++(-165:1.5) arc (-165:-15:1.5);
    \draw[->] (0,0) ++(15:1.5) arc (15:150:1.5);

   \node at (0,-2.7) {$\beta\ :\ \otimes_{\Al,z,w} \ \stackrel{\sim}{\to} \ \otimes_{\Al,w,z}\cdot\sigma$};
    \node at (-1.7,0.4) {$\Sigma$};
    \end{scope}
\end{tikzpicture} 
\end{center} 
We then show that endowing $\Rep A$ with this structure is equivalent to $A$ being a vertex bialgebra and picking an element $R\in A\otimes A ((z))$ satisfying among other things the spectral Yang-Baxter equation, which when $\Sigma=\Ab^1$ is
$$R_{12}(z)R_{13}(z+w)R_{23}(w)\ =\ R_{23}(w)R_{13}(z+w)R_{12}(z).$$
Note that quantum groups are not factorisation algebras over $\Rb^2$ themselves, but rather their $\Eb_2$-Koszul duals are. Likewise, we expect that there is a notion of holomorphic-topological Koszul dual of vertex quantum groups recovering the \textit{raviolo vertex algebras} of Garner and Williams \cite{GW}.

Aspects of factorisation quantum groups have appeared elsewhere in mathematics. When Borcherds first defined lattice vertex algebras, it was twisting by a spectral $R$-matrix, a generalisation of which we treat in the main text. In a recent example, Joyce \cite{Jo} has used this to construct vertex algebra structures on the homology of moduli stacks, which together with its cohomological Hall algebra forms a (cocommutative) vertex quantum group by \cite{La,Li}.

Analogously to how the theory of braided monoidal categories contains $\Rep U_q(\gk)$ as an example, so too we expect the different definitions and examples of $q$-vertex algebra in \cite{EK,FR,Jo} to relate to the theory of factorisation quantum groups. One good property of the theory is that, like Beilinson-Drinfeld factorisation algebras, there are no extraneous choices made when defining factorisation quantum  groups. In the text we only take $\Eb_n$-algebras in certain categories, and then extract the vector-space-and-data definition as a consequence. 

Finally, we note that factorisation category type structures on categories of vertex algebra modules have appeared in the literature. For instance, the notion of vertex tensor product due to Creutzig, Kanade and McRae in \cite{CKM}, and Soibelman's meromorphic tensor categories in \cite{So}. Finally, there is a braided monoidal structure on the category of modules for a special class of vertex algebras, due to Huang,  Lepowsky, and Zhang \cite{HLZ}.

\subsubsection*{Physics motivation} The quantum field theories relevant to this paper are \textit{$3d$ topological-holomorphic quantum field theories} (without boundary). 

To begin, \textit{$2d$ conformal field theories} attach data to cobordisms given by Riemann surfaces $\Sigma$, and give rise to vertex algebras by evaluating on a ``small'' circle $S^1$ then taking holomorphic part.

Likewise, $3d$ topological-holomorphic theories attach data to $3$-topological manifolds like $\Sigma\times \Rb$ equipped with a foliation by Riemann surface leaves, evaluating on a ``small'' sphere $S^2$ should give rise to a raviolo vertex algebra \cite{GW}, and its category of line operators should be representations of a quantum vertex group. We may dimensionally reduce to obtain a $2d$ CFT:
\begin{center}
  \begin{tikzcd}[row sep = {20pt}, column sep = {20pt}]
    \Sigma\times \Rb \ar[r,hook]\ar[d,rightsquigarrow]&\Sigma\times \Rb\times\Rb_{\ge 0} \ar[d,rightsquigarrow] \\ 
   \Sigma \ar[r,hook] & \Sigma\times\Rb_{\ge 0}
  \end{tikzcd}
  \end{center}

One can then consider theories admitting the above as boundaries. First, there are certain $3d$ TQFTs with holomorphic boundary condition. The basic example is $3d$ Chern-Simons theory with WZW (affine vertex algebra) or oper (W vertex algebra) boundary conditions. There is a large body of work about other ``boundary vertex algebras'' arising as boundaries of $A$- or $B$-topological twists of $3d$ $\Nl=2$ gauge theories, e.g. \cite{CDG,BCDN}. Second are the \textit{four}-dimensional topological-holomorphic theories. For instance, in \cite{CWY1,CWY2} Costello, Witten and Yamazaki study categories of line operators in $4d$ Chern-Simons theory, whose category of line operators they relate to $\Rep U_q(\hat{\gk})$. One expects that there are boundary conditions giving the $3d$ theories referred to above. Note that in $3d$, by the Kazhdan-Lusztig Theorem there is an braided monoidal equivalence between the category $\Rep U_q(\gk)$ and the category of $V(\gk)$-modules. By analogy it is reasonable to expect that the category of line operators in appropriate $4d$ topological-holomorphic theories have the structure of braided factorisation categories of the form $\Rep A$ for $A$ a vertex quantum group.

\subsection{Summary of sections}This paper is divided into four parts.

In section \ref{sec:HolEnAlgs} we briefly make all our definitions in a homotopy-coherent way. We define factorisation spaces, categories, algebras and modules. 

In section \ref{sec:SpectralRMatrices} we carry over the main results in the basic theory of quantum groups to live ``over'' factorisation spaces. We first give more explicit forms of our main definitions (sections \ref{sec:ClassicalFactorisationSpaces} and \ref{sec:ClassicalFactorisationCategories}). Then, we prove Theorem \ref{thm:SpectralRMatrixBraidedMonoidal} classifying factorisation braidings on $A\Md$, which we use as our definition of factorisation quantum group, spectral $R$-matrix, etc. The definition of lax braiding is more subtle in the factorisation setting, which is covered in section \ref{sec:ClassicalFactorisationCategories}.

In section \ref{sec:QuantumVertexGroups} we write down the theory of quantum groups over the Ran space explicitly. We show  in Theorem \ref{thm:TannakaFactorisation} that our theory recovers the classical notion of vertex quantum groups and vertex $R$-matrices. We develop the theory of braided-commutative vertex algebras in section \ref{sec:ReconstructionTheorem} and Appendix \ref{sec:RationalMultiplicativeElliptic}, where we also review known facts about vertex algebras.

Finally, in section \ref{sec:BorcherdsTwists} we show how to produce new examples of factorisation quantum groups by Borcherds twisting. In section \ref{sec:Examples} we give some basic examples of all relevant concepts in the paper; for instance in section \ref{sec:ModuliSpaces} we give examples coming from the homology of moduli stacks. There are many other examples of factorisation quantum groups one expects to exist, whose construction we leave to future work.

\subsection{Background and conventions} 

\subsubsection{$\Eb_n$-algebras} \label{sec:EnAlgs}$\Eb_n$-algebras are algebraic structures which generalise associative or commutative algebras, corresponding to $n=1$ and $n=\infty$. For an account of $\Eb_n$ as an $\infty$-operad, as well as an introduction to $\infty$-operads, their algebras and modules over them, see sections $\S$ 2-4 and 5 of \cite{Lu2}. We sketch the one-categorical analogue here. 

 $\El_n$ was first described in \cite{May}, given by a sequence of topological spaces
 $$\El_n(k)\ =\ \left\{ \textup{rectilinear embeddings }\sqcup_{i=1}^k \Box\ \hookrightarrow\ \Box\right\},$$
 where $k\ge 0$ and $\Box$ is the unit $n$-cube. This forms a symmetric topological operad. Thus if $\Cl$ is a topologically enriched $1$-category, an $\El_n$-algebra is an object $A\in \Cl$ together with continuous maps
 $$\El_n(k)\ \to\ \Hom(A^{\otimes k},A),$$
which are compatible as we vary $k\ge 0$ or act by the symmetric group $\Sk_k$ on both sides. One can also define the set-valued symmetric operad, which we also denote by
$$\pi_0\El_n(k)\ =\ \pi_0\left\{ \textup{rectilinear embeddings }\sqcup_{i=1}^k \Box\ \hookrightarrow\ \Box\right\}.$$
Then for any symmetric monoidal $1$-category $\Cl$, a $\pi_0\El_n$-algebra corresponds to an object when $n=0$, an associative algebra object when $n=1$, and a commutative algebra object when $n\ge 2$. We may view $\Cl$ as an $\infty$-category by taking its nerve, and the analogues of these statements in the language of $\Eb_n$-algebras are \cite[4.1.6.18, 5.1.1.7]{Lu2}.

\subsubsection{Categories} \label{sec:Cat} By default all categories considered will be $(\infty,1)$-categories, although after section \ref{sec:HolEnAlgs} we will usually consider only examples which are ordinary $1$-categories or $(2,1)$-categories. In that case all definitions will be translated into a more concrete equivalent form, which readers unfamiliar with higher category theory can safely take as the definition.

\subsubsection{Prestacks} \label{sec:PreStk} When we refer to ``space'' in this paper we will mean a \textit{prestack}. One should think of a prestack $X$ as a black box which takes as input a test scheme $T$ and produces a topological space $X(T)$, with no extra conditions other than functoriality. To be precise, it is an element of the $(\infty,1)$-category
$$\PreStk\ =\ \Fun\left((\Sch^{\textup{aff}})^{\textup{op}},\textup{Spc}\right)$$
where $\text{Sch}^{\textup{aff}}$ and $\textup{Spc}$ are the $(\infty,1)$-categories of derived affine schemes and topological spaces, respectively. These definitions are from \cite[$\S$ 1.8]{GR}. We also have the $(k,1)$-category 
$$\PreStk^{\textup{cl}}_{\le k}\ \hookrightarrow\ \PreStk$$
of \textit{classical $k$-prestacks} where $k\ge 0$, defined  as
$$\PreStk^{\textup{cl}}_{\le k}\ =\ \Fun\left((\Sch^{\textup{aff}}_{\le 0})^{\textup{op}},\textup{Spc}_{\le k}\right)$$
where we use the full subcategories $\Sch^{\textup{aff}}_{\le 0}\subseteq \Sch^{\textup{aff}}$ of \textit{classical} affine schemes and $\Spc_{\le k}\subseteq \Spc$ the spaces with vanishing homotopy groups in degrees $>k$. Thus classical $0$-prestacks incude schemes, and classical $1$-prestacks include Artin stacks. When $k=1$ we omit it from the notation.The reason we consider prestacks is that our main example of a factorisation space is the Ran space $\Ran X$, and this is only defined as a prestack, even when $X$ is a scheme.

\subsubsection{Sheaves of categories} We now categorify the notion of $\Ol$-module, 
$$\Ol_X\Md\hspace{10mm}\rightsquigarrow\hspace{10mm}\ShvCat(X).$$
 The guiding principle is to replace the algebra $\Ol(X)$ with the monoidal dg-category $\QCoh(X)$. Thus when $X$ is a derived affine scheme, we set 
$$\Ol_X\Md = \Ol(X)\Md, \hspace{15mm}\ShvCat(X)\ =\ \QCoh(X)\Md$$
where the latter is the $(\infty,2)$-category of module categories over $\QCoh(X)$. We define $\QCoh(X)$ and $\ShvCat(X)$ for general prestacks $X$ by Kan extending, see \cite{Ga1}.

\subsubsection{Correspondences} \label{sec:Correspondences} We recall \cite[Chap. 7]{GR}. If $\Cl$ is an $(\infty,1)$-category with arbitrary pullbacks, we may consider the $(\infty,2)$-category $\textup{Corr}(\Cl)^{\textup{all}}_{\textup{all};\textup{all}}$ constructed in \cite[$\S$ 7.1.2]{GR}. The objects are objects of $\Cl$, one-morphisms are diagrams of one-morphims in $\Cl$
\begin{center}
\begin{tikzcd}[row sep = {30pt,between origins}, column sep = {45pt,between origins}]
 & c'\ar[rd]\ar[ld]& \\ 
c_1 & & c_2
\end{tikzcd}
\end{center}
 two-morphisms are given by diagrams of two-morphisms in $\Cl$
\begin{equation}\label{fig:CommMon}
\begin{tikzcd}[row sep = {30pt,between origins}, column sep = {45pt,between origins}]
 &[-5pt]c'\ar[rdd, bend left = 30,""{name=UR,inner sep=1pt,{xshift=-5pt},{yshift=5pt}}]\ar[ldd,swap, bend right = 30,""{name=UL,inner sep=1pt,{xshift=5pt},{yshift=5pt}}]\ar[d] &[6pt] \\
 &[-5pt]c''\ar[rd,""{name=DR,inner sep=1pt}]\ar[ld,""{name=DL,inner sep=1pt}] & \\
c_1 & & c_2
\arrow[Rightarrow, from=UR, to=DR, "", shorten <= 7pt, shorten >= 7pt] 
\arrow[Rightarrow, from=UL, to=DL, "", shorten <= 7pt, shorten >= 7pt] 
\end{tikzcd}
\end{equation}
and so on. When $\Cl$ is a $1$-category, the above is a $2$-category. We will denote by $\Cl^{\textup{corr}}$ the $(\infty,1)$-category formed by discarding all non-invertible two-morphisms, which is necessary for reasons discussed in section \ref{sec:Straightening}. Moreover, when working with factorisation algebras on prestacks, we need to in addition assume that the vertical morphisms are quasi-compact and quasi-separated, so will use the notation $\PreStk^{\textup{corr}}$ to refer to $\textup{Corr}(\PreStk)^{\textup{all}}_{\textup{qcqs};\textup{all}}$ with non-invertible $2$-morphisms discarded. The reason is that we need to be able to push forward sections of sheaves of categories, which by \cite[Prop A.9.1]{Ras} we may do so along such morphisms. 

\subsubsection{Straightening and unstraightening} \label{sec:Straightening} In mathematics, the (un)straightening or \textit{Grothendieck} construction refers to phenomenona where data $d\in \Dl$ living over a space $X$ may be encoded as a map $X\to \Bt\Dl$ into a ``classifying'' space for that data, by pulling back universal data over that space. For example, every principal $G$-bundle is obtained as the pullback along a (unique) map into $\BG\simeq \pt/G$:
\begin{center}
\begin{tikzcd}[row sep = {30pt,between origins}, column sep = {45pt,between origins}]
P\ar[d]&[20pt] P\ar[r]\ar[d] & \pt\ar[d]&[20pt]&\\ 
X&X\ar[r] & \BG&X\ar[r]&\BG
\end{tikzcd}
\end{center}
We will use various Grothendieck constructions, where $X$ is a category and the data $\Dl$ consists of a certain type of categories living over it. It will be important for us these constructions be compatible with varying the choice of $\Dl$: by \cite[Thm. 3.2.0.1]{Lu1} or \cite{HHR}, if $X$ is any $(\infty,1)$-category, there is an  equivalence 
  \begin{equation}\label{eqn:UnStraightening}
    \textup{CoCart}(X)\ \simeq\ \Fun(X,\Cat_\infty)
  \end{equation}
  between the $(\infty,1)$-category of functors $X\to \Cat_\infty$ and the category of \textit{cocartesian fibrations} $P\to X$ as defined in \cite[$\S$ 2.4]{Lu1}. This equivalence (\ref{eqn:UnStraightening}) is upgraded to an equivalence of symmetric monoidal $(\infty,1)$-categories by \cite{GL,Ram}. In particular, if $\Ol$ is any $\infty$-operad and $X$ has an $\Ol$-monoidal structure then there is an equivalence 
  \begin{equation}\label{eqn:OperadUnStraightening}
    \textup{CoCart}_\Ol(X)\ \simeq\ \Fun_\Ol(X,\Cat_\infty)
  \end{equation}
  between the $(\infty,1)$-category of lax $\Ol$-monoidal functors $X\to\Cat_\infty$ and \textit{$\Ol$-monoidal cocartesian fibrations} $P\to X$, as defined in \cite[1.11]{Ram}. This is compatible with (\ref{eqn:UnStraightening}) via the forgetful functor on both sides.

There is a version of this for when $X$ is an $(\infty,2)$-category. In this case \cite[Thm. 1.1.8]{GR} or \cite{Ab} gives an equivalence 
  \begin{equation}\label{eqn:UnStraightening2}
    \textup{CoCart}_2(X)\ \simeq\ \Fun(X,\Cat_{(\infty,2)})
  \end{equation}
  between the $(\infty,2)$-category of (right lax) functors $X\to \Cat_{(\infty,2)}$ and the category of (\textit{$2$-})\textit{strict cocartesian fibrations $P\to X$} as defined in \cite[$\S$ 11.1]{GR}. Here $\Cat_{(\infty,2)}$ is the $(\infty,2)$-category of $(\infty,2)$-categories.

  Finally, we would like a symmetric monoidal upgrade of (\ref{eqn:UnStraightening2}), but unfortunately as far as we are able to tell the language of $\Eb_n$-algebras and operads inside $(\infty,2)$-categories have not yet been fully developed yet. Thus in section \ref{sec:SpectralRMatrices} we will make our concrete definitions of lax classical factorisation spaces and categories by hand, which just as in the strong case should be straightforwardly shown to be equivalent to $\Eb_n$-algebra type definitions after the language is developed enough to make them.

\subsubsection{Vertex algebra nomenclature} \label{sec:VertexAlgNomenclature} For an introduction to vertex algebras, see \cite{FBZ} or Appendix \ref{sec:RationalMultiplicativeElliptic}. There we will define an \textit{associative} or \textit{commutative vertex algebra} as a translation-equivariant associative or commutative factorisation algebra in $\Dl\Md_{\Ran^{ch} \Ab^1}$. Unfortunately, this departs from the usual naming conventions:
\begin{center}
  \begin{tabular}{c|c}
    Usual name, e.g. \cite{FBZ,Jo,EK} & Our name \\ \hline
    \textit{Vertex algebra}& \textit{Commutative vertex algebra} \\
    \textit{Nonlocal vertex algebra}& (\textit{Associative}) \textit{vertex algebra} \\
    \textit{Holomorphic vertex algebra}& \textit{Holomorphic commutative vertex algebra} \\
  \end{tabular}
\end{center}
As usual, we use the term \textit{holomorphic} for factorisation $\Eb_n$-algebras in $\Dl\Md_{\Ran^*\Ab^1}$. 

\subsection{Acknowledgements} We would like to thank Yuchen Fu for patiently explaining many facts about $\Eb_n$-algebras, Kevin Lin for reading an early draft of this paper and useful conversations, Dominic Joyce for suggesting a related question as my PhD advisor which eventually led to this paper, as well as Emile Bouaziz, Lukas Brantner, Henry Liu, and Vivek Shende for helpful discussions. This work was completed at the Centre for Quantum Mathematics at SDU, supported by the Danish National Research Foundation (DNRF157) and Villum Fonden (37814). Early stages of this work were at Oxford University, supported by the EPSRC.

\newpage

\section{Basic Definitions} \label{sec:HolEnAlgs} 

\noindent
In this section we define factorisation spaces, categories, algebras, modules, and their basic properties. See also \cite{Ras} for a related reference.  

\subsubsection*{Motivating example} Let $\Ll$ be a complex line bundle over a group $G$. A \textit{multiplicative} structure on it is a pair of maps
$$\Ll\boxtimes\Ll\ \stackrel{}{\to} \ m^*\Ll, \hspace{15mm} \triv\ \stackrel{}{\to}\ e^*\Ll$$
such that the first satisfies an associativity condition and both maps are compatible in the obvious way. Here $m:G\times G\to G$ and $e:\pt\to G$ are the multiplication map and unit.   Thus, for all $g_1,g_2\in G$ we can take fibres of the above to get linear maps
$$\Ll_{g_1}\otimes \Ll_{g_2}\ \to\ \Ll_{g_1\cdot g_2}, \hspace{15mm} \Cb\ \to\ \Ll_e.$$
One may likewise define multiplicative sheaves, sheaves of categories, spaces, algebras, functions, etc.

\noindent Generalising further, we may define this structure when the product $(g_1,g_2)\mapsto g_1\cdot g_2$ is only be partially defined, or more generally depends on a choice of some auxiliary data $c_{g_1,g_2}$. In other words, we replace $m$ with a pair of maps
\begin{center}
\begin{tikzcd}[row sep = 10pt, column sep = 15pt]
 &C\ar[rd]\ar[ld] & \\
G\times G & & G 
\end{tikzcd}
\end{center}
sending $c_{g_1,g_2}\in C$ to $(g_1,g_2)$ and an element denoted $g_1\cdot_cg_2$, respectively, and satisfying an associativity condition and compatibly with the unit. 
The analogue of a multiplicative structure on line bundle $\Ll$ is a \textit{factorisation} structure, which on fibres give for all elements $c_{g_1,g_2}\in C$ linear maps
$$\Ll_{g_1}\otimes \Ll_{g_2}\ \to\ \Ll_{g_1\cdot_c\, g_2},\hspace{15mm}k\ \to\ \Ll_e.$$
Two basic examples are the topological space $G=\Ran T$ of finite subsets in a topological space $ T$, the multiplication map given by taking unions on the open locus $C\subseteq G\times G$ of \textit{disjoint} pairs of finite subsets, 
  \begin{center}
  \begin{tikzcd}[row sep = {30pt,between origins}, column sep = {40pt,between origins}]
    &(\Ran T\times \Ran T)_\circ\ar[ld,|->]\ar[rd,|->]& &[35pt]&(S_1,S_2)\ar[ld,|->]\ar[rd,|->]& \\ 
    \Ran T \times \Ran T&&\Ran T & (S_1,S_2)&& S_1\cup S_2
  \end{tikzcd}
  \end{center}
   and $G=\Ml_\Al$ the moduli stack of objects in an abelian category $\Al$, with $C=\Ml_{\textup{SES}(\Al)}$ so that the extra data corresponds to choosing a short exact sequence in $\Al$:
  \begin{center}
  \begin{tikzcd}[row sep = {30pt,between origins}, column sep = {45pt,between origins}]
   &\Ml_{\textup{SES}(\Al)}\ar[rd]\ar[ld] & &[30pt] & a_1\to e\to a_2\ar[ld,|->]\ar[rd,|->]& \\ 
  \Ml_\Al\times \Ml_\Al & & \Ml_\Al & (a_1,a_2)&& e
  \end{tikzcd}
  \end{center}
  The units correspond to the empty set $\varnothing$ and the zero object $0$ respectively, and the associativity to associativity property of disjointness or of iterated extensions.\footnote{To be precise, we have 
  $$S_1\cap S_2\ =\ \varnothing\ =\  (S_1\cup S_2)\cap S_3\hspace{5mm}\Leftrightarrow\hspace{5mm} S_2\cap S_3\ =\ \varnothing\ =\ S_1\cap (S_2\cup S_3)$$
  and an equivalence between the data of iterated extensions
  $$a_1\to e_{12}\to a_2, \ \ e_{12}\to e_{123}\to e_3 \hspace{5mm}\leftsquigarrow \hspace{-4.5mm} \rightsquigarrow \hspace{5mm} a_2\to e_{23}\to a_3, \ \ a_1\to e_{123}\to e_{23}.$$}

\subsection{Factorisation spaces and categories} \label{sec:FactSpaces}

Recall the $(\infty,1)$-category $\PreStk^{\textup{corr}}$ introduced in section \ref{sec:Correspondences}, whose objects are prestacks and whose morphisms are correspondences correspondence
\begin{center}
\begin{tikzcd}[row sep = 10pt, column sep = 15pt]
 &[-5pt]C\ar[rd]\ar[ld,swap] & \\
X & & Y
\end{tikzcd}
\end{center}
Taking the cartesian product of spaces endows it with a symmetric monoidal structure. 

\begin{defn}
  An (\textit{associative}) \textit{factorisation space} is an associative algebra object in $\PreStk^{\textup{corr}}$. 
\end{defn}

Likewise, we call the $(\infty,1)$-category $\Eb_n\Ag\left(\PreStk^{\textup{corr}}\right)$ the category of \textit{factorisation $\Eb_n$-spaces}. Thus, a factorisation algebra consists of a prestack $Y$ together with correspondences
\begin{center}
\begin{tikzcd}[row sep = 10pt, column sep = 15pt]
 &[-5pt]C\ar[rd,"p"]\ar[ld,swap,"q"] &&[20pt]&E\ar[rd,"e"]\ar[ld] & \\
Y\times Y & & Y & \pt& &Y 
\end{tikzcd}
\end{center}
which satisfies an associativity and unit condition, plus higher compatibilities. On $1$-morphisms, these give isomorphisms
$$(C\times Y)_{Y\times Y}C\ \simeq\ (Y\times C)\times_{Y\times Y}C, \hspace{10mm} (E\times Y)\times_{Y\times Y}C\ \simeq\ (Y\times E)\times_{Y\times Y}C\ \simeq\ Y$$
of correspondences.

\subsubsection{Factorisation categories} We now define a category $\Groth$ whose objects are pairs $(X,\Cl)$ of a prestack and a sheaf of categories over it, and $1$-morphisms $(X,\Cl)\to (Y,\Dl)$ consist of a correspondence 
\begin{center}
  \begin{tikzcd}[row sep = 10pt, column sep = 15pt]
   &[-5pt]C\ar[rd,"p"]\ar[ld,"q"'] & \\
  X & & Y
  \end{tikzcd}
  \end{center}
and functor $\varphi:q^*\Cl\to p^*\Dl$ of sheaves of categories on $C$. 

To define it, we consider the symmetric monoidal functor of $(\infty,1)$-categories
$$\PreStk^{\textup{corr}}\ \to\ \Cat \hspace{15mm}X\ \mapsto \ \ShvCat(X).$$
On the level of $1$-morphisms, given a correspondence $C$ as above we get 
$$p_*q^*\ :\ \ShvCat(Y)\ \to\ \ShvCat(X)$$
which is a functor because of base change for sheaves of categories, see \cite[Sec. A.9]{Ras}. This functor is symmetric monoidal, so $\ShvCat(X)$ is an $\Eb_n$-category whenever $X$ is a factorisation $\Eb_n$-space. We now apply the Grothendieck construction from section \ref{sec:Straightening} to give a symmetric monoidal cocartesian fibration 
\begin{equation} \label{eqn:GrothCoCartFib}
   \Groth\ \to\ \PreStk^{\textup{corr}} \hspace{15mm} (X,\Cl)\ \mapsto\ X.
\end{equation}
Then, 
\begin{defn}
  An (\textit{associative}) \textit{factorisation category} is an associative algebra object in $\Groth$.  
\end{defn}

Often we denote to a factorisation category $(X,\Cl)$ by $\Cl$ and say that it \textit{lives over} $X$. Likewise, a \textit{factorisation $\Eb_n$-category} is an element of $\Eb_n\Ag(\Groth)$. The space it lives over is automatically a factorisation $\Eb_n$-space because (\ref{eqn:GrothCoCartFib}) is a symmetric monoidal functor. Thus, a factorisation category over factorisation space $Y$ is a sheaf of categories $\Cl$ together with maps 
$$\otimes_\Cl\ :\ q^*(\Cl\boxtimes\Cl)\ \to\ p^*\Cl, \hspace{15mm} 1_\Cl\ :\ \triv \ \to\ e^*\Cl$$
of sheaves of categories on $C$ and $E$, along with associativity and compatiblity data.

\subsubsection{} If $M_1,M_2$ are sections of $\Cl$ we will write\footnote{When $Y=\pt$ this agrees with the usual notation for the tensor product of objects in a monoidal category.}
$$M_1\otimes_\Cl M_2\ =\ \otimes_\Cl \left(q^*(M_1\boxtimes M_2)\right).$$
Likewise, we will write 
$$M_1\otimes_\Cl\cdots\otimes_\Cl M_n$$
for the section of $p_n^*\Cl$ given by applying $\otimes_\Cl$ iteratively and where we have the correspondence given by pulling back by $C$ iteratively,
\begin{center}
\begin{tikzcd}[row sep = 10pt, column sep = 15pt]
 &C_n\ar[rd,"p_n"]\ar[ld,"q_n"']  & \\ 
Y^n & & Y
\end{tikzcd}
\end{center}
which by associativity independent of the order taken.

\subsubsection{Factorisation algebras} Recall that one may define associative algebras $A$ in an arbitrary monoidal category, and more generally $\Eb_n$-algebras in $\Eb_m$-categories for $m\ge n$. One does not need to make a special definition for this, one just considers associative algebra objects $(\Cl,A)$ in the category of pairs of a category and an object.

We apply this construction twice, first to 
$$\ShvCat(X)\ \to\ \Cat_\infty\hspace{15mm} \Cl\ \mapsto\ \Gamma(X,\Cl)$$
where $X$ is any prestack, giving the cocartesian fibration 
$$\ShvCat(X)_{\QCoh_X/}\ \to\ \ShvCat(X)$$
classifying pairs $(\Cl,A)$ of a sheaf of categories and section. Second, we apply it to the symmetric monoidal functor
$$\PreStk^{\textup{corr}}\ \to\ \textup{CommAlg}(\Cat_\infty) \hspace{15mm} X\ \mapsto \ \ShvCat(X)_{\QCoh_X/}$$
and apply the Grothendieck construction to give a symmetric monoidal cocartesian fibration
$$\Groth_{\QCoh/}\ \to\ \PreStk^{\textup{corr}}$$
Its objects are triples $(X,\Cl,A)$ of a prestack, sheaf of categories, and a section thereof. A $1$-morphism $(X,\Cl,A)\to (Y,\Dl,B)$ is a $1$-morphism $(X,\Cl)\to (Y,\Dl)$ as discussed above, and a map $\varphi(q^*A)\to p^*B$ inside $p^*\Dl$. 

\begin{defn}
   An (\textit{associative}) \textit{factorisation algebra} is an associative algebra object in $\Groth_{\QCoh/}$.
\end{defn}

Likewise, a \textit{factorisation $\Eb_n$-category} is an element of $\Eb_n\Ag(\Groth_{\QCoh/})$. Forgetting the section induces a symmetric monoidal functor $\Groth_{\QCoh/}\to\Groth$, so it follows that if $A$ is a factorisation $\Eb_n$-algebra, the sheaf of categories $\Cl$ it is a section of is also a factoriation $\Eb_n$-category. Thus, a factorisation algebra is a section $A\in \Gamma(Y,\Cl)$ of a factorisation category $\Cl$, equipped with structure maps
$$m_A\ :\ A\otimes_\Cl A\ \to \ p^*A,\hspace{15mm} 1_A\ :\ 1_\Cl \ \to\ e^*A$$
of sections of $p^*\Cl$ and $e^*\Cl$, satisfying the associativity condition and with a unit.

The most trivial example of a factorisation space is $X=\pt$ with the trivial correspondence $C=\pt$, factorisation category over it $\Cl=\QCoh_{\pt}\simeq\Vect$ and factorisation algebra $A=\Ol_{\pt}\simeq k$. Slightly more generally, if $X$ is any factorisation $\Eb_n$-space, then $\Cl=\QCoh_{X}$ and $A=\Ol_X$ are automatically factorisation $\Eb_n$-categories and algebras, because they are the units in $\ShvCat(X)$ and in $\Cl(X)$.

\subsubsection{Spaces of sections} For any factorisation category $\Cl$ over $X$, we have by adjunction an equivalence of dg-categories
$$\Gamma(X,\Cl)\ =\ \Hom_{\ShvCat(X)}(\QCoh_X,\Cl)$$ 
where by definition the left side is the value of $\Cl$ on $X$, and on the right we use that $\QCoh_X$ is the unit for the symmetric monoidal structure on $\ShvCat(X)$. 

One category level down, if $(\Cl,1_\Cl)$ is a \textit{pointed} sheaf of categories over $X$, i.e. a factorisation $\Eb_0$-category, then for any $A\in \Gamma(X,\Cl)$ we define the dg-vector space of its \textit{sections} by 
$$\Gamma(X,A)\ \defeq\ \Hom_{\Gamma(E,e^*\Cl)}(1_\Cl(\Ol),e^*A).$$
For instance, if the point of $X$ is given by the correspondence $E=X$ and $e$ the identity map, then this is $\Hom_{\Gamma(X,\Cl)}(1_\Cl(\Ol),A)$. 

In the trivial example, for any $A\in \Gamma(X,\QCoh_X)=\QCoh(X)$ we have that $\Gamma(X,A)$ is the usual vector space of sections as a quasicoherent sheaf.

\subsubsection{Fibrewise description} Let $Y$ be a factorisation space and pick a point $c:\pt\to C$ of the correspondence defining the product. Denote its images by
\begin{equation}\label{fig:ImageOfc}
  \begin{tikzcd}[row sep = 10pt, column sep = 15pt]
  &[-5pt]c\ar[rd,"p",|->]\ar[ld,swap,"q",|->] &  \\
  (y_1,y_2) & & y 
  \end{tikzcd}
  \end{equation}
Then if $\Cl$ is a factorisation category, we get a collection of dg-functors 
$$\otimes_{\Cl,c}\ :\ \Cl_{y_1}\otimes \Cl_{y_2}\ \to\ \Cl_{y}.$$
Similarly if $A$ is a factorisation algebra, its product gives a a collection of maps in the category $\Cl_y$
$$m_{A,c} \ :\ A_{y_1}\otimes_{\Cl,c}A_{y_2}\ \to\ A_y.$$
Here we denote $\Cl_y$ and $A_y$ the pullbacks as sheaf of categories and sections along the map $y:\pt\to C$. There is a similar fibrewise description for $\Cl$-module categories and $A$-modules. 

Similarly, we can pick a point $d:\pt \to E$ of the unit, with images 
\begin{equation}\label{fig:ImageOfe}
  \begin{tikzcd}[row sep = 10pt, column sep = 15pt]
  &[-5pt]d\ar[rd,"e",|->]\ar[ld,swap,|->] &  \\
  \pt & & y
  \end{tikzcd}
  \end{equation}
Then if $\Cl$ is a factorisation category and $A$ a factorisation algebra inside it, we get structure maps 
$$1_{\Cl,d}\ :\ \Vect\ \to \ \Cl_d, \hspace{15mm} 1_{A,d}\ :\ 1_{\Cl,d}(k)\ \to\ A_d.$$

We write down associativity explicitly. If $\Cl$ is a factorisation category, then attached to any point 
$$c_{1,2,3}\ \in\ (Y\times C)\times_{Y\times Y}C\ \simeq\ (C\times Y)\times_{Y\times Y}C$$
with images 
\begin{center}
  \begin{tikzcd}[row sep = {30pt,between origins}, column sep = {45pt,between origins}]
    &&c_{1,2,3}\ar[rd,|->]\ar[ld,|->]&&\\
    &[-5pt](c_{12},y_3)\ar[rd,"p\times\id",|->]\ar[ld,swap,"q\times\id",|->] &&[-5pt]c_{12,3}\ar[rd,"p",|->]\ar[ld,swap,"q",|->] &  \\
    (y_1,y_2,y_3) & & (y_{12},y_3) & & y 
    \end{tikzcd} 
\end{center}
and likewise for the other bracketing order, we have for any sections $A_{y_i}\in \Cl_{y_i}$ an equivalence in $\Cl_y$:
$$(A_{y_1}\otimes_{\Cl,c_{12}}A_{y_2})\otimes_{\Cl,c_{12,3}}A_{y_3}\ \simeq\ A_{y_1}\otimes_{\Cl,c_{1,23}}(A_{y_2}\otimes_{\Cl,c_{2,3}}A_{y_3}).$$
Moreover, if $A$ is a factorisation algebra, then under this isomorphism, for any three sections $a_i\in \Gamma(Y,A_{y_i})$ we have an identification 
$$(a_1\cdot_{A,c_{1,2}}a_2)\cdot_{A,c_{12,3}}a_3\ =\ a\cdot_{A,c_{1,23}}(a_2\cdot_{A,c_{2,3}}a_3)$$
in the vector space $\Gamma(Y,A_y)$. One can write similar descriptions for the unit map.

\subsection{Modules} If $A$ is an associative algebra, there are two types of module categories one can consider: bimodules, and left (or right) modules. The former comes with a natural monoidal structure, and the latter does not. 

More generally, if $A$ be a an $\Eb_n$-algebra inside a symmetric monoidal $(\infty,1)$-category $\Cl$, we may consider the categories
$$ A\Md_{\Eb_n}(\Cl), \hspace{15mm} A\Md\ \defeq\ A\textup{-LMod}_{\Eb_n}(\Cl)$$
of its $\Eb_n$-modules and left $\Eb_n$-modules. These have the structure of an $\Eb_n$-category and an $\Eb_{n-1}$-category, respectively. When $\Cl$ is an $(\infty,1)$-category this is proven in \cite[3.4.4.2]{Lu2} and \cite[4.8.5.20]{Lu2} respectively. We will use the convention that unless otherwise stated an ``$A$-module'' means a left module.

\subsubsection{Modules over factorisation spaces} A module over a factorisation $\Eb_n$-space $Y$ is called a \textit{$Y$-module space}. For instance, if $Y$ is an associative factorisation space this consists of a prestack $M$ along with 
\begin{center}
  \begin{tikzcd}[row sep = 10pt, column sep = 15pt]
   &[-5pt]C_X\ar[rd,"p_X"]\ar[ld,swap,"q_X"] & \\
  Y\times X & & Y 
  \end{tikzcd}
  \end{center}
which is compatible with the unit and the compatiblity conditions of $Y$.

\subsubsection{Modules over factorisation categories}  A module over a factorisation $\Eb_n$-category $\Cl$ is called a \textit{$\Cl$-module category}. If $\Cl$ is an associative factorisation category over $X$ in the above example this consists of a map 
$$\otimes_\Ml\ :\ q_M^*(\Cl\boxtimes\Ml)\ \to\ p_M^*\Ml$$
of sheaves of categories on $C_X$ compatible with the factorisation structure $\otimes_\Cl$ and unit $1_\Cl$.

\subsubsection{Modules over factorisation algebras} A module over a factorisation $\Eb_n$-algebra $A$ is called a \textit{$\Cl$-module category}. If $A$ is an associative factorisation algebra in $\Ml$ above, this consists of a section $M\in \Gamma(Y,\Ml)$ together with a map 
$$m_M\ :\ A\otimes_\Ml M\ \to \ p^*M$$
compatible with the multiplication $m_A$ and unit $1_A$. 

\begin{lem}
  The category $A\FactMd(X,\Ml)$ of $A$-modules in $\Ml$ is a factorisation $\Eb_{n-1}$-category.
\end{lem}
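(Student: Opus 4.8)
The plan is to deduce the statement from the abstract fact recalled just above: for an $\Eb_n$-algebra $A$ inside any symmetric monoidal $(\infty,1)$-category $\Dl$, the category $A\Md=A\textup{-LMod}_{\Eb_n}(\Dl)$ of left $\Eb_n$-modules carries a canonical $\Eb_{n-1}$-monoidal structure, by \cite[4.8.5.20]{Lu2}. The only real work is therefore to exhibit $A\FactMd(X,\Ml)$ as such a module category inside a symmetric monoidal $(\infty,1)$-category built by exactly the Grothendieck-construction recipe used to define $\Groth$ and $\Groth_{\QCoh/}$, and then to check that the resulting $\Eb_{n-1}$-structure is compatible with the projection to $\PreStk^{\textup{corr}}$, so that it lands in $\Eb_{n-1}\Ag(\Groth)$, i.e. is a factorisation $\Eb_{n-1}$-category.

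First I would construct the ambient category. Mimicking the construction of $\Groth_{\QCoh/}$, I apply the symmetric monoidal Grothendieck construction (\ref{eqn:OperadUnStraightening}) to a symmetric monoidal functor $\PreStk^{\textup{corr}}\to\textup{CommAlg}(\Cat_\infty)$ whose value on $X$ is the fibre category recording a sheaf of categories $\Cl$, a section $A$, a $\Cl$-module category $\Ml$, and a section $M$ of $\Ml$; call the resulting symmetric monoidal cocartesian fibration $\Dl\to\PreStk^{\textup{corr}}$. As in the algebra case the functoriality in correspondences is the pushforward--pullback $p_*q^*$, which preserves all of this data by base change for sheaves of categories, \cite[Sec. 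A.9]{Ras}. Inside $\Dl$ the factorisation $\Eb_n$-algebra $A$ becomes an $\Eb_n$-algebra object, the module data $(\Ml,M)$ assembles into the ambient in which left modules are formed, and $A\FactMd(X,\Ml)$, whose objects are sections $M$ equipped with an action $m_M\colon A\otimes_\Ml M\to p^*M$, is identified with $A\textup{-LMod}_{\Eb_n}$ computed internally to $\Dl$.

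Then applying \cite[4.8.5.20]{Lu2} to $\Dl$ gives $A\textup{-LMod}_{\Eb_n}(\Dl)$ the structure of an $\Eb_{n-1}$-monoidal category. Because the monoidal structure on $\Dl$ arose from the symmetric monoidal Grothendieck construction over $\PreStk^{\textup{corr}}$, and because that construction is compatible with the forgetful functors by the symmetric monoidal upgrade of unstraightening \cite{GL,Ram}, this $\Eb_{n-1}$-structure projects to an $\Eb_{n-1}$-structure on the underlying factorisation space and, along the forgetful functor $\Dl\to\Groth$, produces an object of $\Eb_{n-1}\Ag(\Groth)$ --- that is, a factorisation $\Eb_{n-1}$-category, as claimed.

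The main obstacle is the middle step: verifying that forming left $\Eb_n$-modules really is compatible with the monoidal product of $\Dl$, which is transported along correspondences by $p_*q^*$ rather than computed naively in a single fixed fibre. Concretely one must check that pulling an $A$-module back along $q$ and pushing it forward along $p$ again yields a module for the pushed-forward algebra, i.e. that base change for sheaves of categories \cite[Sec. A.9]{Ras} intertwines the action maps $m_M$ with the structure maps $m_A$ and $1_A$; this is precisely what guarantees that the fibrewise $\Eb_{n-1}$-structures of \cite[4.8.5.20]{Lu2} glue into a genuine factorisation category over $X$, rather than merely a fibrewise family of $\Eb_{n-1}$-categories. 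Once this compatibility is established the remainder of the argument is formal.
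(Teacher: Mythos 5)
Your proposal is correct and follows essentially the same route as the paper, whose entire proof is the one-line remark that the lemma ``follows by a relative version of \cite[Cor. 4.8.5.20]{Lu2} applied to the symmetric monoidal cocartesian fibration $\Groth_{\QCoh/}\to \Groth$.'' You have simply unwound what that relative application means — packaging the data $(X,\Cl,A,\Ml,M)$ into an ambient symmetric monoidal category via the Grothendieck construction and then invoking the same Lurie result — so the extra detail you supply (in particular the compatibility of $p_*q^*$ with the module structure maps) is a faithful expansion of the paper's intended argument rather than a different one.
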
 
\begin{proof}
  This follows by a relative version of  \cite[Cor. 4.8.5.20]{Lu2} applied to the symmetric monoidal cocartesian fibration $\Groth_{\QCoh/}\to \Groth$. 
\end{proof}

Likewise, the category of (both-sided) $A$-modules is a factorisation $\Eb_n$-category.

\subsection{Functoriality and variants} 
\label{sec:FunctorialityFactCat} 

\subsubsection{Factorisation categories} We have a functor 
$$\Eb_n\Ag(\Groth)\ \to\ \Eb_n\Ag(\PreStk^{\textup{corr}})$$
Since Grothendieck constructions commute with taking $\Eb_n$-algebras, it follows that this is a cocartesian fibration. In particular, for any map of factorisation spaces $X\to Y$ represented by a correspondence 
\begin{center}
  \begin{tikzcd}[row sep = 10pt, column sep = 15pt]
   &[-5pt]C\ar[rd,"p"]\ar[ld,"q"'] & \\
  X & & Y
  \end{tikzcd}
  \end{center}
with compatibilities with the factorisation $\Eb_n$-structures on $X$ and $Y$, 
\begin{lem}
  We have a functor 
  $$p_*q^*\ :\ \Eb_n\FactCt(X)\ \to\ \Eb_n\FactCt(Y).$$
  This acts on the underlying sheaf of categories by the functor sending $\Cl\mapsto p_*q^*\Cl$.
\end{lem}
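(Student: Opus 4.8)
The plan is to realise $p_*q^*$ as the cocartesian pushforward associated to the morphism $X\to Y$ in $\Eb_n\Ag(\PreStk^{\textup{corr}})$ represented by the correspondence $C$, and then to identify its effect on underlying sheaves of categories by comparing with the pushforward downstairs in $\Groth$.

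First I would use the cocartesian fibration
$$\Eb_n\Ag(\Groth)\ \to\ \Eb_n\Ag(\PreStk^{\textup{corr}})$$
recorded above. By construction its fibre over a factorisation $\Eb_n$-space $Z$ is the category $\Eb_n\FactCt(Z)$ of factorisation $\Eb_n$-categories living over $Z$. A map of factorisation $\Eb_n$-spaces $X\to Y$ is precisely a morphism in the base, and cocartesian transport along it supplies a functor between the fibres
$$p_*q^*\ :\ \Eb_n\FactCt(X)\ \to\ \Eb_n\FactCt(Y);$$
equivalently, straightening via (\ref{eqn:UnStraightening}) gives a functor $Z\mapsto\Eb_n\FactCt(Z)$ and we evaluate it on the edge $X\to Y$.

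It then remains to compute the action on underlying sheaves of categories. Forgetting the $\Eb_n$-algebra structure gives a commuting square
\begin{center}
\begin{tikzcd}[row sep = 16pt, column sep = 28pt]
\Eb_n\Ag(\Groth)\ar[r]\ar[d] & \Groth\ar[d]\\
\Eb_n\Ag(\PreStk^{\textup{corr}})\ar[r] & \PreStk^{\textup{corr}}
\end{tikzcd}
\end{center}
whose horizontal forgetful functors preserve cocartesian edges, since the symmetric monoidal structures in play are compatible. Hence the cocartesian transport computed upstairs projects to the cocartesian transport of $\Groth\to\PreStk^{\textup{corr}}$ along the underlying correspondence $X\xleftarrow{q}C\xrightarrow{p}Y$. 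By construction $\Groth\to\PreStk^{\textup{corr}}$ is the unstraightening of the symmetric monoidal functor $Z\mapsto\ShvCat(Z)$, under which this transport is exactly $p_*q^*\colon\ShvCat(X)\to\ShvCat(Y)$, i.e.\ $\Cl\mapsto p_*q^*\Cl$. This proves the displayed formula.

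The one point that is not formal---and the main obstacle---is the input used in the first step: that applying $\Eb_n\Ag(-)$ to the symmetric monoidal cocartesian fibration $\Groth\to\PreStk^{\textup{corr}}$ again yields a cocartesian fibration. This is the assertion that Grothendieck constructions commute with taking $\Eb_n$-algebras, and it relies on the symmetric monoidal refinement of unstraightening and its operadic form (\ref{eqn:OperadUnStraightening}) from \cite{GL,Ram}: the symmetric monoidal cocartesian fibration is classified by a symmetric monoidal functor into $\Cat_\infty$, postcomposing with $\Eb_n\Ag(-)$ produces a functor out of $\Eb_n\Ag(\PreStk^{\textup{corr}})$, and unstraightening it recovers $\Eb_n\Ag(\Groth)\to\Eb_n\Ag(\PreStk^{\textup{corr}})$ as the desired cocartesian fibration. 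Granting this, both the existence of $p_*q^*$ and the computation of its underlying functor are formal.
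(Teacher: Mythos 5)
Your proposal is correct and follows essentially the same route as the paper: the lemma is deduced there from the observation that $\Eb_n\Ag(\Groth)\to\Eb_n\Ag(\PreStk^{\textup{corr}})$ is a cocartesian fibration (because Grothendieck constructions commute with taking $\Eb_n$-algebras), with the functor $p_*q^*$ arising as cocartesian transport along the map of factorisation spaces. Your additional verification that the underlying functor is $\Cl\mapsto p_*q^*\Cl$, via compatibility of the forgetful functors with cocartesian edges, just makes explicit what the paper leaves implicit.
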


For instance, let $X$ and $Y$ be associative factorisation spaces and the correspondence between them be given by a genuine map of prestacks
\begin{center}
  \begin{tikzcd}[row sep = 10pt, column sep = 15pt]
   &[-5pt]X\ar[rd,"f"]\ar[ld,equals] & \\
  X & & Y
  \end{tikzcd}
  \end{center}
Whenever this is a map of factorisation spaces the above says that $f_*$ takes factorisation categories to factorisation categories. Thus, we spell out this condition. To give a two-isomorphism in $\PreStk^{\textup{corr}}$
\begin{center}
  \begin{tikzcd}[row sep = {30pt,between origins}, column sep = {20pt}]
   X\times X\ar[r,"C_X"]\ar[d,"f\times f",swap,""{name=U,inner sep=1pt}]& X\ar[d,"f",""{name=D,inner sep=1pt}] \\
  Y\times Y \ar[r,"C_Y"] & Y  
  \arrow[Rightarrow, from=U, to=D, "\sim", shorten <= 18pt, shorten >= 18pt] 
   \end{tikzcd}
  \end{center}
is equivalent to giving an isomorphism of prestacks
$$\varphi_L\ :\ C_X\ \simeq\ C_Y\times_{Y\times Y}(X\times X)$$
inducing commuting diagram
\begin{equation}\label{fig:RespectsFactStructures}
\begin{tikzcd}[row sep = {30pt,between origins}, column sep = {20pt}]
 &[-5pt] C_X\ar[d]\ar[rd]\ar[ld,swap]&[5pt] \\
 X\times X\ar[d,swap,"f\times f"]&C_Y\ar[rd]\ar[ld] & X\ar[d,"f"]\\
 Y\times Y&& Y 
\end{tikzcd}
\end{equation}
where the left hand square is a pullback square. 

Likewise, consider $X$ and $Y$ factorisation spaces and the map between them being given by a function 
\begin{center}
  \begin{tikzcd}[row sep = 10pt, column sep = 15pt]
   &[-5pt]Y\ar[ld,"g"']\ar[rd,equals] & \\
  X & & Y
  \end{tikzcd}
  \end{center}
so that if this is a map of factorisation spaces then $g^*$ takes factorisation categories to factorisation categories. Denoting the above correspondence by $g$, to supply a two-isomorphism 
\begin{center}
  \begin{tikzcd}[row sep = {30pt,between origins}, column sep = {20pt}]
   X\times X\ar[r,"C_X"]\ar[d,"g\times g",swap,""{name=U,inner sep=1pt}]& X\ar[d,"g",""{name=D,inner sep=1pt}] \\
  Y\times Y \ar[r,"C_Y"] & Y  
  \arrow[Rightarrow, from=U, to=D, "\sim", shorten <= 18pt, shorten >= 18pt] 
   \end{tikzcd}
  \end{center}
it is equivalent to giving an isomorphism of prestacks 
$$\varphi_R\ :\  C_Y\ \simeq\ C_X\times_X Y$$
inducing the commuting diagram
\begin{equation}\label{fig:RespectsFactStructures2}
  \begin{tikzcd}[row sep = {30pt,between origins}, column sep = {20pt}]
   &[-5pt] C_Y\ar[d]\ar[rd]\ar[ld,swap]&[5pt] \\
   Y\times Y\ar[d,swap,"g\times g"]&C_X\ar[rd]\ar[ld] & Y\ar[d,"g"]\\
   X\times X&& X 
  \end{tikzcd}
  \end{equation}
whre the right hand square is a pullback square. 

\subsubsection{} Note that any $X$ is a prestack with respect to the diagonal factorisation structure with $C_X\simeq X$, see section \ref{sec:DiagonalFactStruct}. Thus when $Y=\pt$ the correspondence 
\begin{center}
  \begin{tikzcd}[row sep = 10pt, column sep = 15pt]
   &[-5pt]X\ar[rd,equals]\ar[ld,"\pi_X"'] & \\
  \pt & & X
  \end{tikzcd}
  \end{center}
is a map of commutative factorisation spaces, and $\pi_X^*\Cl$ is a factorisation $\Eb_n$-category on $X$ for any ordinary $\Eb_n$-category $\Cl$. However, the correspondence 
\begin{center}
  \begin{tikzcd}[row sep = 10pt, column sep = 15pt]
   &[-5pt]X\ar[ld,equals]\ar[rd,"\pi_X"] & \\
  X & & \pt
  \end{tikzcd}
  \end{center}
is not a map of factorisation spaces because $\varphi_L$ is not an isomorphism, and so for arbitrary factorisation $\Eb_n$-category $\Cl$ on $X$, its space $\Gamma(X,\Cl)=\pi_{X,*}\Cl$ does not have the structure of an ordinary $\Eb_n$-category.

However, if $G$ is a group or monoid then $C_G\simeq G\times G$ and 
\begin{center}
  \begin{tikzcd}[row sep = 10pt, column sep = 15pt]
   &[-5pt]G\ar[ld,equals]\ar[rd,"\pi_X"] & \\
  G & & \pt
  \end{tikzcd}
  \end{center}
is a map of factorisation spaces, so $\Gamma(G,\Cl)$ has the structure of a monoidal category, or $\Eb_n$-category if we consider factorisation $\Eb_n$-categories on $\Eb_n$-groups.

\subsubsection{}  \label{sec:FactBiAlg} We can likewise define cofactorisation $\Eb_n$-spaces, bifactorisation $\Eb_{n,m}$-spaces, etc. by taking $\Eb_n$-coalgebra objects, $\Eb_{n,m}$-bialgebra objects, etc. in the appropriate category.  We may define factorisation $\Eb_{n_1}$-algebras in factorisation $\Eb_{n_2}$-categories over factorisation $\Eb_{n_3}$-spaces, for $n_1\le n_2\le n_3$.

\newpage

\section{Spectral $R$-Matrices} 
\label{sec:SpectralRMatrices}

\noindent
In this section we develop a factorisable analogue of the theory of quantum groups.

\subsection{Quantum groups and $R$-matrices}\label{sec:BraidedMonoidalCats} \label{sec:QuantumGroups} Recall that a \textit{monoidal} structure on one-category $\Cl$ is a functor
$$\otimes_{\Cl}\ :\ \Cl\otimes\Cl\ \to\ \Cl$$
with a unit $1\in \Cl$ and an associativity natural isomorphism $\otimes_{\Cl}\cdot (\id\otimes\, \otimes_{\Cl})\simeq \otimes_{\Cl}\cdot (\otimes_{\Cl}\otimes\id)$ satisfying the pentagon identity, and a \textit{braided monoidal} structure is an additional natural isomorphism
\begin{center}
\begin{tikzcd}[row sep = 20pt, column sep = 30pt]
\Cl\otimes\Cl\ar[r,"\otimes_{\Cl}"{xshift=-10pt},bend left = 35, ""{name=U,inner sep=0pt,below,xshift=0pt}]\ar[r,"\otimes_{\Cl}\cdot\sigma_{\Cat}"{xshift=-12pt},bend right = 35,swap, ""{name=D,inner sep=1pt}]&\Cl
  \arrow[Rightarrow, from=U, to=D, "\beta", shorten <= 5pt, shorten >= 5pt] 
  \arrow[Rightarrow, from=U, to=D, swap,"\wr", shorten <= 5pt, shorten >= 5pt]
\end{tikzcd}
\end{center}
satisfying the hexagon identities,\footnote{The hexagon relations state 
$$\beta_{A,B\otimes_\Cl C}\ =\ \beta_{A,C}\cdot \beta_{A,B},\hspace{15mm}\beta_{A,B\otimes_\Cl C}\ =\ \beta_{A,C}\cdot \beta_{B,C}$$
for all objects $A,B,C$ of $\Cl$.} where the endofunctor $\sigma_{\Cat}$ swaps the two factors. It is \textit{symmetric monoidal} if $\beta\cdot (\beta\sigma_{\Cat})=\id$.  

In practice, most examples one is interested in are of the form $A\Md$ for $A$ an associative algebra in a background symmetric monoidal category $(\El,\sigma_\El)$. In this section we will recall the well-known necessary and sufficient conditions for $A\Md$ to be braided monoidal. First, the module category must be monoidal:
\begin{lem} \label{lem:MonoidalLift}
  A lift of the monoidal structure along $A\Md\to \El$ is equivalent to a bialgebra structure on $A$.
\end{lem}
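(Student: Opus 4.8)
The plan is to realise the equivalence as a pair of mutually inverse constructions, each matching a monoidal axiom with a coalgebra axiom. Throughout I use that $\El$ is symmetric monoidal (so $\sigma_\El$ is available), that $\oblv\colon A\Md\to\El$ is conservative, and that its left adjoint $\textup{Free}(V)=A\otimes_\El V$ satisfies $\textup{Free}(1_\El)=A$, the regular module, so that by the $\textup{Free}\dashv\oblv$ adjunction $A$-module maps out of free modules are controlled by $\El$-morphisms. Starting from a bialgebra $(A,\Delta,\epsilon)$ I would define the lift explicitly: $M\otimes_{A\Md}N$ has underlying object $M\otimes_\El N$ with $A$ acting through $\Delta$ (using $\sigma_\El$ to route the two tensor factors of $\Delta(a)$ onto $M$ and $N$), and the unit object is $1_\El$ with $A$ acting through $\epsilon$. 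Then $\oblv$ is strictly monoidal by construction, and each structure check reduces to a bialgebra identity: the $A$-action on $M\otimes_\El N$ is associative and unital exactly because $\Delta$ is an algebra homomorphism; the $\El$-associator is $A$-linear exactly by coassociativity $(\Delta\otimes\id)\Delta=(\id\otimes\Delta)\Delta$; and the $\El$-unit constraints are $A$-linear exactly by the counit identities. Pentagon and triangle are then inherited from $\El$ since $\oblv$ is conservative.

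Conversely, from a monoidal lift I would reconstruct $(\Delta,\epsilon)$ using the regular module. The object $A\otimes_{A\Md}A$ has underlying object $A\otimes_\El A$ and carries an $A$-action $\textup{act}\colon A\otimes_\El(A\otimes_\El A)\to A\otimes_\El A$; I set
\[
\Delta\ :\ A\ \cong\ A\otimes_\El 1_\El\otimes_\El 1_\El\ \xrightarrow{\ \id\otimes\eta\otimes\eta\ }\ A\otimes_\El(A\otimes_\El A)\ \xrightarrow{\ \textup{act}\ }\ A\otimes_\El A,
\]
i.e.\ informally $\Delta(x)=x\cdot(1\otimes 1)$, and I define $\epsilon\colon A\to 1_\El$ to be the $A$-action on the unit object $1_{A\Md}$, whose underlying object is $1_\El$. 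Naturality of the adjunction guarantees these are canonical and that $\otimes_{A\Md}$ on free modules is recovered from them. I would then verify that $\Delta$ and $\epsilon$ are algebra homomorphisms — forced by functoriality of $\otimes_{A\Md}$ together with the module axioms for $A\otimes_{A\Md}A$ and $1_{A\Md}$ — and that coassociativity and counitality are the images, under the adjunction isomorphism, of the associativity and unit constraints of the lift.

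Finally I would check the two constructions are inverse: feeding a bialgebra into the first construction and extracting via the second returns $\Delta$ and $\epsilon$ on evaluating the diagonal action on $\eta\otimes\eta$; and starting from a lift, the reconstructed bialgebra reproduces the same $\otimes_{A\Md}$, since the two tensor products agree on free modules and, $\oblv$ being monadic, both are determined by that restriction. The main obstacle is the reconstruction direction — producing $\Delta$ as an honest morphism of $\El$ and proving multiplicativity and coassociativity from the abstract coherence data alone, without an elementwise formula. This is exactly where the $\textup{Free}\dashv\oblv$ adjunction and conservativity of $\oblv$ carry the argument; in the $\infty$-categorical reading the attendant higher coherences are not checked by hand but are supplied by the monadicity and straightening formalism recalled in section \ref{sec:Straightening}.
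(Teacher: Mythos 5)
Your proposal is correct and follows essentially the same route as the paper: the forward direction equips $M\otimes_\El N$ with the $A$-action through $\Delta$, and the converse extracts $\Delta$ as the action of $A$ on $1\otimes 1$ inside the module $A\otimes_{A\textup{-Mod}} A$, which is exactly the paper's map $A \to A\otimes_\El(A\otimes A)\to A\otimes A$. The additional verifications you flag (multiplicativity of $\Delta$, coassociativity, the counit, and mutual inverseness) are left implicit in the paper's one-line proof, so your write-up is simply a more detailed version of the same argument.
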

\begin{proof}
   If $A$ is a bialgebra, then $M\otimes_\El N$ inherits an $A$-module structure for any $M,N\in A\Md$, where $a\in A$ acts as multiplication by $\Delta(a)$. Conversely, given a lift $\otimes$ of the monoidal structure we get an $A$-module structure on $A\otimes A$, and acting on the identity  $\Delta : A \stackrel{\id\otimes 1\otimes 1}{\to} A\otimes_\El (A\otimes A) \to (A\otimes A)$ gives a coproduct.
\end{proof}

Then, it must have a braiding:
\begin{prop} \emph{\cite[Prop. 14.2]{ES}} \label{prop:RMatrixBraidedMonoidal}
  Let $A$ be a bialgebra in $\El$. Then the monoidal structure on $A\Md$ has a braiding if and only if there is an invertible element $R\in A\otimes A$ satisfying
  \begin{equation} \label{eqn:RMatrixHexagon}
    (\Delta\otimes\id)R\ =\ R_{13}R_{23}, \hspace{10mm} (\id\otimes\Delta)R\ =\ R_{13}R_{12},
   \end{equation}
    exhibiting $A$'s \emph{almost cocommutativity}\footnote{Note that this is equivalent to it being a braided-commutative ($\Eb_2$) coalgebra in the category of $A$-bimodules, endowed with the braiding $R^{-1}\sigma(-)R$.}
   \begin{equation}\label{eqn:AlmostCommutativity}
    \sigma_\El\cdot \Delta(-)\ =\ R\Delta(-)R^{-1}
   \end{equation}
   and compatibly with the counit, $(\epsilon\otimes\id)(R)=(\id\otimes\epsilon)(R)=1$. Likewise for lax braidings if we drop the invertibility condition, and symmetric braidings if we require that $R=\sigma(R)^{-1}$.
\end{prop}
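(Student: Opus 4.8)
The plan is to set up an explicit dictionary between braidings $\beta$ on the monoidal category $A\Md$ and elements $R\in A\otimes A$, and then to match the defining axioms on each side. The conceptual engine is a representability reduction: the free module functor $A\otimes_\El(-)$ is left adjoint to the forgetful functor $A\Md\to\El$, so $A$ itself (the regular module) corepresents the forgetful functor and any natural transformation assembled out of $\otimes_\El$ is pinned down by its component on the pair $(A,A)$. Concretely, given a braiding $\beta:\otimes_\Cl\Rightarrow\otimes_\Cl\cdot\sigma_{\Cat}$ I would set $R:=\sigma_\El\circ\beta_{A,A}\circ(1\otimes 1)$, viewing $1\otimes 1:1_\El\to A\otimes A$ as the unit of the free module; conversely, given $R$ I would define candidate components $\beta_{M,N}:=\sigma_\El\circ(R\cdot -)$, where $R$ acts on $M\otimes_\El N$ through the two $A$-actions. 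The first step is to check these assignments are mutually inverse, which is a direct application of the free--forgetful adjunction $\Hom_{A\Md}(A\otimes_\El V,M)\simeq\Hom_\El(V,M)$ together with naturality of $\beta$ in both slots to reduce $\beta_{M,N}$ on arbitrary modules to $\beta_{A,A}$, hence to $R$.

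The second step is to translate the conditions on $\beta$ into those on $R$. First, the requirement that each $\beta_{M,N}$ be a morphism of $A$-modules (not merely of $\El$) is equivalent to almost-cocommutativity (\ref{eqn:AlmostCommutativity}): $\beta_{M,N}$ must intertwine the action on $M\otimes_\El N$ via $\Delta$ with the action on $N\otimes_\El M$ via $\sigma_\El\cdot\Delta$, and conjugation by $R$ is precisely the morphism carrying one to the other. Second, evaluating the two hexagon identities on the triple $(A,A,A)$ at $1\otimes 1\otimes 1$, and using compatibility of $\beta$ with the associativity constraint and the definition of the coproduct, yields exactly the pair (\ref{eqn:RMatrixHexagon}). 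Third, compatibility of $\beta$ with the left and right unit constraints $1_\El\otimes M\simeq M\simeq M\otimes 1_\El$ translates into $(\epsilon\otimes\id)R=(\id\otimes\epsilon)R=1$. Invertibility of the natural transformation $\beta$ corresponds to invertibility of $R$, and dropping it gives the lax statement; the symmetric condition $\beta\cdot(\beta\sigma_{\Cat})=\id$ becomes $R=\sigma_\El(R)^{-1}$.

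The main obstacle is bookkeeping rather than conceptual: since $\El$ is an arbitrary symmetric monoidal category and not $\Vect$, every flip must be performed with the ambient symmetry $\sigma_\El$, and the symbols $1\otimes 1$, $R\cdot(-)$, and $R_{12},R_{13},R_{23}$ must be handled as genuine morphisms rather than as operations on elements. I would therefore establish the representability reduction abstractly first, so that $\beta$ is literally determined by a single morphism $1_\El\to A\otimes A$; only then would I unwind the hexagon and unit axioms, at which point each equivalence in the second step becomes a finite diagram chase. As this is the classical, $z$-independent case, the detailed verification is standard and I would ultimately defer to \cite{ES} for the remaining diagram chases.
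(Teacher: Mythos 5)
Your proposal is correct and follows essentially the same route as the paper: $\beta\mapsto R=\beta_{A,A}(1_A\otimes 1_A)$ in one direction, $R\mapsto \sigma_\El\cdot(R\cdot(-))$ in the other, with $A$-linearity of $\beta$ matching almost cocommutativity (\ref{eqn:AlmostCommutativity}), the hexagon axioms matching (\ref{eqn:RMatrixHexagon}), and the unit constraints matching the counit condition. One caveat: your claim that the two assignments are mutually inverse is more than the proposition asserts and more than your adjunction argument delivers over a general symmetric monoidal $\El$ --- reducing $\beta_{M,N}$ to $\beta_{A,A}$ by naturality uses module maps $A\to M$, i.e.\ generalised elements $1_\El\to M$, and these need not jointly determine a morphism out of $M\otimes_\El N$ unless $1_\El$ is a generator; the paper explicitly declines to claim such a bijection in the remark following Definition~\ref{defn:RMatrix}.
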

\begin{proof}
   Given such an element, $\beta = \sigma_\El\cdot (R\cdot (-))$ defines a braided monoidal structure on $A\Md$. Indeed, for every $A$-modules $M,N$ it defines a map $M\otimes N \stackrel{\sim}{\to} N\otimes M$ in $\El$, which is $A$-linear because
   \begin{equation}
     \label{fig:ALinearityBraiding} 
     \begin{tikzcd}[row sep = 20pt, column sep = 20pt]
       M\otimes N\ar[r,"R\cdot "]\ar[d,"\Delta(a)\cdot"] & M\otimes N\ar[r,"\sigma_\El "]\ar[d,"\sigma_\El(\Delta(a))\cdot"]  & N\otimes M\ar[d,"\Delta(a)\cdot"] \\ 
       M\otimes N\ar[r,"R\cdot "] & M\otimes N\ar[r,"\sigma_\El"] & N\otimes M
       \end{tikzcd}
   \end{equation}
     commutes by (\ref{eqn:AlmostCommutativity}). It satisfies the hexagon relations by (\ref{eqn:RMatrixHexagon}). Conversely, given a braiding $\beta$ of the monoidal structure on $A\Md$, the element $R=\beta_{A,A}(1_A\otimes 1_A)$ defines an $R$-matrix. 
\end{proof}

As a consequence, it is easy to show that $R$ satisfies the \textit{Yang-Baxter equation}:
\begin{center}
\begin{tikzpicture}[scale=0.7]
\begin{knot}[
  clip width=5,
  flip crossing=1,
  flip crossing=2,
  flip crossing=3,
  flip crossing=4,
  flip crossing=5,
  flip crossing=6,
]
\strand[black,ultra thick] (0,0) .. controls +(0,1.5) and +(0,-1.5) .. (2,3);
\strand[black,ultra thick] (1,0) .. controls +(0,1) and +(0,-1) .. (0,1.5);
\strand[black,ultra thick] (0,1.5) .. controls +(0,1) and +(0,-1) .. (1,3);
\strand[black,ultra thick] (2,0) .. controls +(0,1) and +(0,-1) .. (0,3);
\node[] at (3.5,1.5) {$=$};
\node[] at (14,1.5) {$R_{12}R_{13}R_{23}\ =\ R_{23}R_{13}R_{12}.$};

\strand[black,ultra thick] (5,0) .. controls +(0,1.5) and +(0,-1.5) .. (7,3);
\strand[black,ultra thick] (6,0) .. controls +(0,1) and +(0,-1) .. (5+2,1.5);
\strand[black,ultra thick] (5+2,1.5) .. controls +(0,1) and +(0,-1) .. (6,3);
\strand[black,ultra thick] (7,0) .. controls +(0,1) and +(0,-1) .. (5,3);

\end{knot}
\end{tikzpicture}
\end{center}
In Proposition \ref{prop:RMatrixBraidedMonoidal}, we get a symmetric monoidal structure if in addition $R$ is \textit{symmetric}:
\begin{center}
  \begin{tikzpicture}[scale=0.7]
  \begin{knot}[
    clip width=5,
    flip crossing=1,
  ]
  \strand[black,ultra thick] (1,0) .. controls +(0,1) and +(0,-1) .. (0,1.5);
  \strand[black,ultra thick] (0,1.5) .. controls +(0,1) and +(0,-1) .. (1,3);
  \strand[black,ultra thick] (0,0) .. controls +(0,1) and +(0,-1) .. (1,1.5);
  \strand[black,ultra thick] (1,1.5) .. controls +(0,1) and +(0,-1) .. (0,3);
  \node[] at (2.5,1.5) {$=$};
  \node[] at (13,1.5) {$R_{21}R_{12}\ =\ 1\otimes 1.$};
  
  \strand[black,ultra thick] (4,0) .. controls +(0,1.5) and +(0,-1.5) .. (4,3);
  \strand[black,ultra thick] (5,0) .. controls +(0,1.5) and +(0,-1.5) .. (5,3);
  \end{knot}
  \end{tikzpicture}
  \end{center}
  Let $\El$ be a braided monoidal category; one can show that the above Proposition is still true. Then 
  \begin{defn} \label{defn:RMatrix}
    A bialgebra $A\in \El$ with invertible element $R\in A\otimes A$ satisfying the conditions of Proposition \ref{prop:RMatrixBraidedMonoidal} is called a \emph{quantum group} or \emph{quasitriangular bialgebra}. $R$ is called an \textit{$R$-matrix}.
  \end{defn}

Notice that Proposition \ref{prop:RMatrixBraidedMonoidal} does \textit{not} say that there is an equivalence between braided monoidal structures and $R$-matrices, there could potentially be more than one braiding giving the same $R$-matrix, although the Proposition identifies a canonical one. Equivalence statements do exist in some contexts: if $\Cl$ is a braided monoidal category equipped with a monoidal functor $\Cl\to \El$ into another braided monoidal category, in some cases we may apply the Barr-Beck monadicity Theorem to identify it with the category of modules for $A=\End_{\Eb_1 \textup{-Cat}}(\Cl\to \El)$. For instance, this is true under mild conditions when $\El=\Vect$, see \cite[9.4.2]{Maj}.

\subsubsection{Bicharacters}  Instead of working directly with an $R$-matrix, some sources work instead with its \textit{bicharacter}:
$$r\ :\ A\otimes A\ \stackrel{R\cdot}{\to}\ A\otimes A\ \stackrel{\epsilon\otimes\epsilon}{\to}\ k.$$
The $R$-matrix is recovered by taking cap product, $\cap r=R\cdot$, so no information is lost by working with bicharacters. See (e.g. \cite{An,Bo2}), where it is assumed $A$ is commutative and cocommutative, so the almost commutativity axiom becomes vacuous.

\subsubsection{Lax versions} We are going to also need a lax version of the above. We define a \textit{lax braided monoidal} category $\El$ to be the data of two monoidal structures $\otimes,\otimes'$ endowed with a natural transformation
$$\sigma_\El\ :\ (\, \otimes'\, )\otimes(\, \otimes'\, )\ \stackrel{}{\to}\ (\, \otimes\, )\otimes'(\, \otimes\, )\cdot \sigma_{\Cat,23}$$
making $\otimes'$ into a lax $\otimes$-monoidal functor, i.e. are compatible with the unit and we have the following commuting diagram, after supressing associativity maps from the notation,
\begin{equation}\label{fig:LaxHexagonRelations}
  \begin{tikzcd}[row sep = {25pt,between origins}, column sep = {125pt, between origins}]
    &\left((a_1\otimes b_1)\otimes' (a_2\otimes b_2)\right)\otimes (c_1\otimes' c_2) \ar[rd]& \\ 
    (a_1\otimes' a_2)\otimes (b_1\otimes' b_2)\otimes (c_1\otimes' c_2) \ar[ru]\ar[rd]& &(a_1\otimes b_1\otimes c_1)\otimes' (a_2\otimes b_2 \otimes c_2)  \\
   &(a_1\otimes' a_2)\otimes \left( (b_1\otimes' c_1)\otimes (b_2\otimes' c_2)\right)\ar[ru]&
   \end{tikzcd}
\end{equation}
Setting $a_1=b_1=c_2=1$ or $a_1=b_2=c_2=1$ gives the hexagon relations, thus we call (\ref{fig:LaxHexagonRelations}) the (\textit{lax}) \textit{hexagon relation}. Moreover, if we take $\sigma_{\El}$ and let the inner two factors be the unit, we get the natural transformation
$$\sigma_{\El,23}\ :\ \otimes\ \to\ \otimes'$$
which does not determine $\sigma_{\El}$ in general. This notion is called a \textit{duoidal category} in the literature, see for instance \cite{To}.

We again consider examples of lax braided monoidal categories the form $A\Md$ for an $A$ an associative $\otimes$-algebra in $\El$, whose product we denote by $m:A\otimes A\to A$. As before, it first must be monoidal:  

\begin{lem}
  A lift of the monoidal structure $\otimes'$ along $A\Md\to \El$ is equivalent to a bialgebra structure on $A$,  with coproduct denoted $\Delta' : A\ \to\ A\otimes'A$.
\end{lem}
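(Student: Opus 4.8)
The plan is to mirror the proof of Lemma \ref{lem:MonoidalLift}, but now carefully tracking the distinct roles of the two monoidal structures. The essential new ingredient is that, in a duoidal (lax braided monoidal) category, the interchange $\sigma_\El$ promotes the $\otimes'$-product of two $\otimes$-algebras to a $\otimes$-algebra: for $\otimes$-algebras $A,B$ the object $A\otimes'B$ carries the product
$$(A\otimes'B)\otimes(A\otimes'B)\ \xrightarrow{\sigma_\El}\ (A\otimes A)\otimes'(B\otimes B)\ \xrightarrow{m_A\otimes'm_B}\ A\otimes'B.$$
Accordingly, the correct notion of \emph{bialgebra structure} here is a coproduct $\Delta':A\to A\otimes'A$ that is a \emph{morphism of $\otimes$-algebras} (together with a counit $\epsilon$ to the $\otimes'$-unit, also a $\otimes$-algebra map), coassociative and counital with respect to $\otimes'$. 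Pinning down this dictionary is the first thing I would set up, since it is what makes the two sides of the equivalence match.

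For the forward direction, given such a $\Delta'$ I would endow the object $M\otimes'N$, for $M,N\in A\Md$, with the $A$-action
$$A\otimes(M\otimes'N)\ \xrightarrow{\Delta'\otimes\id}\ (A\otimes'A)\otimes(M\otimes'N)\ \xrightarrow{\sigma_\El}\ (A\otimes M)\otimes'(A\otimes N)\ \xrightarrow{\act_M\otimes'\act_N}\ M\otimes'N.$$
Associativity of this action reduces, after invoking naturality of $\sigma_\El$ and the module axioms for $M$ and $N$, exactly to multiplicativity of $\Delta'$ for the interchange-induced $\otimes$-algebra structure on $A\otimes'A$; unitality reduces to the counit condition. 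Coassociativity and counitality of $\Delta'$ then supply the associativity and unit constraints of the lifted $\otimes'$ on $A\Md$, so that the forgetful functor $A\Md\to\El$ becomes $\otimes'$-monoidal.

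For the converse, a lift of $\otimes'$ in particular makes $A\otimes'A$ an $A$-module (as the $\otimes'$ of the regular module with itself), and acting on the unit yields
$$\Delta'\ :\ A\ \xrightarrow{\id\otimes(1\otimes'1)}\ A\otimes(A\otimes'A)\ \xrightarrow{\act}\ A\otimes'A,$$
with $\epsilon$ induced by the action on the $\otimes'$-unit; unwinding compatibility of the lifted action with the structure maps shows $\Delta'$ is a coassociative, counital $\otimes$-algebra map, and the two constructions are formally mutually inverse. The main obstacle is bookkeeping rather than conceptual: one must verify that the associativity constraint of the action on $M\otimes'N$ is genuinely equivalent to multiplicativity of $\Delta'$, and this is precisely where the interchange coherence of $\sigma_\El$ (the lax hexagon relation \ref{fig:LaxHexagonRelations}) must be used to move the two $\otimes'$-factors past each other consistently. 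Because the multiplication $m$ and the comultiplication $\Delta'$ now live with respect to \emph{different} monoidal structures, the diagram chase genuinely invokes the duoidal axiom, unlike the single-structure situation of Lemma \ref{lem:MonoidalLift}.
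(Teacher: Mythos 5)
Your proposal is correct and follows exactly the route the paper intends: the lemma is stated without proof, being the evident adaptation of Lemma \ref{lem:MonoidalLift}, and your argument is that adaptation carried out carefully. You correctly isolate the one genuinely new ingredient — that the interchange $\sigma_\El$ makes $A\otimes'A$ a $\otimes$-algebra, so that ``bialgebra'' here means $\Delta':A\to A\otimes'A$ is a $\otimes$-algebra map, with the lax hexagon relation (\ref{fig:LaxHexagonRelations}) entering in the associativity check — which is precisely what distinguishes this from the single-structure case.
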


Next, assuming that the monoidal structure $\otimes$ on $\El$ is strongly braided, compatiblity with $\sigma_\El$: 
$$\beta_\otimes\ :\ \otimes \ \simeq\ \otimes\cdot \sigma_{\Cat},$$
we have the anaologue of Proposition \ref{prop:RMatrixBraidedMonoidal}:
\begin{prop} \label{prop:LaxRMatrixBraidedMonoidal}
  Let the algebra $A$ have two bialgebra structures, with coproducts 
  $$\Delta\ :\ A\ \to\ A\otimes A, \hspace{15mm} \Delta'\ :\ A\ \to\ A\otimes' A.$$
  Then there is a lax braiding of the monoidal structures $\otimes$ and $\otimes'$ on $A\Md$ if and only if there is an element  
  $$\tilde{R}\ \in\ (A\otimes A)\otimes'(A\otimes A)$$
  satisfying the lax hexagon relation\footnote{In full, it is 
  $$((\Delta\otimes \id)\otimes'(\Delta\otimes \id))(\tilde{R})\cdot \sigma_{A^{\otimes 2},A^{\otimes 2},A,A}(\tilde{R}\otimes (1\otimes'1))\ =\ ((\id\otimes\Delta)\otimes'(\id\otimes\Delta)(\tilde{R}))\cdot \sigma_{A,A,A^{\otimes 2},A^{\otimes 2}}((1\otimes'1)\otimes \tilde{R}).$$} and almost cocommutativity
  \begin{equation}\label{eqn:LaxRMatrixHexagon}
   ((\Delta\otimes \id)\otimes'(\Delta\otimes \id))(\tilde{R})\cdot \tilde{R}_{12,45}\ =\ ((\id\otimes\Delta)\otimes'(\id\otimes\Delta)(\tilde{R}))\cdot \tilde{R}_{23,56},
    \end{equation}
  \begin{equation}\label{eqn:LaxRMatrixAlmostCocommutativity}
   \tilde{R}\cdot \left( \sigma_{\El}\cdot (\Delta'\otimes\Delta')\Delta(-) \right)\ =\ \left(  (\Delta\otimes\Delta)\Delta'(-) \right)\cdot \tilde{R},
  \end{equation}
  as elements of $A^{\otimes 3}\otimes' A^{\otimes 3}$ and as maps $A\to A^{\otimes 2}\otimes'A^{\otimes 2}$, compatibly with the counit
  \begin{equation}
   ((\epsilon\otimes \id)\otimes'(\epsilon\otimes\id))(\tilde{R})\ =\  ((\id\otimes\epsilon)\otimes'(\id\otimes\epsilon ))(\tilde{R})\ =\ 1\otimes'1.
  \end{equation} 
\end{prop}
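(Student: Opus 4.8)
The plan is to transcribe the proof of Proposition \ref{prop:RMatrixBraidedMonoidal} into the duoidal setting, with every step now mediated by the ambient duoidal structure map $\sigma_\El$ and the strong braiding $\beta_\otimes$. By the preceding Lemma the two bialgebra structures $\Delta, \Delta'$ lift $\otimes$ and $\otimes'$ to monoidal structures on $A\Md$, so the content of the Proposition is the identification of lax braidings of this pair of lifts with elements $\tilde R$. I would organise everything around four $A$-modules $M_1,M_2,N_1,N_2$, on which the lax braiding is the natural map $(M_1\otimes' M_2)\otimes(N_1\otimes' N_2)\to (M_1\otimes N_1)\otimes'(M_2\otimes N_2)$.

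For the forward direction, given $\tilde R\in (A\otimes A)\otimes'(A\otimes A)$ I would define the candidate as the composite that first rearranges the source object via the duoidal structure map $\sigma_\El$ of $\El$ together with $\beta_\otimes$ (which supplies the interchange of the inner factors $M_2$ and $N_1$), and then multiplies by $\tilde R$, whose four legs act on $M_1,N_1,M_2,N_2$ respectively. Three verifications remain. \textbf{(i) $A$-linearity}: the source carries the module structure $(\Delta'\otimes\Delta')\Delta$ and the target the module structure $(\Delta\otimes\Delta)\Delta'$, and the square analogous to (\ref{fig:ALinearityBraiding}) commutes \emph{precisely} by the almost cocommutativity relation (\ref{eqn:LaxRMatrixAlmostCocommutativity}), which is exactly the statement that multiplying by $\tilde R$ intertwines $\sigma_\El\cdot(\Delta'\otimes\Delta')\Delta(-)$ with $(\Delta\otimes\Delta)\Delta'(-)$. \textbf{(ii) Lax hexagon}: evaluating the coherence diagram (\ref{fig:LaxHexagonRelations}) on regular modules and invoking naturality reduces it to (\ref{eqn:LaxRMatrixHexagon}); here the subscripts $12,45$ and $23,56$ record which legs of $A^{\otimes 3}\otimes' A^{\otimes 3}$ the two copies of $\tilde R$ occupy after each coproduct is iterated. \textbf{(iii) Unit}: compatibility follows directly from the counit identities $((\epsilon\otimes\id)\otimes'(\epsilon\otimes\id))(\tilde R)=\cdots=1\otimes'1$.

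For the converse, I would set $\tilde R$ to be the lax braiding evaluated on the regular module $A$ in all four slots, applied to the unit $1_A^{\otimes 4}$, yielding an element of $(A\otimes A)\otimes'(A\otimes A)$ exactly as $R=\beta_{A,A}(1_A\otimes 1_A)$ in the classical case. Naturality of the braiding against the action maps $A\to M_i$ then shows this $\tilde R$ reproduces the braiding on arbitrary modules; $A$-linearity of the braiding on the regular module gives back (\ref{eqn:LaxRMatrixAlmostCocommutativity}), the coherence (\ref{fig:LaxHexagonRelations}) gives back (\ref{eqn:LaxRMatrixHexagon}), and the unital axiom gives the counit identities.

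The main obstacle is the combinatorial bookkeeping rather than any conceptual leap: the delicate point is checking that the coherence diagram (\ref{fig:LaxHexagonRelations}) transcribes \emph{exactly} into (\ref{eqn:LaxRMatrixHexagon}), which requires tracking the interaction of $\sigma_\El$ and $\beta_\otimes$ across all six tensor legs and confirming that the two iterated coproducts $(\Delta'\otimes\Delta')\Delta$ and $(\Delta\otimes\Delta)\Delta'$ are the correct source and target module structures. The strong-braiding hypothesis on $\otimes$ is what makes this work, since it is $\beta_\otimes$ that provides the invertible interchange needed to define the rearrangement step at all; with only a lax braiding on $\otimes$ the construction would not close up.
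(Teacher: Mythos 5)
Your proposal matches the paper's proof in substance: the braiding is $\beta=\tilde R\cdot\sigma_\El$, with almost cocommutativity giving $A$-linearity of this map, the lax hexagon relation for $\tilde R$ corresponding to the coherence diagram (\ref{fig:LaxHexagonRelations}), and the converse obtained by evaluating the braiding on regular modules at the unit. One small imprecision worth noting: the interchange of the inner factors $M_2$ and $N_1$ is already performed by the duoidal map $\sigma_\El$ itself, whereas the strong braiding $\beta_\otimes$ is what is needed to make $(A\otimes A)\otimes'(A\otimes A)$ a $\otimes$-algebra acting on $(M_1\otimes N_1)\otimes'(M_2\otimes N_2)$ — that is where the hypothesis genuinely enters, rather than in the rearrangement step.
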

\begin{proof}
  The lax braided monoidal structure is $\beta=\tilde{R}\cdot \sigma_\El$. Here $R$ is acting by left multiplication according to the $\otimes$-algebra structure on $(A\otimes A)\otimes' (A\otimes A)$. The proof proceeds exactly as in Proposition \ref{prop:RMatrixBraidedMonoidal}; almost cocommutativity ensures that this is $A$-linear\footnote{Explicitly, we have
  \begin{equation}
    \begin{tikzcd}[ampersand replacement=\&,row sep = 20pt, column sep = 20pt]
      (M_1\otimes' M_2)\otimes(M_3\otimes' M_4)\ar[r,"\sigma_\El"]\ar[d,"(\Delta'\otimes\Delta')\Delta(a)\cdot"] \& (M_1\otimes M_3)\otimes' (M_2\otimes M_4)\ar[d," \sigma_\El (\Delta'\otimes\Delta')\Delta(a)\cdot "] \ar[r,"R"] \& (M_1\otimes M_3)\otimes' (M_2\otimes M_4)\ar[d," (\Delta\otimes\Delta)\Delta'(a)\cdot "] \\ 
      (M_1\otimes' M_2)\otimes(M_3\otimes' M_4)\ar[r,"\sigma_\El"] \& (M_1\otimes M_3)\otimes' (M_2\otimes M_4)\ar[r,"R"]\& (M_1\otimes M_3)\otimes' (M_2\otimes M_4)
      \end{tikzcd}
  \end{equation}} 
  the hexagon and unit relations for $\tilde{R}$ are equivalent to the hexagon and unit relations for $\beta$.  
\end{proof}

In the context of Proposition \ref{prop:LaxRMatrixBraidedMonoidal}, we call $A$ in the lax braided monoidal category $\El$  a \textit{lax quantum group}, or \textit{lax quasitriangular bialgebra}, and $R$ as a \textit{lax $R$-matrix}. Note that applying the counit $\epsilon$ in the second or fifth factors of (\ref{eqn:LaxRMatrixHexagonSimple}) gives the two ordinary hexagon relations (\ref{eqn:RMatrixHexagon}). Applying $\epsilon$ to the first and fourth factors of (\ref{eqn:LaxRMatrixAlmostCocommutativitySimple}) recovers ordinary almost cocommutativity (\ref{eqn:LaxRMatrixAlmostCocommutativity}).

In particular, given any element $R\in A\otimes' A$ we can take its image $R_{23}$ under
$$A\otimes' A\ \stackrel{(1\otimes\id)\otimes'(\id\otimes 1)}{\to}\ (A\otimes A)\otimes'(A\otimes A).$$ 
We then have an example of the above Proposition:

\begin{cor}\label{cor:LaxRMatrixBraidedMonoidal}
  In the context of Proposition \ref{prop:LaxRMatrixBraidedMonoidal}, an element $R\in A\otimes' A$ induces a lax braided monoidal structure on $A\Md$ if 
  \begin{equation}\label{eqn:LaxRMatrixHexagonSimple}
    ((\id\otimes' \Delta)(R))_{345} \cdot R_{24}\ =\ ((\Delta\otimes'\id)(R))_{234}\cdot R_{35},
  \end{equation}
  \begin{equation}\label{eqn:LaxRMatrixAlmostCocommutativitySimple}
    R_{23}\cdot \left( \sigma_{\El}\cdot (\Delta'\otimes\Delta')\Delta(-) \right)\ =\ \left(  (\Delta\otimes\Delta)\Delta'(-) \right)\cdot R_{23},
  \end{equation}
  and we have compatiblity with the counit, $(\id\otimes'\epsilon)(R)=(\epsilon\otimes' \id)(R)= 1$.
\end{cor}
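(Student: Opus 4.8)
The plan is to apply Proposition \ref{prop:LaxRMatrixBraidedMonoidal} directly to the element $\tilde{R}=R_{23}$, the image of $R$ under the embedding $(1\otimes\id)\otimes'(\id\otimes 1)$, and to check that each of the three families of conditions in that Proposition collapses onto the corresponding simplified identity for $R$. Writing $R=\sum R^{(1)}\otimes' R^{(2)}$, the embedding places $R$ in the second and third of the four tensor slots of $(A\otimes A)\otimes'(A\otimes A)$, so $\tilde{R}=\sum (1\otimes R^{(1)})\otimes'(R^{(2)}\otimes 1)$; the two outer slots carry units, and it is precisely this that makes the reduction go through.

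The main computation is the hexagon relation (\ref{eqn:LaxRMatrixHexagon}). I would expand its left-hand side by applying $\Delta$ to the first slot of each $\otimes$-group of $\tilde{R}$: since $\Delta(1)=1\otimes 1$, the first group contributes $R^{(1)}$ in slot $3$ and the second group contributes $\Delta(R^{(2)})$ in slots $4,5$, so that $((\Delta\otimes\id)\otimes'(\Delta\otimes\id))(\tilde{R})=((\id\otimes'\Delta)(R))_{345}$; likewise $\tilde{R}_{12,45}=R_{24}$. Running the same bookkeeping on the right-hand side, now applying $\Delta$ to the second slot of each group, gives $((\id\otimes\Delta)\otimes'(\id\otimes\Delta))(\tilde{R})=((\Delta\otimes'\id)(R))_{234}$ and $\tilde{R}_{23,56}=R_{35}$. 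Thus (\ref{eqn:LaxRMatrixHexagon}) for $\tilde{R}$ becomes literally (\ref{eqn:LaxRMatrixHexagonSimple}). The main obstacle is purely this index bookkeeping: one must verify that the units introduced by $\Delta(1)=1\otimes 1$ land exactly in the lower slots $1,2$ and $6$, so that the surviving tensor legs occupy the claimed positions.

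Almost cocommutativity is immediate: since $\tilde{R}=R_{23}$ as an element of $(A\otimes A)\otimes'(A\otimes A)$, the identity (\ref{eqn:LaxRMatrixAlmostCocommutativity}) for $\tilde{R}$ is verbatim the identity (\ref{eqn:LaxRMatrixAlmostCocommutativitySimple}), so there is nothing to prove once the substitution is made. For the counit compatibility I would apply $(\epsilon\otimes\id)\otimes'(\epsilon\otimes\id)$ to $\tilde{R}$; using $\epsilon(1)=1$ the slot carrying a unit in the first group collapses, leaving $(\id\otimes'\epsilon)(R)\otimes' 1$, which equals $1\otimes'1$ exactly under the hypothesis $(\id\otimes'\epsilon)(R)=1$, and symmetrically for the other counit equation via $(\epsilon\otimes'\id)(R)=1$. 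Assembling the three verifications shows $\tilde{R}$ satisfies the hypotheses of Proposition \ref{prop:LaxRMatrixBraidedMonoidal}, and hence induces a lax braided monoidal structure on $A\Md$, as claimed.
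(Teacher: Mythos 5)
Your proposal is correct and follows exactly the route the paper intends: the Corollary is stated as the special case $\tilde{R}=R_{23}$ of Proposition \ref{prop:LaxRMatrixBraidedMonoidal}, and you have simply made explicit the index bookkeeping (units landing in slots $1$, $2$, $6$, the surviving legs in $345$/$24$ and $234$/$35$) that the paper leaves implicit. Nothing is missing.
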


\subsubsection{Comodule versions} If $A$ is a \textit{co}algebra in (lax) braided monoidal category $\El$, with coproduct denoted $\Delta':A\to A\otimes' A$, we can dualise the previous sections. As before we assume that in addition $\otimes$ has a strong braiding compatibly with the lax braiding between $\otimes$ and $\otimes'$. 

\begin{prop}
   A lift of the monoidal structure along $A\CoMd\to \El$ is equivalent to a bialgebra structure on $A$. Let $A$ have two bialgebra structures with products
   $$m \ :\ A\otimes A\ \to\ A, \hspace{15mm} m' \ :\ A\otimes' A\ \to\ A.$$
  Then there is a lax braiding of the monoidal structures $\otimes$ and $\otimes'$ on $A\CoMd$ if and only if there is a map 
  $$\tilde{R}^*\ :\ (A\otimes A)\otimes'(A\otimes A)\ \to\ 1_\El$$
  satisfying the (duals of) the relations (\ref{eqn:LaxRMatrixHexagon}) and (\ref{eqn:LaxRMatrixAlmostCocommutativity}), and compatibly with the unit. 
\end{prop}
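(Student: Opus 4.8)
The plan is to dualise the proof of Proposition \ref{prop:LaxRMatrixBraidedMonoidal} by reversing every arrow, so that the comodule category $A\CoMd$ plays the role that $A\Md$ played there. I would first treat the monoidal lift, which is dual to Lemma \ref{lem:MonoidalLift}. Given the coalgebra structure $\Delta'$ defining $A\CoMd$, a comodule structure on the $\otimes$-tensor product $M\otimes N$ of two comodules is built from the coactions $\rho_M,\rho_N$ followed by the multiplication $m$; coassociativity and counitality of the resulting coaction are exactly the bialgebra compatibilities between $\Delta'$ and $m$. Conversely a lift of $\otimes$ produces a coaction on $A\otimes A$ viewed as a cofree comodule, and composing with the counit recovers a product $m$, which one checks is a bialgebra product. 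The same argument with $\otimes$ replaced by $\otimes'$ produces $m'$, giving the first sentence of the Proposition.

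For the braiding I would set $\beta=\tilde{R}^*\star\sigma_\El$, the dual of $\beta=\tilde{R}\cdot\sigma_\El$ from Proposition \ref{prop:LaxRMatrixBraidedMonoidal}: on comodules $M_1,\dots,M_4$ one applies the four coactions to produce a factor in $(A\otimes A)\otimes'(A\otimes A)$, contracts it against $\tilde{R}^*$ to reach $1_\El$, and permutes the remaining underlying object using the strong braiding $\beta_\otimes$ on $\otimes$ (this is where that hypothesis is used). Colinearity of $\beta$ with respect to the coactions is then the arrow-reversal of the $A$-linearity square in the footnote to Proposition \ref{prop:LaxRMatrixBraidedMonoidal}, and it holds if and only if $\tilde{R}^*$ satisfies the dual of the almost-cocommutativity relation (\ref{eqn:LaxRMatrixAlmostCocommutativity}).

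Finally, the lax hexagon for $\beta$ transposes to the dual of the lax hexagon relation (\ref{eqn:LaxRMatrixHexagon}) for $\tilde{R}^*$, and the counit compatibility of the module case becomes the stated unit compatibility; conversely, given a lax braiding, I would recover $\tilde{R}^*$ by evaluating $\beta$ on cofree comodules built from $A\otimes A$ in all four slots and postcomposing with $\epsilon$, dual to the recovery $\tilde{R}=\beta_{A,A}(1\otimes 1)$. The main obstacle is not any single computation but keeping the dualisation honest in the duoidal setting: because $\otimes$ and $\otimes'$ play genuinely asymmetric roles, I would phrase the whole argument diagrammatically — reversing the arrows of the commuting diagrams in Proposition \ref{prop:LaxRMatrixBraidedMonoidal} rather than literally dualising $A$ — so that it stays valid in an arbitrary (lax) braided monoidal $\El$ where $A$ need not be dualisable, and I would verify that the contraction against $\tilde{R}^*$ and the strong braiding $\beta_\otimes$ assemble into a well-defined natural transformation of the correct variance.
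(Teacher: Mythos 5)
Your proposal is correct and matches the paper's approach: the paper in fact gives no proof of this Proposition, merely asserting after the statement of the comodule setup that "we can dualise the previous sections," and your arrow-reversal of Lemma \ref{lem:MonoidalLift} and Proposition \ref{prop:LaxRMatrixBraidedMonoidal} (with $\beta$ built from the coactions, contraction against $\tilde{R}^*$, and the strong braiding $\beta_\otimes$, and with colinearity corresponding to the dual almost-cocommutativity) is exactly the intended argument. Your closing caution about keeping the dualisation diagrammatic in the duoidal setting — since the base (co)algebra structure switches from the $\otimes$-product to the $\otimes'$-coproduct and $A$ need not be dualisable — is well placed and is the only point where care beyond formal arrow-reversal is needed.
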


In the above case, we also call $A$ a \textit{lax op-quantum group}, and $R^*$ a \textit{lax op-$R$-matrix}. As in Corollary \ref{cor:LaxRMatrixBraidedMonoidal}, we have a simpler version of the above:

\begin{cor}
  For any map 
  $$R^*\ :\ A\otimes'A\ \to\ 1_\El$$
  satisfying the dual conditions to (\ref{eqn:LaxRMatrixHexagonSimple}) and (\ref{eqn:LaxRMatrixAlmostCocommutativitySimple}), and compatibility with the unit, the composition $\tilde{R}^*=(\epsilon\otimes \id)\otimes'(\id\otimes\epsilon)(R^*)$ defines an op-$R$-matrix. 
\end{cor}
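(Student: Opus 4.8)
The plan is to obtain this Corollary as the exact dual of Corollary~\ref{cor:LaxRMatrixBraidedMonoidal}, by running the whole \emph{Comodule versions} discussion through the opposite category $\El^{\mathrm{op}}$. Concretely, a $\otimes'$-coalgebra $A$ in $\El$ carrying the two products $m,m'$ is the same datum as an algebra in $\El^{\mathrm{op}}$ whose coproducts are $m,m'$; the counit $\epsilon$ of $A$ becomes the unit of that algebra, and a morphism $R^{*}\colon A\otimes'A\to 1_\El$ becomes an element $R\in A\otimes'A$ of $\El^{\mathrm{op}}$. Every relation appearing in the statement---namely (\ref{eqn:LaxRMatrixHexagonSimple}), (\ref{eqn:LaxRMatrixAlmostCocommutativitySimple}), their full counterparts (\ref{eqn:LaxRMatrixHexagon}), (\ref{eqn:LaxRMatrixAlmostCocommutativity}), and the counit compatibilities---is an equality between composites of the structure maps $m,m',\epsilon,1_\El$ and the interchange $\sigma_\El$, so reversing all arrows carries each ``dual'' relation for $R^{*}$ in $\El$ to the corresponding undualised relation for $R$ in $\El^{\mathrm{op}}$.

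The first thing I would verify is that $\El^{\mathrm{op}}$ again satisfies the hypotheses of Corollary~\ref{cor:LaxRMatrixBraidedMonoidal}: the two monoidal structures $\otimes,\otimes'$ persist on $\El^{\mathrm{op}}$ (with inverted constraints), the interchange $\sigma_\El$ reverses to a natural transformation of exactly the duoidal shape---after interchanging the bookkeeping roles of $\otimes$ and $\otimes'$---and the \emph{strong} braiding on $\otimes$, being invertible, dualises to a strong braiding in $\El^{\mathrm{op}}$. This transport of structure is the step demanding the most care, because $\sigma_\El$ is not invertible and the two monoidal products enter asymmetrically (one carries the (co)algebra, the other is the outer duoidal factor); one must track which factor plays which role so that the hypotheses of Corollary~\ref{cor:LaxRMatrixBraidedMonoidal} match up after relabelling. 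This bookkeeping is the main obstacle; everything else is formal.

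Granting the transport, the key computation is that the unit-insertion map $(1\otimes\id)\otimes'(\id\otimes 1)\colon A\otimes'A\to(A\otimes A)\otimes'(A\otimes A)$ of $\El^{\mathrm{op}}$ is, back in $\El$, precisely the transpose $(\epsilon\otimes\id)\otimes'(\id\otimes\epsilon)$, since $\epsilon$ is dual to the unit $1$. Hence the element $R_{23}=(1\otimes\id)\otimes'(\id\otimes 1)(R)$ produced by Corollary~\ref{cor:LaxRMatrixBraidedMonoidal} in $\El^{\mathrm{op}}$ corresponds in $\El$ to the composite $R^{*}\circ\bigl((\epsilon\otimes\id)\otimes'(\id\otimes\epsilon)\bigr)$, which is exactly $\tilde R^{*}=(\epsilon\otimes\id)\otimes'(\id\otimes\epsilon)(R^{*})$. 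Corollary~\ref{cor:LaxRMatrixBraidedMonoidal}, applied in $\El^{\mathrm{op}}$, then states that $R_{23}$ obeys (\ref{eqn:LaxRMatrixHexagon}), (\ref{eqn:LaxRMatrixAlmostCocommutativity}) and the counit compatibility there; reversing arrows turns these into the dual relations required of an op-$R$-matrix, completing the proof. As an equivalent concrete alternative---useful for double-checking the handedness---I would instead dualise the proof of Corollary~\ref{cor:LaxRMatrixBraidedMonoidal} node by node: it derives the full relations from the simple ones by pre- and post-composing with the coproducts and inserting units through $1\otimes\id,\ \id\otimes 1$, and the diagrammatic argument is self-dual, so replacing coproducts by $m,m'$, units by $\epsilon$, and unit-insertions by their transposes yields the op-relations for $\tilde R^{*}$ verbatim.
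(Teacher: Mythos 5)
The paper offers no proof of this Corollary at all --- the entire ``Comodule versions'' subsection is justified by the single phrase ``we can dualise the previous sections'' --- so your overall strategy of formally dualising Corollary~\ref{cor:LaxRMatrixBraidedMonoidal} is exactly what the paper intends. However, your headline mechanism, transporting through $\El^{\mathrm{op}}$, does not land on the statement as written, and the bookkeeping problem you flag is not merely delicate but unresolvable in the form you propose. Reversing all arrows in $\El$ forces the two monoidal structures to exchange their duoidal roles: the interchange $(\,\otimes'\,)\otimes(\,\otimes'\,)\to(\,\otimes\,)\otimes'(\,\otimes\,)\cdot\sigma_{23}$ reverses to one exhibiting $\otimes$ as a lax $\otimes'$-monoidal functor, so to match the template of Proposition~\ref{prop:LaxRMatrixBraidedMonoidal} you must relabel $\otimes\leftrightarrow\otimes'$. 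But after that relabelling, Corollary~\ref{cor:LaxRMatrixBraidedMonoidal} applied in $\El^{\mathrm{op}}$ produces a functional $R^{*}$ with domain $A\otimes A$, a $\tilde R^{*}$ with domain $(A\otimes'A)\otimes(A\otimes'A)$, and a strong-braiding hypothesis on $\otimes'$ --- none of which match the Corollary, which has $R^{*}\colon A\otimes'A\to 1_\El$, $\tilde R^{*}$ defined on $(A\otimes A)\otimes'(A\otimes A)$, and the strong braiding on $\otimes$. If instead you refuse the relabelling (which your ``key computation'' identifying $(\epsilon\otimes\id)\otimes'(\id\otimes\epsilon)$ with the reversed unit-insertion implicitly does), the duoidal structure fails to transport at all. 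The two halves of your argument require incompatible conventions.

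The resolution is that the comodule version is dual along the forgetful functor $A\CoMd\to\El$ --- the monad $A\otimes(-)$ is replaced by the comonad $A\otimes'(-)$, and elements of $(A\otimes A)\otimes'(A\otimes A)$ by functionals on it --- not along all of $\El$. The outer $\otimes'$ in the four-fold product arises from iterating the $\otimes'$-coaction on $(M_1\otimes M_3)\otimes'(M_2\otimes M_4)$, so it survives undualised; this is why the shapes in the statement agree literally with those of Corollary~\ref{cor:LaxRMatrixBraidedMonoidal} rather than with their mirror images. Consequently your ``equivalent concrete alternative'' --- dualising the proof of Corollary~\ref{cor:LaxRMatrixBraidedMonoidal} diagram by diagram, replacing $\Delta,\Delta'$ by $m,m'$, unit insertions by counit projections, and left multiplication by $\tilde R$ with coaction followed by pairing against $\tilde R^{*}$ --- is not a double-check but the argument that actually proves the Corollary. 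That route is sound and is what you should present as the proof.
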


\subsection{Classical factorisation spaces}  \label{sec:ClassicalFactorisationSpaces} To begin,

\begin{defn}
  A \textit{classical factorisation space} is an associative algebra object in $\PreStk^{\textup{cl},\textup{corr}}$. 
\end{defn}

We recalled the category $\PreStk^{\textup{cl}}$ of classical prestacks in section \ref{sec:PreStk}. Likewise, a \textit{classical factorisation $\Eb_n$-space} is an $\Eb_n$-algebra in the $(2,1)$-category $\PreStk^{\textup{cl},\textup{corr}}$, and we use the name (\textit{braided}) \textit{commutative classical factorisation space} when $n=\infty$ (resp. $n=2$). In particular, 
\begin{lemdefn} \label{lem:ClassicalAssocFactSpace}
   If $Y$ is a classical prestack, the structure of a \textit{classical factorisation space} on $Y$ is equivalent to a compatible product and unit correspondence 
\begin{center}
  \begin{tikzcd}[row sep = 10pt, column sep = 15pt]
   &[-5pt]C\ar[rd,"p"]\ar[ld,swap,"q"] & &[25pt] &E\ar[rd]\ar[ld]& \\
  Y\times Y & & Y & \pt && Y
  \end{tikzcd}
  \end{center}
and associativity isomorphism of correspondences
\begin{center}
  \begin{tikzcd}[row sep = {30pt,between origins}, column sep = {45pt,between origins}]
    & &(Y\times C)\times_{Y\times Y}C\ar[rd]\ar[ld] & & &[-15pt] &[-10pt] & &(C\times Y)\times_{Y\times Y}C\ar[rd]\ar[ld] & & \\
    &Y\times C \ar[rd]\ar[ld,"q\times q"']& &C\ar[rd]\ar[ld] & &\stackrel{\alpha}{\stackrel{\sim}{\to}} & &C\times Y \ar[rd]\ar[ld]& &C\ar[rd]\ar[ld] &  \\
    Y\times Y\times Y& &Y\times Y & &Y & &Y\times Y\times Y& &Y\times Y & &Y 
  \end{tikzcd} 
\end{center}
 satisfying the pentagon relation. 
\end{lemdefn}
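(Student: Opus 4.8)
The plan is to unwind the abstract definition — an associative algebra object in the monoidal $(2,1)$-category $\PreStk^{\textup{cl},\textup{corr}}$ — into concrete data, and to observe that the infinite tower of coherences the definition demands collapses to exactly the associator together with the pentagon, because the ambient category is a $(2,1)$-category. Recall that an associative algebra in a monoidal category $\mathcal{M}$ is an algebra over the associative operad $\mathrm{Assoc}^{\otimes}$, equivalently a monoid object $\Delta^{\mathrm{op}} \to \mathcal{M}$ satisfying the Segal conditions. The first step is to read off, degree by degree, what such a structure supplies when $\mathcal{M} = \PreStk^{\textup{cl},\textup{corr}}$ is equipped with the cartesian product: the object $Y$; a multiplication $1$-morphism $m : Y \times Y \to Y$; a unit $1$-morphism $u : \pt \to Y$; an invertible associativity $2$-morphism $\alpha$; the pentagon and unit coherences; and a priori higher coherences.

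Next I would translate each piece through the definition of $\PreStk^{\textup{corr}}$. Since a $1$-morphism $X \to Y$ is a correspondence $X \leftarrow C \to Y$, the multiplication $m$ is precisely a correspondence $Y \times Y \xleftarrow{q} C \xrightarrow{p} Y$ and the unit is a correspondence $\pt \leftarrow E \to Y$. Because composition of correspondences is computed by fiber product of apices, the two bracketings $m \circ (m \times \id)$ and $m \circ (\id \times m)$ are the correspondences with apices $(C \times Y) \times_{Y \times Y} C$ and $(Y \times C) \times_{Y \times Y} C$ respectively — exactly the two prestacks in the statement. Having discarded non-invertible $2$-morphisms, a $2$-morphism between parallel correspondences is precisely an isomorphism of apex prestacks commuting with both legs, so $\alpha$ is exactly the displayed associativity isomorphism of correspondences and the pentagon coherence is the stated pentagon relation; the left and right unit constraints are the isomorphisms $(E \times Y) \times_{Y \times Y} C \simeq Y \simeq (Y \times E) \times_{Y \times Y} C$ already recorded after the definition of factorisation space, with their triangle compatibility.

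The crux — and the main obstacle — is to justify that nothing beyond the associator, pentagon, and unit data is required, i.e. that the higher coherences are contractible. Here I would invoke that $\PreStk^{\textup{cl},\textup{corr}}$ is a genuine $(2,1)$-category: its mapping spaces are $1$-truncated, so there are no nontrivial $3$-morphisms or higher. An associative algebra object in such a monoidal $(2,1)$-category is therefore the same as a pseudomonoid in the sense of Day--Street, namely the datum $(Y, m, u, \alpha, \lambda, \rho)$ subject to pentagon and triangle and nothing more; equivalently, computing the space of algebra structures as the appropriate limit over $\mathrm{Assoc}^{\otimes}$, every simplicial/operadic level of degree $\ge 4$ contributes either a uniquely-determined filler or a condition rather than new structure, the pentagon being the last nonvacuous one. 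I would make this precise by checking that the restriction from associative algebras to the truncated data is an equivalence, using that apices and their iterated fiber products remain classical prestacks and that the comparison maps are computed levelwise on apices.

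Finally, the $\Eb_n$ and (braided-)commutative variants follow verbatim by replacing $\mathrm{Assoc}^{\otimes}$ with $\Eb_n^{\otimes}$ and truncating at the same categorical level, since $\Eb_1 \simeq \mathrm{Assoc}$ and the $(2,1)$-categorical truncation is insensitive to the choice of operad.
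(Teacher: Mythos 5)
Your proposal is correct and follows essentially the same route as the paper: the paper's proof is a one-line citation of \cite[4.1.6.17]{Lu2} for the description of associative algebra objects in $(2,1)$-categories, with the remark that the pentagon identity is exactly the condition that the image of $\beta_A$ be null-homotopic. Your hand-sketched collapse of the higher coherences (via $1$-truncatedness of mapping spaces, reducing to pseudomonoid data) is precisely the content that citation supplies.
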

\begin{proof}
  This follows from the description of associative algebra objects inside $(2,1)$-categories due to \cite[4.1.6.17]{Lu2}, the pentagon identity corresponding to the condition that the image of $\beta_A$ be null-homotopic.
\end{proof}

Likewise, since taking algebra objects commutes with Duskin nerves, we may use Dunn additivity to get a description of classical factorisation $\Eb_2$-spaces: 

\begin{lemdefn}\label{lem:ClassicalFactSpace}
A (\textit{strong}) \textit{classical braided factorisation structure} on $Y$ is equivalent to having two classical factorisation space structures 
\begin{center}
\begin{tikzcd}[row sep = {30pt,between origins}, column sep = {45pt,between origins}]
   &[-5pt]C\ar[rd,"p"]\ar[ld,swap,"q"] & &[15pt] &C'\ar[rd,"p'"]\ar[ld,"q'"'] &  \\  
  Y\times Y & & Y& Y\times Y& & Y 
\end{tikzcd}
\end{center}
which are compatible, i.e. there is an isomorphism between the correspondences
\begin{equation} \label{fig:OrdBraid}
  \begin{tikzcd}[row sep = {30pt,between origins}, column sep = {45pt,between origins}]
    & &(C\times C)\times_{Y\times Y}C'\ar[rd]\ar[ld] & & &[-15pt] &[-10pt] & &(C'\times C')\times_{Y\times Y}C\ar[rd]\ar[ld] & & \\
    &C\times C \ar[rd,"p\times p"]\ar[ld,"q\times q"']& &C'\ar[rd,"p'"]\ar[ld,"q'"'] & &\stackrel{\gamma}{\stackrel{\sim}{\to}} & &C'\times C' \ar[rd,"p'\times p'"]\ar[ld,"\sigma_{23}\cdot \, q'\times q'"']& &C\ar[rd,"p"]\ar[ld,"q"'] &  \\
    Y^2\times Y^2& &Y\times Y & &Y & &Y^2\times Y^2& &Y\times Y & &Y 
  \end{tikzcd}
\end{equation}
  and such that both factorisation structures share the same unit $1:\pt \to Y$.
\end{lemdefn}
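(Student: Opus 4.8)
The plan is to follow the route signposted immediately before the statement: combine Dunn additivity with the description of associative algebra objects in a $(2,1)$-category already recorded in Lemma-Definition \ref{lem:ClassicalAssocFactSpace}. Dunn additivity gives an equivalence $\Eb_2\simeq \Eb_1\otimes\Eb_1$, so that a classical braided factorisation space, being an $\Eb_2$-algebra object of $\PreStk^{\textup{cl},\textup{corr}}$, is the same datum as an associative algebra object in the $(2,1)$-category $\Eb_1\Ag(\PreStk^{\textup{cl},\textup{corr}})$ of classical factorisation spaces. Since forming algebra objects commutes with passing to Duskin nerves, this reduction happens entirely within $(2,1)$-categories and therefore produces genuine finite data together with a pentagon relation, rather than an infinite tower of coherences.

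First I would apply Lemma-Definition \ref{lem:ClassicalAssocFactSpace} to the ``inner'' algebra structure on $Y$, extracting a correspondence $C$ over $Y\times Y\to Y$ with its unit, associativity isomorphism $\alpha$, and pentagon relation. The ``outer'' associative algebra structure then lives in $\Eb_1\Ag(\PreStk^{\textup{cl},\textup{corr}})$; unwinding its multiplication through Lemma-Definition \ref{lem:ClassicalAssocFactSpace} a second time yields another correspondence $C'$ over $Y\times Y\to Y$, again satisfying associativity and the pentagon relation. These are the two classical factorisation structures appearing in the statement.

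Next I would pin down the remaining datum. A multiplication inside $\Eb_1\Ag(\PreStk^{\textup{cl},\textup{corr}})$ is required to be a \emph{homomorphism} of correspondence-algebras, which in a $(2,1)$-category means it is equipped with an invertible $2$-morphism encoding the interchange law $m'\circ(m\times m)\simeq m\circ(m'\times m')\circ\sigma_{23}$, where $m,m'$ are the products attached to $C,C'$. Translating this $2$-isomorphism through the correspondence formalism, in which composition of $1$-morphisms is computed by fibre product, turns the two sides into the composite correspondences $(C\times C)\times_{Y\times Y}C'$ and $(C'\times C')\times_{Y\times Y}C$, with the permutation of the two middle factors appearing as the factor $\sigma_{23}$ precomposed on the source leg $q'\times q'$ of the latter; this is exactly the isomorphism $\gamma$ of (\ref{fig:OrdBraid}). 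Finally, the coincidence of units is the Eckmann-Hilton phenomenon: the outer unit is forced to be a homomorphism for the inner structure, which makes the two unit maps $1:\pt\to Y$ agree.

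The main obstacle I anticipate is bookkeeping rather than conceptual. One must verify that the interchange $2$-isomorphism, after being rewritten via fibre products and the factor permutation intrinsic to $\PreStk^{\textup{cl},\textup{corr}}$, lands precisely on the correspondence $\gamma$ drawn in (\ref{fig:OrdBraid}) --- in particular that the swap attaches to the $q'\times q'$ source leg as $\sigma_{23}$ and not elsewhere --- and that invertibility of $\gamma$, hence the \emph{strong} (as opposed to lax) character of the braiding, is forced by our convention of discarding non-invertible $2$-morphisms in $\PreStk^{\textup{corr}}$. I would also confirm that the two associativity pentagons together with $\gamma$ exhaust the coherence data surviving the $(2,1)$-categorical truncation, so that no further relations appear.
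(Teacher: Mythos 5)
Your proposal is correct and follows exactly the route the paper itself takes: the paper offers no separate proof beyond the remark that "since taking algebra objects commutes with Duskin nerves, we may use Dunn additivity," and your argument is precisely the fleshing-out of that sentence via two applications of Lemma-Definition \ref{lem:ClassicalAssocFactSpace}, with the interchange $2$-isomorphism identified as $\gamma$ and Eckmann--Hilton handling the unit. The points you flag for verification (the placement of $\sigma_{23}$ on the source leg and the compatibility of $\gamma$ with the two associators) are exactly the suppressed coherences, so nothing is missing.
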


We define a \textit{lax} or \textit{oplax classical braided factorisation space} as above, but weakening the assumptiuon that $\gamma$ be an isomorphism to just being a map rightwards or leftwards, respectively. Given a (lax) classical braided factorisation space $Y$, if we pull back the correspondences in (\ref{fig:OrdBraid}) along
$$(\pt\times Y)\times (Y\times \pt)\ \to\ Y^2\times Y^2$$
we get a map of correspondences 
\begin{center}
  \begin{tikzcd}[row sep = {30pt,between origins}, column sep = {45pt,between origins}]
     &[-5pt]C'\ar[rd,"p'"]\ar[ld,swap,"q'"] &&[5pt]\stackrel{\gamma_{23}}{\to} &[5pt] &C\ar[rd,"p"]\ar[ld,"q"'] &  \\  
    Y\times Y & & Y&& Y\times Y& & Y 
  \end{tikzcd}
  \end{center}
  which is an isomporphism if $\gamma$ is. By analogy with lax braided monoidal categories, we might expect $\gamma_{23}$ to determine $\gamma$. This is not true in general, but is still true for strong classical braided factorisation spaces:

\begin{prop} \label{prop:OrdBraidFactSpace}
  The data of a (strong) classical braided factorisation space is the same an isomorphism $\tilde{\sigma}$ fitting into a commuting diagram 
  \begin{equation}\label{fig:BraidingFactSpace}
    \begin{tikzcd}[row sep = {30pt,between origins}, column sep = {45pt,between origins}]
     &C\ar[rdd,"p", bend left = 30]\ar[ldd,swap,"\sigma\cdot q", bend right = 30]\ar[d,swap,"\wr","\tilde{\sigma}"'] & \\[-5pt]
     &C\ar[rd,"p"]\ar[ld,swap,"q"] & \\
    Y\times Y & & Y
    \end{tikzcd}
    \end{equation}
  satisfying the hexagon identities, where $\sigma:Y\times Y \stackrel{\sim}{\to} Y\times Y$ is the swap map. $Y$ is a \emph{commutative} classical factorisation space if $\tilde{\sigma}$ is an involution. 
\end{prop}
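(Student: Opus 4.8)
The plan is to recognise this as the correspondence-category incarnation of the Eckmann--Hilton / Joyal--Street equivalence between $\Eb_2$-structures and braided $\Eb_1$-structures, and to make that equivalence explicit at the level of the data of Lemma-Definition \ref{lem:ClassicalFactSpace}. Starting from a strong classical braided factorisation space $(C,C',\gamma)$ with shared unit, I would extract $\tilde\sigma$ by two unit specialisations of the interchange isomorphism $\gamma$ of (\ref{fig:OrdBraid}). Writing the two products heuristically as $\cdot$ (for $C$) and $*'$ (for $C'$), the isomorphism $\gamma$ reads $(y_1\cdot y_2)*'(y_3\cdot y_4)\simeq (y_1*'y_3)\cdot(y_2*'y_4)$. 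Pulling back along the locus where the two \emph{inner} coordinates $y_2,y_3$ are the shared unit and using the unit isomorphisms of each structure produces an isomorphism of correspondences $\phi:C'\stackrel{\sim}{\to}C$; this is the categorified statement that the two products agree, and it is a genuine isomorphism precisely because $\gamma$ is strong. Pulling back instead along the locus where the two \emph{outer} coordinates $y_1,y_4$ are the unit produces $\gamma_{23}$, which after transport along $\phi$ becomes the self-isomorphism $\tilde\sigma:C\to C$ interchanging $q$ with $\sigma\cdot q$ in (\ref{fig:BraidingFactSpace}).

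For the converse I would reconstruct $(C,C',\gamma)$ from $(C,\tilde\sigma)$: set $C':=C$ and build $\gamma$ as the standard interchange isomorphism assembled from the associativity data of $C$ together with one application of $\tilde\sigma$ braiding the middle two factors past one another, exactly as one builds the interchange law in a braided monoidal category. The shared unit is inherited from $C$. The two directions are then shown to be mutually inverse; recovering $\tilde\sigma$ from a reconstructed $\gamma$ is immediate from the unit specialisation, so the real content is that the extracted $\tilde\sigma$ reconstitutes the \emph{original} $\gamma$. This is the assertion, flagged in the remark preceding the Proposition, that in the strong case $\gamma$ is determined by $\gamma_{23}$; I would prove it by factoring $\gamma$ as a composite of associativity isomorphisms and a single instance of $\gamma_{23}$, then checking that, once the braiding is fixed, the pentagon and compatibility coherences leave no further freedom. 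This is exactly the step that fails laxly, where $\gamma$ carries strictly more information than $\gamma_{23}$.

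The hexagon identities should fall out by matching coherences: the requirement that $\gamma$ be compatible with the two associativity isomorphisms $\alpha,\alpha'$ of the $\Eb_2$-structure (equivalently, the pentagon data produced by Dunn additivity in the passage to Lemma-Definition \ref{lem:ClassicalFactSpace}) translates, under the two decompositions of a threefold product, into precisely the two hexagon relations for $\tilde\sigma$, in the same way as in Joyal--Street. Finally, the commutative case is handled by observing that $n=\infty$ forces the whole package $(C,C',\gamma)$ to assemble into a fully symmetric algebra object, which on $\tilde\sigma$ imposes $\tilde\sigma^2=\id$; conversely an involutive $\tilde\sigma$ makes the reconstructed interchange symmetric and hence promotes the $\Eb_2$-structure to a commutative one. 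I expect the main obstacle to be the mutual-inverse step of the previous paragraph: carefully exhibiting the factorisation of $\gamma$ through $\gamma_{23}$ and verifying its independence of the chosen braiding route, since this is the one place where the strong hypothesis is indispensable and where the coherence bookkeeping is genuinely needed rather than formal.
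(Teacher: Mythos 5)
Your proposal is correct and takes essentially the same route as the paper's proof: $\tilde\sigma$ is extracted as the unit-specialisation $\gamma_{23}$ of the interchange isomorphism $\gamma$, and conversely $\gamma$ is reassembled from $\tilde\sigma$ together with the associativity data after identifying $C$ with $C'$. You simply make explicit two steps the paper's two-sentence proof leaves implicit, namely the Eckmann--Hilton identification $C'\simeq C$ via the inner-unit specialisation and the verification that the reconstructed $\gamma$ agrees with the original one.
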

\begin{proof}
  We construct $\tilde{\sigma}=\gamma_{23}$ as above, and the hexagon relations are easily checked. Conversely, from (\ref{fig:BraidingFactSpace}) and having fixed an identification $C=C'$, we get a pair of isomorphisms between the middle rows of (\ref{fig:OrdBraid}), which in turn induces an isomorphism $\gamma$ between the two pullbacks. 
\end{proof}

 Likewise, Proposition \ref{prop:OrdBraidFactSpace} is true if we have a lax classical braided factorisation structure and identification $C=C'$, but not for general lax classical braided factorisation spaces. Finally, we need two Lemmas.

\begin{lem} \label{lem:C'IsFactSpace}
  Let $Y$ be a (strong or oplax) classical braided factorisation space. Then the map $C'\to Y$ is a map of classical factorisation spaces. 
\end{lem}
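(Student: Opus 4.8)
The plan is to equip the total space $C'$ with a factorisation structure coming from the first ($C$-)multiplication applied fibrewise, and then to check that the projection $p':C'\to Y$ is compatible with it. I will take the product correspondence on $C'$ to be the right-hand correspondence of (\ref{fig:OrdBraid}),
$$C'\times C'\ \longleftarrow\ C\times_{Y\times Y}(C'\times C')\ \longrightarrow\ C',$$
where the fibre product is formed over $q:C\to Y\times Y$ and $p'\times p':C'\times C'\to Y\times Y$. Its left leg is the evident projection to $C'\times C'$, while its right leg — the genuinely new datum — is obtained by applying the interchange $\gamma$ of Lemma-Definition \ref{lem:ClassicalFactSpace} (equivalently the braiding $\tilde{\sigma}$ of Proposition \ref{prop:OrdBraidFactSpace}), which is invertible in the strong case and a leftward map in the oplax case, to pass to the left-hand correspondence of (\ref{fig:OrdBraid}), and then projecting onto its $C'$-factor. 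The unit is induced by the shared unit $1:\pt\to Y$.

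First I would check that this really defines a classical factorisation space, i.e. an associative algebra object in $\PreStk^{\textup{cl},\textup{corr}}$ as in Lemma-Definition \ref{lem:ClassicalAssocFactSpace}: the associativity isomorphism for $C'$ is assembled from the associativity isomorphisms of the $C$- and $C'$-structures together with the hexagon coherence satisfied by $\gamma$, and the pentagon relation then reduces to the pentagons for $C$, $C'$ and the compatibility of $\gamma$ with them. Next I would verify the criterion (\ref{fig:RespectsFactStructures}) for the genuine map $p'$: because the product correspondence on $C'$ was defined as the fibre product $C\times_{Y\times Y}(C'\times C')$ over $p'\times p'$, the comparison $\varphi_L$ is the identity and the left square is a pullback on the nose, while the right-hand triangle expresses exactly that $p'$ sends the product point of $C'$ to the $C$-product of the two $p'$-values, which holds by construction of the right leg.

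In the strong case this also follows conceptually: by Dunn additivity (already used in the proof of Lemma-Definition \ref{lem:ClassicalFactSpace}) a strong classical braided factorisation space is an $\Eb_1$-algebra object in the $(2,1)$-category $\Eb_1\Ag(\PreStk^{\textup{cl},\textup{corr}})$ of classical factorisation spaces, and the outer multiplication — whose total space is $C'$ — is by definition a $1$-morphism there, so its legs are maps of classical factorisation spaces. The hands-on argument above is what is needed for the oplax case, where Dunn's strict equivalence is unavailable because $\gamma$ is no longer invertible. I expect the main obstacle to be precisely this point: one must check that the right leg of the product correspondence is well defined using $\gamma$ only as a one-directional (oplax) map rather than an isomorphism, and that the resulting structure on $C'$ still satisfies associativity. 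It is here that the oplax — as opposed to lax — direction of $\gamma$ is exactly what is required, which is why the statement is restricted to the strong and oplax cases.
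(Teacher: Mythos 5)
Your construction is exactly the paper's: the product correspondence on $C'$ is taken to be $(C'\times C')\times_{Y\times Y}C$ (fibre product over $p'\times p'$ and $q$), with left leg the projection and right leg obtained by passing through $\gamma$ (inverted in the strong case, used directly in the oplax case) and projecting to $C'$, and the associativity/pentagon checks reduce to those for $C$ just as in the paper. The additional Dunn-additivity remark for the strong case is a reasonable conceptual supplement, but the core argument coincides with the paper's proof.
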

\begin{proof}
  We have a commuting diagram whose left square is a pullback 
  \begin{equation}\label{fig:C'FactStructure}
    \begin{tikzcd}[row sep = {15pt}, column sep = {45pt,between origins}]
    C'\times C'\ar[d] &[50pt](C'\times C')\times_{Y\times Y}C\ar[r,dashed]\ar[d]\ar[l] &[30pt] C'\ar[d] \\ 
    Y\times Y &C\ar[l]\ar[r] &Y 
    \end{tikzcd}
  \end{equation}
  where the dashed arrow is induced by $\gamma$ and $(C\times C)\times_{Y\times Y}C'\to C'$. The right square commutes because $\gamma$ is a map over $Y$. 

  It remains to show that the top row of (\ref{fig:C'FactStructure}) defines a classical factorisation structure on $C'$. To supply an associativity isomorphism
  \begin{center}
    \begin{tikzcd}[row sep = {30pt,between origins}, column sep = {45pt,between origins}]
      & &Z^-\ar[rd]\ar[ld] & & &[-15pt]  &[-5pt] & &Z^+\ar[rd]\ar[ld] & & \\
      & ({C'}^2\times_{Y^2}C) \times C' \ar[rd]\ar[ld]& &{C'}^2\times_{Y^2}C\ar[ld]\ar[rd] & &\stackrel{\sim}{\to} & &C'\times ({C'}^2\times_{Y^2}C) \ar[ld]\ar[rd]& &{C'}^2\times_{Y^2}C\ar[ld]\ar[rd] &  \\
      C'\times C'\times C'& &C'\times C'& &C' & &C'\times C'\times C'& &C'\times C' & &C'
    \end{tikzcd} 
  \end{center}
  note
  $$Z^-\ \simeq\ (({C'}^2\times_{Y^2}C) \times C')\times_{Y^2}C\ \simeq\ (({C'}^2\times_{Y^2}C) \times C')\times_{C\times Y}\left((C\times Y)\times_{Y^2}C\right)$$
  and likewise for $Z^+$, the isomorphism is then induced by the associativity isomorphism for $C$
  $$(C\times Y)\times_{Y^2}C\ \simeq\ (Y\times C)\times_{Y^2}C.$$
  The pentagon and unit axioms are checked similarly. 
\end{proof}

Similarly, 

\begin{lem}\label{lem:C'ToYxYIsFact}
  Let $Y$ be a (strong or lax) classical braided factorisation space. Then the map $C'\to Y\times Y$ is a map of classical factorisation spaces. 
\end{lem}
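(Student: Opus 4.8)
The plan is to run the proof of Lemma~\ref{lem:C'IsFactSpace} with the two legs of the correspondence $C'$ interchanged. There we gave $C'$ the factorisation structure pulled back along $p'$ from the structure $C$ on $Y$; here I would instead pull back along $q'$ from the product factorisation structure on $Y\times Y$. Recall that $Y\times Y$ carries the product structure whose multiplication correspondence is $C\times C$, with legs $\sigma_{23}\cdot(q\times q)$ to $(Y\times Y)^2$ and $p\times p$ to $Y\times Y$. I would therefore set
\[
C_{C'}\ :=\ (C\times C)\times_{(Y\times Y)^2}(C'\times C'),
\]
the fibre product formed along $\sigma_{23}\cdot(q\times q)$ and $q'\times q'$. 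With this definition the left-hand square in the analogue of \ref{fig:RespectsFactStructures} is a pullback by construction, so the only thing to prove is that $C_{C'}$ genuinely defines a classical factorisation structure on $C'$; once it does, $q'$ is a map of classical factorisation spaces for free.

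The right leg $C_{C'}\to C'$ is where the braiding is used. Fibrewise a point of $C_{C'}$ is a pair of products $c_1,c_2\in C$ of $(a_1,a_2)$ and $(b_1,b_2)$ together with a pair of braidings $c_1',c_2'\in C'$ of $(a_1,b_1)$ and $(a_2,b_2)$, and the leg must return a braiding of $(a_1 a_2,\,b_1 b_2)$. This is precisely the interchange supplied by $\gamma$ of \ref{fig:OrdBraid}: as in Lemma~\ref{lem:C'IsFactSpace} the leg is obtained by composing with $\gamma$ and projecting, the difference being that we now use $\gamma$ in the \emph{lax} direction, which is why the relevant hypothesis is lax here as against oplax in Lemma~\ref{lem:C'IsFactSpace}. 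To check associativity I would, exactly as there, rewrite the two triple-product prestacks as iterated fibre products and reduce the associativity constraint to that of $C$; the one new feature is that the product correspondence on $Y\times Y$ involves two copies of $C$, so the reduction applies the associativity isomorphism of $C$ to each factor, with the hexagon identity \ref{fig:BraidingFactSpace} for $\tilde\sigma=\gamma_{23}$ from Proposition~\ref{prop:OrdBraidFactSpace} reconciling the braiding across the two bracketings. The pentagon and unit axioms are verified as before. I expect this associativity isomorphism --- keeping the two applications of $C$-associativity compatible with the single hexagon --- to be the main technical obstacle; everything else is formal once the pullback square is in place.

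Finally it is worth recording the conceptual reason the statement holds, which also explains its symmetry with Lemma~\ref{lem:C'IsFactSpace}. By Dunn additivity a (strong) classical braided factorisation space is an associative algebra object in the category of classical factorisation spaces, the inner structure being $C$ and the outer multiplication being the correspondence $C'$. The source and target legs of the outer multiplication are always maps of the inner ($C$-)factorisation spaces; Lemma~\ref{lem:C'IsFactSpace} records this for the target leg $p'$, and the present lemma records it for the source leg $q'$.
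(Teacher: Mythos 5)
Your construction diverges from the paper's in a way that matters, and the divergence opens a genuine gap. First, the convention: you arrange for the \emph{left} square of the analogue of (\ref{fig:RespectsFactStructures}) to be a pullback, i.e.\ you treat $q'$ in the pushforward sense. The paper's proof instead makes the \emph{right} square of (\ref{fig:C'ToYxYIsFact}) a pullback, i.e.\ the sense of (\ref{fig:RespectsFactStructures2}) in which ${q'}^*$ carries factorisation categories on $Y\times Y$ to factorisation categories on $C'$ --- and that is the sense actually used later, when ${q'}^*(A\boxtimes A)$ is made to act on $\otimes'_\El M$. So the two lemmas are \emph{not} symmetric in the way your closing paragraph suggests: Lemma \ref{lem:C'IsFactSpace} is the pushforward-type statement for $p'$ (left square a pullback), while the present lemma is the pullback-type statement for $q'$ (right square a pullback). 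Concretely, the paper's multiplication correspondence on $C'$ has head $(C\times C)\times_{Y\times Y}C'$, the fibre product of $p\times p$ and $q'$, with right leg the projection to $C'$ and left leg given by $\gamma$ (in the lax direction) followed by the projection $(C'\times C')\times_{Y\times Y}C\to C'\times C'$.

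Second, and more seriously, your candidate head $(C\times C)\times_{Y^2\times Y^2}(C'\times C')$ does not admit the right leg you assert. A point of your object is a quadruple $(c_1,c_2,c_1',c_2')$: inner products of $(a_1,a_2)$ and of $(b_1,b_2)$ together with outer products of $(a_1,b_1)$ and of $(a_2,b_2)$, with \emph{no} compatibility among the four outputs. By contrast, $\gamma$ relates configurations of three products: its source $(C\times C)\times_{Y\times Y}C'$ carries two inner products plus one outer product \emph{of their outputs}, and its target $(C'\times C')\times_{Y\times Y}C$ carries two outer products plus one inner product \emph{of their outputs}. Your quadruple maps to neither: to feed $\gamma$ forwards you would need an outer product of $(p(c_1),p(c_2))$, and to feed it backwards you would need an inner product of $(p'(c_1'),p'(c_2'))$, and you have neither datum. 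So ``compose with $\gamma$ and project'' does not typecheck, and no leg $C_{C'}\to C'$ lying over $p\times p$ is produced. The fix is to adopt the paper's head: with $(C\times C)\times_{Y\times Y}C'$ the right square is a pullback by construction, the projection to $C'$ is the right leg for free, $\gamma$ (lax direction) supplies the left leg, and the only remaining work is verifying that this top row is associative and unital, which reduces to the associativity of $C$ exactly as in Lemma \ref{lem:C'IsFactSpace}.
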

\begin{proof}
  We have a commuting diagram whose right square is a pullback
  \begin{equation}\label{fig:C'ToYxYIsFact}
    \begin{tikzcd}[row sep = {15pt}, column sep = {45pt,between origins}]
    C'\times C'\ar[d] &[50pt](C\times C)\times_{Y\times Y}C'\ar[r]\ar[d]\ar[l,dashed] &[30pt]C'\ar[d] \\ 
    Y^2\times Y^2 &C\times C \ar[r,"p\times p"]\ar[l,"\sigma_{23}\cdot (q\times q)"'] & Y\times Y 
    \end{tikzcd}
  \end{equation}
where the dashed arrow is the map $\gamma:(C\times C)\times_{Y\times Y}C'\to (C'\times C')\times_{Y\times Y}C$ composed with the projection to $C'\times C'$. It suffices to check that both rows define factorisation structures. For the bottom row this is immediate, and for the top row the associativity and unit axioms are easy to check and proceed similarly to Lemma \ref{lem:C'IsFactSpace}.
\end{proof}

\subsection{Classical factorisation categories} \label{sec:ClassicalFactorisationCategories}  Next we need a theory of sheaves of categories on classical prestacks. Loosely speaking, we will construct a sheaves of categories version of \cite[Prop 9.0.2]{CF}.

To begin, notice that if $X$ is a classical prestack, then $\QCoh(X)$ is naturally endowed with a $t$-structure, and so we define the functor 
$$\ShvCat^{t \textup{-ex}}\ :\ \PreStk^{\textup{cl},\textup{corr}}\ \to\ \Cat_\infty$$
on affine prestacks by $X\mapsto \QCoh(X)\Md({\Pr}^{t \textup{-ex}})$, then and Kan extending as in \cite{Ga1}. Here $\Pr^{t \textup{-ex}}$ denotes the category of stable presentable categories equipped with an accesible $t$-structure, which by is equivalent via
$$(-)^\heartsuit\ :\ {\Pr}^{t \textup{-ex}} \stackrel{\sim}{\leftrightarrow}\ \textup{Ab}\ :\ D(-)$$
to the category $\textup{Ab}$ of Grothendieck abelian categories with exact functors between them, see the proof of \cite[Prop 9.0.2]{CF}.  The forgetful functor ${\Pr}^{t \textup{-ex}}\to \Cat_\infty$ induces a natural transformation $\ShvCat^{t \textup{-ex}}\to \ShvCat$ and hence a map of the associated map  of the associated symmetric monoidal cocartesian fibrations over $\PreStk^{\textup{cl},\textup{corr}}$, denoted 
$$\Groth^{t \textup{-ex}}\ =\ \Groth^{\textup{cl}}\to \Groth.$$
We call an object  $\El\in \Groth^{\textup{cl}}$ a \textit{classical sheaf of categories}, which may be viewed as taking values in Grothendieck abelian categories or equivalently in stable presentable categories  equipped with a presentable $t$-structure.

\begin{defn}
  A \textit{classical factorisation category} is an associative algebra $\El$ in $\Groth^{\textup{cl}}$. 
\end{defn}

In particular, it lies over a classical factorisation space $X$. As before, a \textit{classical factorisation $\Eb_n$-category} is an $\Eb_n$-algebra in $\Groth^{\textup{cl}}$, and we use the name ($\varnothing$, \textit{braided}, \textit{commutative}) \textit{classical factorisation category} when $n=1,2,\infty$. Recall by \cite[C.5.4.3]{Lu3} that $\textup{Ab}$ is a $(2,1)$-category, and so if $X$ is a classical affine prestack, then
   $$\QCoh(X)\Md({\Pr}^{t \textup{-ex}})\ \simeq\ \QCoh(X)^\heartsuit\Md(\textup{Ab})$$
   is also a $(2,1)$-category. It follows that $\ShvCat^{t \textup{-ex}}$ factors through the Duskin nerve functor $\Nt :\Cat_2\to \Cat_\infty$, and so it follows that
$\Groth^{\textup{cl}}$ is a $(2,1)$-category.

\begin{defn}
  A \textit{classical factorisation category} is an associative algebra object in $\Groth^{\textup{cl}}$. 
\end{defn}

Likewise \textit{classical factorisation $\Eb_n$-categories} are $\Eb_n$-algebra objects in $\Groth^{\textup{cl}}$, and we use the name ($\varnothing$, \textit{braided}, \textit{commutative}) \textit{classical factorisation category} when $n=1,2,\infty$. As in the previous section, we have
\begin{lemdefn}\label{lemdefn:BraidedFactCat}
  A \textit{classical factorisation category} is equivalent to a classical factorisation space $Y$ along with $\El\in \Groth^{\textup{cl}}$ equipped a map 
  $$\otimes_\El\ :\ q^*(\El\boxtimes\El)\ \to\ p^*\El$$
  satisfying an associativity condition, and a compatible unit $1_\El:\triv\to \El$. Likewise, a \textit{classical braided factorisation category} is a classical braided factorisation space $Y$ along with $\El\in \Groth^{\textup{cl}}$ equipped with with two classical factorisation structures 
  $$\otimes_\El\ :\ q^*(\El\boxtimes \El)\ \to\ p^*\El, \hspace{15mm} \otimes_\El'\ :\ {q'}^*(\El\boxtimes\El) \ \to\ {p'}^*\El$$
  which are compatible in the sense that we have a \textit{braiding} natural transformation\footnote{We use the following as notation for $\beta\ :\ \gamma^*\left(\otimes_\El(\otimes_\El'\boxtimes\otimes_\El')\right)\ \stackrel{\sim}{\to}\ \otimes_\El'(\otimes_\El\boxtimes \otimes_\El)$.}
  \begin{equation}\label{eqn:PreBraidingFactCat}
   \beta\ :\ \gamma^*\left((\, \otimes_\El'\,)\otimes_\El(\, \otimes_\El'\, )\right) \ \stackrel{\sim}{\to}\ \left((\, \otimes_\El\,)\otimes_\El'(\, \otimes_\El\, )\right)
  \end{equation}
  are identified under the isomorphism $\gamma$ between the correspondences in (\ref{fig:OrdBraid}). 
\end{lemdefn}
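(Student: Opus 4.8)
The plan is to run the same argument as in Lemma-Definitions \ref{lem:ClassicalAssocFactSpace} and \ref{lem:ClassicalFactSpace}, now for algebra objects in $\Groth^{\textup{cl}}$ rather than in $\PreStk^{\textup{cl},\textup{corr}}$; the key new input, namely that $\Groth^{\textup{cl}}$ is a $(2,1)$-category, has just been established (it factors through the Duskin nerve). First I would use that the forgetful functor $\Groth^{\textup{cl}}\to\PreStk^{\textup{cl},\textup{corr}}$ is a symmetric monoidal cocartesian fibration, so that taking $\Eb_n$-algebras — which commutes with the Grothendieck construction by the operadic straightening of section \ref{sec:Straightening} — yields a cocartesian fibration $\Eb_n\Ag(\Groth^{\textup{cl}})\to\Eb_n\Ag(\PreStk^{\textup{cl},\textup{corr}})$. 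Thus an $\Eb_n$-algebra $\El$ in $\Groth^{\textup{cl}}$ is the same as its image classical factorisation $\Eb_n$-space $Y$ — described concretely by Lemma-Definition \ref{lem:ClassicalAssocFactSpace} for $n=1$ and \ref{lem:ClassicalFactSpace} for $n=2$ — together with a lift along the fibration, which is precisely a fibrewise $\Eb_n$-monoidal structure on the classical sheaf of categories $\El\in\ShvCat^{t\textup{-ex}}(Y)$.

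For the associative case I would unwind this lift using the description of associative algebra objects in a monoidal $(2,1)$-category from \cite[4.1.6.17]{Lu2}, exactly as in the proof of \ref{lem:ClassicalAssocFactSpace}. Since a $1$-morphism in $\Groth^{\textup{cl}}$ out of $(Y\times Y,\El\boxtimes\El)$ consists of the product correspondence $C$ together with a functor of sheaves of categories over $C$, the multiplication $1$-morphism supplies exactly the map $\otimes_\El\colon q^*(\El\boxtimes\El)\to p^*\El$, the unit $1$-morphism supplies $1_\El\colon\triv\to\El$, and the associativity $2$-isomorphism together with the pentagon (the image of $\beta_\El$ being null-homotopic) gives the required associativity and unit conditions. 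For the braided case I would invoke Dunn additivity $\Eb_2\simeq\Eb_1\otimes\Eb_1$ as in \ref{lem:ClassicalFactSpace}, using that taking algebra objects commutes with Duskin nerves: an $\Eb_2$-algebra is then an associative algebra in associative algebras, producing the two factorisation structures $\otimes_\El,\otimes_\El'$ over the two correspondences $C,C'$, with the interchange $2$-morphism furnishing the braiding $\beta$ of (\ref{eqn:PreBraidingFactCat}) over the isomorphism $\gamma$ of (\ref{fig:OrdBraid}), the hexagon relations being automatic from the $\Eb_2$-structure.

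The main obstacle I anticipate is bookkeeping the fibrewise data correctly: one must check that the symmetric monoidal structure on $\Groth^{\textup{cl}}$ restricts on fibres to the external product $\boxtimes$ of sheaves of categories, and that the action of a correspondence on fibres is computed by $p_*q^*$, so that the abstract multiplication $1$-morphism genuinely becomes a functor $q^*(\El\boxtimes\El)\to p^*\El$ of sheaves of categories on $C$. This is where base change for sheaves of categories (see \cite[Sec. A.9]{Ras}) enters, guaranteeing that the pullbacks $q^*,p^*$ interact with $\boxtimes$ as required; once this identification is in place, the translation of the pentagon and interchange $2$-morphisms into the stated associativity, unit, and braiding conditions is routine and proceeds verbatim as in the two preceding lemma-definitions.
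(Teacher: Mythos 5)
Your proposal is correct and follows essentially the same route the paper intends: the paper gives no separate proof for this Lemma-Definition but prefaces it with ``As in the previous section,'' meaning one reruns the arguments of Lemma-Definitions \ref{lem:ClassicalAssocFactSpace} and \ref{lem:ClassicalFactSpace} (via \cite[4.1.6.17]{Lu2} and Dunn additivity) inside the $(2,1)$-category $\Groth^{\textup{cl}}$, using that the symmetric monoidal cocartesian fibration $\Groth^{\textup{cl}}\to\PreStk^{\textup{cl},\textup{corr}}$ identifies a $1$-morphism over a correspondence $C$ with a functor $q^*(\El\boxtimes\El)\to p^*\El$ of sheaves of categories on $C$. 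Your additional remarks on the fibrewise bookkeeping and base change are exactly the points the paper's construction of $\Groth$ already settles.
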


  If $Y$ is a lax classical braided factorisation space, $\El$ above is a (\textit{op-})\textit{lax} classical braided factorisation category if there is just a map rightwards (leftwards). Next,

\begin{prop}
   If $Y$ is a strong classical braided factorisation space,  then to give a classical braided factorisation category structure on $\El$ is the same as to give a factorisation category structure $\otimes_\El$ and a natural transformation also denoted $\beta :\otimes_\El \to  \tilde{\sigma}^*\otimes_\El$, i.e.
   \begin{equation}\label{fig:FactBraidingNonCommutativeDiagram}
  \begin{tikzcd}[row sep = 10pt, column sep = 30pt]
    q^*(\Cl\boxtimes\Cl)\ar[d,"\wr"]\ar[r,"\otimes_\Cl"{xshift=-0pt},""{name=U,inner sep=1pt,below}, bend left = 35] & p^*\Cl\ar[d,"\wr"]\\ 
    \tilde{\sigma}^*q^*(\Cl\boxtimes\Cl)\ar[r,"\tilde{\sigma}^*\otimes_\Cl"'{xshift=-0pt}, ""{name=D,inner sep=1pt}, bend right = 35] & \tilde{\sigma}^*p^*\Cl
    \arrow[Rightarrow, from=U, to=D, "\beta", shorten <= 5pt, shorten >= 5pt] 
    \arrow[Rightarrow, from=U, to=D, swap,"", shorten <= 5pt, shorten >= 5pt]
    \end{tikzcd}
  \end{equation}
   which satisfies the hexagon relations. It is strong if $\beta$ is a natural isomorphism. 
\end{prop}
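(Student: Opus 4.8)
The plan is to categorify the proof of Proposition \ref{prop:OrdBraidFactSpace}, replacing the isomorphisms of correspondences appearing there by natural transformations of functors of sheaves of categories. By Lemma-Definition \ref{lemdefn:BraidedFactCat}, a classical braided factorisation category structure on $\El$ over $Y$ is a pair of factorisation structures $\otimes_\El$ and $\otimes_\El'$, over $C$ and $C'$ respectively, together with the braiding (\ref{eqn:PreBraidingFactCat}), which I will write $\beta_\gamma$ to distinguish it from the $\beta$ of (\ref{fig:FactBraidingNonCommutativeDiagram}). Since $Y$ is a strong classical braided factorisation space, Proposition \ref{prop:OrdBraidFactSpace} fixes an identification $C\simeq C'$ under which $\tilde{\sigma}=\gamma_{23}$; after this identification both $\otimes_\El$ and $\otimes_\El'$ are functors $q^*(\El\boxtimes\El)\to p^*\El$ over the single correspondence $C$.

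First I would produce the reduced braiding $\beta$ from $\beta_\gamma$. Restricting the natural transformation $\beta_\gamma$ along the pullback of (\ref{fig:OrdBraid}) by $(\pt\times Y)\times(Y\times\pt)\to Y^2\times Y^2$ --- exactly the restriction used to pass from $\gamma$ to $\gamma_{23}$ in Section \ref{sec:ClassicalFactorisationSpaces} --- yields a natural transformation relating $\otimes_\El'$ to $\tilde{\sigma}^*\otimes_\El$, and hence, after identifying the second structure, the desired $\beta:\otimes_\El\to\tilde{\sigma}^*\otimes_\El$ of (\ref{fig:FactBraidingNonCommutativeDiagram}). The two hexagon relations for $\beta$ are then obtained by setting the inner pair of factors equal to the unit in the coherence diagram for $\beta_\gamma$, precisely as specialising $a_1=b_1=c_2=1$ and $a_1=b_2=c_2=1$ recovered the hexagon relations from (\ref{fig:LaxHexagonRelations}). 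Invertibility of $\beta$ is preserved by restriction, which gives the final \emph{strong} clause.

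Conversely, starting from $\otimes_\El$ and $\beta:\otimes_\El\to\tilde{\sigma}^*\otimes_\El$, I would reconstruct the second structure as the twist $\otimes_\El'\defeq\tilde{\sigma}^*\otimes_\El$ over $C'\simeq C$, and rebuild the full interchange $\beta_\gamma$ of (\ref{eqn:PreBraidingFactCat}) by pasting $\beta$ over the two composite correspondences of (\ref{fig:OrdBraid}). The hexagon relations for $\beta$ are exactly what is needed to make this pasted $2$-morphism compatible with $\gamma$, so that the result is a genuine braided factorisation category. To organise the bookkeeping I would phrase both directions as the statement that classical braided factorisation categories are $\Eb_2$-algebras in the $(2,1)$-category $\Groth^{\textup{cl}}$ and invoke Dunn additivity together with the Duskin-nerve description already used in Lemma-Definition \ref{lem:ClassicalFactSpace}, so that all higher coherences reduce to the space-level coherences of Proposition \ref{prop:OrdBraidFactSpace} paired with the naturality of $\beta$.

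The main obstacle is the same subtlety flagged after Proposition \ref{prop:OrdBraidFactSpace}: the restricted transformation $\beta$ does not in general determine the full interchange $\beta_\gamma$, so the reconstruction in the converse direction genuinely relies on $Y$ being strong. It is only because the full $\gamma$ can be rebuilt from $\tilde{\sigma}=\gamma_{23}$ via the fixed identification $C=C'$ that the analogous rebuilding of $\beta_\gamma$ from $\beta$ succeeds; verifying that this rebuilt $2$-morphism satisfies the full pentagon-type coherence in $\Groth^{\textup{cl}}$, rather than only the hexagon relations, is where the real work lies.
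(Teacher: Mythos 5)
Your proposal is correct and follows essentially the same route as the paper: the paper's (very terse) proof likewise obtains $\beta$ by pulling back the correspondences of (\ref{fig:OrdBraid}) along $(\pt\times Y)\times(Y\times\pt)\to Y^2\times Y^2$, which identifies $\otimes_\El$ with $\otimes_\El'$, and handles the converse by rebuilding the full interchange from the reduced braiding exactly as in Proposition \ref{prop:OrdBraidFactSpace}. Your additional framing via Dunn additivity and your explicit flagging of why strongness of $Y$ is needed for the reconstruction are consistent elaborations of the same argument rather than a different proof.
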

\begin{proof}
   Given a classical braided factorisation category $\El$, pull back the correspondences in (\ref{fig:OrdBraid}) along
   $$(\pt\times Y)\times (Y\times \pt)\ \to\ Y^2\times Y^2$$
   gives an identification $\otimes_\El\simeq \otimes_\El'$, and the identification of (\ref{eqn:PreBraidingFactCat}) gives the braiding $\beta$. The converse is proven as in the proof of Proposition \ref{prop:OrdBraidFactSpace}.
\end{proof}

In the above, if $Y$ is in addition a commutative factorisation space, this is called \textit{symmetric} if  $\beta\cdot (\tilde{\sigma}^*\beta)=\id$.

\subsubsection{Module categories} Let $\El$ be a lax classical braided factorisation category, and let $A\in \Gamma(Y,\El)$ be an algebra with respect to $\otimes_\El$. Denote its product by 
$$\otimes_\El q^*(A\boxtimes A)\ \stackrel{m}{\to}\ q^*A$$
and its category of factorisation modules by $\Cl=A\FactMd$.
\begin{prop}
  There is a bijection between 
  \begin{enumerate}
    \item equivalence classes of lifts along $\Cl=A\FactMd\to \El$ of the classical factorisation category structure $\otimes_\El'$, and
    \item equivalence classes of factorisation bialgebra structures on $A$ extending the factorisation algebra structure. 
  \end{enumerate}
\end{prop}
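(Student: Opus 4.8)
The plan is to transpose the proof of Lemma \ref{lem:MonoidalLift} into the factorisation setting, with the lax braiding $\beta$ of Lemma-Definition \ref{lemdefn:BraidedFactCat} playing the role of the interchange that allows the $\otimes_\El$-algebra structure on $A$ to interact with the $\otimes_\El'$-tensor product. Throughout I would argue fibrewise, using the description in the ``Fibrewise description'' subsection, and then check that the fibrewise constructions assemble into honest maps of sheaves of categories and sections over the relevant correspondences. Recall that $A$ is an algebra with respect to $\otimes_\El$, so an extending bialgebra structure amounts to a coproduct $\Delta'\colon A\to A\otimes_\El' A$, a section over $C'$, satisfying coassociativity and the usual compatibility between $m$ and $\Delta'$.

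For the forward direction (bialgebra $\Rightarrow$ lift), suppose $A$ carries such a $\Delta'$. For $A$-modules $M,N$ I would endow $M\otimes_\El' N$ with an $A$-action by the composite that (i) applies $\Delta'$ to split the acting copy of $A$ into $A\otimes_\El' A$, (ii) applies the braiding $\beta$ of (\ref{eqn:PreBraidingFactCat}) to reorganise $(A\otimes_\El' A)\otimes_\El(M\otimes_\El' N)$ into $(A\otimes_\El M)\otimes_\El'(A\otimes_\El N)$, and (iii) applies the two module actions. Associativity and unitality of this action reduce, after suppressing the geometry, exactly to the bialgebra compatibility of $m$ and $\Delta'$ together with the lax hexagon (\ref{fig:LaxHexagonRelations}) governing $\beta$, precisely as in the associative-algebra case. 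This produces the desired lift of $\otimes_\El'$ to $\Cl=A\FactMd$.

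Conversely (lift $\Rightarrow$ bialgebra), given a lift of $\otimes_\El'$ to $\Cl$, the object $A\otimes_\El' A$---being the $\otimes_\El'$-product of two copies of the free rank-one module $A$---acquires an $A$-module structure. Precomposing this action with the unit $1_A$, as in Lemma \ref{lem:MonoidalLift}, produces the coproduct $\Delta'\colon A\to A\otimes_\El' A$ as a section over $C'$; coassociativity and the bialgebra axiom follow from the associativity and unit coherence of the lifted factorisation structure. Finally I would verify the two assignments are mutually inverse on equivalence classes exactly as in Lemma \ref{lem:MonoidalLift}: starting from $\Delta'$ the reverse construction recovers it because acting on the unit undoes the splitting, and starting from a lift the forward construction reproduces it because the $A$-action on a general $M\otimes_\El' N$ is determined by its values on free modules.

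The hard part will be purely factorisation-theoretic bookkeeping rather than new algebra. The real work is to check that every fibrewise map above is compatible with the factorisation structure---that the constructed $A$-action and the coproduct $\Delta'$ are maps of sheaves of categories, respectively sections, over the correspondences $C'$, $(C'\times C')\times_{Y\times Y}C$, and their iterates, and that the associativity and unit conditions hold not merely in each fibre but over these iterated correspondences with the attendant pentagon and higher coherences. The inputs that make this go through are precisely the braiding $\beta$ and the fact, established in Lemmas \ref{lem:C'IsFactSpace} and \ref{lem:C'ToYxYIsFact}, that $C'\to Y$ and $C'\to Y\times Y$ are maps of classical factorisation spaces; organising these into the coherence data required for a lift of $\otimes_\El'$ is where I expect the main obstacle to lie.
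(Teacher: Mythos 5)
Your proposal is correct and follows essentially the same route as the paper: both directions are the factorisation analogue of Lemma \ref{lem:MonoidalLift}, with the coproduct extracted by letting $A$ act on $A\otimes_\El' A$ over $C'$ and evaluating at the unit, and the lift built from the $q'^*(A\boxtimes A)$-action on $M$ reorganised via the braiding (\ref{eqn:PreBraidingFactCat}) and composed with $\Delta'$. You also correctly identify Lemmas \ref{lem:C'IsFactSpace} and \ref{lem:C'ToYxYIsFact} as the inputs that make the geometric bookkeeping work, which is exactly how the paper's proof proceeds.
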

\begin{proof}
    First choose a lift of $\otimes_\El'$, which we denote by the same symbol. Then $A\otimes_\El' A$ defines an element of $\Gamma(C',{p'}^*\Cl)$. By Lemma \ref{lem:C'IsFactSpace} the map $C'\to Y$ is a map of factorisation spaces, and so the pullback of $A$ acts on it, i.e. 
  $${p'}^*A\otimes_\El(A\otimes_\El' A)\ \to\ {\overbar{p}}^*(A\otimes_\El' A).$$
  This is a map of sections of the pullback of $\El$ to $(C'\times C')\times_{Y\times Y}C$, 
    \begin{equation}
    \begin{tikzcd}[row sep = {15pt}, column sep = {45pt,between origins}]
      C'\times C'\ar[d] &[50pt] (C'\times C')\times_{Y\times Y}C\ar[r,"\overbar{p}"]\ar[d]\ar[l] &[30pt]C'\ar[d] \\ 
    Y\times Y &C\ar[l]\ar[r] &Y 
      \end{tikzcd}
  \end{equation}
  and we have denoted by $\otimes_\El$ the pullback of the factorisation structure to $(Y\times C')\times_{Y\times Y}C$. Finally, consider the pullback
  \begin{equation}
    \begin{tikzcd}[row sep = {15pt}, column sep = {45pt,between origins}]
      C'\times \pt\ar[d] &[50pt] C'\ar[d,"a"]\ar[l] &[30pt]   \\ 
      C'\times C'&[50pt](Y\times C')\times_{Y\times Y}C\ar[r,"\overbar{p}"]\ar[l] &[30pt] C'
      \end{tikzcd}
  \end{equation}
  We pull back by $a$ to get a map 
  $${p'}^*A\ =\ A\otimes_\El(1_\El\otimes_\El' 1_\El)\ \to\ a^*(A\otimes_\El(A\otimes_\El' A))\ \to \ a^*{\overbar{p}}^*(A\otimes_\El'A)\ =\ A\otimes_\El'A$$
  of sections of ${p'}^*\El$, which defines a factorisation coproduct with respect to $\otimes'_\El$.  Convesely, assume that $A$ is a factorisation bialgebra. To lift of $\otimes_\El'$ to a classical factorisation structure on $\Cl$ we need a functorial assignment to every $M\in \Gamma(C',{q'}^*(\Cl\boxtimes\Cl))$ a lift of $\otimes_\El'M$ to an element of $\Gamma(C',{p'}^*\Cl)$. It is sufficient to provide a factorisation module action 
  $${p'}^*A\otimes_{\El}(\otimes_\El'M)\ \to\ \otimes_\El'M.$$
  To begin with, by Lemma \ref{lem:C'ToYxYIsFact} the map $q':C'\to Y\times Y$ is a map of a factorisation spaces, so we have a factorisation algebra action  
  \begin{equation}\label{eqn:FactActionAxA}
    {q'}^*(A\boxtimes A)\otimes_{\El\boxtimes \El} M\ \to\ (\overbar{p}\times\overbar{p})^*M
  \end{equation}
  as a map in $(\overbar{p}\times\overbar{p})^*{q'}^*(\El\boxtimes\El)$, as in diagram (\ref{fig:C'ToYxYIsFact}):
  \begin{center}
    \begin{tikzcd}[row sep = {15pt}, column sep = {45pt,between origins}]
      C'\times C'\ar[d,"q'\times q'"] &[70pt](C'\times C')\times_{Y\times Y}C\ar[r,"\overbar{p}\times\overbar{p}"]\ar[d,"b"]\ar[l,"\sigma_{23}\cdot (\overbar{q}\times \overbar{q})"'] &[45pt]C'\ar[d,"q'"] \\ 
      Y^2\times Y^2 &C\times C \ar[r,"p\times p"]\ar[l,"\sigma_{23}\cdot (q\times q)"'] & Y\times Y 
      \end{tikzcd} 
  \end{center}
  Here we use the factorisation category structure of $\El\boxtimes \El$ on $Y^2$. We will now take (\ref{eqn:FactActionAxA}) and apply the factorisation category structure 
  $$(\overbar{p}\times\overbar{p})^*\otimes_\El'\ :\ (\overbar{p}\times\overbar{p})^*{q'}^*(\El\boxtimes\El)\ \to\ (\overbar{p}\times\overbar{p})^*{p'}^*\El,$$ 
  after using the compatibility (\ref{eqn:PreBraidingFactCat}) between $\otimes_\El$ and $\otimes_\El'$ to simplify the right hand side:
  $$(A\otimes_\El' A)\otimes_\El(\otimes_\El'M)\ \to\ \otimes'_\El M.$$
  Then, composing with the factorisation coproduct ${p'}^*A\to A\otimes_\El' A$ gives the desired action.
\end{proof}

\subsubsection{Fibrewise description} We spell out consequences of the definition of classical braided factorisation category in a more concrete way. We have an isomorphism 
$$(A\otimes_\El B)\otimes_\El'(C\otimes_\El D)\ \simeq \ (A\otimes_\El' C)\otimes_\El(B\otimes_\El' D)$$
for all $A,B,C,D\in \Gamma(\El)$, coming from the compatibility condition in (\ref{lemdefn:BraidedFactCat}).\footnote{We have abused notation by omitting pullback along the isomorphism (\ref{fig:OrdBraid}) from the notation.} Next, on the fibre over a point  $c:\pt\to C$ this structure gives the following. If $c$ has images 
 \begin{center}
  \begin{tikzcd}[row sep = 10pt, column sep = 15pt]
    &[-5pt]c\ar[rd,"p",|->]\ar[ld,swap,"q",|->] &  \\
    (y_1,y_2) & & y 
    \end{tikzcd} 
\end{center}
Then pulling back the factorisation braiding (\ref{fig:FactBraidingNonCommutativeDiagram}) to $c$ gives
\begin{center}
\begin{tikzcd}[row sep = 15pt, column sep = 35pt]
  \El_{y_1}\otimes\El_{y_2}\ar[d,"\wr"]\ar[r,"\otimes_{\El,c}",""{name=U,inner sep=1pt,yshift=-5pt}] & \El_y\ar[d,equals]\\ 
  \El_{y_2}\otimes\El_{y_1}\ar[r,"\otimes_{\El,\tilde{\sigma}(c)}"',""{name=D,inner sep=1pt}]&\El_y
  \arrow[Rightarrow, from=U, to=D, "\beta_{c}", shorten <= 5pt, shorten >= 5pt] 
  \arrow[Rightarrow, from=U, to=D, swap,"\wr", shorten <= 5pt, shorten >= 5pt]
\end{tikzcd}
\end{center}
or in other words an isomorphism $A_1\otimes_{\El,c}A_2\simeq A_2\otimes_{\El,\tilde{\sigma}(c)}A_1$ in $\El_y$ for every pair of objects $A_i\in \El_{y_i}$. If $y=y_1=y_2$, which is the case if for instance $y$ is the unit of $Y$, this endows $\El_y$ with the structure of a braided monoidal category. So in general one might think of $\beta$ as being a braided monoidal structure ``spread out'' over $Y$.

\subsection{Factorisation quantum groups and spectral $R$-matrices} We can now define factorisation quantum groups. Just as usual $R$-matrices were defined, we define 
\begin{defn}
  A (\emph{strong}) \emph{spectral $R$-matrix} for a factorisation bialgebra $A$ in strong classical braided factorisation category $(Y,\El)$ is an element $R \in \Gamma(Y,A\otimes_\El' A)$ satisfying the hexagon relations\footnote{The hexagon relations are equalities of maps 
  $${p'}^*A\otimes_\El'A \ \to\ A\otimes_\El' A\otimes_\El'A,\hspace{15mm}A\otimes_\El' {p'}^*A\ \to\ A\otimes_\El' A\otimes_\El'A$$
  in the pullback of $\El$ to the associativity correspondence, and almost cocommutativity is an equality of maps ${p'}^*A\to A\otimes_\El'A$ in ${p'}^*\El$. } 
  \begin{equation} \label{eqn:SpectralRMatrixHexagon}
    (\id\otimes'_\El \Delta)\, R\ =\ R_{12}\cdot R_{13}, \hspace{15mm} (\Delta\otimes_\El' \id)\, R\ =\ R_{12}\cdot R_{23}
  \end{equation}
    and almost cocommutativity: 
  \begin{equation}\label{eqn:SpectralRMatrixAlmostCommutativity}
    \Delta\ =\ R\cdot \sigma(\Delta)\cdot R^{-1},
  \end{equation}
  where $\cdot$ means multiplication with respect to the factorisation product on $A$ and $\sigma$ is the braiding in $\El$. A \emph{lax spectral $R$-matrix} is the same but $R$ is allowed to not be an isomorphism.
\end{defn}

Next, we prove an analogue of Proposition \ref{prop:RMatrixBraidedMonoidal}:

\begin{theorem}\label{thm:SpectralRMatrixBraidedMonoidal} \label{thm:TannakaFactorisation}
  Let $A$ be a factorisation bialgebra in a strong classical braided factorisation category $\El$ over strong braided factorisation space $Y$. Then $\Cl=A\FactMd$ has a strong braiding if and only if $A$ has a spectral $R$-matrix. 
\end{theorem}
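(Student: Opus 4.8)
The plan is to mirror the proof of Proposition~\ref{prop:RMatrixBraidedMonoidal} in the factorisation setting, translating each classical condition into its spectral analogue by means of the fibrewise description established above. The monoidal part is already in hand: by the preceding Proposition on module categories, a lift of $\otimes_\El'$ along $\Cl=A\FactMd\to\El$ is the same datum as a factorisation bialgebra structure on $A$. Since $A$ is given as a factorisation bialgebra, $\Cl$ inherits two factorisation category structures $\otimes_\Cl$ and $\otimes_\Cl'$, and the only remaining task is to produce a braiding $\beta_\Cl$ between them compatible with the strong braided structure on $Y$ and $\El$.

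For the forward direction, given a spectral $R$-matrix $R\in\Gamma(Y,A\otimes_\El' A)$ I would define the candidate braiding by $\beta_\Cl=\beta_\El\cdot(R\cdot(-))$, where $\beta_\El$ is the braiding furnished by the strong classical braided factorisation category structure on $\El$ and $R\cdot(-)$ denotes left multiplication by $R$ with respect to the factorisation product on $A$. One then checks three things. First, $\beta_\Cl$ is $A$-linear, i.e.\ it descends to a morphism of factorisation modules; this is the factorisation analogue of the commuting square~\eqref{fig:ALinearityBraiding} and follows from almost cocommutativity~\eqref{eqn:SpectralRMatrixAlmostCommutativity}, which rearranges to say that multiplication by $R$ intertwines the $\sigma(\Delta)$-action with the $\Delta$-action. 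Second, $\beta_\Cl$ satisfies the hexagon relations, a direct consequence of the two hexagon relations~\eqref{eqn:SpectralRMatrixHexagon} for $R$ together with counit compatibility. Third, $\beta_\Cl$ is an isomorphism since $\beta_\El$ is and $R$ is invertible.

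For the converse, given a strong braiding $\beta_\Cl$ on $\Cl$ I would recover the $R$-matrix by evaluating on free modules: pulling $\beta_\Cl$ back to $C'$ and applying it to the section $1_A\otimes_\El' 1_A$ of the pullback of $A\otimes_\El' A$ produces an element $R\in\Gamma(Y,A\otimes_\El'A)$, exactly as $R=\beta_{A,A}(1_A\otimes 1_A)$ did classically. Naturality of $\beta_\Cl$ then forces the hexagon relations and almost cocommutativity. The fibrewise description established above guarantees that these global identities are equivalent to their fibrewise counterparts, where on the fibre over a point of $C$ they reduce to the statement of Proposition~\ref{prop:RMatrixBraidedMonoidal} for the braided monoidal category $\El_y$.

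The main obstacle is the bookkeeping at the level of correspondences. Unlike the classical case, where everything lives in a single symmetric monoidal category, here multiplication by $R$, the braiding $\beta_\El$, and the two coproducts live over the correspondence spaces $C$, $C'$ and their iterated fibre products, and each defining relation for $R$ is an equality over a specific such space (as recorded in the footnotes to the definition of spectral $R$-matrix). Verifying that~\eqref{eqn:SpectralRMatrixHexagon} and~\eqref{eqn:SpectralRMatrixAlmostCommutativity} imply the hexagon and linearity for $\beta_\Cl$ therefore requires carefully matching these pullbacks; this is where Lemmas~\ref{lem:C'IsFactSpace} and~\ref{lem:C'ToYxYIsFact}, asserting that $C'\to Y$ and $C'\to Y\times Y$ are maps of factorisation spaces, do the essential work, allowing the factorisation product and coproduct to interact coherently across the braiding correspondence $\gamma$ of~\eqref{fig:OrdBraid}.
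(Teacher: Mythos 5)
Your proposal is correct and follows essentially the same route as the paper's proof: the braiding is built as (action by) $R$ followed by the background braiding $\sigma_\El$ of $\El$ (the paper writes this as unit insertion, multiplication by $R$, action, then $\sigma_M$), with $A$-linearity coming from almost cocommutativity and the hexagons from (\ref{eqn:SpectralRMatrixHexagon}); conversely the spectral $R$-matrix is recovered by evaluating the braiding on the unit section $1_A\otimes_\El' 1_A$, exactly as in the paper's $R=\beta_{A,A}\sigma(1)$. The supporting role you assign to the module-category Proposition and to Lemmas~\ref{lem:C'IsFactSpace} and~\ref{lem:C'ToYxYIsFact} also matches how the paper organises the argument.
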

\begin{proof}
   If $\beta$ is a strong braiding for $\Cl$, then $R=\beta_{A,A}\sigma (1)$ defines a spectral $R$-matrix, where $\sigma$ is the braiding isomorphism for $\El$. It satisfies (\ref{eqn:SpectralRMatrixHexagon}) by the hexagon relations for $\beta$ and $\sigma$, and satisfies (\ref{eqn:SpectralRMatrixAlmostCommutativity}) by $A$-linearity of $\beta$. Conversely, if $R$ is a spectral $R$-matrix and $M\in \Gamma(C',{q'}^*(\Cl\boxtimes\Cl))$, then 
   $$\beta_M\ :\ \otimes_\El'M\ \stackrel{1_A\otimes_\El' 1_A}{\to}\ (A\otimes_\El'A)\otimes_\El(\otimes_\El'M)\ \stackrel{R}{\to}\ (A\otimes_\El'A)\otimes_\El(\otimes_\El'M)\ \stackrel{\act}{\to}\otimes_\El'M\ \stackrel{\sigma_M}{\to}\ \otimes_\El'M,$$
   and varying over all $M$, defines a braiding on the lift of $\otimes_\El'$ to $\Cl$.
\end{proof}
 It is easy to show that any spectral matrix $R$ satisfies the \textit{Spectral Yang-Baxter equation}:
\begin{center}
  \begin{tikzpicture}[scale=0.7]
  \begin{knot}[
    clip width=5,
    flip crossing=1,
    flip crossing=2,
    flip crossing=3,
    flip crossing=4,
    flip crossing=5,
    flip crossing=6,
  ]
  \strand[black,ultra thick] (0,0) .. controls +(0,1.5) and +(0,-1.5) .. (2,3);
  \strand[black,ultra thick] (1,0) .. controls +(0,1) and +(0,-1) .. (0,1.5);
  \strand[black,ultra thick] (0,1.5) .. controls +(0,1) and +(0,-1) .. (1,3);
  \strand[black,ultra thick] (2,0) .. controls +(0,1) and +(0,-1) .. (0,3);
  \node[] at (3.5,1.5) {$=$};
  \node[] at (14,2) {$R_{23}(y_2,y_3)\cdot R_{13}(y_1,y_3)\cdot R_{12}(y_1,y_2)$};
  \node[] at (17,1) {$\ =\ R_{12}(y_1,y_2)\cdot R_{23}(y_2,y_3)\cdot R_{13}(y_1,y_3).$};

  \strand[black,ultra thick] (5,0) .. controls +(0,1.5) and +(0,-1.5) .. (7,3);
  \strand[black,ultra thick] (6,0) .. controls +(0,1) and +(0,-1) .. (5+2,1.5);
  \strand[black,ultra thick] (5+2,1.5) .. controls +(0,1) and +(0,-1) .. (6,3);
  \strand[black,ultra thick] (7,0) .. controls +(0,1) and +(0,-1) .. (5,3);
  
  \end{knot}
  
  \node[above] at (0,3) {$y_1$};
  \node[above] at (1,3) {$y_2$};
  \node[above] at (2,3) {$y_3$};
  
  \node[below] at (0,0) {$y_3$};
  \node[below] at (1,0) {$y_2$};
  \node[below] at (2,0) {$y_1$};

  \node[above] at (5,3) {$y_1$};
  \node[above] at (6,3) {$y_2$};
  \node[above] at (7,3) {$y_3$};
  
  \node[below] at (5,0) {$y_3$};
  \node[below] at (6,0) {$y_2$};
  \node[below] at (7,0) {$y_1$};
  \end{tikzpicture}
  \end{center}
   In Theorem \ref{thm:SpectralRMatrixBraidedMonoidal}, we get a symmetric factorisation category structure if in addition $R$ is \textit{symmetric}:
\begin{center}
  \begin{tikzpicture}[scale=0.7]
  \begin{knot}[
    clip width=5,
    flip crossing=1,
  ]
  \strand[black,ultra thick] (1,0) .. controls +(0,1) and +(0,-1) .. (0,1.5);
  \strand[black,ultra thick] (0,1.5) .. controls +(0,1) and +(0,-1) .. (1,3);
  \strand[black,ultra thick] (0,0) .. controls +(0,1) and +(0,-1) .. (1,1.5);
  \strand[black,ultra thick] (1,1.5) .. controls +(0,1) and +(0,-1) .. (0,3);
  \node[] at (2.5,1.5) {$=$};
  \node[] at (13,1.5) {$R_{12}(y_1,y_2)R_{21}(y_2,y_1)\ =\ \id.$};
  
  \strand[black,ultra thick] (4,0) .. controls +(0,1.5) and +(0,-1.5) .. (4,3);
  \strand[black,ultra thick] (5,0) .. controls +(0,1.5) and +(0,-1.5) .. (5,3);
  \end{knot}
  
  \node[above] at (0,3) {$y_1$};
  \node[above] at (1,3) {$y_2$};
  
  \node[below] at (0,0) {$y_1$};
  \node[below] at (1,0) {$y_2$};
  
  \node[above] at (4,3) {$y_1$};
  \node[above] at (5,3) {$y_2$};
  
  \node[below] at (4,0) {$y_1$};
  \node[below] at (5,0) {$y_2$};
  \end{tikzpicture}
  \end{center}
  Here, $R(y_1,y_2)$ and $R(y_2,y_3)$ denote the pullbacks of $R\boxtimes 1_A$ and $1_A\boxtimes R$ along the leftwards maps in 
  \begin{center}
  \begin{tikzcd}[row sep = {30pt,between origins}, column sep = {45pt,between origins}]
    & (C\times Y)\times_{Y\times Y}C\ar[rd]\ar[ld] & &[25pt]& (Y\times C)\times_{Y\times Y}C\ar[rd]\ar[ld] &   \\ 
    C\times Y& & C & Y\times C& & C &
  \end{tikzcd}
  \end{center}
and likewise write $R(y_1y_2,y_3)$ and $R(y_1,y_2y_3)$ for the pullbacks of $R$ along the rightwards maps. Similarly, denote the application of the braiding $\sigma_\El R(y_1,y_2)$ by $R(y_1,y_2)$, and $R(y_1,y_3)$ denotes the pullback of $R\boxtimes 1_A$ along the dashed map
\begin{center}
\begin{tikzcd}[row sep = {30pt,between origins}, column sep = {45pt,between origins}]
  &&[-10pt]&& (C\times Y)\times_{Y\times Y}C\ar[ld]\ar[llld,dashed, bend right = 10] & \\ 
  &C\times Y\ar[ld,"q\times\id"']&&C\times Y\ar[ld,"q\times \id"]\ar[ll,"\sim"]& & \\
  Y\times Y\times Y&&Y\times Y\times Y\ar[ll,"\sigma_{23}"',"\sim"]&& & 
\end{tikzcd}
\end{center}
We also write $R(y_i,y_j)=R_{ij}(y_i,y_j)=R_{ij}$.
\begin{defn}
  A factorisation bialgebra $A\in \Gamma(Y,\El)$ equipped with a spectral $R$-matrix is called a \textit{factorisation quantum group} or \textit{quasitriangular factorisation bialgebra}. 
\end{defn}

Likewise, we have a lax version of the above, an analogue of Proposition \ref{prop:LaxRMatrixBraidedMonoidal}:

\begin{prop}
  Let $A$ be a factorisation bialgebra in a lax classical braided factorisation category $(Y,\El)$. Then every element 
  $$R\ \in\ (A\otimes_\El A)\otimes_\El'(A\otimes_\El A)$$
  satisfying the factorisable analogues of (\ref{eqn:LaxRMatrixHexagon}), (\ref{eqn:LaxRMatrixAlmostCocommutativity}) and compatiblity with the counit defines a lax braiding on the factorisation category structure on $\Cl=A\FactMd$.
\end{prop}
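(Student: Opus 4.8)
The plan is to run the argument of Proposition~\ref{prop:LaxRMatrixBraidedMonoidal} ``spread out'' over $Y$, combining its lax $R$-matrix construction with the factorisation bookkeeping already assembled in the proof of Theorem~\ref{thm:SpectralRMatrixBraidedMonoidal}. First I would invoke the module-category Proposition to fix, from the factorisation bialgebra structure on $A$, the lift of $\otimes_\El'$ along $\Cl=A\FactMd\to\El$ together with its factorisation coproducts $\Delta,\Delta'$ (for $\otimes_\El$ and $\otimes_\El'$, exactly as in Proposition~\ref{prop:LaxRMatrixBraidedMonoidal}); this lifted $\otimes_\El'$-structure is what we will braid against the lift of $\otimes_\El$.

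Next I would write down the candidate lax braiding. Exactly as $\beta=\tilde{R}\cdot\sigma_\El$ in Proposition~\ref{prop:LaxRMatrixBraidedMonoidal}, I set $\beta=R\cdot\sigma_\El$, where $\sigma_\El$ is the lax factorisation braiding of $\El$ supplying the underlying map $\gamma^*\big((\otimes_\El')\otimes_\El(\otimes_\El')\big)\to (\otimes_\El)\otimes_\El'(\otimes_\El)$ of (\ref{eqn:PreBraidingFactCat}), and where $R$ acts by left $\otimes_\El$-multiplication on the fourfold product $(A\otimes_\El A)\otimes_\El'(A\otimes_\El A)$. To interpret this multiplication as a genuine map of sections over the correct iterated correspondence in $Y$, I would reuse the device from Theorem~\ref{thm:SpectralRMatrixBraidedMonoidal}: by Lemma~\ref{lem:C'ToYxYIsFact} the map $C'\to Y\times Y$ is a map of factorisation spaces, so $A\boxtimes A$ acts factorisably on the relevant pullback, and by Lemma~\ref{lem:C'IsFactSpace} the map $C'\to Y$ is too, so after applying $(\overbar{p}\times\overbar{p})^*\otimes_\El'$ and the compatibility (\ref{eqn:PreBraidingFactCat}) one obtains the action of $R$ on $\otimes_\El' M$ for every $M\in\Gamma(C',{q'}^*(\Cl\boxtimes\Cl))$.

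With $\beta$ in place, the remaining steps are diagram chases identical in form to the two model proofs. I would first check that $\beta$ lifts from $\El$ to $\Cl=A\FactMd$, i.e. is $A$-linear: this is the factorisable analogue of the commuting square in the footnote to Proposition~\ref{prop:LaxRMatrixBraidedMonoidal}, and it holds precisely because $R$ satisfies the factorisable analogue of lax almost cocommutativity (\ref{eqn:LaxRMatrixAlmostCocommutativity}), which equates the two ways of conjugating $\sigma_\El\cdot(\Delta'\otimes\Delta')\Delta(-)$ and $(\Delta\otimes\Delta)\Delta'(-)$ by $R$. I would then verify the lax hexagon relation (\ref{fig:LaxHexagonRelations}) for $\beta$ and compatibility with the unit: over the double-product associativity correspondences the hexagon reduces to the factorisable analogue of (\ref{eqn:LaxRMatrixHexagon}), and the unit condition to counit compatibility. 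Since we ask $\beta$ only to be a map and not an isomorphism, no invertibility of $R$ enters, yielding the lax conclusion.

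The main obstacle is the second step: realising ``left multiplication by $R$'' as an honest morphism of sheaves of categories over the correct iterated correspondence, and checking that this factorisation-algebra action commutes with $\sigma_\El$. In the non-factorisation setting of Proposition~\ref{prop:LaxRMatrixBraidedMonoidal} this is plain multiplication inside a fourfold tensor product; here it must be pulled back along the associativity and braiding correspondences of $Y$, and the coherence of this pullback with the action is exactly what Lemmas~\ref{lem:C'IsFactSpace} and~\ref{lem:C'ToYxYIsFact} are designed to guarantee. Once this morphism is correctly assembled, the verifications of $A$-linearity, of the hexagon, and of the unit are formally the same as before.
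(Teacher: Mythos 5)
Your proposal is correct and matches the intended argument: the paper states this Proposition without proof, leaving it as the evident combination of the proof of Proposition \ref{prop:LaxRMatrixBraidedMonoidal} (where $\beta=\tilde{R}\cdot\sigma_\El$ and almost cocommutativity gives $A$-linearity) with the factorisation bookkeeping of Theorem \ref{thm:SpectralRMatrixBraidedMonoidal} (Lemmas \ref{lem:C'IsFactSpace} and \ref{lem:C'ToYxYIsFact} realising multiplication by $R$ as a map of sections over the relevant correspondences). Your assembly of these two ingredients, including the observation that no invertibility of $R$ is needed for the lax conclusion, is exactly the proof the paper intends.
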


Such an element $R$ is called a \textit{lax spectral $R$-matrix}, and $A$ is called a \textit{lax factorisation quantum group}.

\subsubsection{Comodule versions} If $A$ is a factorisation \textit{co}algebra in (lax) classical braided factorisation category $\El$, with coproduct denoted $\Delta':A\to A\otimes'_\El A$, we can dualise the previous sections. As before we assume that in addition $\otimes_\El$ has a strong braiding compatibly with the lax braiding between $\otimes_\El$ and $\otimes_\El'$. 

\begin{prop}
   A lift of the monoidal structure along $A\CoMd\to \El$ is equivalent to a bialgebra structure on $A$. Let $A$ have two bialgebra structures with products
   $$m \ :\ A\otimes_\El A\ \to\ A, \hspace{15mm} m' \ :\ A\otimes'_\El A\ \to\ A.$$
  Then there is a lax braiding of the monoidal structures $\otimes_\El$ and $\otimes'_\El$ on $A\CoMd$ if and only if there is a map in $\Gamma((C\times C')\times_{Y\times Y}C,\tilde{p}^*\El)$,
  $$\tilde{R}^*\ :\ (A\otimes_\El A)\otimes'_\El(A\otimes_\El A)\ \to\ \tilde{p}^*1_\El$$
  satisfying the (duals of) the relations (\ref{eqn:LaxRMatrixHexagon}) and (\ref{eqn:LaxRMatrixAlmostCocommutativity}), and compatibly with the unit. Here $\tilde{p}$ denotes the projection to $Y$.
\end{prop}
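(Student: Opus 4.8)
The plan is to dualise, arrow by arrow, the two module-side results just established: the module-category Proposition identifying lifts of the tensor structure with factorisation bialgebra structures, and Theorem \ref{thm:SpectralRMatrixBraidedMonoidal} together with its lax analogue (the factorisation version of Proposition \ref{prop:LaxRMatrixBraidedMonoidal}). Write $\Cl = A\CoMd$. For the first assertion I would dualise the proof of the module-category Proposition verbatim: a lift of the tensor structure along the forgetful functor $\Cl \to \El$ is the datum of a functorial $A$-comodule structure on the cotensors compatible with the factorisation structure, and reversing every structure map in that argument turns the factorisation coproduct used there into a factorisation product, i.e.\ into exactly the extra bialgebra datum claimed. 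Crucially, the two inputs to that argument, Lemma \ref{lem:C'IsFactSpace} (that $C'\to Y$ is a map of factorisation spaces) and Lemma \ref{lem:C'ToYxYIsFact} (that $C'\to Y\times Y$ is such a map), concern only the underlying factorisation space $Y$ and are insensitive to the direction of the algebra structure maps, so they apply unchanged; one simply uses the resulting coactions where the module-side proof used actions.

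For the equivalence in the second assertion I would run both directions of Theorem \ref{thm:SpectralRMatrixBraidedMonoidal} with arrows reversed. Given a lax braiding $\beta$ of $\otimes_\El$ and $\otimes'_\El$ on $\Cl$, I extract $\tilde{R}^*$ by evaluating $\beta$ on the cofree comodules built from $A\otimes_\El A$ and capping off with the counit of $A$, landing in $\tilde{p}^*1_\El$ over the correspondence $(C\times C')\times_{Y\times Y}C$; this is dual to the formula $R=\beta_{A,A}\sigma(1)$. The hexagon relations for $\beta$ then translate into the dual of (\ref{eqn:LaxRMatrixHexagon}) and counit-compatibility of $\El$ into the dual unit axiom. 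Conversely, from a map $\tilde{R}^*$ I build $\beta$ on an arbitrary $M\in\Gamma(C',{q'}^*(\Cl\boxtimes\Cl))$ by coacting by $A\otimes_\El A$ in both slots (the coactions furnished by Lemmas \ref{lem:C'IsFactSpace} and \ref{lem:C'ToYxYIsFact} exactly as the actions were in the module case), applying $\tilde{R}^*$ to cap off the resulting auxiliary factor, and composing with the braiding $\sigma$ of $\El$; this is the arrow-reversed analogue of the composite $\beta_M$ in the proof of Theorem \ref{thm:SpectralRMatrixBraidedMonoidal}.

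The hard part will be the colinearity check, namely that the $\beta$ constructed from $\tilde{R}^*$ is a morphism of $A$-comodules; this is where the dual of almost cocommutativity (\ref{eqn:LaxRMatrixAlmostCocommutativity}) is used, dualising the commuting square from the proof of Proposition \ref{prop:LaxRMatrixBraidedMonoidal}. The bookkeeping is delicate because $\tilde{R}^*$ and the two coproducts and products now live over distinct correspondences, which must be identified using the braiding isomorphism $\gamma$ of (\ref{fig:OrdBraid}) and the compatibility (\ref{eqn:PreBraidingFactCat}) between $\otimes_\El$ and $\otimes'_\El$. One tempting shortcut is to pass to fibrewise opposite categories and simply quote the module-side Proposition, since a coalgebra in $\El$ is an algebra in $\El^{\textup{op}}$ and comodules become modules; I would \emph{avoid} this route, because the opposite of a nonzero Grothendieck abelian category is never Grothendieck abelian, so $\El^{\textup{op}}$ is not an object of $\Groth^{\textup{cl}}$. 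Instead I would carry out the dualization directly on structure maps, which only ever reverses arrows in diagrams over the fixed factorisation space $Y$ and so never leaves $\Groth^{\textup{cl}}$.
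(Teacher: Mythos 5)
The paper gives no proof of this proposition — it is stated immediately after the remark that one ``can dualise the previous sections'' — so your plan of reversing the arrows in the module-side Proposition and in Theorem \ref{thm:SpectralRMatrixBraidedMonoidal} (with its lax analogue) is exactly the argument the paper intends, and your execution is correct: extracting $\tilde{R}^*$ by capping $\beta$ with the counit, rebuilding $\beta$ from $\tilde{R}^*$ via the coactions supplied by Lemmas \ref{lem:C'IsFactSpace} and \ref{lem:C'ToYxYIsFact}, and using the dual of (\ref{eqn:LaxRMatrixAlmostCocommutativity}) for the colinearity check. Your further observation that one cannot shortcut the dualisation by passing to $\El^{\textup{op}}$, since the opposite of a nonzero Grothendieck abelian category is never Grothendieck abelian, is a genuine point the paper leaves implicit.
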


In the above case, we also call $A$ a \textit{factorisation lax op-quantum group}, and $R^*$ a \textit{factorisation lax op-$R$-matrix}. As in Corollary \ref{cor:LaxRMatrixBraidedMonoidal}, we have a simpler version of the above:

\begin{cor}
  For any map in $\Gamma(C',{p'}^*\El)$
  $$R^*\ :\ A\otimes_\El'A\ \to\ {p'}^*1_\El$$
  satisfying the dual conditions to (\ref{eqn:LaxRMatrixHexagonSimple}) and (\ref{eqn:LaxRMatrixAlmostCocommutativitySimple}), and compatibility with the unit, the composition $\tilde{R}^*=(\epsilon\otimes_\El \id)\otimes'_\El(\id\otimes_\El\epsilon)(R^*)$ defines a factorisation op-$R$-matrix. 
\end{cor}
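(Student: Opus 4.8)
The plan is to treat this as the comodule, factorisation counterpart of Corollary \ref{cor:LaxRMatrixBraidedMonoidal}, obtained by dualising that argument and transporting it over the correspondences $C$ and $C'$. The first step is to check that $\tilde{R}^*$ as defined really is a map of sections $(A\otimes_\El A)\otimes_\El'(A\otimes_\El A)\to \tilde{p}^*1_\El$ over the correct iterated correspondence. By Lemma \ref{lem:C'ToYxYIsFact} the map $q':C'\to Y\times Y$ is a map of classical factorisation spaces, so the counit maps $\epsilon\otimes_\El\id$ and $\id\otimes_\El\epsilon$ pull back to genuine maps of sections over the relevant pullback of $C$; post-composing with $R^*$ then produces $\tilde{R}^*$ living over the required space. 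This is exactly the factorisation incarnation of the dual embedding $R^*\mapsto R^*_{14}$, inserting counits in the outer legs, which replaces the units $1\otimes\id$, $\id\otimes 1$ appearing in the module version.

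Next I would verify that the two simpler hypotheses on $R^*$ force the full defining relations required by the preceding (comodule) Proposition, i.e.\ the duals of (\ref{eqn:LaxRMatrixHexagon}) and (\ref{eqn:LaxRMatrixAlmostCocommutativity}) for $\tilde{R}^*$. Since $\tilde{R}^*$ is $R^*$ with counits inserted in the outer legs, applying the dual counit and coassociativity axioms collapses the dual of (\ref{eqn:LaxRMatrixHexagon}) down to the dual of (\ref{eqn:LaxRMatrixHexagonSimple}), precisely as in the non-factorisable comodule Corollary stated before section \ref{sec:ClassicalFactorisationSpaces}. The only new feature is that each identity is now an equality of maps of sections over an associativity correspondence; here I would invoke Lemma \ref{lem:C'IsFactSpace} (that $C'\to Y$ is a map of factorisation spaces) to guarantee that the products $m,m'$ and coproduct $\Delta'$ entering these relations are compatible with the pullbacks used to build the iterated correspondence, so that the collapse is valid both fibrewise and globally. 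The almost-cocommutativity relation is handled identically, reducing to the dual of (\ref{eqn:LaxRMatrixAlmostCocommutativitySimple}) after killing the first and fourth legs with $\epsilon$, and the unit compatibility is immediate from that of $R^*$. With the full relations in hand, the conclusion that $\tilde{R}^*$ is a factorisation op-$R$-matrix follows at once from the preceding Proposition, whose hypotheses are exactly those relations.

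I expect the main obstacle to be bookkeeping rather than conceptual: tracking which correspondence ($C$, $C'$, or the iterated fibre products appearing in (\ref{fig:OrdBraid})) each map of sections lives over, and confirming that inserting the counits in the outer legs commutes with the pullbacks along the associativity isomorphism $\alpha$ and the braiding isomorphism $\gamma$. Once one fixes a consistent identification $C=C'$ as in Proposition \ref{prop:OrdBraidFactSpace} and uses Lemmas \ref{lem:C'IsFactSpace} and \ref{lem:C'ToYxYIsFact}, the verification reduces to the same string of counit-and-coassociativity manipulations as in the non-factorisable dual Corollary, carried out one categorical level up.
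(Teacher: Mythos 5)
The paper states this corollary without proof, treating it as the immediate comodule/factorisation dual of Corollary \ref{cor:LaxRMatrixBraidedMonoidal}, and your proposal fills in exactly the argument the paper intends: insert counits in the outer legs, use the counit and coassociativity axioms to reduce the full (dual) hexagon and almost-cocommutativity relations for $\tilde{R}^*$ to the simple ones assumed for $R^*$, and invoke Lemmas \ref{lem:C'IsFactSpace} and \ref{lem:C'ToYxYIsFact} for the correspondence bookkeeping. Your approach is correct and consistent with the paper's treatment.
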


\subsubsection{Remark} Instead of working with spectral $R$-matrices, some sources work instead with the \textit{bicharacter} of an $R$-matrix:
$$r\ :\ A\otimes_\El' A\ \stackrel{R\cdot}{\to}\ A\otimes_\El' A\ \stackrel{\epsilon\otimes\epsilon}{\to}\ 1_\El \otimes_\El' 1_\El\ \simeq\ {p'}^*1_\El,$$
The cap product $\cap r=R\cdot$ recovers the $R$-matrix, no information is lost by considering $r$. We can take
$$\Gamma(r)\ :\ \Gamma(C,A\otimes_\El' A)\ \to\ \Gamma(C,{p'}^*1_\El).$$
When $\El=\Dl\Md_Y$ the right hand side consists of locally constant functions, and we get an element 
$$\Gamma(r)\ \in\ \Gamma(C,A\otimes_\El' A)^*.$$
In the vertex algebra case $Y=\Ran^{ch}(\Ab^1_{dR})/\Ga$ and the vertex bialgebra $A$ is commmutative and cocommutative, this corresponds to the notion of (\textit{vertex}) \textit{bicharacter} due to Anguelova \cite{An}, and what is there called $H_D\otimes H_D$-covariance corresponds to the fact that $R$ and $r$ are maps of $\Dl$-modules.

\subsubsection{Remark on twistings} \label{sec:Twistings}
 Given any braiding $\beta$ on one-category $\Cl$, we may precompose it with a $2$-automorphism of the identity 
\begin{equation}\label{fig:GammaBraiding}
  \begin{tikzcd}[row sep = 20pt, column sep = 30pt]
  \Cl\otimes\Cl \ar[r,"\id"{xshift=0pt},bend left = 35, ""{name=U,inner sep=1pt,below}]\ar[r,"\id"{xshift=0pt},bend right = 35,swap, ""{name=D,inner sep=1pt}]&  \Cl\otimes\Cl
    \arrow[Rightarrow, from=U, to=D, "\gamma", shorten <= 5pt, shorten >= 5pt] 
    \arrow[Rightarrow, from=U, to=D, swap,"\wr", shorten <= 5pt, shorten >= 5pt]
  \end{tikzcd}
  \end{equation}
  satisfying the hexagon identity to get a new braiding:
  \begin{prop}\label{prop:GammaBraidedMonoidal}
    $\beta\cdot\gamma$ defines a braided monoidal structure if and only if $\gamma\cdot (\id\otimes\, \otimes_\Cl) = \gamma_{13}\cdot \gamma_{12}$ and $\gamma\cdot (\otimes_\Cl\,\otimes\id) =\gamma_{13}\cdot \gamma_{23}$.\footnote{Spelling this out explicitly, 
    \begin{equation}\label{eqn:GammaRMatrix}
        \gamma_{A,B\otimes_\Cl C}\ =\ (\beta_{A,B}^{-1}\cdot \gamma_{A,C} \cdot \beta_{A,B})\cdot \gamma_{A,B}, \hspace{15mm}\gamma_{A\otimes_\Cl B,C}\ =\ (\beta_{B,C}^{-1}\cdot \gamma_{A,C} \cdot \beta_{B,C})\cdot \gamma_{B,C}
      \end{equation}
      for all objects $A,B,C$ of $\Cl$. }
  \end{prop}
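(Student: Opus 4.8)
The plan is to use the paper's own definition of a braiding---a natural isomorphism $\otimes_\Cl\Rightarrow\otimes_\Cl\cdot\sigma_{\Cat}$ satisfying the two hexagon identities---and to test each hexagon for $\beta\cdot\gamma$ against the one already known to hold for $\beta$. First I would record that $\beta\cdot\gamma$ is automatically a natural isomorphism: writing $\gamma_{A,B}\in\Aut(A\otimes_\Cl B)$ for the component of $\gamma$ (whiskered into $\otimes_\Cl$) and $\beta'_{A,B}=\beta_{A,B}\circ\gamma_{A,B}$, naturality of $\beta'$ follows from naturality of $\beta$ and of $\gamma$ separately, and invertibility from that of both factors. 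Hence $\beta\cdot\gamma$ is a braiding precisely when it satisfies the two hexagons, and---because $\gamma$ is invertible---each hexagon will reduce to exactly one of the displayed conditions, giving both directions of the equivalence at once.

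For the first hexagon I would expand $\beta'_{A,B}=\beta_{A,B}\circ\gamma_{A,B}$ inside
$$\beta'_{A,B\otimes_\Cl C}\ =\ (\id_B\otimes\beta'_{A,C})\circ(\beta'_{A,B}\otimes\id_C),$$
turning the right-hand side into $(\id_B\otimes\beta_{A,C})(\id_B\otimes\gamma_{A,C})(\beta_{A,B}\otimes\id_C)(\gamma_{A,B}\otimes\id_C)$. Inserting $(\beta_{A,B}\otimes\id_C)(\beta_{A,B}^{-1}\otimes\id_C)=\id$ and invoking the first hexagon for $\beta$ itself factors out $\beta_{A,B\otimes_\Cl C}$ on the left, leaving $(\beta_{A,B}^{-1}\gamma_{A,C}\beta_{A,B})\cdot\gamma_{A,B}$; here the conjugate $\beta_{A,B}^{-1}\gamma_{A,C}\beta_{A,B}$ is precisely $\gamma_{13}$ (apply $\gamma_{A,C}$ to the outer slots after braiding the middle factor $B$ aside) and $\gamma_{A,B}=\gamma_{12}$. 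Cancelling the isomorphism $\beta_{A,B\otimes_\Cl C}$ shows that the first hexagon for $\beta\cdot\gamma$ holds if and only if $\gamma\cdot(\id\otimes\,\otimes_\Cl)=\gamma_{13}\cdot\gamma_{12}$, which is the first line of (\ref{eqn:GammaRMatrix}). The second hexagon is identical save that one braids the factor $C$ past $B$ using $\beta_{B,C}$, and the same cancellation produces $\gamma\cdot(\otimes_\Cl\,\otimes\id)=\gamma_{13}\cdot\gamma_{23}$.

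The main obstacle, and the only genuinely non-mechanical point, is that the symbol $\gamma_{13}$ is manufactured by two different conjugations in the two hexagons---by $\beta_{A,B}$ in the first, by $\beta_{B,C}$ in the second---so one must check these describe the same automorphism of $A\otimes_\Cl B\otimes_\Cl C$ for the statement to be well posed. This is where the hypothesis that $\beta$ is itself a braiding is used: the naturality of $\gamma$ as a $2$-automorphism of the identity, together with the braid (hexagon) relations for $\beta$, is exactly the coherence needed to define ``$\gamma$ in slots $1$ and $3$'' unambiguously. Granting this identification, the two hexagon equivalences established above combine to give the proposition.
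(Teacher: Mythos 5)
Your proof is correct and follows essentially the same route as the paper: write out each hexagon identity for $\beta\cdot\gamma$, use the corresponding hexagon for $\beta$ to cancel the $\beta$-factors, and observe that what remains is exactly the displayed condition on $\gamma$ (the paper's footnote already resolves your worry about $\gamma_{13}$ by spelling out the two conjugates $\beta_{A,B}^{-1}\gamma_{A,C}\beta_{A,B}$ and $\beta_{B,C}^{-1}\gamma_{A,C}\beta_{B,C}$ separately in the two equations). The paper's proof is just a compressed version of your calculation.
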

\begin{proof}
  These conditions are equivalent to the first and second hexagon relations for $\beta\cdot\gamma$, respectively. The first hexagon relation reads
  $$\beta_{A,B\otimes_\Cl C}\cdot \gamma_{A,B\otimes_\Cl C}\ = \   \beta_{A,C}\cdot \gamma_{A,C}\cdot\beta_{A,B}\cdot\gamma_{A,B}$$
  as maps $A\otimes_{\Cl}B\otimes_{\Cl}C \stackrel{\sim}{\to} B\otimes_{\Cl} C\otimes_{\Cl} A$, and similarly for the second. 
\end{proof}

We can show a factorisable analogue of the above. For $\beta$ a strong braiding on classical sheaf of categories $\El$, we may precompose it with a $2$-automorphism of the identity 
\begin{equation}\label{fig:SpectralGammaTwist}
  \begin{tikzcd}[row sep = 20pt, column sep = 30pt]
  q^*(\Cl\boxtimes\Cl) \ar[r,"\id"{xshift=0pt},bend left = 30, ""{name=U,inner sep=1pt,below}]\ar[r,"\id"{xshift=0pt},bend right = 30,swap, ""{name=D,inner sep=1pt}]&  q^*(\Cl\boxtimes\Cl)
    \arrow[Rightarrow, from=U, to=D, "\gamma", shorten <= 5pt, shorten >= 5pt] 
    \arrow[Rightarrow, from=U, to=D, swap,"\wr", shorten <= 5pt, shorten >= 5pt]
  \end{tikzcd}
  \end{equation}
  to get a new factorisation braiding:
  \begin{prop}
    \label{prop:TwistedFactBraiding}
    $\beta\cdot\gamma$ defines a braided factorisation structure on $\Cl$ if and only if $\gamma\cdot (\id\otimes\, \otimes_\Cl) =\gamma_{13}\cdot \gamma_{12}$ and $\gamma\cdot (\otimes_\Cl\,\otimes\id)\ =\ \gamma_{13}\cdot \gamma_{23}$.\footnote{Spelling this out, we mean 
    \begin{equation}\label{eqn:SpectralGammaRMatrix}
      \gamma_{A,B\otimes_\Cl C}\ =\ (\beta_{A,B}^{-1}\cdot \gamma_{A,C} \cdot \beta_{A,B})\cdot \gamma_{A,B}, \hspace{15mm}\gamma_{A\otimes_\Cl B,C}\ =\ (\beta_{B,C}^{-1}\cdot \gamma_{A,C} \cdot \beta_{B,C})\cdot \gamma_{B,C}
    \end{equation}
   for all sections of the pullback of $\Cl$ to $(Y\times C)\times_{Y\times Y}C$, which we denote $A\boxtimes B\boxtimes C$.}
   \end{prop}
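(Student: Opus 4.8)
The plan is to adapt, essentially verbatim, the proof of Proposition \ref{prop:GammaBraidedMonoidal} to the factorisation setting, carrying out the computation over the iterated correspondences rather than in a fixed monoidal category. By the preceding Proposition, giving a strong braided factorisation structure on $\Cl$ is the same as giving the factorisation category structure $\otimes_\Cl$ together with a natural transformation $\beta:\otimes_\Cl\to\tilde{\sigma}^*\otimes_\Cl$ satisfying the factorisation hexagon relations. Since $\gamma$ in (\ref{fig:SpectralGammaTwist}) is a $2$-automorphism of the identity of $q^*(\Cl\boxtimes\Cl)$, the composite $\beta\cdot\gamma$ is again a natural transformation of exactly this form, and is invertible precisely when $\beta$ is; so the content of the statement is just to characterise when $\beta\cdot\gamma$ satisfies the two hexagon relations.

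First I would pass to the fibrewise description over the associativity correspondence $(Y\times C)\times_{Y\times Y}C\simeq (C\times Y)\times_{Y\times Y}C$, where the three inputs $A\boxtimes B\boxtimes C$ live in fibres $\Cl_{y_1},\Cl_{y_2},\Cl_{y_3}$ and each hexagon becomes an equality of isomorphisms in $\Cl_y$. In these terms the first hexagon relation for $\beta\cdot\gamma$ reads
$$\beta_{A,B\otimes_\Cl C}\cdot \gamma_{A,B\otimes_\Cl C}\ =\ \beta_{A,C}\cdot\gamma_{A,C}\cdot\beta_{A,B}\cdot\gamma_{A,B},$$
exactly as in Proposition \ref{prop:GammaBraidedMonoidal}, now interpreted as an identity of maps of sections of the pullback of $\Cl$. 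Since $\beta$ already satisfies its own first hexagon relation $\beta_{A,B\otimes_\Cl C}=\beta_{A,C}\cdot\beta_{A,B}$, I can cancel $\beta_{A,C}$ on the left (using that $\beta$ is a strong braiding, hence invertible) and then multiply by $\beta_{A,B}^{-1}$; the surviving identity is precisely
$$\gamma_{A,B\otimes_\Cl C}\ =\ (\beta_{A,B}^{-1}\cdot \gamma_{A,C}\cdot \beta_{A,B})\cdot \gamma_{A,B},$$
which is the first equation of (\ref{eqn:SpectralGammaRMatrix}), i.e. the condition $\gamma\cdot(\id\otimes\,\otimes_\Cl)=\gamma_{13}\cdot\gamma_{12}$. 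The second hexagon relation is handled by the mirror computation, cancelling via $\beta_{A\otimes_\Cl B,C}=\beta_{A,C}\cdot\beta_{B,C}$, and yields the second condition $\gamma\cdot(\otimes_\Cl\,\otimes\id)=\gamma_{13}\cdot\gamma_{23}$. All steps are reversible, giving the stated equivalence.

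The main obstacle is the correspondence bookkeeping rather than the algebra, which is formal once $\beta$ is invertible. Concretely, one must check that the two bracketings are identified through the associativity isomorphism $\alpha$ of the underlying factorisation space, so that both sides of each hexagon really are maps between the \emph{same} sections over the iterated correspondence, and that the pullbacks of $\beta$ and $\gamma$ to the two sides of (\ref{fig:OrdBraid}) are compatibly identified. Granting this compatibility—which is part of the data of the braided factorisation structure and is exactly where the fibrewise reformulation is used—the manipulation with $\gamma$ reduces to the case of Proposition \ref{prop:GammaBraidedMonoidal}, because $\gamma$ being a $2$-automorphism of the identity means all interchanges with $\beta$ and with $\otimes_\Cl$ are governed by naturality.
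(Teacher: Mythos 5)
Your proposal is correct and follows exactly the route the paper intends: the paper states Proposition \ref{prop:TwistedFactBraiding} without a separate proof, as the factorisable analogue of Proposition \ref{prop:GammaBraidedMonoidal}, and your argument is precisely that proof — equating the two conditions with the two hexagon relations for $\beta\cdot\gamma$, then cancelling $\beta_{A,C}$ and conjugating by $\beta_{A,B}^{-1}$ (resp.\ $\beta_{B,C}^{-1}$) using the invertibility of the strong braiding — transported to sections over the iterated correspondence $(Y\times C)\times_{Y\times Y}C$. The algebra checks out and the reversibility remark gives the equivalence, so nothing is missing.
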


   It follows that 
   \begin{cor}
     Let $A$ be a (factorisation) quantum group with (spectral)$R$-matrix $R$. Then if $S$ is an invertible element of $A\otimes A$ (or $\Gamma(C,A\otimes_\Cl'A)$), the element $R\cdot S$ defines a (spectral) $R$-matrix if and only if 
     \begin{equation} \label{eqn:SMatrixHexagon}
       (\Delta\otimes\id)S\ =\ S_{13}S_{23}, \hspace{10mm} (\id\otimes\Delta)S\ =\ S_{13}S_{12}.
      \end{equation}
     and
      \begin{equation}\label{eqn:SAlmostCommutativity}
       \Delta(-)\ =\ S\Delta(-)S^{-1}, \hspace{15mm} R_{23}S_{13}\ =\ S_{13} R_{23}, \hspace{10mm} R_{12}S_{13}\ =\ S_{13} R_{12},
      \end{equation}
      which for instance holds if $S$ is central. 
   \end{cor}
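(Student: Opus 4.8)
The plan is to realise the passage from $R$ to $R\cdot S$ as a twist of the associated braiding in the sense of Propositions \ref{prop:GammaBraidedMonoidal} and \ref{prop:TwistedFactBraiding}, and then to read off the conditions on $S$ from those propositions. Under Proposition \ref{prop:RMatrixBraidedMonoidal} (resp. Theorem \ref{thm:SpectralRMatrixBraidedMonoidal}) the $R$-matrix $R$ corresponds to the braiding $\beta = \sigma_\El\cdot(R\cdot(-))$ on $\Cl = A\FactMd$, and left multiplication by $S$ on tensor products of modules defines a $2$-endomorphism $\gamma$ of the tensor functor with $\gamma_{M,N} = S\cdot(-)$. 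Since $\sigma_\El\cdot(RS\cdot(-)) = \beta\cdot\gamma$, the element $R\cdot S$ is a (spectral) $R$-matrix precisely when $\beta\cdot\gamma$ is a braiding, so the whole statement reduces to applying the twisting propositions to $\gamma$.

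First I would check that $\gamma$ is a legitimate twist, i.e. a $2$-automorphism of the identity as in (\ref{fig:GammaBraiding}) and (\ref{fig:SpectralGammaTwist}). Invertibility of $S$ makes $\gamma_{M,N}$ invertible, and $A$-linearity of $\gamma_{M,N}$ --- the requirement that $S\cdot(-)$ commute with the $A$-action $\Delta(a)\cdot(-)$ on $M\otimes N$ --- is exactly the first relation of (\ref{eqn:SAlmostCommutativity}), namely $\Delta(-) = S\Delta(-)S^{-1}$. With $\gamma$ an honest twist, Proposition \ref{prop:GammaBraidedMonoidal} (resp. \ref{prop:TwistedFactBraiding}) says $\beta\cdot\gamma$ is again a braiding if and only if $\gamma\cdot(\id\otimes\,\otimes_\Cl) = \gamma_{13}\cdot\gamma_{12}$ and $\gamma\cdot(\otimes_\Cl\otimes\id) = \gamma_{13}\cdot\gamma_{23}$. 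Evaluating on the regular module $A$ turns $\gamma_{A,B\otimes_\Cl C}$ into $(\id\otimes\Delta)S$, $\gamma_{A\otimes_\Cl B,C}$ into $(\Delta\otimes\id)S$, and the bare legs $\gamma_{12},\gamma_{13},\gamma_{23}$ into $S_{12},S_{13},S_{23}$, so that these two conditions become the hexagon relations (\ref{eqn:SMatrixHexagon}).

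The remaining commutation relations in (\ref{eqn:SAlmostCommutativity}) are precisely what makes this translation exact. In the honest form (\ref{eqn:GammaRMatrix}) (resp. (\ref{eqn:SpectralGammaRMatrix})) of the twisting conditions the leg $\gamma_{13}$ enters conjugated by the braiding, as $\beta_{B,C}^{-1}\gamma_{A,C}\beta_{B,C}$ and $\beta_{A,B}^{-1}\gamma_{A,C}\beta_{A,B}$; unwinding these conjugations in $R$-matrix terms (commuting the permutation past $S_{13}$ and cancelling the swaps) produces $R_{23}^{-1}S_{13}R_{23}$ and $R_{12}^{-1}S_{13}R_{12}$. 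The relations $R_{23}S_{13} = S_{13}R_{23}$ and $R_{12}S_{13} = S_{13}R_{12}$ are exactly the statement that these conjugated expressions collapse to the bare $S_{13}$, so that the genuine hexagon conditions of the twisting proposition coincide with (\ref{eqn:SMatrixHexagon}). The counit compatibility $(\epsilon\otimes\id)S = (\id\otimes\epsilon)S = 1$ need not be assumed separately: applying $\epsilon$ to the outer leg of each relation in (\ref{eqn:SMatrixHexagon}) and using invertibility of $S$ forces it, exactly as the counit normalisation of an $R$-matrix is forced by its hexagon relations. Finally, if $S$ is central then $\Delta(-) = S\Delta(-)S^{-1}$ and both commutation relations hold automatically, giving the stated special case.

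The factorisation case runs in parallel, with every identity above read as an equality of maps of sections over the iterated correspondences $(Y\times C)\times_{Y\times Y}C$ and $(C\times Y)\times_{Y\times Y}C$ of section \ref{sec:ClassicalFactorisationSpaces}, using $\otimes_\Cl$ and $\otimes_\Cl'$ in place of the two monoidal structures and Proposition \ref{prop:TwistedFactBraiding} in place of Proposition \ref{prop:GammaBraidedMonoidal}. I expect the main obstacle to be purely this bookkeeping: tracking which pullback and which of the two factorisation products each leg $S_{ij}$, $R_{ij}$ lives over, and checking that the braiding conjugations in the factorisation analogue of (\ref{eqn:GammaRMatrix}) unwind to the spectral commutation relations of (\ref{eqn:SAlmostCommutativity}). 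Once the dictionary $\gamma \leftrightarrow S\cdot(-)$ is fixed, each verification is the fibrewise classical computation of the previous paragraphs.
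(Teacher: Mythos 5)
Your proposal is correct and follows exactly the route the paper intends: the corollary is stated as an immediate consequence of Propositions \ref{prop:GammaBraidedMonoidal} and \ref{prop:TwistedFactBraiding}, with $\gamma = S\cdot(-)$, and you have correctly identified that the first relation of (\ref{eqn:SAlmostCommutativity}) is what makes $\gamma$ a natural transformation while the commutation relations collapse the conjugated legs $\beta^{-1}\gamma_{13}\beta$ to the bare $S_{13}$, turning the twisting conditions into (\ref{eqn:SMatrixHexagon}). Your fibrewise bookkeeping for the factorisation case is the same unwinding the paper leaves implicit.
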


\subsection{The diagonal factorisation structure}\label{sec:DiagonalMap} \label{sec:DiagonalFactStruct} Any prestack $Y$ has a tautological commutative factorisation structure coming from the diagonal map, whose multiplication and unit are 
\begin{center}
 \begin{tikzcd}[row sep = {30pt,between origins}, column sep = {45pt,between origins}]
 & Y\ar[ld,"\Delta"']\ar[rd,equals] & &[15pt] & Y\ar[rd,equals]\ar[ld] &  \\
 Y\times Y & & Y & \pt&& Y
 \end{tikzcd}
 \end{center} 
A factorisation $\Eb_n$-category on $Y$ with respect to this is just an $\Eb_n$-algebra object in $(\ShvCat(Y),\otimes)$. Moreover, this factorisation structure is automatically compatible with any other cofactorisation structure on $Y$:
\begin{prop} 
  \label{prop:BiAlgFactSpace}
  If $Y$ is a factorisation $\Eb_n$-space, the diagonal factorisation structure makes it into a lax factorisation $\Eb_n\otimes \Eb_\infty\simeq \Eb_\infty$-space, i.e. the two structures are compatible.
\end{prop}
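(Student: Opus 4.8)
The plan is to exploit that the diagonal structure is not an arbitrary commutative factorisation structure but the canonical one carried by \emph{every} object of a correspondence category. Since the monoidal structure on $\PreStk^{\textup{corr}}$ is induced by the Cartesian product, each $Y$ is tautologically a cocommutative comonoid via $(\Delta_Y,\, Y\to\pt)$, and passing to correspondences turns this into the commutative algebra recorded by the displayed multiplication and unit. I would first record this, so that the content of the proposition becomes the claim that the interchange relating this canonical $\Eb_\infty$-structure to the given $\Eb_n$-structure exists laxly and satisfies the coherences of an $\Eb_n\otimes\Eb_\infty$-algebra. By Dunn additivity (applied exactly as in Lemma-Definition \ref{lem:ClassicalFactSpace}) together with $\Eb_n\otimes\Eb_\infty\simeq\Eb_\infty$, it then suffices to produce the braiding datum $\gamma$ of (\ref{fig:OrdBraid}) and to check its hexagon and unit compatibilities, where one of the two structures is the diagonal one.

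Second, I would compute the two composite correspondences appearing in (\ref{fig:OrdBraid}), placing the diagonal structure as $\otimes$ and the given $\Eb_n$-structure (with correspondence $C$, legs $q\colon C\to Y\times Y$ and $p\colon C\to Y$) as $\otimes'$. Because each fibre product against the diagonal $\Delta$ collapses and the identity leg of the diagonal trivialises the other fibre product, the left-hand composite reduces to $C$ itself while the right-hand one reduces to the relative product $C\times_Y C$ over $p$; both carry the same legs down to $Y^2\times Y^2$ and to $Y$. The interchange of (\ref{fig:OrdBraid}) is then forced to be the relative diagonal
$$\gamma\ \colon\ C\ \longrightarrow\ C\times_Y C,\qquad c\ \mapsto\ (c,c),$$
a rightward (hence lax) map of correspondences that is an isomorphism exactly when $p$ is a monomorphism; in general it is not, which is one source of the word \emph{lax} in the statement. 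The diagonal unit correspondence, whose apex is all of $Y$, likewise only matches the $\Eb_n$-unit $1\colon\pt\to Y$ up to a lax comparison rather than strictly.

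Finally, I would verify that $\gamma$ satisfies the hexagon relations and is unital. The point is that all of these coherences are instances of the comonoid identities for the diagonal: coassociativity and cocommutativity of $\Delta$ supply the two hexagons and their compatibility with the associator of $C$, the counit $Y\to\pt$ supplies unitality, and, crucially, the naturality of the diagonal in the Cartesian category $\PreStk$ guarantees that $\gamma$ is compatible with the legs $q,p$ of the $\Eb_n$-multiplication. I expect the genuine obstacle to be formal rather than computational: as flagged in section \ref{sec:Straightening}, the symmetric-monoidal theory of lax functors of $(\infty,2)$-categories is not yet fully available, so the higher coherences cannot simply be quoted from a general unstraightening statement. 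I would therefore argue, as the paper does for its other Lemma-Definitions, by exhibiting the $1$- and $2$-morphism data explicitly and reducing each coherence diagram to the (co)associativity, (co)commutativity and naturality of the diagonal, which hold in $\PreStk$ by construction; this reduction is the step that carries all the weight.
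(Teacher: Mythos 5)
Your argument is correct, and your explicit computation of the interchange datum is exactly the map the paper itself records (as $\Delta_C : C \to (C\times C)\times_{Y\times Y}Y$, i.e.\ the relative diagonal $C\to C\times_Y C$) -- but only \emph{after} the proof, in the discussion following Corollary \ref{prop:DiagBialg}. The paper's actual proof of Proposition \ref{prop:BiAlgFactSpace} is a one-line functoriality argument: the assignment $Y\mapsto (Y,\Delta)$ is promoted to a symmetric monoidal functor $\PreStk^{\textup{corr}}\to\Eb_\infty\Ag(\PreStk^{\textup{corr}})$ (with the lax comparison on $1$-morphisms supplied by $C\to X\times_{X\times X}(C\times C)$), and one then applies the functor of $\Eb_n$-(co)algebra objects to both sides, so that an $\Eb_n$-algebra $Y$ lands in $\Eb_n$-algebras in $\Eb_\infty$-algebras, i.e.\ lax $\Eb_n\otimes\Eb_\infty$-algebras. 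Your route instead reduces via Dunn additivity to exhibiting the braiding of (\ref{fig:OrdBraid}) and checking its coherences by hand, which is closer in spirit to the pattern of Lemma-Definition \ref{lem:ClassicalFactSpace}. The trade-off is real: the paper's functorial packaging absorbs all higher coherences into the symmetric monoidality of a single functor (at the cost of leaving the lax $1$-morphism structure, and the meaning of "taking $\Eb_n$-coalgebras" of a category with lax morphisms, somewhat implicit), whereas your version makes the interchange and the two sources of laxness (the relative diagonal failing to be an isomorphism unless $p$ is a monomorphism, and the mismatch of units $Y$ versus $E$) completely transparent, but must confront the higher coherences directly -- a difficulty you correctly identify as the formal bottleneck, given that beyond the $(2,1)$-categorical setting these coherences are data rather than properties and cannot all be reduced to the hexagon and unit axioms. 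For the classical $(2,1)$-categorical statement (Corollary \ref{prop:DiagBialg}) your argument is complete as written; for the full $(\infty,1)$-categorical Proposition the functorial argument is doing genuine work that your reduction would need to replicate coherence by coherence.
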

\begin{proof}
  There is a symmetric monoidal functor
  \begin{equation}\label{eqn:DiagFactStr}
    \PreStk^{\textup{corr}}\ \to\ \Eb_\infty\Ag(\PreStk^{\textup{corr}})
  \end{equation}
  acting on objects by $Y\mapsto (Y,\Delta)$, and taking categories of $\Eb_n$-coalgebra objects of both sides gives the result.
\end{proof}

We describe how the functor (\ref{eqn:DiagFactStr}) acts on $1$-morphisms. Given any morphism in $\PreStk^{\textup{corr}}$, i.e. a correspondence $X\leftarrow C\to Y$, the same correspondence defines a map of factorisation spaces with respect to the diagonal factorisation structures. Indeed, get a two-morphism in $\PreStk^{\textup{corr}}$ 
\begin{center}
  \begin{tikzcd}[row sep = {30pt,between origins}, column sep = 20pt]
   X\ar[r]\ar[d,"\Delta_X",""{name=U,inner sep=2pt,xshift=20pt}] &Y\ar[d,"\Delta_Y",""{name=D,inner sep=2pt,xshift=-20pt}] \\
   X\times X \ar[r]&Y\times Y 
   \arrow[Rightarrow, from=U, to=D, "", shorten <= 2pt, shorten >= 2pt] 
  \end{tikzcd}
  \end{center}
induced by the map of correspondences $C\to X\times_{X\times X}(C\times C)$ from $X$ to $Y\times Y$.

\begin{cor}\label{prop:DiagBialg}
If $Y$ is any classical factorisation space, it forms a lax braided factorisation space with respect to the diagonal factorisation structure.
\end{cor}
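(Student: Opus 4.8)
The plan is to read this off from Proposition \ref{prop:BiAlgFactSpace}. By definition a classical factorisation space is an associative (that is, $\Eb_1$) algebra object in $\PreStk^{\textup{cl},\textup{corr}}$, so in particular it is a factorisation $\Eb_1$-space. Applying Proposition \ref{prop:BiAlgFactSpace} with $n=1$ therefore equips $Y$, together with its diagonal factorisation structure, with the structure of a lax factorisation $\Eb_1\otimes\Eb_\infty\simeq\Eb_\infty$-space; here the $\Eb_1$-factor is the original associative structure $(C,p,q)$ and the $\Eb_\infty$-factor is the diagonal structure $(C',p',q')=(Y,\id,\Delta)$ of section \ref{sec:DiagonalFactStruct}.

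Next I would restrict this lax $\Eb_\infty$-structure along the operad map $\Eb_2\to\Eb_\infty$ to obtain a lax factorisation $\Eb_2$-space. By Dunn additivity $\Eb_2\simeq\Eb_1\otimes\Eb_1$ --- exactly the identification used in Lemma-Definition \ref{lem:ClassicalFactSpace} --- this lax $\Eb_2$-datum is precisely a pair of classical factorisation structures together with a lax compatibility, and by construction the two structures are the original and the diagonal one. The compatibility datum $\gamma$ of \ref{fig:OrdBraid} is the one produced by Proposition \ref{prop:BiAlgFactSpace}; since that proposition supplies only a \emph{lax} (rather than strong) compatibility, $\gamma$ is a single rightward map, which is exactly what the definition of a lax braided factorisation space requires.

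The step needing the most care is matching the unit data: the notions in Lemma-Definition \ref{lem:ClassicalFactSpace} ask that the two structures be compatible on units, whereas the diagonal structure carries the unit $\pt\leftarrow Y\xrightarrow{\id}Y$ rather than a point of $Y$. I would handle this by noting that the symmetric monoidal functor (\ref{eqn:DiagFactStr}) used to prove Proposition \ref{prop:BiAlgFactSpace} is unital, so the combined $\Eb_\infty$-structure comes equipped with the bialgebra-type unit compatibilities relating the two units; these restrict along $\Eb_2\hookrightarrow\Eb_\infty$ to exactly the unit compatibility demanded of a lax braided factorisation space. All the remaining pentagon and hexagon coherences are inherited automatically by restriction from the lax $\Eb_\infty$-structure, so beyond this identification no further computation is required; the main obstacle is thus purely the bookkeeping of confirming that the forgotten $\Eb_2$-datum is normalised as in \ref{fig:OrdBraid} and not as some weaker variant.
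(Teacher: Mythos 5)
Your argument is correct and follows essentially the paper's own route: the paper likewise deduces the statement directly from Proposition \ref{prop:BiAlgFactSpace} by keeping only the two binary multiplications (the diagonal one playing the role of $\otimes$, the given one that of $\otimes'$) together with the lax interchange it produces. The only content the paper adds beyond your argument is the explicit form of the compatibility two-morphism, assembled from the diagonal $C\to C\times C$ and the structure map $p:C\to Y$, which your proof leaves implicit.
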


To be explicit, the compatibility two-morphism is given by
\begin{center}
  \begin{tikzcd}[row sep = {30pt,between origins}, column sep = {45pt,between origins}]
    & &C\ar[rd,equals]\ar[ld] & & &[-15pt] &[-10pt] & &(C\times C)\times_{Y\times Y}Y\ar[rd]\ar[ld] & & \\
    &Y\times Y \ar[rd,equals]\ar[ld,"(\Delta\times\Delta)\cdot\sigma_{23}"']& &C\ar[rd]\ar[ld] & &\stackrel{\Delta_C}{\to} & &C\times C \ar[rd]\ar[ld]& &Y\ar[rd,equals]\ar[ld,"\Delta"'] &  \\
    Y^2\times Y^2& &Y\times Y & &Y & &Y^2\times Y^2& &Y\times Y & &Y 
  \end{tikzcd} 
\end{center}
formed by the diagonal map $C\to C\times C$ and the structure map $p:C\to Y$.

\newpage

\section{Vertex Quantum Groups}  \label{sec:QuantumVertexGroups}
\noindent
Vertex algebras are the prototypical example of (commutative) factorisation algebras, essentially for whom the theory was built. They were discovered by Borcherds in \cite{Bo1} and are an attempt to formalise the physics notion of two-dimensional conformal field theory.

We will recap the theory following \cite{FG} and \cite{Ras}, then explain what quantum factorisation algebras are in this setting: vertex quantum groups. The braided commutative objects in their categories of representations give as examples the structures considered by Etingof and Kazhdan \cite{EK} and Frenkel and Reshetikhin \cite{FR}, but have the advantage that their definition is more conceptual and without formulas (the formulas follow as a consequence).

\subsection{The associative Ran space} \label{sec:RanSpace} Let $X$ be any prestack. Its \textit{Ran space} is the colimit in $\PreStk$ of the diagram
\begin{center}
\begin{tikzcd}[row sep = 20pt, column sep = 20pt]
  \Ran X\ =\ \colim_{I \in \FinSet}X^I\ =\ \colim\bigg( X\ar[r,std] & X^2\ar[r,{yshift=3pt},std]\ar[r,{yshift=-3pt},std] \arrow[loop, distance=2em, in=50, out=130, looseness=5,"\Sk_2"] & X^3 \arrow[loop, distance=2em, in=50, out=130, looseness=5,"\Sk_3"] \ar[r,{yshift=6pt},std] \ar[r,{yshift=0pt},std]\ar[r,{yshift=-6pt},std] & \cdots\ \ \bigg)
\end{tikzcd}
\end{center}
where we have a symmetric group $\Sk_n$'s worth of arrows acting on $X^n$ by permuting the factors. Here $\FinSet$ denotes the category of nonempty finite sets with morphisms surjections. When $X$ is a scheme the Ran space parametrises its finite subsets, i.e. if $T$ is a scheme of finite type then $(\Ran X)(T)$ is equivalent to a set, the set of nonempty finite subsets of $X(T)$, see \cite{Ta}. 

It has two structures of a commutative factorisation space:
\begin{defn} \label{defn:RanFactStructure}
 $\Ran^*X$ and $\Ran^{ch}X$ are the commutative factorisation space structures
 \begin{center}
 \begin{tikzcd}[row sep = {35pt,between origins}, column sep = {45pt,between origins}]
  &\Ran X\times\Ran X\ar[ld,equals]\ar[rd,"\cup"] & &[30pt]  &(\Ran X\times\Ran X)_\circ\ar[ld,"j"']\ar[rd,"\cup\cdot j"] &  \\
  \Ran X\times \Ran X& & \Ran X&  \Ran X\times \Ran X& & \Ran X \end{tikzcd}
 \end{center}
 which is induced by 
\begin{center}
\begin{tikzcd}[row sep = {35pt,between origins}, column sep = {45pt,between origins}]
  &X^{I_1}\times X^{I_2}\ar[ld,equals]\ar[rd,"\cup_{I_1,I_2}","\sim"'] & &[30pt] &(X^{I_1}\times X^{I_2})_\circ\ar[ld,"j_{I_1,I_2}"']\ar[rd,"\cup\cdot j_{I_1,I_2}"] & \\
  X^{I_1}\times X^{I_2}& & X^{I_1\sqcup I_2} & X^{I_1}\times X^{I_2}& & X^{I_1\sqcup I_2}
\end{tikzcd}
\end{center}
over pairs of nonempty finite subsets $I_1,I_2$, where $(X^{I_1}\times X^{I_2})_\circ$ is the open locus on which entries of the $I_1$-tuple and $I_2$-tuple of points in $X$ are disjoint.
\end{defn}

One can also consider the \textit{associative Ran space}
\begin{center}
  \begin{tikzcd}[row sep = 20pt, column sep = 20pt]
    \Ran_a X\ =\ \colim_{I \in \FinSet^{\textup{ord}}}X^I\ =\ \colim\bigg( X\ar[r,std] & X^2\ar[r,{yshift=3pt},std]\ar[r,{yshift=-3pt},std] & X^3 \ar[r,{yshift=6pt},std] \ar[r,{yshift=0pt},std]\ar[r,{yshift=-6pt},std] & \cdots\ \ \bigg)
  \end{tikzcd}
  \end{center}
parametrising \textit{ordered} finite subsets of $X$, where $\FinSet^{\textup{ord}}$ is the category of ordered finite sets with morphisms order-preserving surjections. The above definition gives it two structures of a (non-commutative) factorisation space, $\Ran^*_aX$ and $\Ran^{ch}_aX$. In section \ref{sec:Examples} we also define a braided analogue $\Ran_{\textup{br}}X$, and there are maps 
$$\Ran_aX \ \to\ \Ran_{br} X\ \to\ \Ran X$$
which respect the associative and braided factorisation space structures, respectively.

\subsubsection{} For the remainder section \ref{sec:QuantumVertexGroups}, we will view $\Ran X$ and $\Ran_a X$ as a strong    classical braided factorisation space with respect to its chiral and diagonal factorisation structures. Concretely, the strong braiding  (\ref{fig:OrdBraid}) is
\begin{center}
  \begin{tikzcd}[row sep = {30pt,between origins}, column sep = {65pt,between origins}]
    & &(X^{I_1}\times X^{I_2})_\circ \ar[rd,equals]\ar[ld] & & & \\
    &X^{I_1}\times X^{I_2}\ar[rd,equals]\ar[ld]& &(X^{I_1}\times X^{I_2})_\circ\ar[rd]\ar[ld] & &  \hspace{30pt} \\
    X^{I_1}\times X^{I_2}\times X^{I_1}\times X^{I_2}& &X^{I_1}\times X^{I_2} & &X^{I_1\sqcup I_2} & 
  \end{tikzcd} 
\end{center}
\begin{center}
  \begin{tikzcd}[row sep = {30pt,between origins}, column sep = {65pt,between origins}]
     &[-15pt] &[-10pt] & &(X^{I_1}\times X^{I_2})_\circ  \ar[rd]\ar[ld] & & \\
   &\hspace{30pt}\stackrel{\id}{\stackrel{\sim}{\to}}& &(X^{I_1}\times X^{I_2})_\circ \times (X^{I_1}\times X^{I_2})_\circ \ar[rd]\ar[ld]& &X^{I_1\sqcup I_2}\ar[rd,equals]\ar[ld] &  \\
     & &X^{I_1}\times X^{I_2}\times X^{I_1}\times X^{I_2}& & X^{I_1\sqcup I_2} \times X^{I_1\sqcup I_2} & &X^{I_1\sqcup I_2}
  \end{tikzcd} 
\end{center}
As a result $\QCoh_{\Ran X}$ is a strong braided factorisation category, with braiding denoted
$$\beta\ :\ (\, \otimes^{ch}\, )\otimes(\, \otimes^{ch}\, )\ \stackrel{\sim}{\to}\ (\, \otimes\, )\otimes^{ch}(\, \otimes\, ).$$
Likewise, $\QCoh(\Ran X)$ is a lax braided monoidal category with monoidal structures $\otimes^{ch}=\cup_*\jmath_*\jmath^*$ and $\otimes$.\footnote{To be concrete, the braiding is
\begin{align*}
  \Gamma(\beta)\ :\ (A_1\otimes^{ch}A_2)\otimes (A_3\otimes^{ch}A_4) & \ =\ \cup_*\jmath_*\jmath^*(A_1\boxtimes A_2)\otimes \cup_*\jmath_*\jmath^*(A_3\boxtimes A_4) \\
  &\ \to\ \cup_*\jmath_*\jmath^*(A_1\boxtimes A_2\, \otimes\, A_3\boxtimes A_4) \\
  &\ =\ \cup_*\jmath_*\jmath^*(A_1\otimes A_3\, \boxtimes \, A_2\otimes A_4) \\
  &\ =\ (A_1\otimes A_3)\otimes^{ch} (A_2\otimes A_4)
\end{align*}
for any four sections $A_i\in \QCoh(\Ran X)$, where we have used that $*$-pullbacks (and pushforwards) are (lax) monoidal.}

\subsection{Categories over the Ran space} Notice that a sheaf of categories on the Ran space is an element in
$$\ShvCat(\Ran X)\ =\ \lim_{I\in \FinSet}(\ShvCat(X^I)),$$
i.e. a collection $\Cl=(\Cl_I)$ of sheaves of categories on  $X^I$ for every (ordered) finite set $I$, with compatible isomorphisms $\varphi_{I/J}:\Delta_{I/J}^*\Cl_{I}\stackrel{\sim}{\to}\Cl_J$. All the above remains true for equivariant Ran spaces as well, if we consider ordered finite sets.

\subsubsection{Factorisation categories} Let $\Cl$ be a classical sheaf of categories with respect to the chiral factorisation space structure on $\Ran X$ (or $\Ran_a X$). It is a classical factorisation category if in addition we have functors of sheaves of categories
$$\otimes_{I_1,I_2}^{ch}\ :\ (\Cl_{I_1}\boxtimes \Cl_{I_2})\vert_{(X^{I_1}\times X^{I_2})_\circ}\ \to\ (\cup_{I_1,I_2}^*\Cl_{I_1\cup I_2})\vert_{(X^{I_1}\times X^{I_2})_\circ}$$
compatible with the $\varphi_{I/J}$ and satisfying an associativity condition
$$\otimes_{I_1\cup I_2, I_3}^{ch}(\otimes_{I_1,I_2}\,\otimes\id )\ =\ \otimes_{I_1,I_2\cup I_3}^{ch}(\id\otimes\, \otimes_{I_2,I_3}^{ch})$$ 
as functors
$$(\Cl_{I_1}\boxtimes\Cl_{I_2}\boxtimes \Cl_{I_3})\vert_{(X^{I_1}\times X^{I_2}\times X^{I_3})_\circ}\ \to\ \left(\cup^*\Cl_{I_1\cup I_2\cup I_3}\right)\vert_{(X^{I_1}\times X^{I_2}\times X^{I_3})_\circ}.$$
Likewise, a factorisation category structure with respect to the diagonal factorisation space structure is a collection of functors 
$$\otimes_I\ :\ \Cl_{I}\otimes \Cl_{I}\ \to\ \Cl_{I}$$
compatible with the $\varphi_{I/J}$ and satisfying the associativity and unit condition. 

\subsubsection{} 

We now consider the chiral structure on the Ran space as a strong braided factorisation space. Let $\Cl$ be a classical braided factorisation category. In particular, we have maps 
$$\beta_{\{z_i\},\{w_j\}}\ :\ A_{\{z_1,...,z_n\}}\otimes^{ch}B_{\{w_1,...,w_m\}}\ \to\ B_{\{w_1,...,w_m\} } \otimes^{ch} A_{\{z_1,...,z_n\}}$$
for every pair of sections $A,B$ and every nonempty finite subsets $\{z_i\},\{w_j\}\subseteq X(k)$ which are disjoint. For instance, we get
$$\beta_{z,w}\ :\ A_z\otimes^{ch}B_w \ \to\ B_w\otimes^{ch}A_z$$
for all pairs of points $z\ne w$. We will show that under certain requirements $\beta_{z,w}$ determines the entire braiding. Fibrewise one can see this because the diagram
\begin{equation}\label{fig:SmallerMassiveDiagram}
  \begin{tikzcd}[row sep = {35pt,between origins}, column sep = {50pt}]
    \Cl_{z_1}\otimes \Cl_{z_2}\otimes \Cl_w \ar[d,"\beta_{z_2,w}"]\ar[r,"\otimes_{z_1,z_2}\otimes\id"]&\Cl_{\{z_1,z_2\}}\otimes\Cl_w\ar[dd,"\beta_{\{z_1,z_2\},w}"] \\ 
    \Cl_{z_1}\otimes \Cl_w\otimes \Cl_{z_2}\ar[d,"\beta_{z_1,w}"]&\\
    \Cl_w\otimes \Cl_{z_1}\otimes \Cl_{z_2}\ar[r,"\id\otimes \otimes_{z_1,z_2}"]&\Cl_w\otimes \Cl_{\{z_1,z_2\}}
    \end{tikzcd}
\end{equation}
commutes, and so if $\otimes_{z_1,z_2}$ is an equivalence, $\beta_{\{z_1,z_2\},w}$ is determined uniquely by the two-point braidings. Note that the spectral Yang-Baxter equation is now associated to triples of nonempty finite subsets 
$$R_{12}(\{z_i\})R_{13}(\{w_j\})R_{23}(\{u_k\})\ =\ R_{23}(\{u_k\})R_{13}(\{w_j\})R_{12}(\{z_i\})$$
and under the above assumptions, it will follow that this spectral $R$-matrix is uniquely determined by $R(z,w)$.

\begin{prop}\label{prop:BetaDeterminedBy11} Let $\Cl$ be a classical factorisation category which is strong, i.e. $\otimes_\Cl$ is an equivalence. Then to give a braiding $\beta$ is equivalent to giving an natural transformation 
  \begin{center}
    \begin{tikzcd}[row sep = 20pt, column sep = 30pt]
      q^*(\Cl\boxtimes\Cl)\vert_{(X\times X)_\circ}\ =\ &[-35pt] q^*(\Cl\vert_X\boxtimes\Cl\vert_X) \ar[r,"\otimes^{ch}"{xshift=0pt},bend left = 35, ""{name=U,inner sep=1pt,below}]\ar[r,"\sigma^*\otimes^{ch}"{xshift=0pt},bend right = 35,swap, ""{name=D,inner sep=1pt}]& \hspace{15pt} p^*\Cl\vert_{(X\times X)_\circ}
        \arrow[Rightarrow, from=U, to=D, "\beta_{1,1}", shorten <= 5pt, shorten >= 5pt] 
        \arrow[Rightarrow, from=U, to=D, swap,"", shorten <= 5pt, shorten >= 5pt]
      \end{tikzcd}
  \end{center}
satisfying the hexagon relations.
\end{prop}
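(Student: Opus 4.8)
The plan is to prove the two directions separately, the forward direction being immediate and the reverse one carrying all the content. Given a braiding $\beta$ on $\Cl$ — which, by the proposition characterising braided factorisation categories through a natural transformation $\beta:\otimes_\Cl\to\tilde{\sigma}^*\otimes_\Cl$ over $C=(\Ran X\times\Ran X)_\circ$ subject to the hexagon relations — one simply restricts along the open inclusion $(X\times X)_\circ\hookrightarrow C$ of the locus where both $\Ran X$-factors are singletons. This yields $\beta_{1,1}$, and the hexagon relations for $\beta$ restrict to those for $\beta_{1,1}$. Everything therefore reduces to showing that $\beta_{1,1}$, together with its hexagon relations, determines a unique braiding $\beta$ on all of $C$.

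For uniqueness I would argue by the double induction encoded in diagram \eqref{fig:SmallerMassiveDiagram}. Because $\Cl$ is strong, $\otimes^{ch}$ is an equivalence, so on the locus where all points are pairwise distinct the iterated chiral product gives an equivalence $\Cl_I\vert_\circ\simeq(\Cl\vert_X)^{\boxtimes|I|}\vert_\circ$. Diagram \eqref{fig:SmallerMassiveDiagram}, which commutes by the first hexagon relation, then forces $\beta_{\{z_1,z_2\},w}$ to be the composite built from $\beta_{z_1,w}$ and $\beta_{z_2,w}$; inducting on $|I_1|$ determines $\beta_{I_1,w}$ for every finite subset $I_1$ and every singleton $w$, and inducting on $|I_2|$ via the second hexagon relation determines $\beta_{I_1,I_2}$ in general. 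Hence any braiding restricting to a given $\beta_{1,1}$ is unique.

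For existence I would run the same induction constructively, defining $\beta_{I_1,I_2}$ on the fully-distinct locus by moving the points of $I_2$ past those of $I_1$ one transposition at a time using copies of $\beta_{1,1}$. The delicate point is well-definedness: the result must be independent of the order in which the transpositions are carried out and equivariant for the $\Sk_{|I_1|}\times\Sk_{|I_2|}$-actions built into $\Ran X$. This is exactly guaranteed by the spectral Yang-Baxter equation, which I would derive from the hexagon relations for $\beta_{1,1}$ precisely as in the non-spectral case. One then checks that these fibrewise maps assemble into a natural transformation of functors of sheaves of categories and extend from the fully-distinct locus to all of $C$ compatibly with the diagonal restriction maps $\varphi_{I/J}$, whereupon the full hexagon relations follow from those for $\beta_{1,1}$ by a diagram chase against factorisation associativity.

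The main obstacle is this coherence step in the existence part: promoting the inductive fibrewise recipe to an honest natural transformation of sheaves of categories over the correspondence, well-defined independently of the decomposition into singletons and compatible both with the symmetric-group actions and with the maps $\varphi_{I/J}$. Everything else is either a restriction or a formal consequence of strongness, but verifying that the Yang-Baxter relations supply exactly the coherence needed to descend along the open cover by disjoint loci — and to glue across the loci where some points collide — is where the genuine work lies.
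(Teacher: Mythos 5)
Your proposal is correct and follows essentially the same route as the paper: restrict $\beta$ to $(X\times X)_\circ$ to get $\beta_{1,1}$, then use strongness of $\otimes^{ch}$ together with the hexagon relations to show $\beta_{I_1,I_2}$ factors through the restrictions to finer partitions, iterating down to singletons for uniqueness and reversing the recipe for existence. The paper phrases the coherence of the converse construction as independence of the chosen sequence of partitions (a direct consequence of the hexagon relations, with conservativity of pullback along the point-surjective maps $\alpha,\tilde{\alpha}$ doing the descent), which is the same content as your Yang--Baxter/transposition argument.
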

\begin{proof}Given a braiding $\beta$, let $\beta_{1,1}$ be its restriction to $(X\times X)_0 \to (\Ran X\times \Ran X)_\circ$. This inherits the hexagon relations from $\beta$. To spell out the first hexagon relation, consider the diagram 
  \begin{equation}\label{fig:RanSpaceHexagon}
  \begin{tikzcd}[row sep = {30pt,between origins}, column sep = {65pt,between origins}]
 && (\Ran X\times \Ran X\times \Ran X)_\circ\ar[rd,"\tilde{\alpha}"] \ar[ld]& & \\ 
 &(\Ran X\times \Ran X)_\circ\times\Ran X\ar[rd,"\alpha"]\ar[ld] & & (\Ran X\times \Ran X)_\circ\ar[rd]\ar[ld] & \\ 
 \Ran X\times \Ran X\times\Ran X& & \Ran X\times \Ran X& & \Ran X
  \end{tikzcd}
\end{equation}
and the analogous diagram for the second hexagon relation. Since $\alpha$ and $\tilde{\alpha}$ are surjective on points, the $*$-pullbacks they induce on sheaves of categories is conservative. We then have the diagram of spaces over $\Ran X$
\begin{center}
\begin{tikzcd}[row sep = {15pt,between origins}, column sep = 30pt]
 (\Ran X\times \Ran X\times \Ran X)_\circ\ar[rd,"\tilde{\alpha}"] \ar[dd,"\tilde{\sigma}_{23}"]& \\ 
&(\Ran X\times \Ran X)_\circ \ar[dd,"\tilde{\sigma}"]\\
 (\Ran X\times \Ran X\times \Ran X)_\circ\ar[dd,"\tilde{\sigma}_{12}"]& \\ 
&(\Ran X\times \Ran X)_\circ\\
 (\Ran X\times \Ran X\times \Ran X)_\circ\ar[ru,"\tilde{\alpha}"] &
\end{tikzcd}
\end{center}
where for instance $\tilde{\sigma}_{12}$ is induced from diagram (\ref{fig:RanSpaceHexagon}) by applying $\tilde{\sigma}\times\id$ and $\id$ to the left and right middle terms. The first hexagon relation then says that in the following, which is a generalisation of (\ref{fig:SmallerMassiveDiagram}),
\begin{equation}\label{fig:MassiveDiagramBetaHexagon}
  \begin{tikzcd}[row sep = 10pt, column sep = 30pt, crossing over clearance=3ex]
    (\Cl\boxtimes\Cl\boxtimes \Cl)\vert_{((\Ran X)^3)_\circ}\ar[d,"\wr"]\ar[r,"(\otimes^{ch}\otimes\id)"{xshift=20pt}]\ar[rrd,""{name=U,inner sep=1pt,below,xshift=-80pt,yshift=-3pt}, bend left = 35] &[30pt] (\Cl\boxtimes\Cl)\vert_{((\Ran X)^3)_\circ}\ar[rd,"\otimes^{ch}",""{name=A,inner sep=1pt,below,xshift=-3pt}]& \\ 
    \tilde{\sigma}_{23}^*(\Cl\boxtimes\Cl\boxtimes \Cl)\vert_{((\Ran X)^3)_\circ} \ar[d,"\wr"]  \ar[rr,"\tilde{\sigma}_{12}^*(\otimes^{ch}\otimes\id)"{xshift=-30pt}, ""{name=M,inner sep=1pt,xshift=-89pt}]  & & \Cl\vert_{((\Ran X)^3)_\circ}\\
    \tilde{\sigma}_{23}^*\tilde{\sigma}_{12}^*(\Cl\boxtimes\Cl\boxtimes \Cl)\vert_{((\Ran X)^3)_\circ}\ar[r,"\tilde{\sigma}_{12}^*\tilde{\sigma}_{23}^*(\otimes^{ch}\otimes\id)"'{xshift=20pt,yshift=-5pt}"]\ar[rru, ""{name=D,inner sep=1pt,xshift=-80pt,yshift=3pt}, bend right = 35] & \tilde{\sigma}^*(\Cl\boxtimes\Cl)\vert_{((\Ran X)^3)_\circ}\ar[ru,"\tilde{\sigma}^*\otimes^{ch}"',""{name=B,inner sep=1pt,xshift=-0pt}]\ar[from=uu,"\wr",crossing over]&  
    \arrow[Rightarrow, from=U, to=M, "\beta_{23}"{yshift=-15pt}, shorten <= 5pt, shorten >= 5pt,crossing over] 
    \arrow[Rightarrow, from=M, to=D,"\beta_{12}"{yshift=15pt}, shorten <= 5pt, shorten >= 5pt,crossing over]
    \arrow[Rightarrow, from=A, to=B,"\beta", shorten <= 5pt, shorten >= 5pt,crossing over]
    \end{tikzcd} 
\end{equation}
we have $\beta=\beta_{12}\beta_{23}$. Note that the restriction of the above to $(X\times X\times X)_\circ$ is by definition the hexagon relations for $\beta_{1,1}$.

Now, $\beta$ is determined by its restrictions $\beta_{I_1,I_2}$ to $(X^{I_1}\times X^{I_2})_\circ$, and because $\tilde{\alpha}$ is surjective, also by its restriction to $(X^{I_1^-}\times X^{I_1^+}\times X^{I_2})_\circ$ for any partition $I_1=I_1^-\sqcup I_1^+$ into nonempty subsets. By restricting the first hexagon relation (\ref{fig:MassiveDiagramBetaHexagon}) to $(X^{I_1^-}\times X^{I_1^+}\times X^{I_2})_\circ$ and using that $\otimes^{ch}$ is an isomorphism, we have 
  $$\beta_{I_1,I_2}\ \simeq\ \beta_{I_1^-,I_2}\cdot \beta_{I_1^+,I_2}$$
and likewise for $I_2$. Applying this iteratively by partitioning $I_1,I_2$ into singleton sets, implies that every $\beta_{I_1,I_2}$ (and hence $\beta$) is uniquely determined by $\beta_{1,1}$. Conversely, given a $\beta_{1,1}$ we may use this to define $\beta_{I_1,I_2}$, which is independent of the sequence of partitions chosen by the hexagon relations.
\end{proof}

The main example of this is the following. Let $A$ be a bialgebra in a chiral-diagonal braided factorisation category $\El$, i.e. we have lax braided factorisation structures $\otimes$ and $\otimes^{ch}$ and structure maps  
$$\cdot \ :\ A\otimes_\El A\ \to \ A, \hspace{15mm} \Delta\ :\ (\cup\jmath)^*A\ \stackrel{\sim}{\to}\ A\otimes_\El^{ch}A.$$
Also assume that $\otimes^{ch}_\El$ is and $\Delta$ are equivalences, i.e. define strong factorisation structures on $\El$ and $A$, which implies that $\Cl = (A\FactMd(\El),\otimes^{ch})$ is a strong factorisation category. Let us in addition assume that $\otimes^{ch}$ has a braiding $\sigma_\El$.
\begin{cor} \label{cor:RanSpaceRMatrix}
   To endow $A$ with a spectral $R$-matrix $R \in \Gamma((\Ran X\times\Ran X)_\circ,A\otimes^{ch} A)$ is equivalent to giving a section 
   $$R(z,w)\ \in\ \Gamma\left((X\times X)\setminus X,A\otimes^{ch} A\right)$$
   with  
  \begin{equation}
    \begin{aligned} \label{eqn:HexagonRanSpace}
      & (\Delta\otimes^{ch}\id)R(z_{12},z_3)\ =\ R(z_1,z_3)\cdot R(z_2,z_3), \hspace{30mm}\\[3pt]
      & \hspace{30mm}(\id\otimes^{ch}\Delta)R(z_{1},z_{23})\ =\ R(z_1,z_2)\cdot R(z_1,z_3),
     \end{aligned} 
  \end{equation}
   and
  \begin{equation}\label{eqn:AlmostCommutativityRanSpace} 
  \sigma\Delta(-)\ =\ R(w,z)\cdot \Delta(-) \cdot R(w,z)^{-1}.
  \end{equation}
\end{cor}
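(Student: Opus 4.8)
The plan is to obtain this corollary by combining Theorem \ref{thm:SpectralRMatrixBraidedMonoidal} with Proposition \ref{prop:BetaDeterminedBy11} and then specialising the resulting datum to two points. The standing hypotheses---that $\otimes^{ch}_\El$ and $\Delta$ are equivalences and that $\otimes^{ch}$ carries a braiding $\sigma_\El$---place us in the setting of Theorem \ref{thm:SpectralRMatrixBraidedMonoidal}, with $\El$ a strong classical braided factorisation category over the chiral-diagonal braided factorisation space $\Ran X$ and $\Cl=(A\FactMd(\El),\otimes^{ch})$ a strong factorisation category. Thus giving a spectral $R$-matrix $R\in\Gamma((\Ran X\times\Ran X)_\circ,A\otimes^{ch}A)$ is the same as giving a strong braiding $\beta$ on $\Cl$, via $R=\beta_{A,A}\sigma_\El(1_A)$.

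Since $\Cl$ is strong, Proposition \ref{prop:BetaDeterminedBy11} applies: a strong braiding $\beta$ is the same as a two-point natural transformation $\beta_{1,1}$ on $(X\times X)_\circ$ satisfying the hexagon relations, the full $\beta$ being reconstructed from $\beta_{1,1}$ by iteratively partitioning each finite set into singletons, with partition-independence guaranteed exactly by the hexagons. Composing the two equivalences and transporting $R=\beta_{A,A}\sigma_\El(1_A)$ through the restriction identifies $\beta_{1,1}$ with the two-point section $R(z,w)\in\Gamma((X\times X)\setminus X,A\otimes^{ch}A)$, namely the restriction of $R$ to the two-point stratum. This already yields the bijection of the corollary at the level of underlying data; it remains to match the defining conditions.

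For the forward direction I would restrict the spectral hexagon relations \eqref{eqn:SpectralRMatrixHexagon} and the almost cocommutativity \eqref{eqn:SpectralRMatrixAlmostCommutativity} of $R$ to the three-point and two-point strata respectively. The essential point is that along the partial diagonal $z_1=z_2$ the chiral product degenerates and the coproduct isomorphism $\Delta:(\cup\jmath)^*A\simeq A\otimes^{ch}A$ records this degeneration, so that $(\Delta\otimes^{ch}\id)R(z_{12},z_3)$ is precisely the restriction of $(\Delta\otimes^{ch}\id)R$ to the locus where the first two points collide; reading the two composites of the hexagon off in this way yields the two lines of \eqref{eqn:HexagonRanSpace}, and the fibrewise form of the $A$-linearity diagram \eqref{fig:ALinearityBraiding} yields \eqref{eqn:AlmostCommutativityRanSpace}. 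Conversely, given $R(z,w)$ satisfying \eqref{eqn:HexagonRanSpace} and \eqref{eqn:AlmostCommutativityRanSpace}, these are exactly the conditions making $\beta_{1,1}=R(z,w)\cdot\sigma_\El$ an $A$-linear two-point braiding with hexagons; Proposition \ref{prop:BetaDeterminedBy11} then extends it uniquely to a strong braiding $\beta$, and Theorem \ref{thm:SpectralRMatrixBraidedMonoidal} returns a genuine spectral $R$-matrix restricting to $R(z,w)$.

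The main obstacle I anticipate is the bookkeeping in the forward direction: verifying that restricting $R$ over the locus where two of three points collide reproduces $(\Delta\otimes^{ch}\id)R$ (resp.\ $(\id\otimes^{ch}\Delta)R$). This is the factorisation-algebra compatibility between the coproduct $\Delta$ and the chiral product $\otimes^{ch}$ along partial diagonals; it is the one genuinely geometric step, everything else being a formal consequence of the two cited results.
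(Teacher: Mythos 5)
Your proposal is correct and follows essentially the same route as the paper: the paper's proof is the one-line instruction to apply Proposition \ref{prop:BetaDeterminedBy11} to the braiding $\beta = R\cdot\sigma_\El$ on $(\Cl,\otimes^{ch})$, which is precisely your combination of Theorem \ref{thm:SpectralRMatrixBraidedMonoidal} with the reduction to the two-point datum $\beta_{1,1}$. Your additional bookkeeping about restricting the hexagon and almost-cocommutativity conditions to the two- and three-point strata is exactly what the paper leaves implicit.
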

\begin{proof}
  Apply Proposition \ref{prop:BetaDeterminedBy11} to the braiding $\beta=R\cdot \sigma_\El$ of the factorisation structure $(\Cl,\otimes^{ch})$.
\end{proof}

Note that conditions (\ref{eqn:HexagonRanSpace}) and (\ref{eqn:AlmostCommutativityRanSpace}) are equalities of elements of $A\otimes^{ch}A\otimes^{ch}A\vert_{(X\times X\times X)_\circ}$ and  maps  $A\vert_{X^2\setminus X} \to A\otimes A\vert_{X^2\setminus X}\to A\otimes^{ch}A\vert_{X^2\setminus X}$, respectively. If $\El$ in addition symmetric, the above is symmetric if and only if $\sigma_\El(R(z,w)) = R(w,z)^{-1}$.

\subsubsection{Holomorphic-topological $\Eb_n$-categories and algebras} We remark here that for a prestack $X$, we may define \textit{holomorphic-topological $\Eb_{n,m}$-categories} and \textit{-algebras} as factorsation $\Eb_{n}\otimes\Eb_m$-categories and -algebras over $\Ran X$, with its structure of a  factorisation $\Eb_\infty\otimes\Eb_\infty\simeq\Eb_\infty$-space induced by the chiral and diagonal commtutative factorisation structures, respectively.

\subsection{Vertex quantum groups} \label{sec:QuantumVertexAlgebras} We now consider equivariant factorisation quantum groups over the affine line, which mirrors the case of ordinary vertex algebras summarised in Appendix \ref{sec:RationalMultiplicativeElliptic}. There we show that strong commutative factorisation algebras correspond to vector spaces $V$ equipped with a map 
$$Y(z)\ :\ V\otimes V\ \stackrel{}{\to}\ V((z))$$
with extra data and conditions. This should be thought of as loosely analogous to a product in an associative algebra, and in this section we will be carrying this analogy over to quantum groups. 

\subsubsection{} 
For the remainder of this section, we work with factorisation $\Eb_1\otimes\Eb_\infty\simeq\Eb_\infty$-space and -category 
$$Y\ = \ \Ran^{ch}_a(\Ab^1_{dR})/\Ga, \hspace{15mm}\El\ =\ \QCoh_{\Ran^{ch}(\Ab^1_{dR})/\Ga},$$
or the completion $\El= \QCoh_{\Ran^{ch}(\Ab^1_{dR})/\Ga}^\wedge $ with the symmetric factorisation structure $\hat{\otimes}^{ch}$ defined in section \ref{sec:CompletedDModules}; in the following we will abuse notation and refer to either as $(\El,\otimes^{ch})$. Thus, vertex quantum groups will be certain strong factorisation $\Eb_1^{\otimes^{ch}}$-algebras equipped with a compatible $\Eb_1^{\otimes}$-coalgebra structure. They are analogues of quantum groups in $\Vect$, with the associative algebra structure replaced with an associative vertex algebra structure:
\begin{defn} \label{defn:VertexQuantumGroup}
  A \textit{vertex quantum group} is an (associative) vertex algebra $V$ with compatible\footnote{This means that
$\Delta(Y(\alpha,z)\beta) =\Delta(\alpha)\cdot_{Y(z)}\Delta(\beta)$, or, in a commutative diagram,
\begin{center}
\begin{tikzcd}[row sep = {30pt,between origins}, column sep = {20pt},ampersand replacement = \&]
V\otimes V\ar[d,"\Delta\otimes\Delta"]\ar[rr,"Y(z)"] \&[-2pt] \&[15pt] V((z))\ar[d,"\Delta"] \\ 
 (V\otimes V)\otimes (V\otimes V)\ar[r,"\sim"',"\sigma_{23}"]\&V\otimes V\otimes V\otimes V\ar[r,"\sim"',"Y(z)\otimes Y(z)"]\& V\otimes V((z)) 
\end{tikzcd}
\end{center}} coalgebra structure $\Delta$, and a linear map 
$$R(z,w)\ :\  V\otimes V \ \to\ k((z-w))$$
satisfying
  \begin{equation} \label{eqn:VertexQuantumGroupDerivation}
  \begin{aligned}
   &  R(z,w)(T\otimes \id) \ =\ \partial_z R(z,w) , \hspace{30mm}\\[3pt]
   & \hspace{30mm} R(z,w)(\id\otimes T) \ =\ \partial_w R(z,w),
  \end{aligned}
\end{equation}
  the hexagon relations 
  \begin{equation}\label{eqn:VertexQuantumGroupHexagon}
    \begin{aligned}
     & R(z,w)\ =\ (Y(u)\otimes\id) R_{13}(z+u,w)R_{23}(z,w), \hspace{30mm}\\[3pt]
     & \hspace{30mm} R(z,w)\ =\ (\id\otimes Y(u)) R_{13}(z,w+u)R_{12}(z,w),
    \end{aligned}
  \end{equation}
   almost cocommutativity
  \begin{equation}
    \label{eqn:VertexQuantumGroupAlmostCocommutativity} 
    \sigma\cdot  \Delta(-)\ =\ R(z,w)\Delta(-)R(w,z)^{-1}
  \end{equation} 
  and compatibility with the unit: $R(|0\rangle\otimes \id)=R(\id\otimes|0\rangle)=1$.  An $R$ as above is called a \textit{vertex $R$-matrix}. It is \textit{symmetric} if $\sigma R(z,w)=R(w,z)^{-1}$, where inverses are taken using the coproduct $\Delta$.
\end{defn}

The compatibility (\ref{eqn:VertexQuantumGroupDerivation}) between vertex $R$-matrices and the derivation in the vertex algebra appears as ``$H_D\otimes H_D$ covariance'' of the associated bicharacter $r(z,w)$ in \cite{An}. Here equation (\ref{eqn:VertexQuantumGroupAlmostCocommutativity}) takes place inside $V\otimes V\subseteq V\otimes V((z-w))$. It is easy to show that any vertex $R$-matrix satisfies the \textit{vertex Yang-Baxter equation}:
  \begin{equation}\label{eqn:VertexYangBaxter}
    R_{12}(z,w)R_{13}(z,u)R_{23}(w,u) = R_{23}(w,u)R_{13}(z,u)R_{12}(z,w),
  \end{equation}
an equality of elements in $V\otimes V\otimes V[[z^{\pm 1},w^{\pm 1},u^{\pm 1}]]$. Similar definitions have been made before, e.g. in \cite{EK}. One new angle is that they are examples of quantum factorisation algebras, and hence fit into the general theory of factorisation algebras:

\begin{theorem} \label{thm:VertexQuantumGroupFactorisationQuantumGroup}
  The data of a vertex quantum group $V$ is equivalent to a factorisation op-quantum group $A$ in $\El$.
\end{theorem}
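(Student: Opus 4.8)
The plan is to prove this as an equivalence of concrete data, deriving each clause of Definition \ref{defn:VertexQuantumGroup} from the corresponding clause of the abstract definition and conversely. First I would fix the underlying dictionary: by the results recalled in Appendix \ref{sec:RationalMultiplicativeElliptic}, a strong associative factorisation $\Eb_1^{\otimes^{ch}}$-algebra $A$ in $\El$ is the same as an associative vertex algebra $(V,Y(z))$, the chiral structure map $A\otimes^{ch}_\El A\to A$ being $Y(z)$ fibrewise over the two-point locus $(X\times X)\setminus X$. Since we work over $\Ran^{ch}(\Ab^1_{dR})/\Ga$, every structure morphism is automatically a map of $\Dl$-modules and is translation-equivariant; this is precisely the content of the derivation compatibility (\ref{eqn:VertexQuantumGroupDerivation}), which is the infinitesimal form of $\Ga$-equivariance, exactly as the ``$H_D\otimes H_D$-covariance'' of the bicharacter discussed in the remark on bicharacters.

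Second, I would match the coalgebra and $R$-matrix data. The compatible coproduct $\Delta:V\to V\otimes V$ together with the compatibility square in the footnote to Definition \ref{defn:VertexQuantumGroup} is, verbatim, a factorisation coalgebra structure with respect to the diagonal tensor product whose compatibility with $Y(z)$ is the op-bialgebra condition; this exhibits $A$ as a factorisation op-bialgebra. The crucial point is that the vertex $R$-matrix is a \emph{map} $R(z,w):V\otimes V\to k((z-w))$ rather than a section of $A\otimes^{ch}_\El A$; under the fibrewise identification $1_\El\simeq k((z-w))$ over $(X\times X)\setminus X$ this is exactly the two-point restriction of an op-$R$-matrix $R^*:A\otimes^{ch}_\El A\to 1_\El$, in the sense of the comodule version of Corollary \ref{cor:LaxRMatrixBraidedMonoidal}. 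It is this functional-versus-element duality, and the replacement of $A\FactMd$ by $A\CoMd$, that makes the vertex quantum group an \emph{op}-quantum group.

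Third, I would pass between the two-point datum $R(z,w)$ and a global op-$R$-matrix on the off-diagonal locus of $\Ran\Ab^1\times\Ran\Ab^1$, using the op/comodule analogue of Corollary \ref{cor:RanSpaceRMatrix}. Because $\otimes^{ch}_\El$ and $\Delta$ are strong, Proposition \ref{prop:BetaDeterminedBy11} applies: the braiding of $A\CoMd$ is determined by, and reconstructible from, its restriction $\beta_{1,1}$ to a pair of distinct points, the reconstruction being independent of the chosen partitions into singletons by the hexagon relations. Unwinding the resulting braiding fibrewise identifies the abstract hexagon and almost-cocommutativity axioms with the multi-point spectral identities (\ref{eqn:HexagonRanSpace})--(\ref{eqn:AlmostCommutativityRanSpace}), and restricting these to singletons yields precisely the hexagon relations (\ref{eqn:VertexQuantumGroupHexagon}), almost cocommutativity (\ref{eqn:VertexQuantumGroupAlmostCocommutativity}), and the unit conditions; the Yang--Baxter equation (\ref{eqn:VertexYangBaxter}) then follows formally as in the non-spectral case. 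Conversely, the two-point identities are exactly what is needed to define the global op-$R$-matrix, so the two packages of data determine one another.

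The hard part will be the op-bookkeeping together with the analytic matching of targets. One must keep careful track of which of the chiral and diagonal structures plays the algebra versus the coalgebra role once everything is dualised, so that $Y(z)$ remains the chiral product while $\Delta$ is the diagonal coproduct; this is where the ``op'' genuinely enters and where a naive transposition would give the wrong compatibilities. The second delicate point is that $R(z,w)$ lands in $k((z-w))$ rather than in a space of honest global sections, so the equivalence really uses the completed chiral tensor product $\hat\otimes^{ch}$ of section \ref{sec:CompletedDModules}; verifying that $R^*$ is a well-defined map of $\Dl$-modules whose fibrewise Laurent expansion reproduces both (\ref{eqn:VertexQuantumGroupDerivation}) and the desired $R(z,w)$, and that no information is lost in passing to the completion, is the technical crux. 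Once these identifications are in place, the remaining verifications are formal consequences of Theorem \ref{thm:SpectralRMatrixBraidedMonoidal} and Proposition \ref{prop:BetaDeterminedBy11}.
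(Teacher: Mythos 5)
Your proposal follows essentially the same route as the paper's proof: identify the associative vertex algebra structure with a strong $\otimes^{ch}$-factorisation algebra via the appendix, the compatible coproduct with a $\otimes$-factorisation bialgebra structure via Proposition \ref{prop:GaGmEVABialgebra}, and then reduce the spectral op-$R$-matrix to the two-point datum $R(z,w)$ on $(\Ab^1\times\Ab^1)_\circ$ using the dualised Corollary \ref{cor:RanSpaceRMatrix}, with condition (\ref{eqn:VertexQuantumGroupDerivation}) encoding the $\Dl$-module/equivariance structure. You also correctly flag the two points the paper relies on implicitly — the functional-versus-element reason the result is about \emph{op}-quantum groups, and the need for the completed chiral structure of section \ref{sec:CompletedDModules} so that $R(z,w)$ may land in $k((z-w))$.
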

\begin{proof}
  By Theorem \ref{thm:GaGmEVA}, the data of an associative vertex algebra is equivalent to a strong factorisation algebra in $(\El,\otimes^{ch})$ and by Proposition \ref{prop:GaGmEVABialgebra}, to give a compatible coproduct is equivalent to extending this to a bialgebra $A$ in the braided factorisation category $(\El,\otimes^{ch},\otimes)$. It remains to show that endowing $V$ with a vertex $R$-matrix is equivalent to endowing $A$ with a spectral op-$R$-matrix 
  $$R^*\ :\ A\otimes^{ch}A\ \to\ p^*1_\El.$$
  To begin with, given $R^*$ we restrict it to $(\Ab^1\times \Ab^1)_\circ$.  This gives an equivariant map of D-modules, and taking invariant sections gives
  $$R(z,w)\ :\ V\otimes V((z-w))\ \to\ k((z-w)).$$
  Condition (\ref{eqn:VertexQuantumGroupDerivation}) is then equivalent to such a map of vector spaces inducing a map of D-modules over $(\Ab^1\times\Ab^1)_\circ$. Then by Corollary \ref{cor:RanSpaceRMatrix}, after taking duals to get the analogous claim for op-$R$-matrices, this data over $(\Ab^1\times\Ab^1)_\circ$ determines a spectral op-$R$-matrix uniquely. 
\end{proof}

In the above, we have used the notation from section \ref{sec:CompletedDModules} for $\otimes^{ch}$ to refer to the \textit{completed} chiral factorisation structure. 

\subsubsection{} We expect multiplicative and elliptic analogues of vertex quantum groups and of Theorem \ref{thm:VertexQuantumGroupFactorisationQuantumGroup}, although as remarked after Theorem \ref{thm:GaGmEVA} these will be equivalent to vertex quantum groups. 

In Appendix \ref{sec:RationalMultiplicativeElliptic} we recall analogues of vertex algebraic structures living over one dimensional algebraic groups $G$ other than $\Ab^1$. We define analogues of vertex quantum groups, with spectral $R$-matrix $R(z,w)=R(z-_\gk w)\in V\otimes V((z-_\gk w))$ where $z,w$ are coordinates on the formal neighbourhood of the identity in $G$, and conditions (\ref{eqn:VertexQuantumGroupDerivation}) preserved and (\ref{eqn:SpectralRMatrixAlmostCommutativity}) and condition (\ref{eqn:VertexQuantumGroupHexagon})  replaced by 
\begin{equation}
  \begin{aligned}
   & R(u,v)\ =\ (Y(z-_\gk w)\otimes\id)R_{13}(z+_\gk u,v)R_{23}(w+_\gk u,v), \hspace{40mm}\\[3pt]
   & \hspace{20mm} R(u,v)\ =\ (\id\otimes\, Y(z-_\gk w))R_{13}(u,z+_\gk v)R_{12}(u,w+_\gk v).
  \end{aligned}
\end{equation}
For instance, when $G=\Gm$ the multiplicative vertex Yang-Baxter equation is 
$$R_{12}(s)R_{13}(st)R_{23}(t)\ =\ R_{23}(t)R_{13}(st)R_{12}(s)$$
where $z-w=\log(s)$ and $w-u=\log (t)$.

\subsubsection{Remark} One might wonder why we restrict ourselves to $\El=\QCoh^\wedge_{\Ran^{ch}(\Ab^1_{dR})/\Ga}$ or module categories inside it. The reason is that there is no analogue of Theorem \ref{thm:GaGmEVA} classifying factorisation categories over the Ran space as a dg-category along with a vertex algebra type structure map. The main obstactle is the lack of triangulated or stable $\infty$- structure on $\ShvCat$ which would be necessary for chiral Koszul duality as in Appendix \ref{sec:RationalMultiplicativeElliptic} to be true.

\subsubsection{Holomorphic vertex quantum groups} A vertex algebra is called \textit{holomorphic}\footnote{We warn readers that in the vertex algebra literature, the word ``commutative'' is unfortunately sometimes used instead of ``holomorphic''.} if it extends to a factorisation algebra on 
$$\Ran^{ch}(\Ab^1_{dR})/\Ga\ \to\ \Ran^* (\Ab^1_{dR})/\Ga.$$
It is well-known that a holomorphic commutative vertex algebra is equivalent to a commutative algebra with derivation $(A,T)$, where the field maps are 
\begin{equation}
  \label{eqn:HolomorphicVertexAlgebra} 
  Y(a,z)\ =\ (e^{zT}a)\cdot \ .
\end{equation}
The same is true for associative algebras and holomorphic (associative) vertex algebras. Likewise, for vertex quantum groups,
\begin{prop} There is an equivalence of categories between holomorphic vertex quantum groups, and bialgebras with biderivation and an op-$R$-matrix.
\end{prop}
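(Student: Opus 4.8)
The plan is to build the equivalence by transporting the three layers of the vertex-quantum-group structure --- the vertex algebra, the coproduct, and the vertex $R$-matrix --- through the holomorphic simplification $Y(a,z)=(e^{zT}a)\cdot$ of (\ref{eqn:HolomorphicVertexAlgebra}). For the underlying object, I would first invoke the associative analogue of (\ref{eqn:HolomorphicVertexAlgebra}) quoted just above: a holomorphic associative vertex algebra $V$ is the same as an associative algebra $(A,\cdot)$ with a derivation $T$, with $V=A$ as a vector space and $Y(a,z)=(e^{zT}a)\cdot$\,. This fixes the equivalence on underlying vector spaces and reduces the problem to matching the remaining data $(\Delta,R(z,w))$ of a vertex quantum group against the data of a coproduct making $A$ a bialgebra with biderivation $T$ together with an op-$R$-matrix.

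For the coproduct, I would transport the compatible coalgebra structure $\Delta$ through $V=A$. Holomorphicity forces $\Delta$ to land in $A\otimes A$ rather than in a space of formal series, so that $\Delta\colon A\to A\otimes A$ is an honest comultiplication. Substituting $Y(a,z)=(e^{zT}a)\cdot$ into the compatibility condition $\Delta(Y(\alpha,z)\beta)=\Delta(\alpha)\cdot_{Y(z)}\Delta(\beta)$ from Definition \ref{defn:VertexQuantumGroup}, the $z^0$-coefficient gives the bialgebra axiom $\Delta(\alpha\beta)=\Delta(\alpha)\Delta(\beta)$, while the $z$-linear term evaluated at $\beta=|0\rangle$ gives the coderivation axiom $\Delta\circ T=(T\otimes\id+\id\otimes T)\circ\Delta$. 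Conversely, these two together reconstitute the full condition for all $z$, using $\Delta\,e^{zT}=(e^{zT}\otimes e^{zT})\,\Delta$. Combined with the derivation axiom inherited from the first step, this makes $(A,\cdot,\Delta,T)$ a bialgebra with biderivation, and the construction is visibly reversible.

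For the $R$-matrix, the derivation conditions (\ref{eqn:VertexQuantumGroupDerivation}) read $\partial_z R(z,w)=R(z,w)(T\otimes\id)$ and $\partial_w R(z,w)=R(z,w)(\id\otimes T)$, whose unique solution is $R(z,w)=R_0\circ(e^{zT}\otimes e^{wT})$ with $R_0:=R(0,0)\colon A\otimes A\to k$. The requirement that $R(z,w)$ depend only on $z-w$ is then equivalent to the $T$-invariance $R_0\circ(T\otimes\id+\id\otimes T)=0$, whence $R(z,w)=R_0\circ(e^{(z-w)T}\otimes\id)$. Under this substitution together with $Y(u)=e^{uT}\cdot$, the hexagon relations (\ref{eqn:VertexQuantumGroupHexagon}) collapse to the bicharacter hexagon relations for $R_0$, i.e.\ the dual of (\ref{eqn:RMatrixHexagon}); the almost cocommutativity (\ref{eqn:VertexQuantumGroupAlmostCocommutativity}) becomes op-almost-cocommutativity for $R_0$ in the convolution algebra determined by $\Delta$; and the unit compatibility becomes the counit axiom $R_0(|0\rangle\otimes\id)=R_0(\id\otimes|0\rangle)=\epsilon$. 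This is precisely the data of an op-$R$-matrix in the sense of the comodule version of Proposition \ref{prop:RMatrixBraidedMonoidal}, and conversely any such $R_0$ reconstitutes $R(z,w)$ by the displayed formula. Packaging the three reductions as a pair of functors, each step is manifestly natural in $V$, so they are mutually inverse equivalences.

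The main obstacle is the bookkeeping in the last two steps: verifying that holomorphicity collapses the a priori formal-variable dependence of $\Delta$ and $R(z,w)$ exactly through $e^{zT}$, and then matching the ``op'' (dualised) hexagon and almost-cocommutativity axioms coefficient by coefficient against their variable-free counterparts. The cleanest point to handle carefully is that the two derivation conditions (\ref{eqn:VertexQuantumGroupDerivation}) encode not only the formula $R(z,w)=R_0\circ(e^{zT}\otimes e^{wT})$ but, via the dependence on $z-w$ alone, exactly the $T$-invariance of $R_0$ needed for it to define a well-posed op-$R$-matrix.
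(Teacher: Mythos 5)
Your proposal is correct and rests on the same key dictionary as the paper's proof: $Y(a,z)=(e^{zT}a)\cdot$ for the vertex structure and $R(z,w)=R_0\circ(e^{(z-w)T}\otimes\id)$ for the $R$-matrix, with the inverse given by evaluating at $z=0$ and $z-w=0$. You simply run the equivalence in the opposite direction (from vertex quantum groups to bialgebras) and spell out the axiom-matching in more detail than the paper, which instead verifies associativity of the constructed vertex product explicitly and leaves the remaining checks implicit.
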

\begin{proof}
  A bialgebra with biderivation $(A,m=\cdot,\Delta,T)$ is equivalent gives a holomorphic vertex bialgebra with vertex algebra structure (\ref{eqn:HolomorphicVertexAlgebra}) and coproduct $\Delta$. It is an associative vertex algebra because 
  \begin{align*}
    Y(Y(-,z)(-),w)&\ =\ m (e^{wT}\otimes \id) ( m\otimes\id)(e^{zT}\otimes \id \otimes\id )\\
    &\ =\ m  ( m\otimes\id)(e^{(z+w)T}\otimes e^{wT} \otimes\id )\\
    &\ =\ m  (\id\otimes m)(e^{(z+w)T}\otimes e^{wT} \otimes\id )\\ 
    &\ =\ Y(-,z+w)Y(-,w)
  \end{align*}
  because $T$ is a derivation and $m$ is associative, and the product is recovered by setting $z=0$. Given an op-$R$-matrix $R^*: A\otimes A\to k$, we get a vertex $R$-matrix 
  $$R(z,w)\ =\ R^*\cdot (e^{(z-w)T}\otimes \id )$$
  and we recover $R^*$ by setting $z-w=0$.  Weak commutativity is inherited from $m$.
\end{proof}

\subsection{Reconstruction and braided commutative vertex algebras} \label{sec:ReconstructionTheorem}

Recall that an algebra $A$ in a background braided monoidal category $\El$ is called \textit{braided commutative} if 
$$a\cdot b\ =\ b\cdot_\beta a,$$
that is the product of two elements is the same as their product after being swapped by the braiding $\beta$, in other words $m=m\cdot\beta$ where $m$ is the product. This is the same thing as an $\Eb_2$-algebra in $\El$. 

Likwise, we can define a \textit{braided commutative factorisation algebra} $A$ in background braided factorisation category $\Cl$ as a factorisation $\Eb_2$-algebra in $\Cl$. If $(Y,\Cl)$ is strong and classical, this is equivalent to the condition $m=m\cdot \beta$, where $m$ is the factorisation product and $\beta$ is the factorisation braiding. Now, let $H$ be a vertex quantum group with vertex $R$-matrix $R(z,w)$. Then

\begin{lemdefn} An $H$-linear vertex algebra $V$ is called (\textit{weakly}) \textit{braided commutative} or \textit{$R$-twisted} if
$$Y(\alpha,z)Y(\beta,w)\ =\ Y(R(w,z)\cdot \beta\otimes\alpha,w,z).$$
It is equivalent to a strong braided-commutative factorisation algebra inside $H\FactMd(\El)$.
\end{lemdefn}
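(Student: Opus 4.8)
The plan is to deduce the stated equivalence from the abstract characterisation of braided-commutative factorisation algebras already recorded above: for a strong classical braided factorisation category $(Y,\Cl)$ a factorisation $\Eb_2$-algebra is the same as a factorisation algebra whose product $m$ satisfies $m = m\cdot\beta$, and then to translate this identity into the concrete vertex-algebraic formula by passing to the two-point locus. First I would fix the relevant strong classical braided factorisation category: by Theorem \ref{thm:VertexQuantumGroupFactorisationQuantumGroup} the vertex quantum group $H$ is a factorisation op-quantum group in $\El$, so by (the op-variant of) Theorem \ref{thm:SpectralRMatrixBraidedMonoidal} its category of factorisation modules $H\FactMd(\El)$ carries a strong factorisation braiding of the form $\beta = R\cdot\sigma$, where $\sigma$ is the ambient braiding on $\El$ and $R$ corresponds to the vertex $R$-matrix $R(z,w)$ of $H$. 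An $H$-linear vertex algebra $V$ is, via the reconstruction Theorem \ref{thm:GaGmEVA}, precisely a strong factorisation algebra inside this category, so the notion of a strong braided-commutative factorisation algebra in $H\FactMd(\El)$ is well-defined and equals the condition $m_V = m_V\cdot\beta$.

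The second step is to unwind $m_V = m_V\cdot\beta$ fibrewise over the two-point locus. Restricting to $(\Ab^1\times\Ab^1)_\circ$ and taking $\Ga$-invariant sections, the factorisation product $m_V$ becomes the iterated field product $Y(\alpha,z)Y(\beta,w)$, while the braiding $\beta = R\cdot\sigma$ sends $\alpha_z\otimes\beta_w$ to $R(w,z)\cdot(\beta\otimes\alpha)$ with insertions reordered to $(w,z)$, by the fibrewise description of the braiding together with Corollary \ref{cor:RanSpaceRMatrix}. Composing with $m_V$ on the right-hand side then produces exactly $Y(R(w,z)\cdot\beta\otimes\alpha,w,z)$, so the two-point restriction of $m_V = m_V\cdot\beta$ is literally the displayed weak braided-commutativity relation. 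Here one must keep careful track of conventions: the appearance of $R(w,z)$ rather than $R(z,w)$ and of the reversed insertion order $(w,z)$ both come from the swap $\sigma$ in $\beta = R\cdot\sigma$, and checking that positions and tensor factors line up is the one genuinely fiddly bookkeeping point in this step.

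It then remains to see that this two-point identity is equivalent to the full factorisation $\Eb_2$-condition, and this is where the real work lies. The point is that both the braiding and the multiplication are reconstructed from their two-point data: the braiding by Proposition \ref{prop:BetaDeterminedBy11}, and the product by the reconstruction Theorem \ref{thm:GaGmEVA}, so that $m_V = m_V\cdot\beta$ holds over all of $\Ran^{ch}(\Ab^1_{dR})/\Ga$ as soon as it holds on $(\Ab^1\times\Ab^1)_\circ$. Concretely, one propagates the relation to an arbitrary pair of disjoint finite subsets by factoring each insertion into singletons using the associativity of $Y$ and the hexagon relations for $\beta$, exactly as in the proof of Proposition \ref{prop:BetaDeterminedBy11}; the hexagon identities guarantee that the resulting multi-point relation is independent of the chosen factorisation. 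The main obstacle is thus not the fibrewise computation but verifying this compatibility, i.e.\ that the weak two-point relation indeed closes up under the factorisation and associativity constraints to yield a genuine factorisation $\Eb_2$-structure rather than merely a two-point condition.
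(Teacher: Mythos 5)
Your argument is correct and rests on the same underlying mechanism as the paper's, namely that a strong factorisation structure over the Ran space is determined by its restriction to the two-point locus; but the paper's own proof is a one-line citation of Proposition \ref{prop:GaGmEVABraidedCommutative}, whose proof modifies the Mayer--Vietoris boundary-map argument of Theorem \ref{thm:GaGmEVA} by replacing the commutativity diagram for $m$ with its $R$-twisted analogue, whereas you re-derive that content from the general characterisation $m = m\cdot\beta$ of factorisation $\Eb_2$-algebras together with Proposition \ref{prop:BetaDeterminedBy11} and reconstruction of the product from its binary part. The one point where your account is looser than it should be is the claim that the two-point restriction of $m = m\cdot\beta$ ``is literally'' the displayed relation: what the two-point locus actually yields, and what Proposition \ref{prop:GaGmEVABraidedCommutative} records, is the braided \emph{skew-symmetry} $Y(\alpha,z)\beta = e^{u(z)T}Y(R(z)\cdot\beta\otimes\alpha,-_{\gk}z)$, a statement about a single field applied to a vector, while the displayed identity $Y(\alpha,z)Y(\beta,w) = Y(R(w,z)\cdot\beta\otimes\alpha,w,z)$ compares iterated two-variable products and therefore lives over configurations of three points. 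Passing between the two uses associativity of $Y$, i.e.\ the standard vertex-algebra equivalence between skew-symmetry plus associativity and locality; you do invoke associativity in your propagation step, so nothing breaks, but this translation deserves to be made explicit, since it is precisely the step that the paper's citation also leaves implicit. What your route buys is that it lands directly on the locality form of the statement and makes visible where the hexagon relations enter (through Proposition \ref{prop:BetaDeterminedBy11}); what the paper's route buys is that the analytic content is quarantined in the appendix and reused verbatim from the untwisted case.
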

\begin{proof}
  Follows from Proposition \ref{prop:GaGmEVABraidedCommutative}.  
\end{proof}

In the above, we have as usual written 
$$Y(-,z,w)(-)\ =\ Y(-,z)Y(-,w)(-)\ :\ V\otimes V\otimes V\ \to\ V((w))((z))$$
and recall our convention that $\El=\QCoh^\wedge_{\Ran (\Ab^1_{dR})/\Ga}$ is the factorisation category over the Ran space we take. If we impose additional assumtions on $R(z,w)$, we recover the notion of quantum vertex algebra due to Etingof-Kazhdan \cite{EK}, or deformed chiral algebra due to Frenkel-Reshetikhin \cite{FR}.

\subsubsection{Reconstruction Theorem} For the remainder of this section we sketch some of the theory of braided commutative vertex algebras, the commutative case being \cite{FBZ}.

First consider the case for ordinary algebras. We may identify any algebra $A$ as a subalgebra 
$$A\ \hookrightarrow\ \End(A)$$
by letting any $a\in A$ act by left multiplication. The Reconstruction Theorem (Proposition \ref{prop:ReconstructionBraidedCommutativeAlgebra}) says that we may identify (braided) commutative algebra structures on vector space $A$ with (braided) commutative subalgebras of $\End(A)$ satisfying a mild condition. This allows us to work with generators of $A$ without referring to the free algebra generated by them, and so this method generalises well to vertex algebras, where there is no free vertex algebra on a set of generators. We now recall the Reconstruction Theorem for commutative algebras:\footnote{It will be a special case of the Reconstruction Theorem for (braided commutative) vertex algebras, but we prove it in any case. }

\begin{prop} \label{prop:ReconstructionCommutativeAlgebra}
  To endow $(A,1)$ with the structure of a commutative algebra is equivalent to supplying commuting linear maps
  $$a_i\ :\ A\ \to\ A$$
  such that $A$ is spanned by the set of vectors of the form $a_{i_1}^{k_1}\cdots a_{i_r}^{k_r}1$. 
\end{prop}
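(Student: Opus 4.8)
The plan is to prove the two implications separately, in both cases passing through the commutative subalgebra $\mathcal{U} \subseteq \End(A)$ generated by the given operators together with $\id$, and then identifying $A$ with $\mathcal{U}$ via evaluation at $1$.

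First I would treat the direction from an algebra structure to operators. Given a commutative unital algebra structure on $A$ with unit $1$, choose any algebra generating set $\{b_i\}$ and let $a_i = b_i\cdot(-) : A \to A$ be the corresponding left multiplications. These commute, since $a_i a_j(x) = b_i(b_j x) = (b_i b_j)x = (b_j b_i)x = a_j a_i(x)$ by associativity and commutativity, and $a_{i_1}^{k_1}\cdots a_{i_r}^{k_r}1 = b_{i_1}^{k_1}\cdots b_{i_r}^{k_r}$, so the spanning condition is exactly the statement that the $b_i$ generate $A$ as an algebra (taking $\{b_i\}$ to be all of $A$ always works).

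The substantive direction is the converse. Given commuting operators $a_i$ with the spanning property, let $\mathcal{U} \subseteq \End(A)$ be the subalgebra generated by $\{a_i\}$ and $\id$; it is commutative because its generators commute, and associative as a subalgebra of $\End(A)$. Consider the evaluation map $\phi : \mathcal{U} \to A$, $\phi(u) = u(1)$, which sends $\id \mapsto 1$. The spanning hypothesis says precisely that $\phi$ is surjective. The key step is injectivity: if $u(1) = 0$, then for any spanning vector $u(a_{i_1}^{k_1}\cdots a_{i_r}^{k_r}1) = a_{i_1}^{k_1}\cdots a_{i_r}^{k_r}\,u(1) = 0$, because $u$ commutes with every $a_i$, whence $u = 0$. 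Thus $\phi$ is a linear isomorphism, and I would transport the commutative algebra structure of $\mathcal{U}$ along $\phi$; explicitly $x \cdot y = u(y)$ where $u \in \mathcal{U}$ is the unique operator with $u(1) = x$. Associativity and commutativity are then inherited from $\mathcal{U}$, and $\phi(\id) = 1$ supplies the required unit.

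The only point requiring an idea is the well-definedness of the product in the converse direction, which is exactly the injectivity of $\phi$; everything hinges on the observation that elements of $\mathcal{U}$ commute with the $a_i$, so that the value of any $u \in \mathcal{U}$ on a spanning vector is controlled by its value on $1$. Finally, I would note that the two constructions are mutually inverse, yielding the asserted equivalence.
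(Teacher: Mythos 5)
Your proposal is correct and follows essentially the same route as the paper: both form the commutative subalgebra of $\End(A)$ generated by the given operators and identify it with $A$ via evaluation at $1$, with injectivity proven by commuting an operator killing $1$ past the generators onto a spanning vector. Your write-up is slightly more careful on two small points the paper glosses over — explicitly adjoining $\id$ to get the unit, and checking the two constructions are mutually inverse — but the key idea is identical.
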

\begin{proof}
  Given an algebra structure, let $\{a_i\}\subseteq \End(A)$ be endomorphisms given by an element of $A$. Conversely, given such data, form the smallest subalgebra generated by the $a_i$, 
  $$B\ =\ \langle\{a_i\}\rangle\ \subseteq\ \End(A)$$
  which is commutative. There is an isomorphism $B \simeq A$ sending $b\mapsto b\cdot 1$. It is surjective by the assumption on spanning, and injective because if $b\cdot 1=0$ then for any element $a$, which by spanning takes the form $a=b_a\cdot 1$, we have 
  \begin{equation}\label{eqn:UsingCommutativity}
    b\cdot a\ =\ bb_a\cdot 1\ =\ b_ab\cdot 1\ =\ 0
  \end{equation}
  and so $b=0$. 
\end{proof}

Notice that (\ref{eqn:UsingCommutativity}) is the reason we required commutativity. Indeed, the Proposition cannot work for merely associative algebras, because then any vector space $A$ would have a canonical algebra structure by dint of $\End(A)$ being an algebra. Moreoer, (\ref{eqn:UsingCommutativity}) is not even true for general braided commutative algebras unless $\El=H\Md(\Vect)$ is the category of modules for a quantum group with $R$-matrix $R=\sum R_1\otimes R_2$. In this case, we have 
$$b\cdot a\ =\ bb_a\cdot 1\ =\ \sum R_1 b_a R_2 b\cdot 1\ =\ 0$$
and so for $(A,1)$ a pointed object in $\El$,
\begin{prop} \label{prop:ReconstructionBraidedCommutativeAlgebra}
  To endow $(A,1)$ with the structure of a braided commutative algebra in $\El=H\Md$ is equivalent to supplying a subset
  $$S\ =\ \{a_i\}\ \subseteq\ \End(A)$$
  such that the $a_i$ braided commute,\footnote{To be explicit, $\Endl(A)$ is an algebra in $\El$, and we require that the elements $a_i:1_\El\to \Endl(A)$ braided commute.} and
  $A$ is spanned by the set of elements of the form $a_{i_1}^{k_1}\cdots a_{i_r}^{k_r}1$.
\end{prop}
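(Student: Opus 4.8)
The plan is to mirror the proof of Proposition \ref{prop:ReconstructionCommutativeAlgebra}, working throughout inside the braided monoidal category $\El=H\Md$ and using the internal endomorphism algebra $\Endl(A)$ in place of $\End(A)$. Recall from Proposition \ref{prop:RMatrixBraidedMonoidal} that the braiding on $\El$ is $\beta=\sigma_\El\cdot(R\cdot-)$ with $R=\sum R_1\otimes R_2$, so that two elements $x,y:1_\El\to \Endl(A)$ braided commute precisely when $xy=\sum (R_1\cdot y)(R_2\cdot x)$. The key structural feature I will exploit is that the evaluation pairing $\Endl(A)\otimes A\to A$ is $H$-linear and the unit $1:1_\El\to A$ is a morphism in $\El$, so that the map $b\mapsto b\cdot 1$ is a morphism in $\El$ rather than merely a linear map.

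For the forward direction, given a braided commutative algebra structure on $(A,1)$, left multiplication defines a morphism of algebras $A\to \Endl(A)$ in $\El$, and I would take $S$ to be (a generating set for) its image. These elements braided commute because $A$ is braided commutative and left multiplication is a braided algebra map, and they span $A$ trivially since $a=a\cdot 1$.

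For the reverse direction, given $S=\{a_i\}$ satisfying the two hypotheses, I would form the subalgebra $B=\langle S\rangle\subseteq \Endl(A)$ generated by $S$ and define $\phi:B\to A$ by $b\mapsto b\cdot 1$, a morphism in $\El$ by the equivariance noted above. Surjectivity of $\phi$ is immediate from the spanning hypothesis. For injectivity, suppose $b\cdot 1=0$; for any $a\in A$ write $a=b_a\cdot 1$ using spanning and compute, as in the display preceding the statement,
$$b\cdot a\ =\ b\, b_a\cdot 1\ =\ \sum (R_1\cdot b_a)(R_2\cdot b)\cdot 1\ =\ \sum (R_1\cdot b_a)\big((R_2\cdot b)\cdot 1\big)\ =\ 0,$$
where the second equality is braided commutativity inside $B$ and the last uses $H$-linearity of evaluation to rewrite $(R_2\cdot b)\cdot 1=R_2\cdot(b\cdot 1)=0$. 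Hence $b$ annihilates all of $A$, so $b=0$ and $\phi$ is an isomorphism; transporting the structure of $B$ along $\phi$ equips $(A,1)$ with a braided commutative algebra structure, and the two constructions are mutually inverse.

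The step I expect to be the main obstacle is verifying that $B=\langle S\rangle$ is genuinely braided commutative, i.e. an $\Eb_2$-subalgebra, given only that the \emph{generators} braided commute: one must propagate the braided commutativity relation to arbitrary products, which is exactly where the hexagon relations for $\beta$ enter, just as in the definition of braided commutativity. Once this propagation lemma is in hand, the only other delicate point is the $H$-equivariance of $b\mapsto b\cdot 1$, which is what makes the $R$-matrix appear in the correct position in the injectivity computation and is precisely the feature absent in the merely associative setting (cf.\ the remark after Proposition \ref{prop:ReconstructionCommutativeAlgebra}).
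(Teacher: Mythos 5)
Your proposal is correct and follows essentially the same route as the paper, which leaves the proposition without a formal proof and instead supplies the key injectivity computation $b\cdot a = bb_a\cdot 1 = \sum R_1 b_a R_2 b\cdot 1 = 0$ in the remark immediately preceding the statement, the rest being understood to mirror Proposition \ref{prop:ReconstructionCommutativeAlgebra}. You usefully make explicit the two points the paper leaves implicit — that $(R_2\cdot b)\cdot 1 = R_2\cdot(b\cdot 1)$ rests on $H$-linearity of evaluation together with invariance of the unit, and that braided commutativity propagates from generators to $\langle S\rangle$ via the hexagon relations.
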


\subsubsection{} \label{ssec:ReconstructionVertexAlgebras} We now turn to the Reconstruction Theorem for vertex algebras. Recall that if $V$ is a vector space, we say that two linear maps 
$$\alpha(z),\beta(z)\ :\ V\ \to\ V((z))$$
are \textit{mutually local} or \textit{weakly commute}, if we have equivalently
$$(z-w)^n\left(\alpha(z)\beta(w)- \beta(w)\alpha(z)\right)\ =\ 0 \hspace{5mm}\textup{for }n\gg 0, \hspace{15mm}\textup{or}\hspace{5mm}\alpha(z)\beta(w)\ =\ \beta(w)\alpha(z)$$
as elements of top left of the following squares, where the outer one is a pullback (intersection) and the inner one does not commute:
\begin{center}
\begin{tikzcd}[row sep = {30pt,between origins}, column sep = {25pt}]
  V[[z,w]][z^{-1},w^{-1}]\ar[rddd, bend right = 25]\ar[drrr,bend left=10]\ar[rd]&[-45pt]&[-30pt]&\\
& V[[z,w]][z^{-1},w^{-1},(z-w)^{-1}]\ar[rr,"\iota_{w/z}"]\ar[dd,"\iota_{z/w}"'] && V((z))((w))\ar[dd]\\[-15pt]
&&\times&\\[-15pt]
& V((w))((z))\ar[rr] && V[[z^{\pm 1},w^{\pm 1}]] 
\end{tikzcd}
\end{center}
where $\iota_{z/w}$ expands $1/(z-w)$ as a power series in $z/w$. Then 
\begin{prop}\emph{\cite[4.4.1]{FBZ}} Let $(V,|0\rangle)$ be a pointed vector space. To endow it with the structure of a (commutative) vertex algebra is equivalent to giving a collection 
$$\alpha_i(z)\ :\ V\ \to\ V((z))$$
of weakly commuting linear maps such that $V$ is spanned by the set of vectors $\alpha_{i_1,-n_1}^{k_1}\cdots \alpha_{i_r,-n_r}^{k_r}|0\rangle$, over all $n_j>0$.\footnote{
Here we have introduced the notation 
$$\alpha(z)\ =\ \sum_{n\in \Zb}\alpha_n z^{-n-1}.$$}
\end{prop}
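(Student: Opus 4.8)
The plan is to mirror the proof of Proposition \ref{prop:ReconstructionCommutativeAlgebra}, replacing the algebra $\End(V)$ with the space of \emph{fields} on $V$ (linear maps $V \to V((z))$) and ordinary commutativity with mutual locality. One direction is immediate: given a vertex algebra structure, set $\alpha_i(z) = Y(\alpha_i,z)$ for a generating set $\{\alpha_i\}$, which are weakly commuting by the locality axiom and satisfy the spanning condition by hypothesis. The content is the converse.

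First I would introduce the translation operator $T:V\to V$, determined by $T|0\rangle = 0$ and $[T,\alpha_i(z)] = \partial_z\alpha_i(z)$, and record the states $\alpha_i := \alpha_{i,-1}|0\rangle$. The key device is the \emph{normally ordered product} $:a(z)b(z): \,=\, a(z)_+ b(z) + b(z)\,a(z)_-$, where $a(z)_\pm$ denote the nonnegative and negative mode parts. I would then invoke \emph{Dong's Lemma} --- that mutual locality is preserved under normally ordered products and under $\partial_z$ --- to conclude that the family of fields obtained from the $\alpha_i(z)$ by iterated normally ordered products and derivatives is mutually local.

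Next I would \emph{define} the state--field correspondence on the spanning vectors by
$$Y\!\left(\alpha_{i_1,-n_1}^{k_1}\cdots \alpha_{i_r,-n_r}^{k_r}|0\rangle,\ z\right)\ =\ \prod_{j}\frac{1}{(n_j-1)!}\ :\!\partial_z^{\,n_1-1}\alpha_{i_1}(z)\cdots \partial_z^{\,n_r-1}\alpha_{i_r}(z)\!:,$$
which by Dong's Lemma produces mutually local fields satisfying the creation property $Y(v,z)|0\rangle = v + O(z)$. The crucial point is \emph{well-definedness}: the right-hand side must depend only on $v$ and not on its chosen expression in the generators. This I would establish via \emph{Goddard's uniqueness theorem}, which asserts that a field local with respect to the $\alpha_i(z)$ and creating $v$ from the vacuum (i.e.\ acting on $|0\rangle$ as $e^{zT}v$) is uniquely determined by $v$. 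Granting this, the vacuum, translation-covariance and locality axioms follow from the construction, just as the spanning and commutativity hypotheses forced the isomorphism $B\simeq A$ in Proposition \ref{prop:ReconstructionCommutativeAlgebra}.

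The main obstacle is exactly this well-definedness. Unlike the associative case, where $V$ embeds into the genuine algebra $\End(V)$, the fields here do not close under composition and a vector admits many presentations as a normally ordered monomial; the role played by weak commutativity in rescuing injectivity --- the analogue of equation (\ref{eqn:UsingCommutativity}) --- is supplied by Goddard's uniqueness theorem, which itself rests on the locality and creation properties. Everything else (checking each $Y(v,z)$ lands in $V((z))$ and verifying the axioms on the spanning set) is routine once well-definedness is secured.
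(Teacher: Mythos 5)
Your proposal is correct and follows essentially the same route as the cited proof in \cite[4.4.1]{FBZ}, which is also exactly the argument the paper reuses for its braided generalisation: define $Y$ on the spanning vectors by iterated normally ordered products of $\partial_z^{n_j-1}\alpha_{i_j}(z)$, get locality from Dong's Lemma, and settle well-definedness via Goddard uniqueness. No gaps worth flagging.
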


The main result of this section will be an analogue of the above well-known result for braided factorisation categories 
$$\El\ =\ H\Md(\QCoh^\wedge_{\Ran(\Ab^1_{dR})/\Ga},\otimes)$$  where $H$ is a vertex quantum group with spectral $R$-matrix $R(z,w)$. To set some notation, if vector space $V$ is any $H$-module then we have a maps
$$\left(H\otimes H ((z))\right)\otimes \Hom(V,V((z)))^{\otimes 2}\ \to\ \Hom(V,V((z)))^{\otimes 2}\ \to\ \Hom(V,V((z))((w)))$$
where the first is given by extending the action map of $H$ by $k((z))$-linearity, and the second is composition.

\begin{defn}
  We say that two linear maps 
  $$\alpha(z),\beta(z)\ :\ V\ \to \ V((z))$$
  are \textit{mutually braided local} or \textit{weakly braided commute} if we have equivalently
  $$(z-w)^n\left(\alpha(z)\beta(w)- R(w,z)\beta(w)\alpha(z)\right)\ =\ 0 \hspace{5mm}\textup{for }n\gg 0,\hspace{45mm}$$
  $$ \hspace{45mm}\textup{or}\hspace{5mm}\alpha(z)\beta(w)\ =\ R(w,z)\beta(w)\alpha(z)$$
  where we have used the same notation as in section \ref{ssec:ReconstructionVertexAlgebras}. 
\end{defn}

Finally, we recall the binary operation on $\Hom(V,V((z)))$ called \textit{normally ordered product}, which is neither associative nor commutative. Set  
$$:\alpha_n\beta_m:\ = \ \begin{cases}
\alpha_n\beta_m &m>0 \\
\beta_m\alpha_n  & m\le 0 
\end{cases},$$
and extend by linearity to define $:\alpha(z)\beta(z):$. The crucial fact for us will be an $R$-twisted version of Dong's Lemma
\begin{lem} \label{lem:QuantumDong}
  Let $V$ be an $H$-module. Then if $\alpha(z),\beta(z),\gamma(z):V\to V((z))$ are pairwise mutually $R$-twisted local, then so are $:\alpha(z)\beta(z):$ and $\gamma(z)$.
\end{lem}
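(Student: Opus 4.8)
The plan is to adapt the classical proof of Dong's Lemma for commutative vertex algebras (cf.\ \cite{FBZ}) to the $R$-twisted setting, the new ingredient being that the braiding factors picked up along the way must reassemble correctly via the hexagon relation (\ref{eqn:VertexQuantumGroupHexagon}) and the vertex Yang--Baxter equation (\ref{eqn:VertexYangBaxter}). It suffices to exhibit an integer $N\gg 0$ with
$$(z-x)^N\Big(:\alpha(z)\beta(z):\,\gamma(x)\ -\ R(x,z)\,\gamma(x)\,:\alpha(z)\beta(z):\Big)\ =\ 0,$$
where $R(x,z)$ acts on the pair of fields through the $H$-module structure on $\Hom(V,V((z)))^{\otimes 2}$, as in the definition of $R$-twisted locality. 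The first step is to point-split the normally ordered product: introducing an auxiliary variable $w$, I would write $:\alpha(z)\beta(z):$ as a residue $\Res_w$ of $\alpha(z)\beta(w)$ paired against the two expansions $\iota_{z/w}$ and $\iota_{w/z}$ of $(z-w)^{-1}$, so that the factor $\beta$ is carried to a separate point $w$. In this form the two factors live at distinct points $z$ and $w$ and may be commuted past $\gamma(x)$ using the pairwise localities $\alpha$--$\gamma$ and $\beta$--$\gamma$ independently.

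With the factors separated, I would move $\gamma(x)$ leftward across $\beta(w)$ and then across $\alpha(z)$, invoking the relations $\beta(w)\gamma(x)=R(x,w)^{-1}\gamma(x)\beta(w)$ and $\alpha(z)\gamma(x)=R(x,z)^{-1}\gamma(x)\alpha(z)$, each valid after multiplication by a suitable power $(w-x)^{r}$ or $(z-x)^{r}$. Here $r$ is taken large enough both to bound the three pairwise locality orders and to clear the finite-order poles along the diagonals of the Laurent series $R(x,w)\in k((x-w))$ and $R(x,z)\in k((x-z))$; the compatibility (\ref{eqn:VertexQuantumGroupDerivation}) with the derivation guarantees that these $\iota$-expansions and the residue in $w$ are compatible with the $D$-module structure and introduce no spurious poles. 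As in the untwisted argument the delicate point is analytic uniformity: a single power of $(z-x)$ must survive after the residue in $w$ is taken. This follows exactly as classically, since once the factors $(w-x)^{2r}$ and $(z-x)^{2r}$ are present the expressions become polynomial in the difference variables and the standard three-variable $\delta$-function identity allows one to trade the $(w-x)$-factors for $(z-x)$-factors upon setting $w=z$.

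The genuinely new point, which I expect to be the main obstacle, is verifying that the two braiding factors produced in commuting $\gamma(x)$ past $\alpha(z)$ and $\beta(w)$ recombine into the single twist $R(x,z)$ demanded of the composite field $:\alpha(z)\beta(z):$. Moving $\gamma$ past both factors produces an operator $R_{13}(x,z)\,R_{23}(x,w)$ acting in the slots $(\gamma,\alpha,\beta)$; taking the residue that sets $w=z$ collapses this to $R_{13}(x,z)\,R_{23}(x,z)$, which is precisely the form appearing on the right of the hexagon relation (\ref{eqn:VertexQuantumGroupHexagon}) for the coproduct underlying the normally ordered product, and hence equals the required $R(x,z)$-twist of the composite. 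The vertex Yang--Baxter equation (\ref{eqn:VertexYangBaxter}) ensures that the order in which $\gamma$ is commuted past the two factors is immaterial, so that the twist is well defined and independent of the bracketing of $:\alpha\beta:$. Assembling these, the twisted commutator is annihilated by $(z-x)^N$ with $N$ the sum of the three locality orders and the pole orders of $R$ along the two diagonals, which shows that $:\alpha(z)\beta(z):$ and $\gamma(z)$ are mutually $R$-twisted local.
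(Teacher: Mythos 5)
Your proposal is correct and follows essentially the same route as the paper, whose proof simply observes that the argument of Lemma 3.2 in Kac's book goes through verbatim once the Koszul sign $(-1)^{p(\alpha)p(\beta)}$ is replaced by the translation-invariant factor $R(z,w)=R(z-w)$. Your write-up is a faithful expansion of that one-line reduction, correctly isolating the hexagon relation (\ref{eqn:VertexQuantumGroupHexagon}) as the point where the two braiding factors picked up in commuting $\gamma$ past $\alpha$ and $\beta$ recombine into the single twist required of the composite field.
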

\begin{proof}
   Follows exactly as in the proof of Lemma 3.2 in \cite{Ka}, with $(-1)^{p(\alpha)p(\beta)}$ replaced by $R(z,w)=R(z-w)$.
\end{proof}

We now can state the Reconstruction Theorem for $R$-braided commutative vertex algebras: 
\begin{prop} 
  Let $(V,|0\rangle)$ be a pointed element of $H\Md$. To endow it with the structure of a weakly braided commutative vertex algebra in $H\Md$ is equivalent to giving a collection of $H$-linear maps
  $$\alpha_i(z)\ :\ V\ \to\ V((z))$$
  which weakly braided commute and such that $V$ is spanned by the set of vectors $\alpha_{i_1,-n_1}^{k_1}\cdots \alpha_{i_r,-n_r}^{k_r}|0\rangle$, over all $n_j>0$.
\end{prop}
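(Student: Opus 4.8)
The plan is to adapt the classical reconstruction theorem \cite[4.4.1]{FBZ}, replacing mutual locality everywhere by the weakly braided commutativity of Definition~\ref{lem:QuantumDong}'s setting and using the $R$-twisted Dong's Lemma (Lemma~\ref{lem:QuantumDong}) in place of the ordinary one. The forward direction is immediate: given a weakly braided commutative vertex algebra $V$ in $H\Md$, pick a collection of states $a_i$ whose modes span and set $\alpha_i(z) = Y(a_i,z)$. These maps are $H$-linear because the field map is a morphism in $H\Md$, they weakly braided commute since this is exactly the axiom $Y(\alpha,z)Y(\beta,w) = Y(R(w,z)\cdot\beta\otimes\alpha,w,z)$ restricted to the $a_i$, and they span in the required sense by the choice of the $a_i$.

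The substance is the backward direction. Given $H$-linear, weakly braided commuting, spanning fields $\alpha_i(z)$, I would build the state--field correspondence inductively. Set $Y(|0\rangle,z) = \id_V$ and $Y(a_i,z) = \alpha_i(z)$, and for a spanning vector define its field as the $R$-twisted normally ordered product of the $\alpha_i(z)$ and their $z$-derivatives, so that $Y(\alpha_{i_1,-n_1}^{k_1}\cdots\alpha_{i_r,-n_r}^{k_r}|0\rangle, z)$ is assembled from iterated expressions $:\!\partial_z^{m}\alpha_i(z)\cdots\!:$. The crucial input is that, by Lemma~\ref{lem:QuantumDong}, the collection of fields that are pairwise weakly braided local is closed under normally ordered product and under $\partial_z$; applying this inductively shows all constructed fields are pairwise weakly braided local, which is precisely the weak braided commutativity axiom for $Y$. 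Translation covariance (\ref{eqn:VertexQuantumGroupDerivation}) is then obtained by defining $T = \partial_z$ and checking $[T,\alpha_i(z)] = \partial_z\alpha_i(z)$, and the vacuum/creation axiom holds by construction, $Y(v,z)|0\rangle$ lying in $V[[z]]$ with constant term $v$.

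The main obstacle is well-definedness. The spanning monomials $\alpha_{i_1,-n_1}^{k_1}\cdots|0\rangle$ need not be linearly independent, so extending $Y$ linearly requires that any linear relation among these states induces the same relation among the corresponding fields; equivalently, that $Y(v,z)$ depends only on $v$ and not on its presentation. As in Proposition~\ref{prop:ReconstructionBraidedCommutativeAlgebra}, where equation~(\ref{eqn:UsingCommutativity}) was exactly the point at which braided commutativity was used, the resolution is an $R$-twisted Goddard-type uniqueness statement: a field that is weakly braided local with respect to the generating set is determined by its action on $|0\rangle$. I expect this uniqueness to follow, as in \cite{Ka}, by tracking the $R(z,w)$ factor through the usual argument. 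Combined with the closure properties of Lemma~\ref{lem:QuantumDong}, it pins down $Y(v,z)$ from $v$ and forces independence of presentation, giving the desired equivalence; since the whole construction takes place inside $H\Md$ with $H$-linear fields, the output is a weakly braided commutative vertex algebra in $H\Md$.
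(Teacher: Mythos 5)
Your proposal is correct and follows essentially the same route as the paper: define the fields of spanning monomials as normally ordered products of derivatives of the generating fields, and run the argument of \cite[4.4.1]{FBZ} with Dong's Lemma replaced by its $R$-twisted version (Lemma \ref{lem:QuantumDong}), the well-definedness/uniqueness issue you flag being exactly the point the paper delegates to the ``verbatim'' FBZ argument. The only cosmetic difference is that the paper uses the ordinary normally ordered product as defined in the text rather than an ``$R$-twisted'' one.
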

\begin{proof}
  Given a braided commutative vertex algebra structure, we use the set of $H$-linear maps $Y(v_i,z)$ for $v_i$ ranging over all of $V$. Conversely, given such data we define 
  $$Y(\alpha_{i_1,-n_{i_1}}\cdots \alpha_{i_k,-n_{i_k}}|0\rangle,z)\ =\ \frac{1}{(n_{i_1}-1)!\cdots (n_{i_k}-1)! }:\partial_z^{n_{i_1}-1}\alpha_{i_1}(z)\cdots \partial_z^{n_{i_k}-1}\alpha_{i_k}(z):.$$
  The proof of the Reconstruction Theorem \cite[4.4.1]{FBZ} for (commutative) vertex algebras now applies verbatim, with Dong's Lemma replaced by Lemma \ref{lem:QuantumDong}.
\end{proof}

\subsubsection{} Finally we consider the Reconstruction Theorem for factorisation algebras. Let $\El$ be a classical symmetric factorisation category over $Y$ admitting an internal hom functor 
$$\Homl\ :\ q^*(\El\boxtimes \El)\ \to \ p^*\El$$
satisying tensor-hom adjunction with respect to $\otimes_\El$. Note that $\otimes^{ch}$ does not admit an internal hom functor.  

Given any $F\in \Gamma(Y,\El)$, we define the vector space of \textit{sections} of $F$ to be
$$\Gamma(Y,F)\ =\ \Hom_{\Gamma(Y,\El)}(1_\El,F).$$
Since $\Gamma(\El)$ is a Grothendieck abelian category, we may define images of any map and subobjects, and hence the \textit{span} $\langle S\rangle\subseteq F$ of any collection of sections $S$ as the smallest subobject containing their images. We call $F$ \textit{globally generated} if it is spanned by its sections.\footnote{This agrees with the usual terminology if $F$ is a quasicoherent sheaf.}

\begin{lem} \label{lem:GlobalGenerationInjective}
  If $F$ is globally generated, then for any $G$ the tautological map 
  $$\Hom_{\Gamma(\El)}(F,G)\ \hookrightarrow\ \Hom(\Gamma(F),\Gamma(G))$$
  is injective.  
\end{lem}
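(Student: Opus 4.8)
The plan is to reduce injectivity to a statement about kernels and then invoke the defining universal property of the span, using that $\Gamma(\El)$ is a Grothendieck abelian category. First I would unwind the definitions. The tautological map sends a morphism $\phi\colon F\to G$ in $\Gamma(\El)$ to the $k$-linear map $\Gamma(\phi)\colon \Gamma(F)\to \Gamma(G)$ given by post-composition, $s\mapsto \phi\circ s$, where $\Gamma(F)=\Hom_{\Gamma(\El)}(1_\El,F)$ and similarly for $G$. Since the assignment $\phi\mapsto \Gamma(\phi)$ is itself $k$-linear, to prove injectivity it suffices to show that $\Gamma(\phi)=0$ forces $\phi=0$.

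So suppose $\phi\circ s=0$ for every section $s\in \Gamma(F)$. Because $\Gamma(\El)$ is abelian, the kernel $\ker(\phi)\hookrightarrow F$ exists. For each section $s$, the relation $\phi\circ s=0$ says exactly that $s$ factors through $\ker(\phi)$, and hence that the image $\im(s)\subseteq F$ is contained in the subobject $\ker(\phi)$. This holds for all $s\in \Gamma(F)$ simultaneously.

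I would then invoke the defining property of the span: by definition $\langle \Gamma(F)\rangle$ is the smallest subobject of $F$ containing the images $\im(s)$ of all sections $s\in \Gamma(F)$. Since $\ker(\phi)$ is a subobject containing every such image, minimality gives $\langle \Gamma(F)\rangle \subseteq \ker(\phi)$. But $F$ is globally generated, so $\langle \Gamma(F)\rangle = F$; therefore $\ker(\phi)=F$ and $\phi=0$, as required.

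There is essentially no obstacle here: the argument is a formal consequence of the factorization of a morphism through its kernel together with the universal (minimality) property of the span, both available because $\Gamma(\El)$ is Grothendieck abelian. The only points requiring a moment's care are the correct interpretation of $\Gamma$ on morphisms as post-composition, and the observation that $\Gamma(\phi)=0$ genuinely forces every section to land in $\ker(\phi)$ rather than merely some cofinal family of them — which is immediate since the hypothesis ranges over all of $\Gamma(F)$.
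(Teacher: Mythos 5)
Your proof is correct, and it is the evident argument: the paper in fact states this lemma without proof, and the intended justification is surely exactly what you wrote. Reducing injectivity to triviality of the kernel via linearity of $\phi\mapsto\Gamma(\phi)$, observing that $\Gamma(\phi)=0$ forces every $\im(s)$ into $\ker(\phi)$, and then invoking the minimality in the paper's definition of the span $\langle\Gamma(F)\rangle$ together with global generation is a complete and correct argument, with all the needed subobject and kernel constructions available because $\Gamma(\El)$ is Grothendieck abelian.
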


Let $(A,1)$ be a pointed element of $\Gamma(Y,\El)$.
\begin{prop} \label{prop:ReconstructionFactorisationAlgebras}
   If $A$ is globally generated, to endow $(A,1_A)$ with the structure of a commutative factorisation algebra is equivalent to supplying a commuting set of maps in $\Gamma(Y,\El)$
   $$a_i\ :\ A\ \to\ A$$
   such that $A$ is spanned by the set of sections of the form $a_{i_1}^{k_1}\cdots a_{i_r}^{k_r}1$.
\end{prop}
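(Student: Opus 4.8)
The plan is to run the classical argument of Proposition~\ref{prop:ReconstructionCommutativeAlgebra} internally to the Grothendieck abelian category $\Gamma(Y,\El)$, with $\End(A)$ replaced by the internal endomorphism object $\Endl(A)=\Homl(A,A)$ and with ``a map is determined by its values'' replaced by Lemma~\ref{lem:GlobalGenerationInjective}. First I would record the standard fact that, because $\Homl$ satisfies tensor--hom adjunction with respect to $\otimes_\El$, the object $\Endl(A)$ is an associative factorisation algebra in $\El$: its product is composition, adjoint to the evaluation $\Endl(A)\otimes_\El A\to A$, and its unit $1_\El\to\Endl(A)$ is $\id_A$. Since $\El$ is symmetric, commutativity of a family of sections of $\Endl(A)$ is well defined, and the same adjunction identifies maps $A\to A$ in $\Gamma(Y,\El)$ with sections of $\Endl(A)$. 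I also introduce $\mathrm{ev}_1\colon\Endl(A)\to A$, $\phi\mapsto\phi(1_A)$, obtained from $\Endl(A)\simeq\Endl(A)\otimes_\El 1_\El\to\Endl(A)\otimes_\El A\to A$.

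For the forward direction, given a commutative factorisation algebra structure $m$ on $(A,1_A)$, its adjoint $A\to\Endl(A)$ is a morphism of factorisation algebras (the regular representation) whose image is commutative because $m$ is associative and commutative. I take $\{a_i\}$ to be the left multiplications $L_v=m(v\otimes_\El-)$ as $v$ runs over $\Gamma(Y,A)$; these commute, and since $L_v(1_A)=v$ and $A$ is globally generated, the monomials $a_{i_1}^{k_1}\cdots a_{i_r}^{k_r}1_A$ already contain every section of $A$ and hence span it.

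For the backward direction, given commuting sections $\{a_i\}\subseteq\Gamma(Y,\Endl(A))$ with the spanning hypothesis, I form the smallest factorisation subalgebra $B\subseteq\Endl(A)$ containing them; it is commutative because the $a_i$ are. Restricting gives $\psi=\mathrm{ev}_1|_B\colon B\to A$, which I claim is an isomorphism. It is surjective because its image is a subobject containing all $a_{i_1}^{k_1}\cdots a_{i_r}^{k_r}(1_A)$, which span $A$. For injectivity I use commutativity: if a section $b$ of $B$ satisfies $b(1_A)=0$, then for each monomial $s=c(1_A)$ with $c=a_{i_1}^{k_1}\cdots a_{i_r}^{k_r}\in B$ one has
$$b(s)=(b\circ c)(1_A)=(c\circ b)(1_A)=c(b(1_A))=0,$$
so the subobject $\ker(b)\subseteq A$ contains every monomial section; as these span $A$ we get $\ker(b)=A$, i.e.\ $b=0$. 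Transporting the commutative factorisation algebra structure of $B$ across $\psi$ (which sends $\id_A\mapsto 1_A$, so the unit is preserved) equips $A$ with the desired structure.

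Finally, to see that the two assignments are mutually inverse I would invoke Lemma~\ref{lem:GlobalGenerationInjective}: since $A$ is globally generated, the adjoint map $A\to\Endl(A)$ --- and hence the product $m$ --- is determined by its values on sections, namely by the family $\{L_v\}$, so reconstructing a product from the left multiplications returns the original one. The main obstacle is the injectivity step: one must make precise that ``composition'' in $\Endl(A)$ is the $\otimes_\El$-product over the relevant correspondences, that $b(1_A)=0$ is an equality of sections, and that the swap $b\circ c=c\circ b$ is licensed by commutativity of $B$ with respect to the symmetric braiding of $\El$. Once these are in place, the spanning hypothesis and the subobject calculus of $\Gamma(Y,\El)$ close the argument exactly as in the classical case, with Lemma~\ref{lem:GlobalGenerationInjective} supplying the passage between morphisms and their effect on sections.
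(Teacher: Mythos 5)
Your proposal is correct and follows essentially the same route as the paper: the forward direction via the adjoint (regular representation) map $A\to\Endl(A)$ and the left multiplications by sections, and the backward direction via the smallest commutative factorisation subalgebra $B=\langle\{a_i\}\rangle\subseteq\Endl(A)$, with evaluation at $1_A$ shown to be surjective by the spanning hypothesis and injective by the commutativity trick $b\circ c=c\circ b$ together with Lemma~\ref{lem:GlobalGenerationInjective}. The only differences are cosmetic (you spell out $\mathrm{ev}_1$ and the mutual-inverse check more explicitly than the paper does).
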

\begin{proof}
  Given a commutative factortisation algebra structure, consider the map 
  $$A\ \to\ \Endl(A)$$
  adjoint to the multiplication map. Let $a_i$ run over all sections of $A$, which induce commuting sections $\Gamma(\Endl(A))=\End(A)$. Conversely, given such data we form the smallest factorisation subalgebra spanned by the $a_i$:
  $$B\ =\ \langle\{a_i\}\rangle \ \hookrightarrow\ \Endl(A)$$
  which is a commutative factorisation algebra. To finish, it is enough to give an isomorphism $B\simeq A$ as objects of $\El$. Since by definition both objects are generated by their sections, it is enough to give an isomorphism $\Gamma(B)\simeq \Gamma(A)$, which is supplied by sending $b\mapsto b\cdot 1_A$. This is surjective by assumption. To show injectivity, if  $b\cdot 1_A=0$ then as by the spanning assumption every section $a\in \Gamma(A)$ takes the form $a=b_a\cdot 1_A$, we have
  $$b\cdot a\ =\ bb_a\cdot 1_A\ =\ b_ab\cdot 1_A\ =\ 0$$
  hence by Lemma \ref{lem:GlobalGenerationInjective} we have  $b=0$. 
\end{proof}

Analogues for braided commutative globally generated factorisation algebras may also be proven. We do not discuss the relation to Reconstruction for vertex algebras, but expect that it can be derived from Proposition \ref{prop:ReconstructionFactorisationAlgebras}.

\subsection{Completions} \label{sec:CompletedDModules} We now deal with a technical issue. Naively, we might have defined a vertex quantum group to simply be a factorisation quantum group inside $\QCoh_{Y}$, but in that case the spectral $R$-matrices would lie inside
$$R\ \in\ V\otimes V[(z-w)^{\pm 1}]$$
and likewise for spectral op-$R$-matrices. However, most examples we care about it will in fact live in the completion 
$$R\ \in\ V\otimes V((z-w)).$$
Thus, we need to somehow ``complete'' the factorisation category $\QCoh_Y$, but leaving intact the fact that factorisation coalgebras are vertex algebras. 

\subsubsection{} 

To do this, for any scheme $X$ and nonempty finite set $I$ we consider the closed subscheme
$$Z_I\ =\ \underset{\substack{I\twoheadrightarrow J\\ {|I|>|J|}}}{\bigsqcup}X^J\ \subseteq\ X^I$$
given by the union of all proper diagonal subspaces of $X^I$. 

\begin{lem}
  If $X$ is smooth and $\Ml$ is a D-module on $X^I$ we may endow its completion as a quasicoherent sheaf with a D-module structure, giving is a commuting diagram
  \begin{center}
  \begin{tikzcd}[row sep = 20pt, column sep = 20pt]
    \Dl\Md(X^I)\ar[d,"\oblv"]\ar[r,"(-)^\wedge_{Z_I}",dashed] &[5pt]\Dl\Md(X^I)\ar[d,"\oblv"] \\ 
  \QCoh(X^I)\ar[r,"(-)^\wedge_{Z_I}"] & \QCoh(X^I)
  \end{tikzcd}
  \end{center}
These diagrams are compatible as we vary over $I$. 
\end{lem}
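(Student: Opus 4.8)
The plan is to build the top (dashed) functor by transporting the $\Dl$-action through the completion, using that differentiation is continuous for the $Z_I$-adic topology. Write $\mathcal{I}\subseteq \mathcal{O}_{X^I}$ for the ideal sheaf of $Z_I$, so that for $\mathcal{F}\in \QCoh(X^I)$ the bottom arrow is the (derived) $\mathcal{I}$-adic completion $\mathcal{F}^\wedge_{Z_I}=\lim_n \mathcal{F}/\mathcal{I}^n\mathcal{F}$. The key observation I would record is that a differential operator $P$ of order $\le k$ satisfies $P(\mathcal{I}^n\Ml)\subseteq \mathcal{I}^{n-k}\Ml$ (with the convention $\mathcal{I}^m=\mathcal{O}$ for $m\le 0$): for $k=1$ this is the Leibniz identity $\xi(fm)=\xi(f)m+f\,\xi(m)$ together with $\xi(\mathcal{I}^n)\subseteq \mathcal{I}^{n-1}$ for a vector field $\xi$, and the general case follows since $\Dl^{\le k}$ is generated by $k$-fold products of vector fields. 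Hence each $P$ induces a compatible family $\Ml/\mathcal{I}^n\Ml\to \Ml/\mathcal{I}^{n-k}\Ml$, and passing to the limit yields $\hat P:\Ml^\wedge_{Z_I}\to \Ml^\wedge_{Z_I}$. In other words the $\Dl_{X^I}$-action is continuous for the $\mathcal{I}$-adic topology and so extends to the completion; since the underlying quasicoherent sheaf is unchanged by construction, the square with $\oblv$ commutes.

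Next I would check that these $\hat P$ assemble into an honest $\Dl$-module, organizing the argument along the order filtration $\Dl_{X^I}=\colim_k \Dl^{\le k}$. Smoothness of $X$ enters precisely here: $X^I$ is smooth, each $\Dl^{\le k}$ is locally free of finite rank over $\mathcal{O}$, and $\Omega^1_{X^I}$, $T_{X^I}$ are perfect, so (derived) completion commutes with $-\otimes \Dl^{\le k}$ and with $-\otimes\Omega^1$. Consequently the connection $\nabla:\Ml\to \Omega^1\otimes \Ml$ completes to $\hat\nabla:\Ml^\wedge_{Z_I}\to \Omega^1\otimes \Ml^\wedge_{Z_I}\simeq (\Omega^1\otimes \Ml)^\wedge_{Z_I}$, and the relations defining a flat connection (Leibniz and integrability $\hat\nabla^2=0$) survive because they hold at each finite stage and completion is a limit. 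This produces the desired endofunctor $(-)^\wedge_{Z_I}:\Dl\Md(X^I)\to \Dl\Md(X^I)$ lifting the completion on $\QCoh$. Smoothness is also what guarantees that each diagonal comprising $Z_I$ is a locally complete intersection cut out by a regular sequence, so that the $\mathcal{I}$-adic filtration and its derived completion are well behaved.

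Finally, for compatibility as $I$ varies I would use that $\Dl\Md(\Ran X)=\lim_{I\in\FinSet}\Dl\Md(X^I)$ with transition functors the pullbacks $\Delta_{I/J}^{*}$ (resp. $!$-pullbacks) along the diagonal embeddings $X^J\hookrightarrow X^I$ attached to surjections $I\twoheadrightarrow J$, and verify $\Delta_{I/J}^{*}\,(-)^\wedge_{Z_I}\simeq (-)^\wedge_{Z_J}\,\Delta_{I/J}^{*}$ compatibly with the $\Dl$-structure constructed above. Naturality of the limit defining the completion then upgrades the per-$I$ squares to a compatible system. The delicate point, which I expect to be the main obstacle, is exactly the geometry of the $Z_I$ under the diagonals: when $|J|<|I|$ the image of $X^J$ lies entirely inside $Z_I$, so $\mathcal{I}_{Z_J}$ is \emph{not} simply the pullback of $\mathcal{I}_{Z_I}$. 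One must therefore check directly, working with the regular sequences cutting out the relevant strata and the correct pullback functors, that completing along the coarser locus $Z_I$ and restricting to a stratum reproduces completion along the finer locus $Z_J$; once this base-change statement for completion is pinned down, the compatibility of the diagrams follows formally.
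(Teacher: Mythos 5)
Your construction of the $\Dl$-module structure on the completion is correct and is essentially the paper's argument: the paper works with the filtration by order of vanishing along $Z_I$ and observes that vector fields shift it by one, hence act on the inverse limit; your version with the $\mathcal{I}$-adic filtration and differential operators of order $\le k$ shifting by $k$ is the same idea, spelled out in slightly more detail (the flat-connection packaging and the use of smoothness are consistent with what the paper implicitly needs). On the final claim of compatibility over $I$, you correctly isolate the genuine subtlety — that for $|J|<|I|$ the diagonal $X^J$ lands \emph{inside} $Z_I$, so completion along $Z_I$ does not naively base-change to completion along $Z_J$ — but you leave the required base-change statement unverified; for what it is worth, the paper's proof does not address this point at all and simply asserts the compatibility, so your explicit flagging of where the remaining work lies is, if anything, more honest than the source.
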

\begin{proof}
  To begin, there is a filtration $F^\sbt \Ol_{X^I}$ given by the order of vanishing along $Z_I$, i.e. consider the order of vanishing along $X^J\subseteq Z_I\subseteq X^I$ for all $J$ with $|J|=|I|-1$ then take the maximum. In particular, if $M$ is a D-module this induces a filtration $F^\sbt M=M\otimes F^\sbt \Ol_{X^I}$ as a quasicoherent sheaf, where the action by vector fields sends
  $$\Tl_{X^I}\otimes F^\sbt M\ \to\ F^{\sbt-1}M.$$
  In particular, as vector fields act on the completion
  $$M^\wedge_{Z_I}\ =\ \lim F^\sbt M/F^{\sbt+1}M$$
  and hence we get an action of $\Dl_{X^I}$ on $M^\wedge_{Z_I}$. 
\end{proof}
It follows that the completions along the $Z_I$ arrange into a functor of sheaves of categories
  $$(-)^\wedge \ :\ \QCoh_{\Ran X_{dR}}\ \to\ \QCoh_{\Ran X_{dR}},$$
and we denote its essential image by $\QCoh^\wedge_{\Ran X_{dR}}$. Its sections consist of D-modules on $\Ran X$ whose restriction to $X^I$ are complete along $Z_I$. 

\subsubsection{} We now define the \textit{completed} chiral factorisation structure on $\QCoh_{\Ran X_{dR}}$. It is induced by the functors
$$\Dl\Md(X^{I_1})\otimes \Dl\Md(X^{I_2}) \ \to\ \Dl\Md(X^{I_1\sqcup I_2})$$
sending
$$M\otimes N\ \mapsto \jmath_*\jmath^*(M\hat{\boxtimes}N)\ \defeq\ \jmath_*\jmath^*(M\boxtimes N)^\wedge_{Z_{I_1\sqcup I_2}}.$$
It is easily checked that this gives a symmetric factorisation structure, and that 
\begin{lem}
  $\hat{\otimes}^{ch}$ restricts to a symmetric factorisation structure on $\QCoh^\wedge_{\Ran X_{dR}}$. 
\end{lem}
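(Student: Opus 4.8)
The plan is to reduce the whole statement to a single closure property: that $\hat{\otimes}^{ch}$ carries a pair of objects of $\QCoh^\wedge_{\Ran X_{dR}}$ back into $\QCoh^\wedge_{\Ran X_{dR}}$. Granting this, the associativity, unit and symmetry constraints come for free. Indeed $\QCoh^\wedge_{\Ran X_{dR}}$ is a \emph{full} subcategory of $\QCoh_{\Ran X_{dR}}$, and, as just noted, $\hat{\otimes}^{ch}$ is already a symmetric factorisation structure on the ambient $\QCoh_{\Ran X_{dR}}$; the structure natural isomorphisms then simply restrict to the subcategory. The unit lies in the subcategory because on $X^{\{*\}}$ one has $Z_{\{*\}}=\varnothing$, so the completion functor is the identity there. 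Thus only closure requires an argument.

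So the content is to check, for each pair of nonempty finite sets $I_1,I_2$, that
$$\jmath_*\jmath^*(M\boxtimes N)^\wedge_{Z_{I_1\sqcup I_2}}$$
is complete along $Z_{I_1\sqcup I_2}$ as a $\Dl$-module on $X^{I_1\sqcup I_2}$. The key observation is that the completion $(-)^\wedge_{Z_{I_1\sqcup I_2}}$ already sits inside the definition, so $P:=(M\boxtimes N)^\wedge_{Z_{I_1\sqcup I_2}}$ is complete by construction (idempotency of completion), and the claim does not even use completeness of the inputs. Hence the problem is entirely about the outer operation: I would show that $\jmath_*\jmath^*$ \emph{preserves} completeness along $Z_{I_1\sqcup I_2}$. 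Here $\jmath$ is the open embedding of the complement of the union $Z^{\mathrm{cr}}$ of the cross-diagonals $\{x_i=x_j:i\in I_1,\ j\in I_2\}$, and $Z^{\mathrm{cr}}$ is a closed subscheme of $Z_{I_1\sqcup I_2}$, so $\jmath_*\jmath^*$ is exactly the operation of inverting the ideal of $Z^{\mathrm{cr}}$.

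To prove preservation I would work locally on $X^{I_1\sqcup I_2}$ and reduce to commutative algebra. Each diagonal is smooth, cut out by a regular sequence of coordinate differences, so completing $P$ along $Z_{I_1\sqcup I_2}$ is completion in the transverse directions to the various diagonals, and these completions commute with one another and with inverting the ideal of any one diagonal. Writing $Z_{I_1\sqcup I_2}=Z^{\mathrm{cr}}\cup Z^{\mathrm{w}}$ with $Z^{\mathrm{w}}$ the within-$I_1$ and within-$I_2$ diagonals: away from $Z^{\mathrm{cr}}$ the map $\jmath$ is an isomorphism, so $\jmath_*\jmath^*P=P$ and completeness along $Z^{\mathrm{w}}$ is untouched; transversally to $Z^{\mathrm{cr}}$, inverting the cross-diagonal ideal converts the power-series completion into a Laurent completion, which is again complete. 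The model case $I_1=I_2=\{*\}$, $X=\Ab^1$ already shows the mechanism: $P$ has sections $k[z][[z-w]]$ and $\jmath_*\jmath^*P$ has sections $k[z]((z-w))$, which is visibly complete along $z=w$.

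The hard part will be precisely the interaction at the deepest strata, where cross- and within-diagonals meet: I must know that the iterated completions and localizations along the several diagonal ideals can be performed in any order. I would dispatch this by choosing, near such a stratum, étale coordinates in which all relevant diagonals become coordinate subspaces, which trivialises the commutation of completion with localization and reduces the general case to the model computation carried out transversally to each diagonal. Finally, since the completion functors, the external products and the maps $\jmath$ are all compatible as $I_1,I_2$ vary (by the compatibility clause of the completion lemma), the construction assembles into a functor of sheaves of categories preserving $\QCoh^\wedge_{\Ran X_{dR}}$; combined with the inherited constraints, this exhibits $\hat{\otimes}^{ch}$ as a symmetric factorisation structure on $\QCoh^\wedge_{\Ran X_{dR}}$.
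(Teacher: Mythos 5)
The paper offers no proof of this Lemma (it is filed under ``it is easily checked''), so your attempt cannot be compared against an official argument; judged on its own terms, your reduction to a single closure statement is the right skeleton, but the two steps you lean on hardest are exactly the ones that do not go through as written. First, the crux ``$\jmath_*\jmath^*$ preserves completeness'' is asserted via the model computation that $k[z]((z-w))$ is ``visibly complete along $z=w$''. Under the paper's actual definition, $\QCoh^\wedge_{\Ran X_{dR}}$ is the \emph{essential image} of the functor $M\mapsto \lim M/F^\sbt M$ with $F^nM=M\otimes F^n\Ol_{X^I}$, and every object of that image is separated for the ideal of the diagonal; but the ideal of a cross-diagonal acts \emph{invertibly} on anything of the form $\jmath_*\jmath^*(-)$, so $\bigcap_n F^n$ is everything and the object is as far from separated as possible. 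Concretely, the naive completion functor sends $k[z]((z-w))$ to $0$, and no $N$ has $\lim N/F^nN\cong k[z]((z-w))$. So either you must reinterpret the order-of-vanishing filtration as $\Zb$-indexed (allowing poles, so that $\jmath_*\jmath^*(M\boxtimes N)^\wedge$ is exhibited as the Laurent completion $\colim_k\lim_n F^{-k}/F^n$ of $\jmath_*\jmath^*(M\boxtimes N)$, which does lie in the essential image), or the closure statement fails. Your proof needs to name which reading of completeness it is using and exhibit the target explicitly as a completion; ``visibly complete'' is precisely the point at issue.

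Second, your proposed treatment of the deepest strata --- choose \'etale coordinates in which all relevant diagonals become coordinate subspaces --- is not available. Already for $|I_1\sqcup I_2|=3$ on a curve, the three pairwise diagonals in $X^3$ are, in coordinates $u=x_1-x_2$, $v=x_2-x_3$, the hyperplanes $u=0$, $v=0$, $u+v=0$: three concurrent divisors through the small diagonal, which is not a normal-crossings (coordinate) configuration in any \'etale chart. So the commutation of ``complete along the union of all diagonals'' with ``invert the ideals of the cross-diagonals'' cannot be trivialised coordinate-by-coordinate; it has to be argued directly for the order-of-vanishing filtration of the full diagonal arrangement, e.g.\ by checking that localisation at the cross-diagonal ideals carries the filtration $F^\sbt$ to a cofinal filtration and commutes with the relevant limits. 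Your outline correctly isolates this as ``the hard part'', but the mechanism you propose for dispatching it does not exist.
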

In particular, it follows that  
$$\QCoh_{\Ran (X_{dR})}^{\otimes^{ch}}\ \stackrel{(-)^\wedge}{\to}\ \QCoh_{\Ran (X_{dR})}^{\wedge,\hat{\otimes}^{ch}}\ \to\ \QCoh_{\Ran (X_{dR})}^{\hat{\otimes}^{ch}}$$
are maps of symmetric factorisation categories.

\subsubsection{Equivariant version}  
If algebraic group $G$ acts on $X$, then $Z^I\subseteq X^I$ is an equivariant subspace for the diagonal action of $G$. Likewise, we have commuting diagram
\begin{center}
  \begin{tikzcd}[row sep = 20pt, column sep = 20pt]
    \Dl\Md(X^I)^{G,w}\ar[d,"\oblv"]\ar[r,dashed,"(-)^\wedge_{Z_I}"] &[5pt]\Dl\Md(X^I)^{G,w}\ar[d,"\oblv"] \\ 
  \QCoh(X^I)^G\ar[r,"(-)^\wedge_{Z_I}"] & \QCoh(X^I)^G
\end{tikzcd}
\end{center}
for the diagonal action of $G$ on $X^I$. We as before denote by $\QCoh_{\Ran(X_{dR})/G}^\wedge$ the essential image of 
  $$(-)^\wedge \ :\ \QCoh_{\Ran(X_{dR})/G}\ \to\ \QCoh_{\Ran(X_{dR})/G}$$
and again we have maps of symmetric factorisation categories
$$\QCoh_{\Ran (X_{dR})/G}^{\otimes^{ch}}\ \stackrel{(-)^\wedge}{\to}\ \QCoh_{\Ran (X_{dR})/G}^{\wedge,\hat{\otimes}^{ch}}\ \to\ \QCoh_{\Ran (X_{dR})/G}^{\hat{\otimes}^{ch}}$$
are maps of symmetric factorisation categories. Moreover, if $X=G$ is a one dimensional algebraic group over the complex numbers, we have the following variant of Theorem \ref{thm:GaGmEVA} and Proposition \ref{prop:GmGaEAssociative}:
\begin{prop} \label{prop:Completion}
  The functor $(-)^\wedge$ induces an equivalence on categories of strong factorisation algebras
  $$ \Eb_1\FactAg^{\textup{st}}(\Dl\Md(\Ran \Ab^1)^{\Ga,w})\ \stackrel{\sim}{\to}\ \Eb_1\FactAg^{\textup{st}}(\Dl\Md(\Ran \Ab^1)^{\Ga,w,\wedge}),$$
  and likewise for strong commutative factorisataion algebras.
\end{prop}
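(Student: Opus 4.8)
The plan is to prove that $(-)^\wedge$ is an equivalence on strong factorisation algebras by showing it is fully faithful and essentially surjective, the crux being that completion acts as the identity on the generic locus while the strong structure map reconstructs all higher strata from that locus. First I would record that the functor is well defined: since $(-)^\wedge$ is a map of symmetric factorisation categories (established just before the statement), it carries $\Eb_1$-algebras to $\Eb_1$-algebras, and being a functor it sends the structure equivalence $\Delta\colon(\cup\jmath)^*A\stackrel{\sim}{\to}A\otimes^{ch}A$ to an equivalence, using the identifications $((\cup\jmath)^*A)^\wedge\simeq(\cup\jmath)^*(A^\wedge)$ and $(A\otimes^{ch}A)^\wedge\simeq A^\wedge\hat{\otimes}^{ch}A^\wedge$. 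Hence strongness is preserved and we obtain the displayed functor.

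The key observation is that $(-)^\wedge$ modifies a D-module only along the diagonals $Z_I$, which lie in the complement of the locus of pairwise-distinct points; in particular $(-)^\wedge$ restricts to the identity on the open locus $(\Ab^1\times\Ab^1)_\circ$. On the other hand, a strong factorisation algebra is reconstructed from its restriction to this generic locus together with its structure map: exactly as in the proof of Theorem \ref{thm:VertexQuantumGroupFactorisationQuantumGroup}, restricting the product to $(\Ab^1\times\Ab^1)_\circ$ and taking $\Ga$-invariant sections recovers the field map $Y(z)\colon V\otimes V\to V((z))$, and the strong structure equivalences $\Delta$ then rebuild the values on all diagonal strata. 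This is precisely the reconstruction encoded by Theorem \ref{thm:GaGmEVA} (and Proposition \ref{prop:GmGaEAssociative} in the associative case).

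For full faithfulness, a morphism of strong factorisation algebras is determined by its restriction to the generic locus, since both source and target are reconstructed there; because $(-)^\wedge$ is the identity on that locus, it induces a bijection on such restricted morphisms and is therefore fully faithful. For essential surjectivity, given a completed strong factorisation algebra $B$ I would take its restriction $B|_{(\Ab^1\times\Ab^1)_\circ}$ — where the completed and uncompleted chiral structures coincide — and use the uncompleted analogue of Theorem \ref{thm:GaGmEVA} to realise this generic data as an uncompleted strong factorisation algebra $A$ with matching generic restriction. Both $A^\wedge$ and $B$ are then reconstructed from the same generic data via $\hat{\otimes}^{ch}$, giving $A^\wedge\simeq B$. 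The commutative case is identical, working with commutative rather than associative vertex algebra data throughout.

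The main obstacle is the verification that reconstruction commutes with completion: one must check that forming $\jmath_*\jmath^*$ of the external product and then completing along $Z_{I_1\sqcup I_2}$ agrees with the completed product $\hat{\otimes}^{ch}$ that appears in the strong structure map of $B$, so that the algebra built from the generic data of $B$ completes back to $B$ rather than to some other object. This compatibility is exactly what the Lemmas of section \ref{sec:CompletedDModules} provide — that $\hat{\otimes}^{ch}$ is a well-defined symmetric factorisation structure and that $(-)^\wedge$ is a map of factorisation categories — and once these are invoked the equivalence follows formally from the fully-faithful-and-essentially-surjective argument above.
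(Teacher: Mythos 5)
The paper states this proposition without proof, so your proposal can only be judged on its own terms — and its central mechanism fails. The claim that ``$(-)^\wedge$ restricts to the identity on the open locus $(\Ab^1\times\Ab^1)_\circ$'' is false: restriction to an open set is a localisation, and localisation does not commute with adic completion. Concretely, $\jmath^*\Ol_{\Ab^2}$ has sections $k[z,w][(z-w)^{-1}]$, while $\jmath^*\bigl(\Ol_{\Ab^2}^{\wedge}\bigr)$ has sections $k[z+w]((z-w))$; the inclusion of the former into the latter is strict. So completion along the diagonal genuinely enlarges the generic restriction, and both halves of your argument inherit the error: full faithfulness cannot be deduced from a bijection on ``restricted morphisms'' over the generic locus (the functor is not the identity there), and for essential surjectivity the generic restriction of a completed algebra $B$ is \emph{not} of the form $\jmath^*(A_1\boxtimes A_1)$ for any uncompleted external product, so there is no uncompleted $A$ with ``matching generic restriction'' in the sense you use.

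The skeleton of your argument — reduce to data on which completion acts trivially — is salvageable, but the invariant data is not $\jmath^*A_2$; it is the pair $(A_1, Y)$, equivalently the vertex-algebra data $(V,T,Y)$ of Proposition \ref{prop:GmGaEAssociative}. Since $Z_{\{1\}}=\varnothing$, completion is honestly the identity on $A_1$, and a strong factorisation algebra is recovered as $A_2=\fib\bigl(\jmath_*\jmath^*(A_1\boxtimes A_1)\to \iota_*A_1\bigr)$ (resp.\ the same fibre with the source completed), so morphisms and objects on both sides are controlled by $A_1$ together with the boundary map. The key point you should have isolated is that Lemma \ref{lem:BDLem} already identifies maps out of $\jmath_*\jmath^*(A_1\boxtimes A_1)$ into $\iota_*A_1$ with maps out of its completion — the field map $Y:V\otimes V\to V((z))$ lands in Laurent \emph{series} even in the uncompleted setting — so the proof of Proposition \ref{prop:GmGaEAssociative} runs verbatim in the completed category and yields the \emph{same} explicit classification. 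The equivalence then follows because $(-)^\wedge$ intertwines the two classifications, not because it fixes the generic locus.
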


\newpage

\section{Borcherds Twists} \label{sec:BorcherdsTwists}

\noindent
In this section, we show how to produce braided commutative bialgebras from (factorisation) quantum groups, using the Borcherds twist construction, which was originally used in \cite{Bo1} to construct lattice vertex algebras. These structure on these twists should be viewed as analogous to positive half quantum groups $U_q(N)$, which are a braided cocommutative bialgebra inside $\Rep_q T$, as in \cite[$\S$ 3.2]{Ga2}.

\subsection{Borcherds bialgebras} Let $A$ be a quantum group with $R$-matrix $R$ in a background symmetric monoidal category $(\El,\sigma)$, as in section \ref{sec:BraidedMonoidalCats}. 

\begin{lemdefn} \label{lemdefn:BorcherdsTwist}
The \emph{Borcherds twist} $A_R$ is the coalgebra on the same object $A$ and with coproduct
$$\Delta_R\ =\ R\cdot \Delta.$$
\end{lemdefn}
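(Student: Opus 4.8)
The plan is to verify the two assertions hidden in this Lemma-Definition: that $\Delta_R = R\cdot\Delta$ is coassociative, and that the original counit $\epsilon$ remains a counit for it, so that $(A,\Delta_R,\epsilon)$ is genuinely a coalgebra in $\El$.

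First I would reduce coassociativity to an identity on $R$ alone. Writing $\Delta_R\otimes\id = (R_{12}\cdot)\circ(\Delta\otimes\id)$ and using that $\Delta\otimes\id$ is an algebra homomorphism (since $\Delta$ is, $A$ being a bialgebra), I would compute
\begin{align*}
  (\Delta_R\otimes\id)\Delta_R &= R_{12}\cdot(\Delta\otimes\id)(R)\cdot(\Delta\otimes\id)\Delta = R_{12}R_{13}R_{23}\cdot(\Delta\otimes\id)\Delta,
\end{align*}
where the last step is the first hexagon relation \eqref{eqn:RMatrixHexagon}. Symmetrically, using $\id\otimes\Delta_R=(R_{23}\cdot)\circ(\id\otimes\Delta)$ and the second hexagon relation,
\begin{align*}
  (\id\otimes\Delta_R)\Delta_R &= R_{23}R_{13}R_{12}\cdot(\id\otimes\Delta)\Delta.
\end{align*}
Since $\Delta$ is coassociative, $(\Delta\otimes\id)\Delta=(\id\otimes\Delta)\Delta$, so the two expressions agree precisely when $R_{12}R_{13}R_{23}=R_{23}R_{13}R_{12}$, which is the Yang--Baxter equation holding for any $R$-matrix as noted after Proposition \ref{prop:RMatrixBraidedMonoidal}. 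This yields coassociativity of $\Delta_R$.

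Next I would check the counit. Since $\epsilon$ is multiplicative, $\epsilon\otimes\id$ is an algebra homomorphism, so $(\epsilon\otimes\id)\Delta_R = (\epsilon\otimes\id)(R)\cdot(\epsilon\otimes\id)\Delta$; the counit compatibility $(\epsilon\otimes\id)(R)=1$ from Proposition \ref{prop:RMatrixBraidedMonoidal} together with $(\epsilon\otimes\id)\Delta=\id$ then gives $(\epsilon\otimes\id)\Delta_R=\id$, and symmetrically $(\id\otimes\epsilon)\Delta_R=\id$. Hence $\epsilon$ is a counit for $\Delta_R$ and $A_R$ is a coalgebra.

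The main point requiring care is that all of this takes place in the symmetric monoidal category $(\El,\sigma)$ rather than in $\Vect$, so the products $R_{12}R_{13}R_{23}$ and the algebra-homomorphism manipulations must be interpreted with the symmetry $\sigma$ inserted in the correct places; the only genuine input beyond this bookkeeping is the Yang--Baxter equation, whose derivation in turn relies on almost cocommutativity \eqref{eqn:AlmostCommutativity}. Note that no invertibility of $R$ is used for the coalgebra structure itself, so the statement in fact holds verbatim for lax $R$-matrices.
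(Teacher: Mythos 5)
Your proposal is correct and follows essentially the same route as the paper: write $(\Delta_R\otimes\id)\Delta_R = R_{12}\,(\Delta\otimes\id)(R)\,(\Delta\otimes\id)\Delta$, apply the hexagon relations to replace $(\Delta\otimes\id)(R)$ and $(\id\otimes\Delta)(R)$ by $R_{13}R_{23}$ and $R_{13}R_{12}$, and conclude via the Yang--Baxter equation and coassociativity of $\Delta$, with the counit handled by the compatibility of $R$ with $\epsilon$. Your treatment of the counit (using $(\epsilon\otimes\id)(R)=(\id\otimes\epsilon)(R)=1$ rather than just $(\epsilon\otimes\epsilon)(R)=1$) and your observation that invertibility of $R$ is not needed are, if anything, slightly more careful than the paper's one-line remark.
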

\begin{proof}
To show that $\Delta_R$ is coassociative, use coassociativity of $\Delta$ and the Yang-Baxter equation:   
\begin{align*}
(\Delta_R\otimes \id)\Delta_R\ = \ R_{12}(\Delta\otimes \id)R\Delta&\ =\ R_{12}R_{13}R_{23}(\Delta\otimes\id)\Delta\\
&\ =\ R_{23}R_{13}R_{12}(\id\otimes \Delta)\Delta\ =\ R_{23}(\id\otimes\Delta)R\Delta\ =\ (\id\otimes\Delta_R)\Delta_R.
\end{align*}
The counit is inherited from $A$ because $(\epsilon\otimes\epsilon)(R)=1$.
\end{proof}

Note that $A_R$ is a coalgebra internal to $\textup{Mod-}A$.

In the following, we say that $R$ is (\textit{naive-})\textit{symmetric} if 
\begin{equation}
  \label{eqn:NaiveSymmetric}
  R\ =\ \sigma(R). 
\end{equation}
In particular, this implies that $R/\sigma(R)$ is fixed by $\sigma(-)^{-1}$, a condition which appears in the definition of symmetric $R$-matrix. In the following, we view $\textup{Mod-}A$ as a braided monoidal category with respect to the braiding $\beta=\sigma(-)\cdot R^{-1}$ given using right multiplication by the inverse of the $R$-matrix.

\begin{prop}\label{prop:SymmetricBorcherdsTwist}
  If $R$ is naive-symmetric then 
  $$A_R\ \in\ \Eb_2\CoAg(\textup{Mod-}A)$$
  defines a braided commutative coalgebra. If in addition $A$ is commutative, then $A_R$ is a cocommutative.
   \end{prop}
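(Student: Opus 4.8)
The plan is to build on the Remark that $A_R$ is already a coalgebra internal to $\textup{Mod-}A$: coassociativity and compatibility with the counit were established in Lemma-Definition \ref{lemdefn:BorcherdsTwist} (coassociativity being exactly where the Yang--Baxter equation was used, and the counit being inherited since $(\epsilon\otimes\epsilon)(R)=1$). What remains for the first statement is therefore to promote this coalgebra to an $\Eb_2$-coalgebra, i.e. to verify that $\Delta_R$ is cocommutative with respect to the braiding $\beta=\sigma(-)\cdot R^{-1}$ on $\textup{Mod-}A$. Concretely I would first check the binary condition $\beta_{A_R,A_R}\circ\Delta_R=\Delta_R$, and then argue that the remaining $\Eb_2$-coherences are automatic.

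For the binary condition I would expand $\beta\circ\Delta_R$ using that $\sigma$ is the symmetry of $\El$, hence an algebra automorphism of $A\otimes A$, so that applying $\beta$ to $\Delta_R=R\cdot\Delta$ produces the three factors $\sigma(R)$, $\sigma\circ\Delta$, and $R^{-1}$. The two inputs that should collapse this back to $\Delta_R$ are precisely almost cocommutativity (\ref{eqn:AlmostCommutativity}), which rewrites $\sigma\circ\Delta$ as $\Delta$ conjugated by $R$, and the naive-symmetry hypothesis $\sigma(R)=R$ of (\ref{eqn:NaiveSymmetric}), which lets the outer $\sigma(R)$ be absorbed against the twist. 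Thus I expect $\beta\circ\Delta_R=\Delta_R$ to follow from exactly these two substitutions. For the higher structure, the two $\Eb_2$-coherences (the cocommutative analogues of the hexagon identities) reduce to the Yang--Baxter equation for $R$, which is already available from the hexagon relations (\ref{eqn:RMatrixHexagon}) invoked in Lemma-Definition \ref{lemdefn:BorcherdsTwist}; so once the binary condition is in place the $\Eb_2$-coalgebra structure is formal.

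For the second statement the argument is cleaner and essentially self-contained. Commutativity of $A$ makes the algebra $A\otimes A$ commutative, so $R$ is central in $A\otimes A$; almost cocommutativity (\ref{eqn:AlmostCommutativity}) then degenerates to $\sigma\circ\Delta=\Delta$, that is, $A$ is automatically cocommutative. Given this, ordinary (non-braided) cocommutativity of $A_R$ is immediate: $\sigma(\Delta_R)=\sigma(R)\,\sigma(\Delta)=R\,\Delta=\Delta_R$, where the first equality uses that $\sigma$ is an algebra map on $A\otimes A$, and the second uses naive symmetry together with cocommutativity of $A$. Equivalently, once $R$ is central the braiding of $\textup{Mod-}A$ is symmetric, and the braided cocommutativity of the first part specialises to ordinary cocommutativity.

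The main obstacle I anticipate is not conceptual but a matter of careful bookkeeping: matching the side on which $R^{-1}$ acts in the braiding $\beta=\sigma(-)\cdot R^{-1}$ against the side of the twist in $\Delta_R=R\cdot\Delta$, and applying almost cocommutativity in the form compatible with \emph{right} $A$-modules (where the tensor product and braiding are built from $\Delta$ and $R^{-1}$ respectively). Getting these conventions aligned is exactly what makes the factors $\sigma(R)$ and $R$ cancel under the naive-symmetry hypothesis; everything else is governed by the hexagon/Yang--Baxter data already verified for $R$.
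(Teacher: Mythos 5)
Your proposal is correct and follows essentially the same route as the paper: the paper's proof is exactly the one-line computation $\sigma(\Delta_R)=\sigma(R)\sigma(\Delta)=\sigma(R)R\Delta R^{-1}=\sigma(R)\Delta_R R^{-1}$, after which naive-symmetry gives braided cocommutativity and centrality of $R$ (when $A$ is commutative) collapses $\sigma(R)\Delta_R R^{-1}$ to $\Delta_R$. The only divergence is your remark that the higher $\Eb_2$-coherences reduce to Yang--Baxter; in this $1$-categorical setting braided commutativity is just the binary condition $m=m\cdot\beta$ (as the paper notes in section \ref{sec:ReconstructionTheorem}), so that step is vacuous rather than needing Yang--Baxter.
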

  \begin{proof}
    Since $A$ is a quantum group, it follows that 
    \begin{equation}\label{eqn:AlmostCommTwist}
    \sigma(\Delta_R)\ =\ \sigma(R\Delta)\ =\ \sigma(R) R \Delta R^{-1}\ =\ \sigma(R)\Delta_R R^{-1}.
   \end{equation}  
   Thus if $R$ is naive-symmetric, the result follows and if $A$ is commutative then $\sigma(R)\Delta_R R^{-1}=\Delta_R$.
  \end{proof}

\subsubsection{} The Borcherds twist $A_R$ is usually not a bialgebra with respect to the original product on $A$. However, in examples it often \textit{does} extend to a bialgebra $A_R=(A,m,\Delta_R)$. First we axiomatise this structure before considering examples in section \ref{sec:Examples}.

Let  $A=(A,\cdot,\Delta,R)$ be a commutative quantum group with with $R$-matrix.

\begin{defn} An algebra structure $m$ on $A$ is called a \textit{Borcherds product} if it is $A$-linear, i.e. 
  \begin{equation}\label{eqn:BorcherdsLinearity}
    m(\Delta(a)\cdot a')\ = \ a\cdot m(a')
  \end{equation}
  for all $a,a'\in A$, and the following so-called \textit{excess intersection} diagram commutes
  \begin{equation}\label{fig:ExcessIntersectionDiagram}
  \begin{tikzcd}[row sep = 15pt, column sep = 20pt]
  p^*A\otimes p^*A\ar[dd,"m"]\ar[r,"\Delta\otimes\Delta"] &A^{\otimes 2}\otimes A^{\otimes 2}\ar[r,"\sigma_{23}"] &A^{\otimes 2}\otimes A^{\otimes 2}\ar[d,"R_{14}^{-1}\sigma(R_{23})^{-1}"]  \\
  & & A^{\otimes 2}\otimes A^{\otimes 2}\ar[d,"m\otimes m"]\\
  A\ar[rr,"\Delta"] &  & A\otimes A
  \end{tikzcd}
  \end{equation}
  where in $\sigma_{23}$ is induced by the background symmetric monoidal structure. 
  \end{defn}

  If $R=1\otimes 1$ then (\ref{fig:ExcessIntersectionDiagram}) says that $(A,m,\Delta)$ forms a bialgebra. More generally,
  \begin{prop} \label{prop:BorcherdsBialgebra}
    A Borcherds-twisted coalgebra $A_R$ forms a bialgebra in $\textup{Mod-}A$ with respect to any Borcherds product $m$, where $\textup{Mod-}A$ has braided monoidal structure 
    $$M\otimes N\ \mapsto\ \sigma(M\otimes N) \cdot \sigma(R)R^{-1}.$$
    Moreover, the coalgebra structure on $A_R$ is braided cocommutative.
  \end{prop}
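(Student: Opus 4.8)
The plan is to check, one piece at a time, the data making $A_R$ a bialgebra in the braided monoidal category $\textup{Mod-}A$, and then its cocommutativity. The coalgebra structure $(\Delta_R,\epsilon)$ is already available from Lemma-Definition \ref{lemdefn:BorcherdsTwist} together with the remark that $A_R$ is a coalgebra internal to $\textup{Mod-}A$; the algebra structure is the Borcherds product $m$, which is associative and unital by hypothesis and which is a morphism in $\textup{Mod-}A$ precisely because of its $A$-linearity \eqref{eqn:BorcherdsLinearity}. The unit and counit compatibilities are inherited from $A$ and routine, so the only substantive points are the bialgebra compatibility between $m$ and $\Delta_R$ and the braided cocommutativity of $\Delta_R$. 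Throughout I write $\beta$ for the braiding of $\textup{Mod-}A$, namely $\sigma(-)$ followed by right multiplication by $\sigma(R)R^{-1}$.

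For the bialgebra axiom I must establish
$$\Delta_R\circ m\ =\ (m\otimes m)\circ(\id\otimes\beta\otimes\id)\circ(\Delta_R\otimes\Delta_R).$$
First I would substitute $\Delta_R=R\cdot\Delta$ on the left and rewrite $\Delta\circ m$ using the excess intersection diagram \eqref{fig:ExcessIntersectionDiagram}, which contributes the factor $R_{14}^{-1}\sigma(R_{23})^{-1}$ alongside $\sigma_{23}$ and $\Delta\otimes\Delta$. Next, using $A$-linearity \eqref{eqn:BorcherdsLinearity} in the form $R\cdot(m\otimes m)=(m\otimes m)\circ\big((\Delta\otimes\Delta)(R)\,\cdot\big)$, I move the outer $R$ coming from $\Delta_R$ across $m\otimes m$, converting it into the multiplication operator $(\Delta\otimes\Delta)(R)$; iterating the hexagon relations \eqref{eqn:RMatrixHexagon} gives the explicit expansion $(\Delta\otimes\Delta)(R)=R_{14}R_{13}R_{24}R_{23}$. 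On the right, $\Delta_R\otimes\Delta_R$ contributes $R_{12}R_{34}$, which becomes $R_{13}R_{24}$ after commuting past $\sigma_{23}$, while $\id\otimes\beta\otimes\id$ contributes the braiding factor in the middle two legs. Comparing the two resulting elements of $A^{\otimes 4}$ and cancelling the common $R_{13}R_{24}$, the identity reduces to an equality of the remaining factors in the $(2,3)$-legs, supplied by the form of the braiding $\sigma(R)R^{-1}$ together with commutativity of $A$ (every $R_{ij}$ and its transpose commute). Since each side is a multiplication operator on the commutative ring $A^{\otimes 4}$ composed with the fixed maps $\sigma_{23}$, $\Delta\otimes\Delta$ and $m\otimes m$, equality of the operators follows.

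For braided cocommutativity I would compute $\beta\circ\Delta_R$ directly. Applying almost cocommutativity \eqref{eqn:AlmostCommutativity} exactly as in \eqref{eqn:AlmostCommTwist} yields $\sigma(\Delta_R)=\sigma(R)\,\Delta_R\,R^{-1}$; composing with the braiding factor and using commutativity of $A\otimes A$ to collect all $R$-factors together, they cancel against the braiding factor and leave $\beta\circ\Delta_R=\Delta_R$. This is the announced generalisation of the naive-symmetric computation in Proposition \ref{prop:SymmetricBorcherdsTwist}: here the nontrivial braiding $\sigma(R)R^{-1}$ is precisely what compensates for the failure of $R$ to satisfy $R=\sigma(R)$.

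The main obstacle is the bookkeeping in the bialgebra axiom: one must track all the $R$-factors — the two coming from the copies of $\Delta_R$ on the right, the excess-intersection factor $R_{14}^{-1}\sigma(R_{23})^{-1}$, and the outer $R$ from $\Delta_R$ on the left (expanded via the hexagons into four factors) — through the transposition $\sigma_{23}$ and the two multiplications $m\otimes m$, and verify that they cancel in pairs. The only genuine inputs are the hexagon relations \eqref{eqn:RMatrixHexagon}, the $A$-linearity \eqref{eqn:BorcherdsLinearity}, and the commutativity of $A$; everything else is a rearrangement of multiplication operators on $A^{\otimes 4}$.
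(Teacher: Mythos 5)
Your proof is correct and follows essentially the same route as the paper: the bialgebra compatibility is obtained by sewing the excess-intersection diagram \eqref{fig:ExcessIntersectionDiagram} to the square induced by $A$-linearity \eqref{eqn:BorcherdsLinearity} (with $(\Delta\otimes\Delta)(R)$ expanded via the hexagons), and tracking the $R$-factors through $\sigma_{23}$ until only the braiding factor in the $(2,3)$-legs remains. You are in fact somewhat more complete than the paper on the braided-cocommutativity clause, whose verification via $\sigma(\Delta_R)=\sigma(R)\Delta_R R^{-1}$ and commutativity of $A\otimes A$ the paper's proof leaves essentially unstated.
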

\begin{proof}
  Follows from sewing (\ref{fig:ExcessIntersectionDiagram}) to the diagram induced by (\ref{eqn:BorcherdsLinearity}):
\begin{center}
\begin{tikzcd}[row sep = 15pt, column sep = 65pt]
A^{\otimes 2}\otimes A^{\otimes 2}\ar[d,"m\otimes m"]\ar[r,"R_{13}R_{14}R_{23}R_{24}"] &A^{\otimes 2}\otimes A^{\otimes 2}\ar[d,"m\otimes m"] \\
A\otimes A\ar[r,"R"] & A\otimes A
\end{tikzcd}
\end{center}
We are left with the diagram 
\begin{center}
  \begin{tikzcd}[row sep = 15pt, column sep = 20pt]
    p^*A\otimes p^*A\ar[d,"m"]\ar[r,"R\cdot\Delta\otimes R\cdot \Delta"] &[15pt]A^{\otimes 2}\otimes A^{\otimes 2}\ar[r,"\sigma_{23}"] &A^{\otimes 2}\otimes A^{\otimes 2}\ar[r,"R_{23}\sigma(R_{23})^{-1}"] &[35pt]A^{\otimes 2}\otimes A^{\otimes 2}\ar[d,"m\otimes m"]  \\
    A\ar[rrr,"R\cdot \Delta"] &  && A\otimes A
    \end{tikzcd}
\end{center}
It follows that $(A,\Delta_R,m)$ forms a bialgebra inside the braided monoidal category $\textup{Mod-}A$, where $R_{23}\sigma(R_{23})^{-1}$ is induced by the braiding. If $R$ is naive-symmetric then $\Delta_R$ is almost commutative, 
\end{proof}

\subsubsection{Remark} It is natural to expect that there is a systematic way of constructing Borcherds products $m$ making $A_R$ into a bialgebra. For instance, this process will take in the data of a moduli stack $\Ml_\Cl$ and produce the correspondence $\SES$.

\subsection{Geometric Borcherds twists and Euler classes} We can take the background symmetric monoidal category $\Al=\PreStk^{\textup{corr}}$ to consist of prestacks. In this section we will discuss these ``space level'' $R$-matrices and their Borcherds twists.

\subsubsection{Linear lax $R$-matrices} \label{sec:LinearRMatrices} Let $Y$ be a prestack with a unital monoid structure
$$a\ :\ Y\times Y\ \to\ Y.$$
Then if $\theta\in \Perf(Y\times Y)$ is a multiplicative perfect complex, meaning $(a\times\id)^*\theta=\theta_{13}\oplus\theta_{23}$ and $(\id\times a)^*\theta=\theta_{12}\oplus\theta_{13}$, as well as compatibility with the unit $\theta\vert_{1\times Y}=\theta\vert_{Y\times 1}=0$, then if $a$ is abelian its total space projecting to the base in both directions:
\begin{center}
 \begin{tikzcd}[row sep = 10pt, column sep = 15pt]
 & \theta\ar[rd]\ar[ld] & \\
Y\times Y & & Y\times Y
 \end{tikzcd}
 \end{center} 
defines a Yang-Baxter matrix $R_\theta:Y\times Y\to Y\times Y$ with respect to the coproduct
 \begin{center}
 \begin{tikzcd}[row sep = 10pt, column sep = 15pt]
 & Y\times Y\ar[rd,equals]\ar[ld,swap,"a"] & \\
 Y & & Y\times Y
 \end{tikzcd}
 \end{center} 
These Yang-Baxter matrices compose as $R_\theta\cdot R_{\theta'}= R_{\theta\oplus\theta'}$. In particular, $R_{\theta}$ is usually not invertible, but it does have some sort of ``weak inverse": $R_{\theta}\cdot R_{\theta[1]}$ is a degeneration of trivial correspondence. Indeed, it is the fibre above the origin of a flat family of correspondences $R_{\theta \stackrel{t\id}{\to}\theta}$ over $\Ab^1_t$ whose fibres away from the origin are $R_0=Y\times Y$.

\subsubsection{} \label{sec:SpaceRMatrix} More generally, if $Y$ is as in section \ref{sec:LinearRMatrices} we may ask what does it mean for $R\in Y\times Y$, i.e. a map $R:1\to Y\times Y$,
\begin{center}
  \begin{tikzcd}[row sep = 10pt, column sep = 15pt]
  &[5pt] R\ar[rd,"f"]\ar[ld] & \\
\pt & & Y\times Y
\end{tikzcd}
\end{center}
to define a (lax) $R$-matrix. We will explain the conditions  in terms of multiplication $R\cdot$ by $R$ with respect to the diagonal product on $Y$, which one can show defines the correspondence
\begin{center}
  \begin{tikzcd}[row sep = 10pt, column sep = 15pt]
  & R\ar[rd,"f"]\ar[ld,"f"'] & \\
 Y\times Y & & Y\times Y
  \end{tikzcd}
  \end{center} 
\begin{prop}
$R$ defines a lax $R$-matrix if and only if we have identities for $(a\times \id)^*R$ and $(\id\times a)^*R$: 
\begin{equation}\label{eqn:LinearRMatrixPullback}
  (Y^2\times Y)\times_{Y\times Y}R \ =\ R_{13}\times_{Y\times Y}R_{23},\hspace{15mm} (Y\times Y^2)\times_{Y\times Y}R\ =\ R_{12}\times_{Y\times Y}R_{13}
\end{equation}
as correspondences from $Y^3$ to $Y^2$, and as maps of prestacks 
\begin{equation}\label{eqn:LinearRMatrixQuasitriangularity}
  f \cdot a \ = \ f\cdot \sigma\cdot a,
\end{equation}
which is automatically true if $a$ is abelian. 
\end{prop}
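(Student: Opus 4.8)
The plan is to unwind the definition of a lax $R$-matrix — the version of Proposition \ref{prop:RMatrixBraidedMonoidal} in which the invertibility of $R$ is dropped, as in Corollary \ref{cor:LaxRMatrixBraidedMonoidal} — for the bialgebra object $Y$ in the symmetric monoidal category $\PreStk^{\textup{corr}}$, whose product is the diagonal correspondence of section \ref{sec:DiagonalFactStruct} and whose coproduct $\Delta$ is the correspondence $Y \xleftarrow{a} Y\times Y \xrightarrow{=} Y\times Y$ of section \ref{sec:LinearRMatrices}, and then to translate each defining condition into the language of correspondences. Since the translation is an equivalence of conditions read off term by term, both directions of the ``if and only if'' are obtained at once.

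Two observations about the diagonal product do all the work. First, for the diagonal product on any prestack $Z$, the product of two elements $P,Q\colon \pt \to Z$ — that is, of two prestacks $M_P,M_Q$ over $Z$ — is computed by the fibre product $M_P\times_Z M_Q$ with its tautological map to $Z$; in particular the diagonal product is commutative. Second, precomposing $R$ with $\Delta\otimes\id$ (resp. $\id\otimes\Delta$) is the same as pulling the correspondence $R\xrightarrow{f} Y\times Y$ back along $a\times\id$ (resp. $\id\times a$), since $\Delta\otimes\id$ is the correspondence $Y^2 \xleftarrow{a\times\id} Y^3 \xrightarrow{=} Y^3$. Combined with the identification of $R\cdot$ as the correspondence $Y\times Y \xleftarrow{f} R \xrightarrow{f} Y\times Y$ already recorded in the text, these let me evaluate every term appearing in the hexagon and almost-cocommutativity relations.

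For the first hexagon relation $(\Delta\otimes\id)R = R_{13}\cdot R_{23}$ of (\ref{eqn:RMatrixHexagon}), the second observation gives $(\Delta\otimes\id)R = (a\times\id)^*R = (Y^2\times Y)\times_{Y\times Y} R$, while the first observation computes the right-hand product $R_{13}\cdot R_{23}$ as the fibre product $R_{13}\times_{Y\times Y} R_{23}$. Their equality, as correspondences from $Y^3$ to $Y\times Y$, is precisely the first identity of (\ref{eqn:LinearRMatrixPullback}); the relation $(\id\otimes\Delta)R = R_{13}\cdot R_{12}$ yields the second identity in the same way after replacing $a\times\id$ by $\id\times a$. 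I would also check the unit/counit compatibility separately, using that the unit for the diagonal structure is the correspondence $\pt \leftarrow Y \xrightarrow{=} Y$; this is routine.

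The step I expect to require the most care is the almost-cocommutativity axiom (\ref{eqn:AlmostCommutativity}). Since $R$ need not be invertible, I first rewrite it in the inverse-free form $\sigma\cdot\Delta(-)\cdot R = R\cdot\Delta(-)$, and then use commutativity of the diagonal product to collapse the conjugation, turning the condition into $R\cdot(\sigma\Delta) = R\cdot\Delta$. Evaluating both sides via the two observations gives the correspondences $Y \xleftarrow{a\circ\sigma\circ f} R \xrightarrow{f} Y\times Y$ and $Y \xleftarrow{a\circ f} R \xrightarrow{f} Y\times Y$, which agree exactly when $a\circ f = a\circ\sigma\circ f$, i.e. when $f\cdot a = f\cdot\sigma\cdot a$; this is (\ref{eqn:LinearRMatrixQuasitriangularity}), and it holds automatically once $a\circ\sigma = a$, that is when $a$ is abelian. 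The delicate bookkeeping is getting the handedness of the two products right and confirming that the collapse of $R\Delta R^{-1}$ under commutativity is legitimate without ever inverting $R$, so that the lax hypothesis is genuinely sufficient.
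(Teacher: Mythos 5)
The paper states this Proposition without proof, so there is no argument of the author's to compare against; your strategy of unwinding the lax version of Proposition \ref{prop:RMatrixBraidedMonoidal} for the bialgebra on $Y$ whose product is the diagonal one of section \ref{sec:DiagonalFactStruct} and whose coproduct is $Y\xleftarrow{a}Y\times Y\xrightarrow{=}Y\times Y$ is certainly the intended route, and your two observations are the correct engine. In particular your handling of the only genuinely delicate axiom, almost cocommutativity, is right: passing to the inverse-free form, using commutativity of the diagonal product to remove the conjugation without ever inverting $R$, and reading off the two correspondences $Y\xleftarrow{a\circ f}R\xrightarrow{f}Y\times Y$ and $Y\xleftarrow{a\circ\sigma\circ f}R\xrightarrow{f}Y\times Y$ lands exactly on (\ref{eqn:LinearRMatrixQuasitriangularity}), with the abelian case being automatic. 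Your computation of the left-hand sides of (\ref{eqn:LinearRMatrixPullback}) as $(a\times\id)^*R$ is likewise correct.

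The one place you are too quick is the right-hand side of the hexagon. Your first observation computes the product of two elements of a prestack $Z$ as a fibre product over $Z$; here $R_{13}$ and $R_{23}$ are elements of $Y^{3}$, so $R_{13}\cdot R_{23}$ is a priori a fibre product over $Y^{3}$, not over $Y\times Y$ as your sentence (and, admittedly, the paper's own subscript) suggests. To make this step honest you must first identify $R_{13}$ as the space $R\times Y$ with structure map $(f_1,\id,f_2)$ to $Y^{3}$ — which uses that the unit of the diagonal product is the identity correspondence $\pt\leftarrow Y\xrightarrow{=}Y$ — and similarly $R_{23}=Y\times R$, then compute $(R\times Y)\times_{Y^{3}}(Y\times R)$ explicitly and reconcile the answer with what (\ref{eqn:LinearRMatrixPullback}) denotes by $R_{13}\times_{Y\times Y}R_{23}$, taking into account the paper's instruction that the fibre product is taken over the right-hand copy of $Y\times Y$. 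This is bookkeeping rather than a conceptual gap, but it is precisely the bookkeeping the Proposition exists to record, so it should not be waved through by citing the first observation alone. Your remark that the counit compatibility needs a separate routine check is well taken, especially since the statement of the Proposition does not list that condition.
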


 Two such $R$-matrices $R,R'$ compose as 
 \begin{equation}\label{eqn:LinearRMatrixComposition}
  R\cdot R'\ =\ R\times_{Y\times Y} R'.
 \end{equation}
 In the right hand sides of the equalities in (\ref{eqn:LinearRMatrixPullback}) and (\ref{eqn:LinearRMatrixComposition}), we are taking the fibre products over the right hand copy of $Y\times Y$.

 The Borcherds twist of $(Y,a,\check{\Delta})_R$ is the cofactorisation space
\begin{center}
  \begin{tikzcd}[row sep = 10pt, column sep = 15pt]
  & Y\times_{Y\times Y}R\ar[rd]\ar[ld] & \\
 Y & & Y\times Y
  \end{tikzcd}
  \end{center} 

\subsubsection{} More generally, if $Y$ is a factorisation space and $R$ is as in section \ref{sec:SpaceRMatrix}, then 
\begin{prop}\label{prop:SpaceRMatrix}
$R$ defines a lax $R$-matrix if and only if 
\begin{equation}
  (C_Y\times Y)\times_{Y\times Y}R \ =\ R_{13}\times_{Y\times Y}R_{23},\hspace{15mm} (Y\times C_Y)\times_{Y\times Y}R\ =\ R_{12}\times_{Y\times Y}R_{13}
\end{equation}
as correspondences from $Y^3$ to $Y^2$, and
\begin{equation}
  R\times_{Y\times Y,f}C_Y\ =\ R\times_{Y\times Y,\sigma f}C_Y 
\end{equation}
as correspondences from $Y^2$ to $Y$, where we have taken the fibre product of $C_Y\to Y^2$ and $f:R\to Y^2$ or $\sigma f:R\to Y^2$, respectively. 
\end{prop}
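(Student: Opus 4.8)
The plan is to read off the statement from the Proposition of section \ref{sec:SpaceRMatrix}, of which it is the direct generalisation: there the coproduct $\check\Delta$ on $Y$ was the graph of the monoid multiplication $a$, whereas here it is the genuine correspondence $Y \xleftarrow{p} C_Y \xrightarrow{q} Y\times Y$ underlying the factorisation structure, while the multiplication $R\cdot$ is still taken with respect to the diagonal product. The proof is a dictionary between the defining conditions of a lax $R$-matrix and equalities of spans, using only two facts: composition of $1$-morphisms in $\PreStk^{\textup{corr}}$ is computed by fibre product, and the composition law (\ref{eqn:LinearRMatrixComposition}), $R\cdot R' = R\times_{Y\times Y}R'$, for multiplication operators (with the fibre product taken over the right-hand copy of $Y\times Y$, as specified there). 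Concretely, I would take the hexagon relations and almost cocommutativity of Corollary \ref{cor:LaxRMatrixBraidedMonoidal}, interpret them inside $\El=\PreStk^{\textup{corr}}$ with its cartesian symmetric monoidal structure, and check that their span transcriptions are exactly the three displayed equations.

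First I would handle the two hexagon relations. On the one hand, applying the coproduct $\check\Delta$ in a single tensor slot to the operator $R\cdot$ glues the leg $C_Y\to Y\times Y$ of $\check\Delta$ (extended by the identity on the remaining factor) to the leg $f$ of $R$; computing this composite of spans yields exactly $(C_Y\times Y)\times_{Y\times Y}R$, the left-hand side of the first equation, and $(Y\times C_Y)\times_{Y\times Y}R$ for the other slot. On the other hand, the products $R_{13}\cdot R_{23}$ and $R_{12}\cdot R_{13}$ are composites of multiplication operators, which by (\ref{eqn:LinearRMatrixComposition}) are precisely the fibre products $R_{13}\times_{Y\times Y}R_{23}$ and $R_{12}\times_{Y\times Y}R_{13}$. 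Matching the two sides gives the first pair of displayed equations, the second being obtained from the first by the evident symmetry of slots.

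Next I would handle almost cocommutativity. Because $R$ is not assumed invertible, I would clear the $R^{-1}$ in $\check\Delta = R\cdot\sigma(\check\Delta)\cdot R^{-1}$ and work with the lax identity stating that the factorisation product $C_Y\,:\,Y\times Y\to Y$ precomposed by $R\cdot$ agrees with its swap $\sigma\cdot C_Y$ precomposed by $R\cdot$. Transcribing both composites as fibre products over $Y\times Y$, taken respectively against the legs $f$ and $\sigma f$ of $R$, gives the two sides $R\times_{Y\times Y,f}C_Y$ and $R\times_{Y\times Y,\sigma f}C_Y$ of the third equation. The compatibility with the counit needs no separate clause: it is inherited from the common unit of the two factorisation structures on $Y$ (Lemma-Definition \ref{lem:ClassicalFactSpace}), just as $\theta|_{1\times Y}=0$ was automatic in section \ref{sec:LinearRMatrices}. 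Since each transcription is an equivalence, assembling the three equations yields the ``if and only if''.

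The main obstacle will be the fibre-product bookkeeping rather than any conceptual point: I must pin down which of the several copies of $Y\times Y$ each gluing is taken over --- the subtlety already flagged after (\ref{eqn:LinearRMatrixComposition}) --- and verify that the associativity (pentagon) $2$-isomorphism for $C_Y$ makes the iterated coproducts in the two hexagons well-defined and the two bracketings agree. A reassuring consistency check is that the whole argument degenerates to the Proposition of section \ref{sec:SpaceRMatrix} when $C_Y = Y\times Y$ with $q=\id$ and $p=a$, so that the only genuinely new input is the passage from the graph of $a$ to an arbitrary product correspondence $C_Y$.
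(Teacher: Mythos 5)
The paper states this proposition without proof, treating it---like its monoid-case predecessor in section \ref{sec:SpaceRMatrix}---as a routine transcription of the hexagon and almost-cocommutativity conditions into equalities of spans, which is exactly what your proposal carries out; your unwinding via composition-of-correspondences-as-fibre-product together with the composition law (\ref{eqn:LinearRMatrixComposition}) is correct, and the degeneration check to $C_Y=Y\times Y$, $p=a$ confirms it. The only loose end is your assertion that counit compatibility is automatic, which is not obviously so for a general $f:R\to Y\times Y$, but the paper's own displayed conditions likewise omit it, so you are consistent with the intended reading.
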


\subsubsection{Euler classes} Given a (lax) $R$-matrix space as in section \ref{sec:SpaceRMatrix}, temporarily assume that $f$ and $a$ are smooth, so we can take the relative Euler class
$$e(\Tb_f)\ \in\ \Ht^\sbt(R).$$
In what follows, we will assume that this is the pullback of a class $e(\Tb_f)=f^*e_R$ of a class $e_R\in \Ht^\sbt(Y\times Y)$ and that $f^*$ is injective, which is true at least in the linear case of section \ref{sec:LinearRMatrices}. Second, we assume that $e(\Tb_a)=1$.

If $a$ is commutative,

\begin{lem}
 $e_R$ defines a lax $R$-matrix for the bialgebra $(\Ht^\sbt(Y),\cup, a^*)$. 
\end{lem}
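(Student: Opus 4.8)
The plan is to verify that $e_R$, regarded as an element of $\Ht^\sbt(Y\times Y)\simeq\Ht^\sbt(Y)\otimes\Ht^\sbt(Y)$, satisfies the two hexagon relations (\ref{eqn:RMatrixHexagon}) together with the counit compatibility. Since the cup product is graded commutative and, as $a$ is commutative, the coproduct $a^*$ is cocommutative ($\sigma^* a^*=(a\sigma)^*=a^*$), the triple $(\Ht^\sbt(Y),\cup,a^*)$ is a commutative and cocommutative bialgebra; hence the almost cocommutativity condition (\ref{eqn:AlmostCommutativity}) holds automatically (compare the bicharacter discussion following Proposition \ref{prop:RMatrixBraidedMonoidal}), and a lax $R$-matrix is exactly a class $e_R$ obeying the hexagon and counit identities. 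Concretely, writing $\pi_{ij}\colon Y^3\to Y^2$ for the projections, the two hexagon relations become the identities
$$(a\times\id)^* e_R \ =\ \pi_{13}^* e_R\cup\pi_{23}^* e_R,\qquad (\id\times a)^* e_R\ =\ \pi_{13}^* e_R\cup\pi_{12}^* e_R$$
in $\Ht^\sbt(Y^3)$; I will prove the first, the second being symmetric.

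My approach is to read the space-level hexagon relations of Proposition \ref{prop:SpaceRMatrix} through the relative Euler class, using its naturality under base change and its multiplicativity (the Whitney sum formula). By the first equation of (\ref{eqn:LinearRMatrixPullback}) there is an isomorphism of correspondences over $Y^3$ between $(a\times\id)^*R=(C_Y\times Y)\times_{Y\times Y}R$ and $R_{13}\times_{Y\times Y}R_{23}$. On the left, the projection $f'\colon (a\times\id)^*R\to Y^3$ is the base change of $f$ along $a\times\id$, so base change for relative tangent complexes gives $\Tb_{f'}\simeq (f'')^*\Tb_f$ (with $f''\colon (a\times\id)^*R\to R$ the other projection), whence by naturality and the hypothesis $e(\Tb_f)=f^*e_R$,
$$e(\Tb_{f'})\ =\ (f'')^* f^* e_R\ =\ (f')^*(a\times\id)^* e_R.$$

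On the right, $R_{13}$ and $R_{23}$ are base changes of $f$ along $\pi_{13}$ and $\pi_{23}$, so their relative tangents over $Y^3$ are pullbacks of $\Tb_f$ with relative Euler classes the pullbacks of $\pi_{13}^*e_R$ and $\pi_{23}^*e_R$; for the fibre product over $Y^3$ the relative tangent splits as the direct sum of the two, so the Whitney sum formula gives relative Euler class the pullback of $\pi_{13}^*e_R\cup\pi_{23}^*e_R$. Here the hypothesis $e(\Tb_a)=1$ enters: it guarantees that the isomorphism in (\ref{eqn:LinearRMatrixPullback}) matches the right legs of the two correspondences without introducing the relative tangent $\Tb_a$ of the multiplication as an extra Euler-class factor, so that the relative tangents of the common left leg to $Y^3$ agree under the isomorphism. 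Comparing the two computations along this common left leg yields $(f')^*(a\times\id)^*e_R=(f')^*(\pi_{13}^*e_R\cup\pi_{23}^*e_R)$, and since $f^*$ is injective — a property preserved by the base change defining $f'$, and which holds in particular in the linear case of section \ref{sec:LinearRMatrices} where $f'$ is a vector-bundle projection — the map $(f')^*$ is injective and the first hexagon relation follows.

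Finally, the counit compatibility follows by restricting along the unit $1\colon\pt\to Y$: the unit condition on the $R$-matrix space (that $R$ restricts to the trivial correspondence over $1\times Y$, equivalently $\Tb_f$ vanishes there) forces $\iota^* e_R=1$ for $\iota\colon Y\simeq\{1\}\times Y\hookrightarrow Y\times Y$, i.e. $(\epsilon\otimes\id)e_R=(\id\otimes\epsilon)e_R=1$, again using $e(\Tb_a)=1$ for the unit of $a$. The main obstacle will be the step controlled by the two standing hypotheses: matching the relative tangent bundles across the isomorphism of Proposition \ref{prop:SpaceRMatrix} so that no spurious Euler-class correction from $\Tb_a$ survives, and justifying that injectivity of $f^*$ descends the resulting identity from $\Ht^\sbt((a\times\id)^*R)$ back to $\Ht^\sbt(Y^3)$.
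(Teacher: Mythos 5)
Your proof is correct and follows essentially the same route as the paper's: both verify the hexagon relations by using the cartesian square from (\ref{eqn:LinearRMatrixPullback}) to compute the relative Euler class of the common correspondence in two ways (via multiplicativity on distinguished triangles, the hypotheses $e(\Tb_f)=f^*e_R$ and $e(\Tb_a)=1$, and injectivity of the pullback), and both dispose of almost cocommutativity by noting that $(\Ht^\sbt(Y),\cup,a^*)$ is commutative and cocommutative. The only differences are cosmetic: you compute the relative Euler class over the source $Y^3$ where the paper works over the target $Y^2$, and you explicitly check the counit compatibility, which the paper's proof leaves implicit.
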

\begin{proof}
  To prove $(a\times \id)^*e_R=e_{R,12}e_{R,23}$, we take diagram (\ref{eqn:LinearRMatrixPullback}) consisting of a cartesian square:
  \begin{center}
    \begin{tikzcd}[row sep = {30pt, between origins}, column sep = {55pt, between origins}]
   & &R_{13}\times_{Y\times Y}R_{23}\ar[ld,swap]\ar[rd]&&\\ 
   &\ar[ld,equals] Y^2\times Y\ar[rd,"a\times\id"]& & R\ar[ld,"f"']\ar[rd,"f"] \\
   Y\times Y&  & Y^2\times Y& & Y\times Y
    \end{tikzcd}
    \end{center} 
We apply the distinguished triangle for tangent complexes of compositions to compute $e(\Tb_{R_{13}\times_{Y\times Y}R_{23}/Y\times Y})$ in two different ways. First, it is the pullback of $e_{R,12}e_{R,23}$. Second, is the the product of the pullbacks of $e(\Tb_f)$ and $e(\Tb_{a\times\id})=1$. The identity $(\id\times a)^*e_R=e_{R,12}e_{R,13}$ is proven similarly. 

Finally, almost cocommutativity follows because $\Ht^\sbt(Y)$ is commutative and cocommutative. 
\end{proof}

\subsubsection{} 
We now relax the assumptions that $f$ and $a$ are smooth to $\Tb_f=(\Tb_f^i)$ and $\Tb_a=(\Tb_a^i)$ being globally complexes of vector bundles. Work in the category $\PreStk^{\textup{corr}}_{/B}$ of prestacks with correspondences over base space $B$, and take a multiplicative subset 
$$S\ \subseteq\ \Ht^\sbt(B)$$
such that $e(\Tb_f^i),e(\Tb_a^i)$  are invertible in the localised cohomology groups $\Ht^\sbt(-)_{\loc}=\Ht^\sbt(-)[S^{-1}]$ for odd $i$.  We can then define $e(\Tb_f)$ as the alternating product of the $e(\Tb_f^i)$, likewise $e(\Tb_a)$, and since this is also compatible with pullback and is multiplicative on distinguished triangles, we have as before

\begin{prop}
  If $a$ is commutative, $e_R$ defines a lax $R$-matrix in $\Ht^\sbt(Y\times Y)_{\loc}$.
\end{prop}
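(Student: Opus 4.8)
The plan is to run the argument of the preceding (smooth) Lemma verbatim, the only difference being that the relative Euler classes are now the localised alternating products $e(\Tb_f)=\prod_i e(\Tb_f^i)^{(-1)^i}$ and $e(\Tb_a)=\prod_i e(\Tb_a^i)^{(-1)^i}$, taken in $\Ht^\sbt(-)_{\loc}$. These are well defined precisely because the hypothesis on $S$ inverts the odd-degree factors $e(\Tb_f^i),e(\Tb_a^i)$. I retain, in this localised form, the standing assumptions of the smooth case: that $e(\Tb_f)=f^*e_R$ for a class $e_R\in\Ht^\sbt(Y\times Y)_{\loc}$ with $f^*$ injective, and that $e(\Tb_a)=1$ (hence $e(\Tb_{a\times\id})=1$). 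The two formal inputs needed are that this Euler class is compatible with pullback and multiplicative on distinguished triangles, both of which were recorded just before the statement.

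First I would establish the hexagon identities (\ref{eqn:LinearRMatrixPullback}), i.e. $(a\times\id)^*e_R=e_{R,12}e_{R,23}$ and $(\id\times a)^*e_R=e_{R,12}e_{R,13}$. For the first, take the cartesian square in the diagram of the smooth Lemma, with apex $R_{13}\times_{Y\times Y}R_{23}$, and compute its relative Euler class $e(\Tb_{R_{13}\times_{Y\times Y}R_{23}/Y\times Y})$ in two ways using the distinguished triangle of tangent complexes of the composition $R_{13}\times_{Y\times Y}R_{23}\to Y^2\times Y\xrightarrow{a\times\id}Y\times Y$: on one hand it is the pullback of $e_{R,12}e_{R,23}$, on the other it is the product of the pullback of $e(\Tb_f)=f^*e_R$ with $e(\Tb_{a\times\id})=1$. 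Since both pullback-compatibility and multiplicativity over the triangle hold for the localised alternating Euler class, these two computations agree, giving the identity after using the injectivity of the relevant pullback; the second identity is symmetric. Condition (\ref{eqn:LinearRMatrixQuasitriangularity}) and almost cocommutativity (\ref{eqn:AlmostCommutativity}) are automatic, since $a$ is commutative and $\Ht^\sbt(Y)_{\loc}$ remains graded-commutative and cocommutative, so conjugation by $e_R$ acts trivially on the coproduct $a^*$.

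The only genuinely new point, and hence the main obstacle, is the multiplicativity of the alternating-product Euler class along a distinguished triangle $\Tb'\to\Tb\to\Tb''$ of global complexes of vector bundles, i.e. $e(\Tb)=e(\Tb')e(\Tb'')$ in the localised ring; this is what replaces the Whitney formula for a short exact sequence of bundles used implicitly in the smooth case. I would reduce it to the termwise multiplicativity of top Chern classes: the alternating product is arranged so that the odd-degree bundles contribute as inverses, which is exactly why inverting the odd $e(\Tb^i)$ is required, and the identity then follows from additivity of Chern classes in $K$-theory (a distinguished triangle gives $[\Tb]=[\Tb']+[\Tb'']$, so the total Chern class is multiplicative), the failure of the triangle to split being irrelevant at the level of the underlying $K$-theory classes. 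Granting this property, which is asserted in the text, the proposition follows immediately from the two paragraphs above.
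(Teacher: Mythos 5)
Your proposal is correct and follows essentially the same route as the paper, which itself gives no separate proof here: it simply observes that the localised alternating-product Euler class is still compatible with pullback and multiplicative on distinguished triangles, and then reruns the smooth-case Lemma (the cartesian-square/tangent-complex computation for the hexagon identities, plus commutativity and cocommutativity of $\Ht^\sbt(Y)_{\loc}$ for almost cocommutativity) verbatim. Your additional discussion of why the alternating product is multiplicative on triangles is a reasonable gloss on a property the paper merely asserts.
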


\subsection{Cohomological Hall algebras and localised coproducts} \label{sec:CohaBialg} Consider a lax bialgebra $Y$ in $\PreStk^{\textup{corr}}$, whose coproduct is given by a commutative monoid structure
$$a\ :\ Y\times Y\ \to\ Y$$
Suggestively denote the algebra structure on $Y$ by
\begin{center}
\begin{tikzcd}[row sep = 10pt, column sep = 15pt]
 &\SES\ar[rd,"p"]\ar[ld,"q",swap] & \\
Y\times Y & & Y
\end{tikzcd}
\end{center}
The lax compatibility between the product and coproduct on $Y$ is the existence of a map $i$ in 
\begin{equation}\label{fig:CohaDiagram}
\begin{tikzcd}[row sep = 10pt, column sep = 15pt]
&&\SES^2\arrow[d,"i"]\arrow[rddd,bend right = -30,"p\times p"]\arrow[lldd,bend right = 20,swap,"q\times q"]&&\\[5pt]
&&\SES \times_{Y} Y^2\arrow[rdd]\arrow[ldd,swap]&&\\[-10pt]
Y^2\times Y^2\ar[dd,"a\times a\cdot \sigma_{23}",swap]&&&&\\[-20pt]
&\SES\arrow[ld,swap]\arrow[rd]&& Y^2\arrow[ld,"a",swap]\\
\textcolor{white}{a}Y\times Y\textcolor{white}{a}&&Y&
\end{tikzcd}
\end{equation}
compatible with the (co)associativity isomorphisms. $Y$ is an honest bialgebra if $i$ is in addition an isomorphism. The above diagram is just 
\begin{center}
  \begin{tikzcd}[row sep = 20pt, column sep = 45pt]
   Y\times Y\ar[r,"\SES"]\ar[d,"\sigma_{23}\cdot (a\times a)"',""{name=U,inner sep=2pt,xshift=20pt}] &Y\ar[d,"a",""{name=D,inner sep=2pt,xshift=-20pt}] \\
   Y^2\times Y^2 \ar[r,"\SES\times \SES"]&Y^2
   \arrow[Rightarrow, from=U, to=D, "i", shorten <= 5pt, shorten >= 5pt] 
  \end{tikzcd}
  \end{center}
Of course, the main example of the above is when $Y=\Ml_\Cl$ is the moduli stack of objects in a dg or abelian category.

\subsubsection{} In the following we will assume that $\SES$ and $a$ are compatible in addition by the pullbacks on cohomology being
$$p^*\ =\ q^*a^*,$$
and as before we also assume that $q^*$ is injective. For instance, this is the case when $Y=\Ml_\Cl$ is a moduli stack. 

\subsubsection{} 

Assume $p$ is proper and for simplicity that all spaces in (\ref{fig:CohaDiagram}) and $p$ are smooth. We then get an algebra structure $m=p_*q^*$ on cohomology $\Ht^{\sbt}(Y)$, in addition to the bialgebra structure $A=(\Ht^\sbt(Y),\cup,a^*)$ induced by the diagonal map and $a$. Assume that $i$ is a closed embedding, so that 
$$i^*i_*\ =\ e(\Nb_i)\cdot$$
where $e(\Nb_i)$ denotes the euler class of the normal complex $\Nb_i$, which is a vector bundle since all relevant spaces are smooth.

In the following, we will write $e_R=e(\Tb_p)$ for the unique element of $\Ht^\sbt(Y\times Y)$ pulling back to $e(\Tb_p)\in \Ht^\sbt(\SES)$.

\begin{lem}\label{lem:eNi}
  $e(\Nb_i)=(q\times q)^*e_{R,14}e_{R,32}$ as elements of $\Ht^\sbt(\SES^2)$. 
  \end{lem}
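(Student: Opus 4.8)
The plan is to compute $e(\Nb_i)$ from the tangent complex of the closed embedding $i$ and then recognise the answer using the multiplicativity of $e_R$ already established for the $R$-matrix.

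First I would make $i$ explicit. Unwinding the two composite correspondences witnessed by (\ref{fig:CohaDiagram}), the apex of the product-then-coproduct side is the fibre product $\SES\times_Y Y^2$ (taken along $p:\SES\to Y$ and $a:Y^2\to Y$), while the apex of the coproduct-then-product side is $\SES^2$, and the $2$-morphism $i$ is the map
$$i\ :\ \SES^2\ \to\ \SES\times_Y Y^2, \hspace{10mm} (s,t)\ \mapsto\ \bigl(s\oplus t,\ (p(s),p(t))\bigr),$$
sending a pair of short exact sequences to their direct sum together with the pair of middle terms. In particular $\pi_2\circ i=p\times p$ and $\pi_1\circ i=a_{\SES}$, where $a_{\SES}:\SES^2\to\SES$ is the direct-sum map and $\pi_1,\pi_2$ are the two projections of the fibre product.

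Second, I would compute the normal complex relative to $Y^2$. The square defining $\SES\times_Y Y^2$ is derived cartesian, so base change gives $\Tb_{\pi_2}\simeq \pi_1^*\Tb_p$. The triangle for the relative tangent complexes of the composition $\SES^2\xrightarrow{i}\SES\times_Y Y^2\xrightarrow{\pi_2}Y^2$, whose total map is $p\times p$, then reads
$$\Tb_i\ \to\ \Tb_{p\times p}\ \to\ i^*\Tb_{\pi_2}\ =\ a_{\SES}^*\Tb_p,$$
so that, $i$ being a closed embedding, $\Nb_i=\Tb_i[1]=\cofib(\Tb_{p\times p}\to a_{\SES}^*\Tb_p)$ and hence $e(\Nb_i)=e(a_{\SES}^*\Tb_p)/e(\Tb_{p\times p})$. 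The denominator is immediate: since $\Tb_{p\times p}=\textup{pr}_1^*\Tb_p\oplus \textup{pr}_2^*\Tb_p$ and $q^*e_R=e(\Tb_p)$, we get $e(\Tb_{p\times p})=(q\times q)^*(e_{R,12}\,e_{R,34})$.

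Third, I would evaluate the numerator. Compatibility of the $\SES$ correspondence with direct sums gives $q\circ a_{\SES}=(a\times a)\circ\sigma_{23}\circ(q\times q)$, whence
$$e(a_{\SES}^*\Tb_p)\ =\ a_{\SES}^*q^*e_R\ =\ (q\times q)^*\sigma_{23}^*(a\times a)^*e_R.$$
Applying the bicharacter relations $(a\times\id)^*e_R=e_{R,13}\,e_{R,23}$ and $(\id\times a)^*e_R=e_{R,12}\,e_{R,13}$ established above, once in each slot, yields $(a\times a)^*e_R=e_{R,13}\,e_{R,14}\,e_{R,23}\,e_{R,24}$, and pulling back along $\sigma_{23}$ turns this into $e_{R,12}\,e_{R,14}\,e_{R,32}\,e_{R,34}$. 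Dividing by $e(\Tb_{p\times p})=(q\times q)^*(e_{R,12}\,e_{R,34})$ leaves exactly $(q\times q)^*(e_{R,14}\,e_{R,32})$, as claimed.

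The main obstacle will be the second step: the clean identity $e(\Nb_i)=e(a_{\SES}^*\Tb_p)/e(\Tb_{p\times p})$ requires that $\SES\times_Y Y^2$ be formed in the derived sense and that $i$ be a quasi-smooth (regular) closed embedding, so that $\Tb_i$ is the shifted normal bundle and the Euler class is multiplicative on the triangle; under the running smoothness and closed-embedding hypotheses this is exactly the self-intersection setup $i^*i_*=e(\Nb_i)\cdot$ recalled before the statement. The remaining work is the index bookkeeping in the third step, whose conceptual content is entirely the already-proven fact that $e_R$ is a bicharacter for the direct sum $a$; the appearance of the crossed indices $14$ and $32$ (rather than $13,24$) is precisely the effect of the $\sigma_{23}$ reordering built into the bialgebra compatibility.
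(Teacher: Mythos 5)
Your proposal is correct and follows essentially the same route as the paper: both apply the distinguished triangle for relative tangent complexes to the composite $\SES^2\xrightarrow{i}\SES\times_Y Y^2\to Y^2$, identify $i^*\Tb_{\pi_2}\simeq \tilde{a}^*\Tb_p$ by base change, and compute $e(\Tb_{p\times p})=(q\times q)^*e_{R,12}e_{R,34}$ and $e(\tilde{a}^*\Tb_p)=(q\times q)^*\sigma_{23}^*(a\times a)^*e_R=(q\times q)^*e_{R,12}e_{R,14}e_{R,32}e_{R,34}$ using the multiplicativity of $e_R$ with respect to $a$ and the injectivity of $q^*$. Your version merely makes explicit the step $e(\Nb_i)=e(\Tb_i[1])=e(\tilde{a}^*\Tb_p)/e(\Tb_{p\times p})$, which the paper leaves implicit.
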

\begin{proof}
  Apply the distinguished triangle for tangent complexes to 
\begin{center}
  \begin{tikzcd}[row sep = 10pt, column sep = 15pt]
    &&\SES^2\arrow[d,"i"]\arrow[rddd,bend right = -30,"p\times p"]&&\\[5pt]
    &&\SES \times_{Y} Y^2\arrow[rdd]&&\\[-10pt]
    &&&&\\
    &&& Y^2
    \end{tikzcd} 
\end{center}
to give that $e(\Tb_{p\times p})=e(\Tb_i)e(\tilde{a}^*\Tb_p)$, where $\tilde{a}:\SES^2\to \SES$ is the map in (\ref{fig:CohaDiagram}). We then compute
$$e(\Tb_{p\times p})\ =\ (q\times q)^*e_{R,12}e_{R,34}$$
and
$$e(\tilde{a}^*\Tb_p)\ =\ \tilde{a}^*q^*e_R\ =\ (q\times q)^*\sigma_{23}^*(a\times a)^*e_R\ =\ (q\times q)^*e_{R,12}e_{R,14}e_{R,32}e_{R,34}$$
where in the last equality we used that $e_R$ was compatible with $a$. This finishes the proof, since by assumption $q^*$ was injective. 
\end{proof}

Finally, assume that we are in a situation such that the maps
$$\Ht^\sbt(\SES\times_Y Y^2)\ \stackrel{i_*}{\to}\ \Ht^\sbt(\SES^2)\ \stackrel{i^*}{\to}\ \Ht^\sbt(\SES\times_Y Y^2)$$
are both isomorphisms. This is often the case for instance if we are in the torus-equivariant setting and replace cohomology with localised cohomology. In particular, 
\begin{equation}
  \label{eqn:TorusLocalisation}
  \id\ =\ i_*\frac{i^*(-)}{e(\Nb_i)}. 
\end{equation}

\begin{prop} The cohomological Hall algebra product and localised coproduct form a bialgebra
$$(\Ht^\sbt(Y),p_*q^*,e(\Tb_p)\cdot a^*)$$
in the braided monoidal category of modules over the algebra $(\Ht^\sbt(Y),\cup,a^*)$, with braiding induced by the $R$-matrix $e(\Tb_p)\cdot \sigma^*(e(\Tb_p))^{-1}$.
\end{prop}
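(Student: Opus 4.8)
The plan is to recognise $A=(\Ht^\sbt(Y),\cup,a^*)$ as a commutative and cocommutative bialgebra carrying the lax $R$-matrix $R=e_R=e(\Tb_p)$ supplied by the preceding Euler-class results (working throughout in localised cohomology, where $R$ becomes invertible), and then to check that the cohomological Hall algebra product $m=p_*q^*$ is a \emph{Borcherds product} in the sense of the definition preceding Proposition \ref{prop:BorcherdsBialgebra}. Granting this, the localised coproduct $e(\Tb_p)\cdot a^*=R\cdot\Delta$ is exactly the Borcherds twist $A_R$ of Lemma-Definition \ref{lemdefn:BorcherdsTwist}, and Proposition \ref{prop:BorcherdsBialgebra} then produces the bialgebra structure in $\textup{Mod-}A$, the asserted braiding $e_R\,\sigma(e_R)^{-1}$, and braided cocommutativity of the coproduct. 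Associativity of $m$ is the usual consequence of the iterated-extension (associativity) isomorphism for the correspondence $\SES$.

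There are two conditions to verify. The first, $A$-linearity (\ref{eqn:BorcherdsLinearity}), is immediate from the compatibility $p^*=q^*a^*$ and the projection formula: for $a\in A$ and $\omega\in A\otimes A=\Ht^\sbt(Y\times Y)$,
$$m(\Delta(a)\cup\omega)\ =\ p_*\bigl(q^*a^*a\cup q^*\omega\bigr)\ =\ p_*\bigl(p^*a\cup q^*\omega\bigr)\ =\ a\cup p_*q^*\omega\ =\ a\cup m(\omega).$$

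The second and main condition is the excess-intersection square (\ref{fig:ExcessIntersectionDiagram}), where the geometry of (\ref{fig:CohaDiagram}) enters. Read on localised cohomology, the summary square of (\ref{fig:CohaDiagram}) would assert $\Delta\circ m=(m\otimes m)\circ\sigma_{23}\circ(\Delta\otimes\Delta)$ \emph{were it cartesian}; since the comparison is instead the closed embedding $i:\SES^2\hookrightarrow \SES\times_Y Y^2$, the two sides differ by a normal-bundle correction. Concretely, I would first rewrite $a^*p_*$ by base change along the cartesian square defining $\SES\times_Y Y^2$, so that $\Delta\circ m=a^*p_*q^*$ becomes a pull-push through $\SES\times_Y Y^2$, while $(m\otimes m)\circ\sigma_{23}\circ(\Delta\otimes\Delta)$ is manifestly a pull-push through $\SES^2=\SES\times\SES$. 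The commuting triangles of (\ref{fig:CohaDiagram}) say that the structure maps of $\SES\times_Y Y^2$ precompose with $i$ to the structure maps of $\SES^2$, so inserting the localisation identity (\ref{eqn:TorusLocalisation}), $\id=i_*\bigl(i^*(-)/e(\Nb_i)\bigr)$, into the $\SES\times_Y Y^2$ pull-push converts it into the $\SES^2$ pull-push at the cost of a factor $e(\Nb_i)^{-1}$.

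Finally, Lemma \ref{lem:eNi} gives $e(\Nb_i)=(q\times q)^*e_{R,14}e_{R,32}$, so by the projection formula and injectivity of $q^*$ the correcting factor $e(\Nb_i)^{-1}$ descends to multiplication by $R_{14}^{-1}R_{32}^{-1}=R_{14}^{-1}\sigma(R_{23})^{-1}$ on $\Ht^\sbt(Y^4)$, which is precisely the twist appearing in (\ref{fig:ExcessIntersectionDiagram}). This closes the square, shows $m$ is a Borcherds product, and lets me conclude by Proposition \ref{prop:BorcherdsBialgebra}. I expect the main obstacle to be the bookkeeping of this last step: tracking the four tensor factors, the swap $\sigma_{23}$, and the cross-indices $14$ and $32$ so that the Euler class of $\Nb_i$ matches the prescribed twist \emph{on the nose} rather than up to a permutation or an inverse. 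Everything else is base change and the projection formula.
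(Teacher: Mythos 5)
Your proposal is correct and follows essentially the same route as the paper's own proof: reduce to checking that $p_*q^*$ is a Borcherds product, verify $A$-linearity via the projection formula and $p^*=q^*a^*$, establish the excess-intersection square by inserting the localisation identity (\ref{eqn:TorusLocalisation}) and identifying the correction factor via Lemma \ref{lem:eNi}, then conclude with Proposition \ref{prop:BorcherdsBialgebra}. Your write-up is in fact somewhat more explicit than the paper's about the base-change step through $\SES\times_Y Y^2$ and the index bookkeeping, but there is no substantive difference in approach.
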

\begin{proof} It is enough to show that $p_*q^*$ defines a Borcherds product for $(\Ht^\sbt(Y),\cup,a^*,e(\Tb_p))$. We have two different maps $\Ht^\sbt(Y\times Y)\to \Ht^\sbt(Y^2)$, by pushing and pulling along the outer arrows of (\ref{fig:CohaDiagram}). Then applying (\ref{eqn:TorusLocalisation}) the diagram
  \begin{center}
    \begin{tikzcd}[row sep = 15pt, column sep = 20pt]
      \Ht^\sbt(Y\times Y)\ar[dd,"p_*q^*"]\ar[r,"(a\times a)^*"] &\Ht^\sbt(Y^2\times Y^2)\ar[r,"\sigma_{23}"] &\Ht^\sbt(Y^2\times Y^2) \ar[dd,"(p\times p)_*\frac{(q\times q)^*}{e(\Nb_i)}"]  \\
      & & \\
      \Ht^\sbt(Y)\ar[rr,"a^*"] &  & \Ht^\sbt(Y\times Y)
      \end{tikzcd} 
  \end{center}
  commutes, and by Lemma \ref{lem:eNi} if we take $R=e(\Tb_p)$ this is the excess intersection diagram (\ref{fig:ExcessIntersectionDiagram}). Finally, the linearity condition holds by the projection formula and the fact that $p^*=q^*a^*$.
\end{proof}

The simplying assumption that $p$ and all spaces in (\ref{fig:CohaDiagram}) are smooth is very restrictive, for instance when $Y=\Ml_{\Al}$ is the moduli stack of objects in an abelian category, we only expect this is only true when $\Al$ has dimension zero. It is possible to relax this assumption, but one needs to carefully consider how to define $e(\Nb_i)$ and $i_*$--for technical details are worked out for dimension one categories, see \cite{La}.

\subsection{Factorisation Borcherds twists}  \label{sec:FactorisationBorcherdsTwists} 

Let $A$ be a factorisation quantum group in background lax classical braided factorisation category $(\El,\sigma)$ on classical braided factorisation space $Y$, and with spectral $R$-matrix $R\in \Gamma(C,A\otimes_\El A)$ a spectral $R$-matrix. As before, we can define from this a factorisation coalgebra

\begin{defn}
The \textit{factorisation Borcherds twist} $A_R$ is the factorisation coalgebra with underlying section $A$ and factorisation coproduct $\Delta_R=R\cdot\Delta$.
\end{defn} 

In the vertex algebra case, this reproduces the Borcherds twists in \cite{Bo2}. We have as before 
\begin{lem}
$A_R$ is a braided commutative factorisation bialgebra in $\textup{Mod-}A$ with respect to the classical factorisation braiding induced by multiplication by $R^{-1}$ so long as $R$ is \emph{(naive-)symmetric} in the sense that $R=\sigma(R)$. 
\end{lem}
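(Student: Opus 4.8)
The plan is to carry the proof of Proposition~\ref{prop:SymmetricBorcherdsTwist} across to the factorisation setting essentially verbatim, the only genuinely new issue being that every identity must be interpreted as an equality of morphisms of sections over an appropriate correspondence rather than of elements of a vector space. First I would record that, by the definition of the factorisation Borcherds twist, the factorisation coproduct of $A_R$ is $\Delta_R = R\cdot\Delta$, where $R\in\Gamma(C,A\otimes_\El A)$ acts by multiplication with respect to the factorisation product on $A$ and $\Delta$ is the factorisation coproduct of the underlying factorisation quantum group. Coassociativity of $\Delta_R$, hence the fact that $A_R$ is a genuine factorisation coalgebra, is exactly the content of that definition: it follows from the spectral Yang--Baxter equation implied by the hexagon relations~\eqref{eqn:SpectralRMatrixHexagon} together with coassociativity of $\Delta$, by the same computation as in Lemma-Definition~\ref{lemdefn:BorcherdsTwist}. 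Compatibility with the counit is inherited from $A$ using $(\epsilon\otimes_\El\id)(R)=(\id\otimes_\El\epsilon)(R)=1$.

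The key step is the factorisation analogue of~\eqref{eqn:AlmostCommTwist}. Since $A$ is a factorisation quantum group, its spectral $R$-matrix satisfies the almost cocommutativity relation~\eqref{eqn:SpectralRMatrixAlmostCommutativity}, which I read in the form $\sigma(\Delta)=R^{-1}\cdot\Delta\cdot R$, with $\sigma$ the factorisation braiding of $\El$. Using that $\sigma$ is monoidal for the factorisation product on $A$, I would then compute
\begin{equation*}
  \sigma(\Delta_R)\ =\ \sigma(R\cdot\Delta)\ =\ \sigma(R)\cdot\sigma(\Delta)\ =\ \sigma(R)\cdot R^{-1}\cdot\Delta\cdot R,
\end{equation*}
as an equality of morphisms of sections of the pullback of $\El$ to the correspondence obtained by iterating $C$. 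When $R$ is naive-symmetric, so $R=\sigma(R)$, this collapses to $\sigma(\Delta_R)=\Delta\cdot R=R^{-1}\cdot\Delta_R\cdot R$, which is exactly the assertion that $\Delta_R$ is braided cocommutative for the classical factorisation braiding on $\textup{Mod-}A$ induced by multiplication by $R^{-1}$. Thus $A_R$ defines a braided commutative object of $\Eb_2\CoAg(\textup{Mod-}A)$, the residual bialgebra compatibility in $\textup{Mod-}A$ being inherited from the factorisation bialgebra structure on $A$ precisely as in Proposition~\ref{prop:SymmetricBorcherdsTwist}.

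The main obstacle I anticipate is bookkeeping rather than conceptual: in the factorisation world $\Delta_R$, multiplication by $R$, and the braiding $\sigma$ a priori live over different correspondences (the product correspondence $C$ and its iterates, together with the braiding datum of the braided factorisation space $Y$), so before the displayed manipulation is even meaningful one must pull all three back to a common correspondence and check that the factorisation product, the action of $R$, and $\sigma$ compose there as the notation suggests. This is where I would invoke Lemma~\ref{lem:C'IsFactSpace} and Lemma~\ref{lem:C'ToYxYIsFact} to guarantee that the relevant structure maps are themselves maps of factorisation spaces, so that the pullbacks of the product, coproduct and braiding are compatible; once this identification is in place the algebraic steps are formally identical to the non-factorisation case and require no further input.
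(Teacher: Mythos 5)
Your proposal is correct and follows essentially the same route as the paper, which offers no separate argument for this Lemma beyond ``we have as before'': one transports the one-line computation of Proposition~\ref{prop:SymmetricBorcherdsTwist} (and the coassociativity check of Lemma-Definition~\ref{lemdefn:BorcherdsTwist}) to sections over the iterated correspondence, with the pullback compatibilities supplied by Lemmas~\ref{lem:C'IsFactSpace} and~\ref{lem:C'ToYxYIsFact} exactly as you indicate. The only point worth noting is that you read almost cocommutativity in the spectral form $\sigma(\Delta)=R^{-1}\Delta R$ from~(\ref{eqn:SpectralRMatrixAlmostCommutativity}) while the model computation~(\ref{eqn:AlmostCommTwist}) uses the opposite convention $\sigma(\Delta)=R\Delta R^{-1}$ from~(\ref{eqn:AlmostCommutativity}); this is an inconsistency internal to the paper, and your derivation is the one consistent with the factorisation-level definitions.
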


If $A$ is a factorisation quantum group, we define 
\begin{defn} A factorisation algebra structure $m:A\otimes_\El' A\to p^*A$ on the section underlying $A$ is called a \textit{factorisation Borcherds product} if it is $A$-linear and the following so-called \textit{excess intersection} diagram
  \begin{equation}\label{fig:FactorisationExcessIntersectionDiagram}
    \begin{tikzcd}[row sep = 15pt, column sep = 20pt]
    {p'}^*A\otimes_\El {p'}^*A\ar[dd,"m"]\ar[r,"\Delta\otimes_\El\Delta"] &A^{\otimes_\El' 2}\otimes_\El A^{\otimes_\El' 2}\ar[r,"\sigma_{23}"] &A^{\otimes_\El 2}\otimes'_\El A^{\otimes_\El 2}\ar[d,"R_{14}^{-1}\sigma(R_{23})^{-1}"]  \\
    & & A^{\otimes_\El 2}\otimes_\El' A^{\otimes_\El 2}\ar[d,"m\otimes m"]\\
    \tilde{p}^*A\ar[rr,"\Delta"] &  & {p}^*A\otimes'_\El {p}^*A
    \end{tikzcd}
    \end{equation}
    in $\Gamma((C\times C)\times_{Y\times Y}C',\tilde{p}^*\El)$ commutes, where $\tilde{p}$ denotes the projection to $Y$.
  \end{defn}
 
One consequence of $A$-linearity is the following commuting
\begin{equation}\label{eqn:SpectralRMatrixTwisted}
  \begin{tikzcd}[row sep = 25pt, column sep = 65pt]
  A^{\otimes_\El' 2}\otimes_\El A^{\otimes_\El' 2}\ar[d,"m\otimes m"]\ar[r,"R_{13}R_{14}R_{23}R_{24}"] &A^{\otimes_\El' 2}\otimes_\El A^{\otimes_\El' 2}\ar[d,"m\otimes m"] \\
  A\otimes_\El A\ar[r,"R"] & A\otimes_\El A
  \end{tikzcd}   
\end{equation}
Thus as is the case for classical Borherds products, we have the following way to produce new factorisation bialgebras:

\begin{prop}
  A Borcherds-twisted factorisation coalgebra $A_R$ forms a braided cocommutative bialgebra in $\textup{Mod-}A$ with respect to any factorisation Borcherds product $m$, where $\textup{Mod-}A$ has braided factorisation structure induced by spectral $R$-matrix $\sigma(R)R^{-1}$. 
\end{prop}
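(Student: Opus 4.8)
The plan is to transcribe the proof of Proposition \ref{prop:BorcherdsBialgebra} into the factorisation setting, with ordinary tensor products replaced by the factorisation products $\otimes_\El$ and $\otimes_\El'$ and with all identities read as equalities of sections of pullbacks of $\El$ to the relevant correspondence spaces. The two inputs are already in hand: the factorisation excess intersection diagram (\ref{fig:FactorisationExcessIntersectionDiagram}), which is the defining property of a factorisation Borcherds product $m$, and the commuting square (\ref{eqn:SpectralRMatrixTwisted}) expressing the $A$-linearity of $m$.

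First I would glue (\ref{eqn:SpectralRMatrixTwisted}) onto the bottom edge of (\ref{fig:FactorisationExcessIntersectionDiagram}), matching the $m\otimes m$ at the foot of the excess diagram with the $m\otimes m$ at the head of (\ref{eqn:SpectralRMatrixTwisted}). Unwinding $\Delta_R=R\cdot\Delta$, the automorphism $R_{14}^{-1}\sigma(R_{23})^{-1}$ appearing in (\ref{fig:FactorisationExcessIntersectionDiagram}) combines with the factor $R_{13}R_{14}R_{23}R_{24}$ coming from (\ref{eqn:SpectralRMatrixTwisted}) and simplifies, leaving precisely the morphism $R_{23}\sigma(R_{23})^{-1}$ induced by the braiding $\sigma(R)R^{-1}$ on $\textup{Mod-}A$; this is exactly the bookkeeping carried out in the proof of Proposition \ref{prop:BorcherdsBialgebra}. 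The resulting composite diagram is the bialgebra compatibility asserting that $\Delta_R$ intertwines the factorisation product $m$ with $m\otimes m$ up to this braiding, so $(A,m,\Delta_R)$ is a bialgebra inside $\textup{Mod-}A$. Unit compatibility is inherited from $A$ via $(\epsilon\otimes_\El\epsilon)(R)=1$, exactly as in Lemma-Definition \ref{lemdefn:BorcherdsTwist}.

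Braided cocommutativity of $A_R$ is then obtained by the same computation as (\ref{eqn:AlmostCommTwist}) in the classical case: applying the braiding $\sigma$ to $\Delta_R=R\cdot\Delta$ and invoking almost cocommutativity (\ref{eqn:SpectralRMatrixAlmostCommutativity}) of the spectral $R$-matrix produces an identity relating $\sigma(\Delta_R)$ to $\Delta_R$ by conjugation by $R$, which is precisely the statement that $\Delta_R$ is cocommutative for the braiding $\sigma(R)R^{-1}$ on $\textup{Mod-}A$.

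The main obstacle is not conceptual but is the factorisation bookkeeping: every morphism above must be interpreted over an appropriate iterated correspondence space, and one must check that (\ref{fig:FactorisationExcessIntersectionDiagram}) and (\ref{eqn:SpectralRMatrixTwisted}) are formulated over matching correspondences before they can be sewn together. This is guaranteed by the fact that $C'\to Y$ and $C'\to Y\times Y$ are maps of classical factorisation spaces, by Lemmas \ref{lem:C'IsFactSpace} and \ref{lem:C'ToYxYIsFact}, together with the compatibility isomorphism $\gamma$ of (\ref{fig:OrdBraid}) and the associativity data for $C$ and $C'$; once these identifications are made the diagram chase is identical to that of Proposition \ref{prop:BorcherdsBialgebra}.
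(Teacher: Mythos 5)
Your proposal is correct and follows essentially the same route as the paper, which likewise proves the statement by sewing the factorisation excess intersection diagram (\ref{fig:FactorisationExcessIntersectionDiagram}) to the square (\ref{eqn:SpectralRMatrixTwisted}) coming from $A$-linearity, exactly as in the classical Proposition \ref{prop:BorcherdsBialgebra}. Your additional remarks on matching the correspondence spaces via Lemmas \ref{lem:C'IsFactSpace} and \ref{lem:C'ToYxYIsFact}, and your explicit treatment of braided cocommutativity via the analogue of (\ref{eqn:AlmostCommTwist}), only spell out details the paper leaves implicit.
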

\begin{proof}
 Proceeds exactly as in the proof of Proposition \ref{prop:BorcherdsBialgebra} by sewing (\ref{fig:FactorisationExcessIntersectionDiagram}) to the diagram resulting from (\ref{eqn:SpectralRMatrixTwisted}).
\end{proof}

One can write down analogues of the above results for op factorisation quantum groups, i.e. factorisation bialgebras equipped with a spectral op-$R$-matrix. This is the structure appearing in \cite{La,Li} in the compatiblity between Joyce vertex algebras and cohomological Hall algebras, which gives a Borcherds coproduct.

\newpage

\section{Examples} \label{sec:Examples}

\noindent In this section we give examples of factorisation spaces, categories, and vertex quantum groups.

For motivation we first give examples of quantum groups; for more see for instance \cite{Maj,ES}. The prototypical example of a quantum group is the Drinfeld-Jimbo quantum group $U_q(\gk)$ attached to $\gk$ is a finite-dimensional simple Lie algebra over the complex numbers. But for a simpler example, consider the category $\Vect_{\Zb/2}$ of $\Zb/2$-graded vector spaces, with its \textit{sign} symmetric braiding 
$$ V\otimes W\ \to\ W\otimes V\hspace{15mm} v\otimes w\ \mapsto \ (-1)^{|v|\cdot |w|}w\otimes v$$
for $v,w$ homogeneous vectors. Note that this is the category of modules for the quantum group $A=\Ol(\Zb/2)$, where for any module the idempotents $e_0,e_1\in A$ project onto the even and odd parts respectively, and $e_0+e_1=1$. This braiding is induced by the $R$-matrix 
$$R\ =\ (e_0-e_1)\otimes (e_0-e_1).$$
 More generally, for a discrete group $G$, an $R$-matrix on $A=\Ol(G)$ is determined by a function $R : G\times G \to k.$ The hexagon and unit conditions say that this is multiplicative in both variables, and almost cocommutativty says that for any function $f\in \Ol(G)$,
$$R(g_1,g_2)f(g_1g_2)\ =\ f(g_2g_1)R(g_1,g_2),$$
or in other words that $R(g_1,g_2)=0$ unless $g_1,g_2$ commute.

We can also consider the dual example $A=\Ol(G)^*$, which for any algebraic group $G$ forms a cocommutative bialgebra with coproduct induced by multiplication of functions, and hence is a quantum group. Likewise $U(\gk)$ for any Lie algebra $
\gk$. This is the origin of the term ``quantum group''.

\subsection{Factorisation spaces and categories}
\subsubsection{Groups} If $G$ is an algebraic group, then it has factorisation space structure
\begin{center}
\begin{tikzcd}[row sep = {30pt,between origins}, column sep = {45pt,between origins}]
 &G\times G\ar[ld,equals]\ar[rd,"m"] & \\ 
G\times G & & G
\end{tikzcd}
\end{center}
More generally the same is true for any monoid object in $\PreStk$, for instance the Ran space with its $*$-factorisation structure. If $G$ is commutative then the factorisation structure is commutative. 

A strong classical factorisation category over $G$ is then the same as a \textit{multiplicative} classical sheaf of categories $\Cl$, i.e. a classical sheaf of categories with an equivalence
$$\tau\ :\ \Cl\boxtimes \Cl\ \simeq \  m^*\Cl$$
satisfying the cocycle condition. Thus we have equivalences of categories
$$\tau_{g_1,g_2}\ :\ \Cl_{g_1}\otimes \Cl_{g_2}\ \simeq\ \Cl_{g_1\cdot g_2}$$
for all $g_1,g_2\in G$. If $G$ is commutative, we can ask that $\tau_{g_1,g_2}$ and $\tau_{g_2,g_1}$ induce the same equivalence, in which case this is a symmetric factorisation category.

A factorisation algebra in $\Cl$ is then a section $A\in \Gamma(G,\Cl)$, with an associative multiplication map 
$$m_A\ :\ \tau(A\boxtimes A)\ \to\ m^*A$$
and a compatible unit $1_A\in \Gamma(G,A)$. Over fibres this gives maps $m_{A,g_1,g_2} :\tau_{g_1,g_2}(A_{g_1}\otimes A_{g_2}) \to A_{g_1\cdot g_2}$ in the category $\Cl_{g_1g_2}$, satisfying the cocycle condition. Let now $G$ be commutative and $A$ have an additional compatible factorisation coalgebra structure $\Delta$. In this setting, a spectral $R$-matrix is an invertible section  
$$R\ \in\ \Gamma(G\times G,\tau(A\boxtimes A))\ \simeq \ \Gamma(G\times G,A\boxtimes A)$$
satisfying conditions, in particular on fibres we have a collection of $R_{g_1,g_2}\in A_{g_1}\otimes A_{g_2}$. The hexagon relations say that this is multiplicative in both variables
$$R(g_1,g_{2}\cdot g_{3})\ =\ R(g_1,g_2)R(g_1,g_3), \hspace{15mm} R(g_{1}\cdot g_2,g_3)\ =\ R(g_1,g_3)R(g_2,g_3),$$ 
 compatible with the unit as $R(1,g)=R(g,1)=1$, and making the coproduct of $A$ almost cocommutative, which on fibres says
 $$\Delta_{g_2,g_1}\ =\ R_{g_1,g_2}\Delta_{g_1,g_2}R_{g_1,g_2}.$$

\subsubsection{Ran spaces} The Ran space and associative Ran space of a prestack $X$ were introduced in section \ref{sec:RanSpace}, and we will give a braided version here. 

To begin, recall that a (\textit{$\varnothing$, braided, symmetric}) \textit{operad} valued in background symmetric monoidal category $\Cl$ is a monoid object in $\Fun((-)^{\textup{op}},\Cl)$, for one of the following respective domain categories:
\begin{itemize}
 \item the category $\Nb$ of finite sets with no nontrivial morphisms, 
 \item the \textit{braid} category $\Bb$ of finite sets with morphisms a braid group's $\Bk_n$ worth of automorphisms acting on each object, 
 \item the category $\Pb$ of finite sets with bijections. 
\end{itemize}
Thus, by analogy we finish our trilogy of definitions by saying the \textit{braided Ran space} $\Ran_{br}X$ is the colimit 
\begin{center}
\begin{tikzcd}[row sep = 20pt, column sep = 20pt]
  \Ran_{\textup{br}} X\ =\ \colim_{I \in \FinSet^{\textup{br}}}X^I\ =\ \colim\bigg( X\ar[r,std] & X^2\ar[r,std,"\Bk_{3,2}"]\arrow[loop, distance=2em, in=50, out=130, looseness=5,"\Bk_2"] & X^3  \ar[r,"\Bk_{4,3}",std]\arrow[loop, distance=2em, in=50, out=130, looseness=5,"\Bk_3"] & \cdots\ \ \bigg)
\end{tikzcd}
\end{center} 
where $\FinSet^{\textup{br}}$ is the category of nonempty finite sets and surjective \textit{braided maps} them, and we write $\Bk_{n,m}$ for the set of braided surjections $[n]\to [m]$.  Here a braided map $I\to J$ is a homotopy class of non-intersecting paths in the solid cylinder, where the paths start and end on fixed finite subsets of size $|I|$ and $|J|$ on opposite faces.
 For instance, a braided map $[4]\to [2]$ is 
\begin{center}
  \begin{tikzpicture}
    \draw[->, shorten >=5pt, shorten <=5pt] (0,2*0.7) -- (2,2*0.7);
    \draw[line width = 5pt, white, shorten >=5pt, shorten <=5pt] (0,3*0.7) -- (2,1*0.7);
    \draw[->, shorten >=5pt, shorten <=5pt] (0,3*0.7) -- (2,1*0.7);
    \draw[line width = 5pt, white, shorten >=5pt, shorten <=5pt] (0,1*0.7) -- (2,2*0.7);
    \draw[->, shorten >=5pt, shorten <=5pt] (0,1*0.7) -- (2,2*0.7);
    \draw[->, shorten >=5pt, shorten <=5pt] (0,0*0.7) -- (2,1*0.7);

    \foreach \x in {0,1,2,3} {
      \fill (0,\x*0.7) circle (2pt);
    }
    
    \foreach \x in {1,2} {
      \fill (2,\x*0.7) circle (2pt);
    }
  \end{tikzpicture}
\end{center}
The braided Ran space has two braided factorisation space structures as in Definition \ref{defn:RanFactStructure}, the $*$ and the chiral. The chiral factorisation structure is given by union of disjoint subsets as before, and the braiding 
\begin{center}
  \begin{tikzcd}[row sep = {30pt,between origins}, column sep = {85pt, between origins}]
    &(\Ran_{\textup{br}}X\times \Ran_{\textup{br}}X)_\circ\ar[rdd, bend left = 30]\ar[ldd, bend right = 30]\ar[d,swap,"\wr","\tilde{\sigma}"'] & \\[5pt]
    &(\Ran_{\textup{br}}X\times \Ran_{\textup{br}}X)_\circ\ar[rd]\ar[ld] & \\
    \Ran_{\textup{br}}X\times \Ran_{\textup{br}}X & & \Ran_{\textup{br}}X
  \end{tikzcd}
  \end{center}
  where $\tilde{\sigma}$ swaps both factors, and the two-morphism making the right triangle commute is induced by the following braided automorphism of $I_1\sqcup I_2$, for all nonempty finite sets $I_1,I_2$:
\begin{center}
  \begin{tikzpicture} 
    \draw[line width = 5pt, white, shorten >=5pt, shorten <=5pt] (0,3*0.7) -- (2,1*0.7);
    \draw[->, shorten >=5pt, shorten <=5pt] (0, 0) -- (2, 0.9*0.8);
    \draw[->, shorten >=5pt, shorten <=5pt] (0, 0.16) -- (2, 1.1*0.8);
    \draw[->, shorten >=5pt, shorten <=5pt] (0, 0.32) -- (2, 1.3*0.8);
    \draw[->, shorten >=5pt, shorten <=5pt] (0, 0.48) -- (2, 1.5*0.8);

    \draw[line width=5pt, white, shorten >=5pt, shorten <=5pt] (0, 1.1*0.8) -- (2, 0);
    \draw[line width=5pt, white, shorten >=5pt, shorten <=5pt] (0, 1.3*0.8) -- (2, 0.16);
    \draw[line width=5pt, white, shorten >=5pt, shorten <=5pt] (0, 1.5*0.8) -- (2, 0.32);
    
    \draw[->, shorten >=5pt, shorten <=5pt] (0, 1.1*0.8) -- (2, 0);
    \draw[->, shorten >=5pt, shorten <=5pt] (0, 1.3*0.8) -- (2, 0.16);
    \draw[->, shorten >=5pt, shorten <=5pt] (0, 1.5*0.8) -- (2, 0.32);

    \foreach \x in {0,0.2,0.4,0.6,1.1,1.3,1.5} {
      \fill (0,\x*0.8) circle (2pt);
    }
    
    \foreach \x in {0,0.2,0.4,0.9,1.1,1.3,1.5} {
      \fill (2,\x*0.8) circle (2pt);
    }

\node[left] at (0, 0.3*0.8) {$I_1$};
\node[left] at (0, 1.3*0.8) {$I_2$};

\node[right] at (2, 1.2*0.8) {$I_1$};
\node[right] at (2, 0.2*0.8) {$I_2$};
  \end{tikzpicture}
\end{center}
As in \cite[3.1.2]{Ta}, there is a second definition when $X$ is a scheme as in section \ref{sec:RanSpace}, by defining $\Ran_{\textup{br}} X(T)$ for any finite type affine scheme $T$ as the $1$-groupoid of ordered finite subsets of $X(T)$ and braided automorphisms. We expect these two definitions are equivalent.

\subsubsection{Ravioli BD Grassmannian} If $X$ is a scheme and $G$ an algebraic group, the \textit{Beilinson Drinfeld Grassmannian} $\Gr_{G,X}$ is a commutative factorisation space over $\Ran^{ch}X$. It parametrises $G$ bundles with trivialisation away from a specified finite subset of $X$: its $T$-points are
$$\Gr_{G,X}(T)\ =\ \{(P,\varphi,S)\ :\ P\in \Bun_G X_T,\ S\in \Ran X(T),\ \varphi\ :\ P\vert_{X_T\setminus S} \simeq\triv\}$$
for a scheme $T$. The factorisation structure is inherited from $\Ran X$.

One may define a variant, the \textit{ravioli Beilinson Drinfeld Grassmannian} $\tilde{\Gr}_{G,X}$, parametrising pairs of $G$ bundles with trivialisations away from \textit{the same} finite subset of $X$:
$$\tilde{\Gr}_{G,X}(T)\ =\ \{(P_0,P_1,\varphi_0,\varphi_1,S)\ :\ P_i\in \Bun_G X_T,\ S\in \Ran X(T),\ \varphi_i\ :\ P_i\vert_{X_T\setminus S} \simeq\triv\}.$$
Again this has the \textit{chiral} factorisation structure inherited from $\Ran X$. However, it has an additional compatible \textit{ravioli} factorisation structure, given by the correspondence
\begin{center}
\begin{tikzcd}[row sep = 10pt, column sep = 15pt]
 & \tilde{\tilde{\Gr}}_{G,X}\ar[rd]\ar[ld] & \\ 
\tilde{\Gr}_{G,X}\times\tilde{\Gr}_{G,X}& &\tilde{\Gr}_{G,X} 
\end{tikzcd}
\end{center}
parametrising triples of $G$ bundles with trivialisation away from the same finite subset of $X$. When $X$ is a curve, the fibre of this correspondence over a point $x\in X$ is the classical correspondence structure on the equivariant affine Grassmannian $G(\Ol)\backslash \Gr_G$.

\subsubsection{Quasicoherent sheaves} Since for any map of prestacks $f:Y\to X$ we have 
$$f^*\QCoh_X\ =\ \QCoh_Y$$
and $\QCoh_Y$ is the unit in $\ShvCat(Y)$, it follows that it has a canonical structure of a factorisation $\Eb_n$-category whenever $X$ is a factorisation $\Eb_n$-space. It follows that for any $\Eb_n$-category $\Al$, the pullback along $p:Y\to \pt$
$$\Al_Y\ =\ p^*\Al$$
inherits the so-called \textit{constant} structure of a factorisation $\Eb_n$-category. For instance, over every point $c$ of the correspondence $C$ defining a factorisation $\Eb_1$ structure on $Y$, we have a monoidal structure 
$$\Al\otimes\Al\ \simeq\ \Al_{Y,y_1}\otimes \Al_{Y,y_2}\ \stackrel{\otimes_{\Al_Y,c}}{\to}\ \Al_{Y,y}\ \simeq\ \Al.$$

\subsection{Vertex quantum groups} 

In this section we give examples of vertex quantum groups. The non-vertex coproduct will mostly be cocommutative, and we will leave the question of deforming them to future work. However, in the following section we will give explicit examples of non-cocommutative vertex quantum groups.

\subsubsection{Jet groups} If $X$ is a scheme over field $k$, we may take its \textit{jet space} $J_\infty X$, an ind-scheme whose $R$-points for algebra $R$ are given by
$$J_\infty X(R)\ =\ \Maps(D,X),$$
 the scheme maps from the formal disk $D=\Spec R[[t]]$ into $X$. See for instance \cite{KV}. The $\Ga$ action on $D$ induces the \textit{loop rotation} action on the jet space:
$$a_X \ :\ \Ga\times J_\infty X\ \to\ J_\infty X.$$
The assignment $X\mapsto (J_\infty X, a_X)$ gives a symmetric monoidal functor from schemes to ind-schemes equipped with a $\Ga$ action. 
Thus when $G$ is an algebraic group, the \textit{jet group} $J_\infty G$ inherits the structure of a group with compatible diagonal comonoid stucture, with $\Ga$ action compatible with both structures. It follows that $\Ol(J_\infty G)$ together with the vector field $\partial_t$ induced by the $\Ga$ action is a bialgebra with biderivation, i.e. a holomorphic vertex bialgebra. Note that we in fact inherit from $D$ the action of the positive half $k\{t^n\partial_t:n\ge 0\}$ of the Witt Lie algebra.

If $G$ is abelian, this is cocommutative and so $\Ol(J_\infty G)$ defines a holomorphic vertex quantum group.

\subsubsection{Universal enveloping vertex algebras} We recall the universal enveloping algebra construction due to Beilinson and Drinfeld, following \cite[$\S$ 12]{Bu}. 

For any variety $X$ is a left adjoint
$$U^{ch}(-)\ :\ \textup{Lie}_{\Delta}(\Dl\Md_{\Ran X},\otimes^*)\ \leftrightarrows\ \Eb_\infty \CoAg^{\textup{st}}(\Dl\Md_{\Ran X},\otimes^{ch})\ :\ \oblv$$
where the left hand side is the category of $\otimes^*$-Lie algebras supported on the main diagonal $X\subseteq \Ran X$. The functor 
$$\oblv \ :\ A\ \mapsto \ \textup{KD}(A)\vert_{\otimes^*}$$
first applies chiral Koszul duality of \cite{FG} to give a $\otimes^{ch}$-Lie algebra, then restricts along $\otimes^*\to \otimes^{ch}$ to give a $\otimes^*$-Lie algebra. 

Now we recall in a long-winded but convenient way why the universal enveloping algebra $U(\gk)$ of a Lie algebra is a bialgebra. We use the adjunction 
$$\Hom_{\textup{Lie}}(\gk,A)\ =\ \Hom_{\textup{Alg}}(U(\gk),A)$$
three times. The first to $A=U(\gk)$ gives a map of Lie algebras $\gk\to U(\gk)$. In particular we get a map of Lie algebras
\begin{equation}\label{eqn:Coproduct}
  \gk\ \to\ U(\gk)\otimes U(\gk)\hspace{15mm} x\ \mapsto \ x\otimes 1+1\otimes x
\end{equation}
and setting $A=U(\gk)\otimes U(\gk)$ gives a comultiplication compatible with the product. Cocommutativity is inherited from (\ref{eqn:Coproduct}). Thirdly, we set $A=0$ to get the counit. 
\begin{prop}
  $U^{ch}(L)$ is a commutative cocommutative factorisation bialgebra in $(\Dl\Md_{\Ran X},\otimes^{ch},\otimes)$, and its $\otimes^{ch}$-algebra structure is strong.  
\end{prop}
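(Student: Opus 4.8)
The plan is to prove the final Proposition by transporting the three-fold adjunction argument just given for ordinary Lie algebras into the factorisation setting, using the functor $U^{ch}$ in place of $U$. The statement has two parts: that $U^{ch}(L)$ is a commutative cocommutative factorisation bialgebra, and that its $\otimes^{ch}$-algebra structure is strong. I would handle the algebra structure and its strength first, then build the coalgebra structure by adjunction, and finally verify commutativity and cocommutativity.

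\textbf{Algebra structure and strength.} First I would note that $U^{ch}(L)$ lands in $\Eb_\infty\CoAg^{\textup{st}}(\Dl\Md_{\Ran X},\otimes^{ch})$ by construction, so it is \emph{a priori} a strong commutative factorisation coalgebra with respect to $\otimes^{ch}$. To get what the Proposition calls the $\otimes^{ch}$-algebra structure, I would invoke chiral Koszul duality of \cite{FG}: the left adjoint $U^{ch}$ is defined so that $\oblv$ first applies $\textup{KD}$ and then restricts along $\otimes^*\to\otimes^{ch}$, and the commutative factorisation \emph{algebra} structure on $U^{ch}(L)$ with respect to $\otimes^{ch}$ is exactly the Koszul-dual incarnation of the fact that $U^{ch}(L)$ is the enveloping object of a Lie algebra. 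Strength of the $\otimes^{ch}$-algebra structure is the assertion that the structure maps are equivalences on the disjoint locus; this is a standard property of chiral enveloping algebras (it is the factorisation analogue of the PBW statement that $U(\gk)$ is free as a coalgebra), and I would cite it from \cite{FG} or \cite{Bu} rather than reprove it.

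\textbf{Coproduct by adjunction.} The heart of the argument mirrors the displayed Lie-algebra computation. Writing the adjunction $\Hom_{\textup{Lie}}(L,A)=\Hom_{\textup{Alg}}(U^{ch}(L),A)$ inside $(\Dl\Md_{\Ran X},\otimes^{ch})$, I would apply it three times. Applying it to $A=U^{ch}(L)$ gives the unit map $L\to U^{ch}(L)$. Then, exactly as in \eqref{eqn:Coproduct}, I would produce the map of $\otimes^*$-Lie algebras
$$
L\ \to\ U^{ch}(L)\otimes^{ch} U^{ch}(L), \qquad x\ \mapsto\ x\otimes^{ch} 1 + 1\otimes^{ch} x,
$$
using that $U^{ch}(L)\otimes^{ch}U^{ch}(L)$ is again a commutative factorisation algebra (so carries a Lie structure via the commutator) and that the unit of $L$ into each factor is a Lie map. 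Applying the adjunction with $A=U^{ch}(L)\otimes^{ch}U^{ch}(L)$ then yields the factorisation coproduct $\Delta:U^{ch}(L)\to U^{ch}(L)\otimes^{ch}U^{ch}(L)$, compatible with the product because it is an algebra map by construction. Setting $A$ equal to the unit object gives the counit. Cocommutativity is inherited from the manifest symmetry $x\otimes^{ch}1+1\otimes^{ch}x$ of the map into the tensor square, just as in the classical case, and commutativity of the product is inherited from the commutative (indeed $\Eb_\infty$) structure on $U^{ch}(L)$.

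\textbf{The main obstacle.} The delicate point, and the one I would spend the most care on, is checking that the two factorisation structures $\otimes^{ch}$ (for the algebra/coalgebra operations) and $\otimes$ (the diagonal structure, against which the \emph{bialgebra} compatibility in section \ref{sec:FactBiAlg} is measured) interact correctly, so that the coproduct $\Delta$ is genuinely a map of factorisation algebras and the resulting object is a bialgebra in the sense of a factorisation $\Eb_{n,m}$-structure rather than merely an algebra equipped with an unrelated coalgebra map. Concretely, I must verify that the adjunction-produced $\Delta$ respects the braiding \eqref{fig:OrdBraid} relating $\otimes^{ch}$ and $\otimes$ on $\Ran X$; this is where the strength hypothesis on $\otimes^{ch}$ is essential, since it lets me identify $U^{ch}(L)\otimes^{ch}U^{ch}(L)$ on the disjoint locus and thereby compare the two structures fibrewise. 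I expect this compatibility to follow formally once the Koszul-duality description of the algebra structure is in hand, but it is the step that requires genuine justification rather than citation, and I would present it as the compatibility of $U^{ch}$ with the symmetric monoidal functor $(-)\mapsto((-),\Delta_{\text{diag}})$ of Proposition \ref{prop:BiAlgFactSpace}.
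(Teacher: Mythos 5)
Your construction of the coproduct targets the wrong monoidal structure, and this is a genuine gap rather than a notational slip. The Proposition asserts that $U^{ch}(L)$ is a bialgebra in $(\Dl\Md_{\Ran X},\otimes^{ch},\otimes)$: the \emph{algebra} structure is with respect to $\otimes^{ch}$ and the \emph{coalgebra} structure is with respect to the diagonal structure $\otimes$. You instead apply the adjunction to $A=U^{ch}(L)\otimes^{ch}U^{ch}(L)$ and produce a map $\Delta:U^{ch}(L)\to U^{ch}(L)\otimes^{ch}U^{ch}(L)$. But a map of that shape is already part of the given data --- it is (essentially) the strong factorisation coproduct that $U^{ch}(L)$ carries by construction as an object of $\Eb_\infty\CoAg^{\textup{st}}$ --- and it carries none of the new content of the Proposition. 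The coproduct that makes $U^{ch}(L)$ a vertex bialgebra is the one valued in the diagonal tensor square $U^{ch}(L)\otimes U^{ch}(L)$; this is what restricts over the main diagonal to an honest coproduct $V\to V\otimes V$ of the underlying object (compare the affine vertex algebra example, where $\Delta(\alpha(z))=\alpha(z)\otimes\id+\id\otimes\alpha(z)$ lives in the ordinary, not chiral, tensor square). Your closing paragraph correctly senses that the interaction of $\otimes^{ch}$ and $\otimes$ is the delicate point, but your construction never actually brings $\otimes$ into play, so the compatibility you propose to check cannot rescue the argument.

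The missing step, which is the actual content of the paper's proof, is the observation that the diagonal tensor square $A\otimes A$ of a strong factorisation $\otimes^{ch}$-algebra $A$ is again a strong factorisation $\otimes^{ch}$-algebra, via the strong braiding
$$(A\otimes A)\otimes^{ch}(A\otimes A)\ \simeq\ (A\otimes^{ch}A)\otimes(A\otimes^{ch}A)\ \stackrel{\sim}{\to}\ p^*A\otimes p^*A\ \simeq\ p^*(A\otimes A).$$
Once $U^{ch}(L)\otimes U^{ch}(L)$ is known to lie in $\Eb_\infty\CoAg^{\textup{st}}(\Dl\Md_{\Ran X},\otimes^{ch})$, the adjunction $\Hom_{\textup{Lie}_\Delta}(L,A)=\Hom_{\Eb_\infty\CoAg^{\textup{st}}}(U^{ch}(L),A)$ may be applied with $A=0$, $A=U^{ch}(L)$ and $A=U^{ch}(L)\otimes U^{ch}(L)$, exactly mirroring the classical three-fold argument for $U(\gk)$; the resulting $\Delta$ is automatically a map of $\otimes^{ch}$-algebras (so the bialgebra compatibility is free), and cocommutativity is inherited from the symmetry of $x\mapsto x\otimes 1+1\otimes x$. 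Your treatment of the $\otimes^{ch}$-algebra structure, its strength, and the commutativity is fine; it is only the coproduct half of the argument that needs to be redone with the diagonal tensor product as the target.
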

\begin{proof}
  Note that if $A$ is a (strong) factorisation $\otimes^{ch}$-algebra, then so is $A\otimes A$. Indeed, there is a strong braiding between $\otimes$ and $\otimes^{ch}$, and so given any factorisation algebra structure $m:A\otimes^{ch}A \stackrel{\sim}{\to} p^*A$, we get 
$$(A\otimes A)\otimes^{ch}(A\otimes A)\ \simeq\ (A\otimes^{ch} A)\otimes (A\otimes^{ch} A)\ \stackrel{\sim}{\to}\ p^*A\otimes p^*A\ \simeq\ p^*(A\otimes A),$$
defining a strong factorisation algebra structure. In particular, the tensor product $U^{ch}(L)\otimes U^{ch}(L)$ is also a strong commutative factorisation coalgebra. We can then use the adjunction  
$$\Hom_{\textup{Lie}_{\Delta}(\Dl\Md_{\Ran X},\otimes^*)}(L,A)\ =\  \Hom_{\Eb_\infty \CoAg^{\textup{st}}(\Dl\Md_{\Ran X},\otimes^{ch})}(U^{ch}(L),A)$$
for $A=0,U^{ch}(L),U^{ch}(L)\otimes U^{ch}(L)$ and apply exactly the same argument as for $U(\gk)$ to give the result.  
\end{proof}

Note that translation equivariant $\otimes^*$-Lie algebras on $X=\Ab^1$ have a explicit equivalent definition as \textit{vertex Lie algebras}, see \cite[10.5.4]{Bu} for a proof. They are called \textit{Lie conformal algebras} by \cite{KDS}, in which they also define ``non-linear'' examples, which as proven in \cite{KL} include the $\Wl_{1+\infty}$ vertex algebras. One might expect that universal enveloping vertex algebras of these nonlinear Lie conformal algebras are also vertex quantum groups. 

For instance, recall the affine vertex algebra  $V_\kappa(\gk)$ attached to a finite dimensional Lie algebra $\gk$ and ad-invariant bilinear form $\kappa$. It is a chiral universal envelope of $L=\gk_\Dl$ as in \cite[10.5.7]{Bu}, and is strongly generated by fields $\alpha(z)$ as $\alpha$ varies over a basis of $\gk$. The coproduct $\Delta$ is thus uniquely determined by specifying 
$$\Delta(\alpha(z))\ =\ \alpha(z)\otimes\id\ +\ \id\otimes\alpha(z)$$
making $V_\kappa(\gk)$ into a cocommutative vertex bialgebra. Choosing $\kappa=0$ returns the jet group construction $\Ol(J_\infty \gk^*)$ where $\gk^*$ is viewed as a group under addition.

\subsubsection{Nonsymmetric lattice vertex algebras} We may get more examples by taking Borcherds twists as in section \ref{sec:BorcherdsTwists}. Let $(\Lambda,\kappa)$ be a lattice with integral bilinear form, which we do not assume is symmetric. Then writing $\hk=\Lambda\otimes_\Zb k$ for the vector space on $\Lambda$, we set 
$$A\ =\ k[\Lambda]\otimes \Ol(J_\infty \hk )$$
with its natural cocommutative bialgebra structure. Extending the derivation to the first factor by $T(e^\lambda) =\lambda_{-1}e^\lambda$ makes it into a cocommutative holomorphic vertex quantum group.\footnote{One would like to say that when $k$ is the complex numbers, $A$ is functions on the loop space of $\Cb^n/\Lambda\simeq(\hk)/\Lambda$, or rather the algebraic torus having this as a maximal compact, which would thus automatically inherit this algebraic structure, but such a notion of loop space does not exist currently. The data $(\kappa,\epsilon)$ below should presumably parametrise its central extensions.} To define a vertex $R$-matrix, we begin with a bicharacter
$$k[\Lambda]\otimes k[\Lambda]\ \to \ k(z)\hspace{15mm}e^\lambda\otimes e^\mu\ \mapsto\ \epsilon_{\lambda,\mu} z^{\kappa(\lambda,\mu)}$$
where $\epsilon_{\lambda,\mu}$ are choices of signs as in \cite[$\S$ 5.4.2]{FBZ}. Then taking \cite[Lem 2.15]{Bo2} applied to $M=k[\Lambda]$ and $H=k[\partial]$ we see that this extends uniquely to a vertex $R$-matrix 
$$R(z)\ :\ A\otimes A\ \to \ k(z).$$
It is symmetric in the sense of Proposition \ref{prop:SymmetricBorcherdsTwist} if and only if 
$$\epsilon_{\lambda,\mu}z^{\kappa(\lambda,\mu)}\ =\ \epsilon_{\mu,\lambda}(-z)^{\kappa(\mu,\lambda)}$$
which is equivalent to $\kappa$ being symmetric. The resulting Borcherds twist is called the \textit{lattice vertex algebra} $V_{(\Lambda,\kappa)}$, and when $\kappa$ is symmetric this is a commutative vertex algebra. This construction is due to Borcherds, and it inherits the cocommutative coproduct from $A$.\footnote{To be concrete, the coproducts on $k[\Lambda]$ and $\Ol(J_\infty\hk)$ are
$$\Delta(e^{\lambda(z)})\ =\ e^{\lambda(z)}\otimes e^{\lambda(z)}, \hspace{15mm} \Delta(\lambda(z))\ =\ \lambda(z)\otimes \id \ +\ \id\otimes\lambda(z).$$
or more explicitly, $\Delta(e^\alpha)\ =\ e^\alpha\otimes e^\alpha$ and $\Delta(\lambda_{1,-n_1}\cdots \lambda_{k,-n_k}|0\rangle)\ =\ \prod (\lambda_{i,-n_i}\otimes \id\ +\ \id\otimes \lambda_{i,-n_i})\, |0\rangle\otimes|0\rangle$.}

We may repeat this construction, replacing $k$ with any op quantum group $H$ with op-$R$-matrix $R$. Then we get a bicharacter 
$$H[\Lambda]\otimes H[\Lambda]\ \to\ k(z)\hspace{15mm} h_\lambda e^{\lambda}\otimes h_\mu e^{\mu}\ \mapsto\ R(h_\lambda,h_\mu)\epsilon_{\lambda,\mu}z^{\kappa(\lambda,\mu)}$$
which by the same argument extends to a vertex $R$-matrix on the $H$-linear vertex algebra $A=H[\Lambda]\otimes\Ol(J_\infty\hk_H)$, where $\hk_H=\hk\otimes_kH$.

\subsection{Moduli stacks}  \label{sec:ModuliSpaces}

For a dg category $\Al$, its \textit{moduli stack of objects} $\Ml=\Ml_\Al$ is the prestack defined by
$$\Ml(R)\ \defeq\ \Hom_{\dgCat}(\Al,\Perf R)$$
for any ring $R$. If $\Al$ has finite type, \cite{TV} showed that this is a derived stack which is a union of open $n$-geometric (in particular, $n$-Artin) substacks which are locally of finite presentation. In particular, we can apply the theory \cite{LZ} of $\ell$-adic sheaves and hence define its (co)homology. We will from now on assume this. 

\begin{lem}
  $\Ml$ is a monoid object with an action of $\Perf=\Ml_{\Vect}$. 
\end{lem}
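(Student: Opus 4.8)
The plan is to obtain both structures on $\Ml=\Ml_\Al$ from structures already present on the target $\Perf R$, transported along the functor $\Hom_{\dgCat}(\Al,-)$, which preserves limits and in particular finite products. First I would record the two symmetric monoidal structures carried by $\Perf R$: the biproduct $(\Perf R,\oplus,0)$ coming from stability, and the tensor product $(\Perf R,\otimes,R)$. Both are natural in $R$, since for a ring map $R\to R'$ the pullback $\Perf R\to \Perf R'$ is exact and symmetric monoidal, hence preserves $\oplus$, $\otimes$, the zero and unit objects, and the distributivity constraints relating them. At the $\infty$-categorical level this is the statement that $R\mapsto \Perf R$ refines to a functor into commutative algebra objects of $\Cat_\infty$ landing in stable categories, equipped in addition with the canonical biproduct structure on any stable category; the two are linked by the exactness of $\otimes$ in each variable.

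Next I would use that $\Ml_\Al(R)=\Hom_{\dgCat}(\Al,\Perf R)$ and that, a symmetric monoidal structure being the same as a commutative monoid object of $(\dgCat,\times)$, the product-preserving functor $\Hom_{\dgCat}(\Al,-)$ carries commutative monoids of $(\dgCat,\times)$ to commutative monoids of $(\Spc,\times)$. Applying this to $(\Perf R,\oplus,0)$ shows $\Ml_\Al(R)$ is a commutative monoid in spaces, with multiplication $(F,G)\mapsto\bigl(a\mapsto F(a)\oplus G(a)\bigr)$ and unit the zero functor; naturality in $R$ then promotes this to a commutative monoid object $\Ml$ in $\PreStk$ with respect to the cartesian product (so in fact an $\Eb_\infty$-monoid, slightly more than asserted).

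For the action I would first recall $\Perf=\Ml_{\Vect}$: since $\Vect$ is the monoidal unit of $(\dgCat,\otimes)$, one has $\Hom_{\dgCat}(\Vect,\Perf R)\simeq \Perf(R)$, compatibly with the $\otimes$-monoid structures. Restriction along the terminal map $\Al\to \Vect$ then induces a map of $\otimes$-monoids $\Perf(R)\simeq\Hom_{\dgCat}(\Vect,\Perf R)\to \Hom_{\dgCat}(\Al,\Perf R)$, sending $P$ to the constant functor $c_P$, and composing $(c_P,F)\colon \Al\to \Perf R\times \Perf R$ with $\otimes$ defines
\[
  \Perf(R)\times \Ml_\Al(R)\ \to\ \Ml_\Al(R),\qquad (P,F)\ \longmapsto\ \bigl(a\mapsto P\otimes F(a)\bigr).
\]
Associativity and unitality of this action follow from those of $(\Perf R,\otimes,R)$, and naturality in $R$ makes it a morphism of prestacks $\Perf\times\Ml\to\Ml$ exhibiting $\Ml$ as a module over the $\otimes$-monoid $\Perf$.

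The one point requiring genuine care — and the main obstacle — is the compatibility between the two structures. Because $\otimes$ is exact, hence additive, in each variable, every endofunctor $P\otimes(-)$ preserves $\oplus$, so the $\Perf$-action is by monoid endomorphisms of $(\Ml,\oplus)$; equivalently, one must track the distributivity isomorphisms of the symmetric bimonoidal (rig) structure on $\Perf R$ coherently and show they are natural in $R$. This is exactly where the $\infty$-categorical refinement of $\Perf(-)$ is needed rather than a $\pi_0$-level argument, and it is this compatibility that the later factorisation and Borcherds-twist constructions exploit. That $\Ml_\Al$ is a derived stack — a union of locally finitely presented Artin substacks by \cite{TV} — plays no role beyond guaranteeing that the functor-of-points formulas above define genuine morphisms in $\PreStk$.
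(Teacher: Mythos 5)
Your construction of the monoid structure is sound and is essentially a target-side repackaging of the paper's one-line argument: the paper transports the direct sum through the (lax) symmetric monoidality of $\Al\mapsto\Ml_\Al$, while you observe directly that $(\Perf R,\oplus,0)$ is a commutative monoid in $(\dgCat,\times)$ and that $\Hom_{\dgCat}(\Al,-)$ preserves finite products; both land on the pointwise formula $(F,G)\mapsto(a\mapsto F(a)\oplus G(a))$, and your version spells out the naturality in $R$ more carefully than the paper does. Your closing remarks about the distributivity coherence and the irrelevance of the Artin-stack hypotheses are also correct.

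The gap is in your construction of the $\Perf$-action. There is no ``terminal map $\Al\to\Vect$'' in $\dgCat$ ($\Vect$ is the \emph{unit} of $(\dgCat,\otimes)$, not the terminal object, which is the zero category), and the ``constant functor'' $c_P:\Al\to\Perf R$ is not a dg-functor: on morphism complexes it would have to send every $f\in\Hom_\Al(a,b)$ to $\id_P$, which is not $k$-linear. Consequently the pair $(c_P,F):\Al\to\Perf R\times\Perf R$ is not a morphism in $\dgCat$ either, so the step producing the action by composing it with $\otimes$ fails as written. The repair is short and brings you back to the paper's route: either postcompose $F$ with the dg-endofunctor $P\otimes(-)$ of $\Perf R$, or use the \emph{external} tensor product $\Hom_{\dgCat}(\Vect,\Perf R)\times\Hom_{\dgCat}(\Al,\Perf R)\to\Hom_{\dgCat}(\Vect\otimes\Al,\Perf R)$ together with the unit equivalence $\Vect\otimes\Al\simeq\Al$ — this is exactly what the paper means by ``the action $\Vect\otimes\Al\to\Al$ gives the result'', and it produces your intended formula $(P,F)\mapsto(a\mapsto P\otimes F(a))$ without ever invoking a constant functor. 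With that substitution your argument, including the compatibility of the action with $\oplus$ via exactness of $P\otimes(-)$, goes through.
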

\begin{proof}
  The assignment $\Al\mapsto \Ml_\Al$ is symmetric monoidal and functorial in $\Al$. In particular, the direct sum map on $\Al$ gives $\Ml$ a monoid structure. Likewise, the direct sum and tensor structure on $\Vect$ gives $\Perf$ two compatible monoid structures, and the action $\Vect\otimes\Al\to \Al$ gives the result.
\end{proof}
Note that we have $\BGm\subseteq \Perf$ as group stacks, and writing $\Ht_\sbt(\BGm)\simeq k[\tau]$ as algebras, 

\begin{cor}
  $\Ht_\sbt(\Ml)$ is a cocommutative bialgebra with with biderivation $\tau_*$. 
\end{cor}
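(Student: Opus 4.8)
The plan is to transport the monoid-and-action structure on $\Ml$ supplied by the Lemma through the homology functor $\Ht_\sbt(-)$, and then to read off the biderivation from the action of the subgroup $\BGm\subseteq\Perf$.

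First I would record that $\Ml$ is canonically a commutative and cocommutative \emph{bimonoid} in $(\PreStk,\times)$: the Lemma provides the commutative monoid structure $\oplus:\Ml\times\Ml\to\Ml$, while the diagonal $\Delta_\Ml:\Ml\to\Ml\times\Ml$ furnishes the cocommutative comonoid structure that every object carries in a cartesian symmetric monoidal category. In such a category these two structures are automatically compatible, so $\oplus$ is a comonoid map and $\Delta_\Ml$ an algebra map, making $\Ml$ a bimonoid. Because $\Ml$ is a union of finite-type Artin stacks by \cite{TV}, the Künneth map is an isomorphism and $\Ht_\sbt(-)$ is symmetric monoidal on the relevant subcategory; applying it carries the bimonoid $\Ml$ to a bialgebra $\Ht_\sbt(\Ml)$, with product $m=\oplus_*$ (commutative) and coproduct $\Delta=\Delta_{\Ml,*}$ (cocommutative, since the diagonal is symmetric). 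This already gives the cocommutative bialgebra assertion.

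Next I would restrict the $\Perf$-action along $\BGm\hookrightarrow\Perf$ to obtain an action $\act:\BGm\times\Ml\to\Ml$, and use $\Ht_\sbt(\BGm)\simeq k[\tau]$ with $\tau$ the degree-two generator. The key structural input is that $\tau$ is \emph{primitive} for the coproduct on $\Ht_\sbt(\BGm)$ induced by the diagonal of $\BGm$: this coproduct is dual to the cup product on $\Ht^\sbt(\BGm)\simeq k[x]$, under which $\Delta(\tau)=\tau\otimes 1+1\otimes\tau$. I then set $\tau_*\defeq\act_*(\tau\otimes-):\Ht_\sbt(\Ml)\to\Ht_{\sbt+2}(\Ml)$ and verify the two halves of the biderivation condition, each from a compatibility of $\act$ combined with primitivity of $\tau$. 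The derivation property $\tau_*\circ m=m\circ(\tau_*\otimes\id+\id\otimes\tau_*)$ follows from distributivity of the action over direct sum, $L\otimes(M\oplus N)\simeq(L\otimes M)\oplus(L\otimes N)$; on homology this says $\act_*$ carries the product to the product after duplicating the $\BGm$-factor along its diagonal, so feeding in $\Delta(\tau)=\tau\otimes 1+1\otimes\tau$ (with $1\in\Ht_0(\BGm)$ acting as the identity) yields exactly the Leibniz rule. Dually, the coderivation property $\Delta\circ\tau_*=(\tau_*\otimes\id+\id\otimes\tau_*)\circ\Delta$ follows from the compatibility of $\act$ with the diagonal $\Delta_\Ml$, again using $\Delta(\tau)=\tau\otimes 1+1\otimes\tau$.

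The main obstacle I anticipate is the homological bookkeeping in the $\ell$-adic setting: checking that the Künneth isomorphism and the finiteness from \cite{TV,LZ} make $\Ht_\sbt$ genuinely symmetric monoidal on the stacks at hand, so that both the algebra and the coalgebra structures transport rather than only the algebra one, and making the primitive-generator computation for $\tau$ precise in $\ell$-adic homology of $\BGm$ with correct degree and sign conventions. Once $\Ht_\sbt$ is symmetric monoidal and $\tau$ is primitive, the two (co)derivation identities are formal consequences of the monoid, diagonal, and action axioms.
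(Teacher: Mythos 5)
Your proof is correct and follows the route the paper intends: the paper states this corollary without proof as an immediate consequence of the preceding Lemma, and your argument (transport the monoid and diagonal through the symmetric monoidal functor $\Ht_\sbt$, then extract $\tau_*$ from the restricted $\BGm$-action using primitivity of $\tau$ and distributivity of the action over $\oplus$) is exactly the intended one, with the Künneth and primitivity details usefully made explicit.
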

 
It follows that this is a (cocommutative) vertex quantum group. 

\subsubsection{$R$-matrices from chern classes} We can consider the forgetful map of dg-categories $\Ext_\Al\to \Al\otimes \Al$ where $\Ext_\Al$ classifies pairs of objects in $\Al$ and a map between them. Thus we get a map of prestacks 
$$\Ext\ \to\ \Ml\times\Ml$$
which satisfies 
\begin{lem}
  $\Ext$ is a multiplicative perfect complex with respect to the monoid structure on $\Ml\times\Ml$, and has weights $(-1,1)$ with respect to the action of $\Perf$ (or $\BGm$) on the two factors of $\Ml$. 
\end{lem}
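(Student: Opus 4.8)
\section*{Proof proposal}

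The plan is to identify $\Ext$, as a perfect complex on $\Ml\times\Ml$, with the universal mapping complex between the two tautological objects, and then deduce both assertions formally from the bi-additivity of this $\Homl$-functor together with the defining properties of the direct-sum map and the $\BGm$-action. First I would apply the symmetric monoidal functor $\Ml_{(-)}$ to the forgetful functor $\Ext_\Al\to\Al\otimes\Al$; using $\Ml_{\Al\otimes\Al}\simeq\Ml\times\Ml$ this recovers the map $\Ext\to\Ml\times\Ml$ and exhibits $\Ext$ as the space over $\Ml\times\Ml$ whose fibre over a pair $(M,N)$ is the mapping complex $\Homl_\Al(M,N)$. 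Writing $\El$ for the universal object on $\Ml$ and $\El_1=\mathrm{pr}_1^*\El$, $\El_2=\mathrm{pr}_2^*\El$ for its two pullbacks to $\Ml\times\Ml$, this identifies $\Ext$ with the total space of the perfect complex $\theta=\Homl_\Al(\El_1,\El_2)$. Perfectness of $\theta$ follows from the standing finiteness hypotheses on $\Al$, under which mapping complexes of perfect $\Al$-modules are again perfect.

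For multiplicativity I would use that the monoid map $a\colon\Ml\times\Ml\to\Ml$ is direct sum, so that $a^*\El\simeq\El_1\oplus\El_2$: pulling the tautological family back along $\oplus$ returns the direct sum of the two tautological families. Pulling $\theta$ back along $a\times\id\colon\Ml^3\to\Ml^2$ and using bi-additivity of $\Homl_\Al$ in the first variable then gives
$$(a\times\id)^*\theta\ =\ \Homl_\Al(\El_1\oplus\El_2,\El_3)\ \simeq\ \Homl_\Al(\El_1,\El_3)\oplus\Homl_\Al(\El_2,\El_3)\ =\ \theta_{13}\oplus\theta_{23},$$
and symmetrically $(\id\times a)^*\theta\simeq\theta_{12}\oplus\theta_{13}$ from additivity in the second variable. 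Compatibility with the unit is immediate, since $\Homl_\Al(0,-)=\Homl_\Al(-,0)=0$ forces $\theta|_{1\times\Ml}=\theta|_{\Ml\times 1}=0$; this is exactly the data of a multiplicative perfect complex in the sense of section \ref{sec:LinearRMatrices}.

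For the weights I would recall that the $\Perf$-action on $\Ml$ is tensoring objects by perfect complexes, so the restricted $\BGm$-action sends $M\mapsto\Ll\otimes M$ for $\Ll$ the weight-one tautological line on $\BGm$; on universal objects this reads $\El_i\mapsto\Ll_i\otimes\El_i$. Since $\Homl_\Al$ is contravariant (weight $-1$) in its first argument and covariant (weight $+1$) in its second, the $\BGm\times\BGm$-action on the two factors of $\Ml\times\Ml$ carries $\theta=\Homl_\Al(\El_1,\El_2)$ to $\Ll_1^\vee\otimes\Ll_2\otimes\theta$, which is precisely weight $(-1,1)$.

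The main obstacle is the first step: making precise the universal object $\El$ and the equivalence $a^*\El\simeq\El_1\oplus\El_2$. Because $\Ml_\Al$ parametrises dg-functors $\Al\to\Perf R$ rather than literal objects, one must unwind the Toën--Vaquié construction and the symmetric monoidal structure on $\Ml_{(-)}$ to see that the direct-sum monoid map genuinely carries the tautological family to the direct sum of the two tautological families, and to verify that the resulting $\Homl$-complex is perfect. Once this identification is in place the two displayed computations, and the weight calculation, are purely formal consequences of bi-additivity and contravariance of $\Homl_\Al$.
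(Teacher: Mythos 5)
The paper states this lemma without proof, so there is no argument of the author's to compare against; judged on its own terms your proposal is correct and is surely the intended argument. Identifying $\Ext$ with the total space of $\theta=\Homl_\Al(\El_1,\El_2)$ for $\El$ the tautological family, and then reading off multiplicativity from bi-additivity of $\Homl_\Al$ together with $a^*\El\simeq\El_1\oplus\El_2$, and the weights $(-1,1)$ from contravariance/covariance in the two arguments, checks exactly the three conditions in the paper's definition of a multiplicative perfect complex (the two pullback identities along $a\times\id$ and $\id\times a$, plus vanishing over the unit). You are also right to flag where the real content sits: one must know that $\Ml_{\Ext_\Al}\to\Ml_{\Al\otimes\Al}\simeq\Ml\times\Ml$ has fibres the mapping complexes, that the direct-sum monoid map pulls the tautological family back to $\El_1\oplus\El_2$, and that $\Homl_\Al(\El_1,\El_2)$ is perfect --- the last being exactly where the paper's standing finite-type hypothesis on $\Al$ (via To\"en--Vaqui\'e) is used. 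The only cosmetic caveat is that the paper's Joyce $R$-matrix formula suggests the complex called $\Ext$ may implicitly carry a shift or dual relative to $\Homl_\Al(\El_1,\El_2)$; this does not affect multiplicativity or the weights up to an overall sign convention, but is worth a sentence if you write this up.
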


In particular, we get the \textit{Joyce} vertex $R$-matrix
\begin{equation}
  \label{eqn:JoyceRMatrix}
  R(z)\ =\ \sum_{k\ge 0} z^{\rank \Ext -k}c_k(\Ext). 
\end{equation}
It is an element of cohomology, so defines a $k((z^{-1}))$-valued functional on homology. The hexagon relations are satisfied because $\Ext$ is multiplicative, and the compatiblity with $T$ because of the weight conditions. In the case $\Ext$ is a vector bundle it is 
$$(\oplus\times\id)^*(e(\Ext)) \ \in \ \Ht^\sbt(\BGm\times\Ml^2)\ \simeq\ \Ht^\sbt(\Ml^2)[z],$$
 where $e$ denotes the Euler class. In particular, we get the associative vertex algebra structure on $\Ht_\sbt(\Ml)$ discovered by Joyce in \cite{Jo}\footnote{The dual map on cohomology is more natural:
 $$\Delta(\alpha,z)\ =\ R(z) \cdot \act_{1}^*\oplus^*\alpha$$
 where $\act_1:\BGm\times\Ml^2\to\Ml^2$ is induced by the action on the first factor and we identify $\Ht^\sbt(\BGm)\simeq k[z]$.}
 \begin{equation}
   \label{eqn:JoyceBorcherdsTwist}
   Y(\alpha,z)\beta\ =\ \oplus_*((e^{z\tau_*}\otimes\id)\cdot R(z)\cap(\alpha\otimes\beta)).
 \end{equation}

 \subsubsection{Extension factorisation structure} If $\Al$ is a dg-category, we may consider the dg-category $\SES(\Al)$ of distinguished triangles in $\Al$, giving a correspondence 
 \begin{center}
 \begin{tikzcd}[row sep = {30pt,between origins}, column sep = {45pt,between origins}]
  &\Ml_{\textup{SES}(\Al)}\ar[rd]\ar[ld] & &[30pt] & a_1\to e\to a_2\ar[ld,|->]\ar[rd,|->]& \\ 
 \Ml_\Al\times \Ml_\Al & & \Ml_\Al & (a_1,a_2)&& e
 \end{tikzcd}
 \end{center}
 Indeed, in the notation of section \ref{sec:ClassicalFactorisationSpaces}, the head  $(C\times C)\times_{Y\times Y}C'$ of the correspondence (\ref{fig:OrdBraid}) parametrises diagrams of short exact sequences in $\Al$
\begin{center}
\begin{tikzcd}[row sep = {30pt,between origins}, column sep = {40pt,between origins}]
a\ar[r] &e\ar[d]\ar[r] &b\\ 
 &E\ar[d] &\\
a'\ar[r]& e'\ar[r]& b'
\end{tikzcd}
\end{center}
The second head $(C'\times C')\times_{Y\times Y}Y$ parametrises the same but with $b$ and $a'$ swapped, so in general these two correspondences are \textit{not} the same, nor is there a map of correspondences from one space to the other.

\subsubsection{Relative version} \label{sec:RelativeVersion} Let $B$ be a fixed prestack, and $\Al$ a sheaf of categories over it. We define the relative moduli stack of sections $\Ml=\Ml_\Al$ by 
$$\Ml(R)\ =\ \Maps_{\ShvCat(B)}(\Perf R,\Al)$$
for $\Spec_B R\to B$ any affine map. As before, we will from now on assume that as in the absolute case that $\Ml$ and $B$ are derived stacks which are the union of open $n$-geometric (in particular, $n$-Artin) substacks which are locally of finite presentation.\footnote{We expect it is enough to assume that this holds for $B$ and that $\Al$ satisfies a finite type condition.} We get as before

\begin{lem}
  $\Ml$ is a monoid object in $\PreStk_{/B}$ with an action of $\Perf B=\Ml_{\QCoh_B}$. 
\end{lem}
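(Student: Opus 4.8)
The plan is to transcribe the proof of the absolute lemma, replacing $\dgCat$ and its unit $\Vect$ by the symmetric monoidal $(\infty,2)$-category $\ShvCat(B)$ and its unit $\QCoh_B$, and $\Perf=\Ml_{\Vect}$ by $\Perf B=\Ml_{\QCoh_B}$. The only genuinely new input needed is that the relative moduli-of-sections construction $\Al\mapsto\Ml_\Al$ is again functorial and (lax) symmetric monoidal, now as a functor $\ShvCat(B)\to\PreStk_{/B}$, where the source carries $\otimes_{\QCoh_B}$ and the target the fibre product $\times_B$. Functoriality in $\Al$ is immediate from the definition $\Ml_\Al(R)=\Maps_{\ShvCat(B)}(\Perf R,\Al)$.

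Granting this, the monoid structure is produced exactly as before: fibrewise direct sum exhibits any $\Al\in\ShvCat(B)$ as a commutative algebra object of $(\ShvCat(B),\otimes_{\QCoh_B})$, and a (lax) symmetric monoidal functor carries commutative algebras to commutative monoids, so $\Ml=\Ml_\Al$ becomes a commutative monoid in $\PreStk_{/B}$. On $R$-points the multiplication is the fibrewise direct sum of sections, whence associativity, commutativity and unitality over $B$ are automatic.

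For the $\Perf B$-action I would use that $\QCoh_B$, being the unit of $(\ShvCat(B),\otimes_{\QCoh_B})$, carries both its tautological tensor-algebra structure and the fibrewise direct-sum algebra structure; applying $\Ml_{(-)}$ these give $\Perf B=\Ml_{\QCoh_B}$ two compatible monoid structures in $\PreStk_{/B}$. Since $\QCoh_B$ is the unit, every $\Al$ is canonically a module over it, and the action $\QCoh_B\otimes_{\QCoh_B}\Al\simeq\Al$ is sent by the (lax) symmetric monoidal functor to an action $\Perf B\times_B\Ml\to\Ml$ of the tensor-monoid $\Perf B$ on $\Ml$, compatible with the direct-sum monoid structure. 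This is the relative analogue of the action of $\Perf$ in the absolute case.

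The main obstacle is the first step. In the absolute setting the symmetric monoidality and functoriality of $\Ml_{(-)}$ is supplied by \cite{TV}; relativising it over $B$ requires checking that forming the stack of sections commutes with the tensor product of sheaves of categories and with base change (as in section \ref{sec:Correspondences}), and that the result stays within the class of locally geometric, locally finitely presented derived stacks over $B$. This is precisely where the standing hypothesis imposed just before the statement, that $\Ml$ and $B$ are unions of open $n$-geometric, locally finitely presented substacks, is used; I expect it to follow from the absolute case applied fibrewise together with descent, but this is the step that carries all the technical content.
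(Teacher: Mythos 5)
Your proposal is correct and follows essentially the same route as the paper, which proves the relative lemma simply by transcribing the absolute argument (``we get as before''): functoriality and lax symmetric monoidality of $\Al\mapsto\Ml_\Al$, now as a functor out of $(\ShvCat(B),\otimes_{\QCoh_B})$, with fibrewise direct sum giving the monoid structure and the unit-module structure $\QCoh_B\otimes_{\QCoh_B}\Al\simeq\Al$ giving the $\Perf B$-action. If anything you are more explicit than the paper about where the standing geometricity and finite-presentation hypotheses are needed to make the relativisation legitimate.
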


Note that we have $B\times\BGm\subseteq \Perf B$ as group prestacks over $B$. From now on we assume that the tautological map
\begin{equation}
  \Ht^\sbt(\Ml)\otimes_{\Ht^\sbt(B)}\Ht^\sbt(\Ml)\ \stackrel{\sim}{\to}\ \Ht^\sbt(\Ml\times_B\Ml)
\end{equation}
is an isomorphism, and so as before
\begin{cor}
  $\Ht^\sbt(\Ml)$, and hence also its graded $\Ht^\sbt(B)$-linear dual\footnote{To be concrete, $\Ht_{B,\sbt}(\Ml)$ consists of $\Ht^\sbt(B)$-linear $\Zb$-graded linear maps $\Ht^\sbt(\Ml)\to \Ht^\sbt(B)$.} $\Ht_{B,\sbt}(\Ml)$, is a cocommutative bialgebra in $\Ht^\sbt(B)\Md$ with biderivation.
\end{cor}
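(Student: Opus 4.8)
The plan is to produce the asserted structure on $\Ht^\sbt(\Ml)$ by transporting, through the relative cohomology functor, a bialgebra structure that $\Ml$ already carries at the level of prestacks, exactly paralleling the absolute case. First I would record that $\Ml$ is a commutative and cocommutative bialgebra object in $\left(\PreStk_{/B},\times_B\right)$: the commutative monoid structure $a=\oplus$ is the one supplied by the preceding Lemma, and the cocommutative comonoid structure is the relative diagonal $\Delta_\Ml:\Ml\to\Ml\times_B\Ml$. That these two are compatible, i.e. assemble into a bialgebra object, is the relative avatar of Proposition \ref{prop:BiAlgFactSpace} and Corollary \ref{prop:DiagBialg}: the diagonal comonoid structure on any object is canonically compatible with any monoid structure on it.

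Next I would apply relative cohomology $\Ht^\sbt(-)$ as a lax symmetric monoidal functor $\PreStk_{/B}\to\Ht^\sbt(B)\Md$ whose lax structure maps are the external products. By the standing K\"unneth hypothesis these structure maps are isomorphisms on the finite $\times_B$-powers of $\Ml$ that occur, so the functor is strong monoidal there and therefore carries the bialgebra object $\Ml$ to a bialgebra object in $\Ht^\sbt(B)\Md$. Since cohomology is contravariant the roles interchange: the commutative monoid $a$ becomes the cocommutative coproduct $a^*$, while the cocommutative comonoid $\Delta_\Ml$ becomes the commutative cup product, yielding the cocommutative bialgebra $\left(\Ht^\sbt(\Ml),\cup,a^*\right)$ over $\Ht^\sbt(B)$.

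To obtain the biderivation I would use the inclusion $B\times\BGm\subseteq\Perf B$ and the induced action $\act:(B\times\BGm)\times_B\Ml\to\Ml$. Identifying $\Ht^\sbt(B\times\BGm)\simeq\Ht^\sbt(B)[\tau]$ as a Hopf algebra in which $\tau$ is primitive, because $c_1$ is additive under tensor product of line bundles, the pullback $\act^*$ makes $\Ht^\sbt(\Ml)$ a comodule, and the $\tau$-linear part of $\act^*$ defines a degree-two operator $T$. Compatibility of $\act$ with both $a$ and $\Delta_\Ml$, combined with primitivity of $\tau$, forces $T$ to be simultaneously a derivation for $\cup$ and a coderivation for $a^*$, i.e. a biderivation. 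Finally, since the relevant groups are dualizable over $\Ht^\sbt(B)$ in each degree, taking graded $\Ht^\sbt(B)$-linear duals is a contravariant symmetric monoidal operation interchanging product and coproduct, so $\Ht_{B,\sbt}(\Ml)$ inherits a cocommutative bialgebra structure with the dual biderivation.

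I expect the main obstacle to be upgrading relative cohomology to a genuinely symmetric monoidal functor, ensuring that the external products are associative, unital and compatible relative to $\Ht^\sbt(B)$ so that bialgebra objects are preserved; this rests entirely on the assumed K\"unneth isomorphism together with the $n$-geometric, locally finitely presented hypotheses on $\Ml$ and $B$. Once the primitive Hopf-algebra structure on $\Ht^\sbt(B\times\BGm)$ is in hand, verification of the biderivation axioms and of the dualization is then routine and proceeds as in the absolute case.
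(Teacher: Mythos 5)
Your argument is correct and matches the paper's intended reasoning: the paper gives no explicit proof here, deferring via ``and so as before'' to the absolute case, and the implicit argument is exactly yours — transport the monoid structure $a$, the relative diagonal, and the $B\times\BGm\subseteq\Perf B$ action through (contravariant) cohomology, using the assumed K\"unneth isomorphism to get the cocommutative bialgebra $(\Ht^\sbt(\Ml),\cup,a^*)$ with biderivation the $\tau$-coefficient of the action pullback. The only cosmetic caveat is that the K\"unneth hypothesis must be read as applying to the threefold power of $\Ml$ as well (for coassociativity of $a^*$), which you already flag.
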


In particular, $\Ht_{B,\sbt}(\Ml)$ is a (cocommutative) vertex group in $\Ht^\sbt(B)\Md$ viewed as a symmetric monoidal category with $\otimes_{\Ht^\sbt(B)}$ defining its product. As before, we let $\Ext_\Al\to \Al\otimes\Al$ parametrise pairs of sections of $\Al$ and a map between them, so that we get a map of prestacks over $B$
$$\Ext\ \to\ \Ml\times\Ml$$
which as before is a multiplicative perfect complex with weights $(-1,1)$ with respect to the $\Perf_B$ (or $B\times\BGm$) action on the two factors of $\Ml$. We use the same formula (\ref{eqn:JoyceRMatrix}) to define 
$$R(z)\ \in\ \Ht^\sbt(\Ml\times_B\Ml)((z^{-1}))\ \simeq\ \Ht^\sbt(\Ml)\otimes_{\Ht^\sbt(B)}\Ht^\sbt(\Ml)((z^{-1}))$$
taking cap product with which gives an $\Ht^\sbt(B)$-linear vertex $R$-matrix. Thus just as before, 
\begin{prop}
  The Borcherds twist (\ref{eqn:JoyceBorcherdsTwist}) defines an associative vertex algebra structure on $\Ht^\sbt_{B,\sbt}(\Ml)$. 
\end{prop}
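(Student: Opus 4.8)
The plan is to reduce the relative statement to the absolute Joyce case established above, working throughout in the symmetric monoidal background category $\El = \Ht^\sbt(B)\Md$ in place of $\Vect$, with $\otimes_{\Ht^\sbt(B)}$ as the tensor product and $\Ht^\sbt(B)$ as the ground "field". Concretely, I would assemble the two inputs of the Borcherds twist construction: the underlying holomorphic vertex quantum group, and the vertex op-$R$-matrix $R(z)$; associativity of the twisted field then follows formally, and the only genuine work is verifying that the relative $\ell$-adic six-functor formalism supports the identities involved.

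First I would record that $\Ht_{B,\sbt}(\Ml)$ is a holomorphic (cocommutative) vertex quantum group in $\El$. By the preceding Corollary it is a cocommutative bialgebra in $\Ht^\sbt(B)\Md$ with biderivation $\tau_*$, and by the equivalence between holomorphic vertex quantum groups and bialgebras with biderivation equipped with an op-$R$-matrix, the commutative algebra $(\Ht_{B,\sbt}(\Ml),\oplus_*,\tau_*)$ underlies a holomorphic commutative vertex algebra with untwisted field $\beta\mapsto \oplus_*(e^{z\tau_*}(-)\otimes\beta)$. Next I would verify that the relative Joyce class $R(z)$ of (\ref{eqn:JoyceRMatrix}) is a vertex op-$R$-matrix. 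Exactly as in the absolute case, $\Ext\to\Ml\times_B\Ml$ is a multiplicative perfect complex of weight $(-1,1)$ for the $\Perf_B$ (equivalently $B\times\BGm$) action, so cap product with $R(z)$ defines an $\Ht^\sbt(B)$-linear functional on $\Ht_{B,\sbt}(\Ml)\otimes_{\Ht^\sbt(B)}\Ht_{B,\sbt}(\Ml)$. Multiplicativity of $\Ext$ yields the hexagon identities (\ref{eqn:VertexQuantumGroupHexagon}), the weight conditions yield the derivation compatibility (\ref{eqn:VertexQuantumGroupDerivation}), and almost cocommutativity is automatic from cocommutativity of the bialgebra; hence $R(z)$ is a vertex op-$R$-matrix and in particular satisfies the vertex Yang--Baxter equation (\ref{eqn:VertexYangBaxter}).

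I would then conclude by identifying (\ref{eqn:JoyceBorcherdsTwist}) as the op-Borcherds twist of this holomorphic vertex quantum group by $R(z)$: the twisted field $Y(\alpha,z)\beta=\oplus_*((e^{z\tau_*}\otimes\id)\cdot R(z)\cap(\alpha\otimes\beta))$ is the commutative product $\oplus_*$ precomposed with the cap twist, and associativity of the resulting vertex algebra reduces to the vertex Yang--Baxter equation for $R(z)$ together with commutativity and associativity of $\oplus_*$ and the derivation property of $\tau_*$. This is the exact dual of the coassociativity computation carried out in Lemma-Definition \ref{lemdefn:BorcherdsTwist}, read in the op-setting, so the algebraic verification is identical to the absolute Joyce case with $\Ht^\sbt(B)$ replacing $k$ throughout.

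The main obstacle is therefore not the algebra but the relative geometry: one must ensure that all the operations used above exist and are appropriately functorial over $B$. Specifically, I would need the Künneth isomorphism $\Ht^\sbt(\Ml)\otimes_{\Ht^\sbt(B)}\Ht^\sbt(\Ml)\xrightarrow{\sim}\Ht^\sbt(\Ml\times_B\Ml)$ assumed in section~\ref{sec:RelativeVersion}, the existence and compatibility of relative cap products and Euler classes of $\Ext$, and the pushforward $\oplus_*$ along a non-proper direct-sum map, all satisfying the projection formula so that the op-$R$-matrix hexagon relations and the Borcherds product identities hold over $\Ht^\sbt(B)$. These are precisely the points where the standing hypotheses that $\Ml$ and $B$ are unions of finitely presented $n$-geometric substacks, together with the $\ell$-adic formalism of \cite{LZ}, are invoked; granting them, the proof goes through verbatim from the absolute case.
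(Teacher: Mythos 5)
Your proposal is correct and follows exactly the route the paper intends: the paper offers no separate argument for this Proposition beyond the phrase ``just as before,'' i.e.\ it defers to the absolute Joyce case, and your reduction to a holomorphic vertex quantum group in $\Ht^\sbt(B)\Md$ with the relative Joyce class as op-$R$-matrix, followed by the dual of the coassociativity computation of Lemma-Definition \ref{lemdefn:BorcherdsTwist}, is precisely that argument spelled out. Your closing paragraph correctly isolates the only genuinely new content in the relative setting (the K\"unneth assumption, relative Euler classes and cap products, and the projection formula over $B$), which are exactly the standing hypotheses of section \ref{sec:RelativeVersion}.
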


This structure is commutative if the vertex $R$-matrix is symmetric, $\sigma(R(z))=R(-z)$. However, the reason we introduce the relative version is that $(\Ht^\sbt(B)\Md,\otimes_{\Ht^\sbt(B)})\simeq \QCoh(\Spec \Ht^\sbt(B))$ potentially has other braided monoidal structures, e.g. arising from pull-push along a braided factorisation space structure on $\Spec \Ht^\sbt(B)$. Using the construction of this section one can for instance get $q$-deformed vertex algebras, as in for instance \cite{Li}, where one takes the quotient of moduli stacks by a torus action.

\subsubsection{Remark} We now explain what the vertex quantum group and related structures are which appear in \cite{La,Li} when considering cohomological Hall algebras. Consider an braided commutative bialgebra
$$\Hl\ \in\ \Eb_2\FactAg(\Eb_1\FactCoAg(\El))$$
with respect to $\otimes,\otimes^{ch}$ as follows:
$$\Hl\ \stackrel{\cup^*}{\to}\ \Hl\otimes\Hl, \hspace{10mm}\Hl\otimes\Hl\ \stackrel{\oplus_*}{\to}\ \Hl, \hspace{10mm} m\ :\ \Hl\otimes^{ch}\Hl\ \stackrel{\sim}{\to}\ \Hl.$$
Thus, $(\Hl,\cup^*)$ forms a bialgebra with respect to $\oplus_*$ and $m$, which are compatible themselves as they form an $\Eb_2$-algebra. It follows that 
\begin{lem}
  $\Cl=(\Hl,\cup^*)\CoMd(\El)$ is a braided factorisation category.  
\end{lem}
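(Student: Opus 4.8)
The plan is to obtain $\Cl$ from the comodule analogues of the results of Section~\ref{sec:SpectralRMatrices}, organised conceptually by Dunn additivity. Throughout, recall that in the present conventions $\otimes_\El=\otimes$ and $\otimes'_\El=\otimes^{ch}$: the coalgebra $(\Hl,\cup^*)$ is formed with respect to $\otimes$, so $\Cl=(\Hl,\cup^*)\CoMd(\El)$ is a category of $\otimes$-comodules, while the factorisation tensor product and braiding we must manufacture on $\Cl$ are those of $\otimes^{ch}$.

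First I would produce the factorisation category structure. The triple $(\Hl,m,\cup^*)$ is a factorisation bialgebra, with $m$ the $\otimes^{ch}$-product, $\cup^*$ the $\otimes$-coproduct, and compatibility expressing precisely that $\Hl$ is a factorisation algebra object inside $\Eb_1\FactCoAg(\El)$. By the comodule version of the lifting proposition of Section~\ref{sec:SpectralRMatrices}, such a bialgebra structure is equivalent to a lift of the $\otimes^{ch}$-factorisation category structure along $\Cl\to\El$. Concretely, for comodules $M\to M\otimes\Hl$ and $N\to N\otimes\Hl$ one coacts on $M\otimes^{ch}N$ through $(M\otimes\Hl)\otimes^{ch}(N\otimes\Hl)\simeq(M\otimes^{ch}N)\otimes(\Hl\otimes^{ch}\Hl)$, using the $\otimes$--$\otimes^{ch}$ braiding of $\El$, followed by $m$; this endows $\Cl$ with a factorisation category structure over $Y$.

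Next I would produce the braiding. The data in $\Hl\in\Eb_2\FactAg(\Eb_1\FactCoAg(\El))$ beyond the bialgebra $(\Hl,m,\cup^*)$ is exactly the braided commutativity of $m$, i.e. the identification $m\simeq m\cdot\sigma$ with $\sigma$ the ambient braiding of $\El$, together with its higher coherences. By the comodule version of Theorem~\ref{thm:SpectralRMatrixBraidedMonoidal} this braided commutativity supplies the (canonical, $\sigma$-induced) op-$R$-matrix, hence a braiding $\beta$ on the lifted $\otimes^{ch}$-structure; the hexagon relations for $\beta$ follow from those of $\sigma$ and the bialgebra axioms just as in the proof of that theorem.

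The cleanest packaging, which also settles the homotopy coherence uniformly, is via Dunn additivity: the presentation $\Eb_2\simeq\Eb_1\otimes\Eb_1$ exhibits $\Hl$ as an $\Eb_1$-algebra object in the category of factorisation bialgebras $\Eb_1\FactAg(\Eb_1\FactCoAg(\El))$. The comodule construction of the two previous paragraphs is a symmetric monoidal functor from factorisation bialgebras to $\FactCat$, so applying $\Eb_1\FactAg(-)$ carries $\Hl$ to an $\Eb_1$-algebra object in $\FactCat$, that is, to an element of $\Eb_2\FactAg(\Groth^{\textup{cl}})$ -- a braided factorisation category. I expect the main obstacle to be precisely the verification that this comodule functor is genuinely symmetric monoidal at the level of the underlying cocartesian fibrations, i.e. the comodule counterpart of the fibration $\Groth_{\QCoh/}\to\Groth$ used for modules; granting that coherence, the degree bookkeeping in the Dunn decomposition forces the braiding, and the hexagon identities reduce to the bialgebra compatibilities already in hand.
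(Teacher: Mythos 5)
Your first paragraph is on target: the lift of $\otimes^{ch}$ to $\Cl$ does come from the mixed bialgebra $(\Hl,m,\cup^*)$ via the comodule version of the lifting proposition, with the coaction on $M\otimes^{ch}N$ built exactly as you describe. The problem is with everything after that. The hypothesis $\Hl\in\Eb_2\FactAg(\Eb_1\FactCoAg(\El))$ with respect to $\otimes,\otimes^{ch}$ decomposes, by Dunn additivity, into \emph{two} products --- $\oplus_*$ for $\otimes$ and $m$ for $\otimes^{ch}$ --- together with an interchange law between them; it does \emph{not} assert braided commutativity of $m$ alone, and your proposal never uses $\oplus_*$ at all. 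This matters twice over. First, a braided factorisation category in the sense of Lemma-Definition \ref{lemdefn:BraidedFactCat} over the chiral--diagonal Ran space consists of lifts of \emph{both} $\otimes$ and $\otimes^{ch}$ together with a compatibility between them (and the text immediately after the lemma uses the $\otimes$-structure on $\Cl$, via $\Delta:\Vl\to\Vl\otimes\Vl$); the $\otimes$-lift requires the second bialgebra $(\Hl,\oplus_*,\cup^*)$, which you have discarded. Second, the braiding $\beta$ relating the two lifted structures is supplied by the $\Eb_2$-interchange between $\oplus_*$ and $m$ combined with the ambient chiral--diagonal braiding of $\El$, not by a ``canonical $\sigma$-induced op-$R$-matrix'' coming from a braided commutativity of $m$ that is not among the hypotheses. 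Indeed, the comodule analogue of Theorem \ref{thm:SpectralRMatrixBraidedMonoidal} that you cite already presupposes both products before it can speak of a braiding between the two monoidal structures, so there is nothing for your middle paragraph to apply to.

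Your closing Dunn-additivity packaging is the right idea and, once corrected, is essentially the intended argument: exhibit $\Hl$ as an $\Eb_1^{\otimes}$-algebra (via $\oplus_*$) in the category of factorisation bialgebras built from $m$ and $\cup^*$, and push it through the symmetric monoidal comodule functor to land in $\Eb_2\Ag(\Groth^{\textup{cl}})$. But as written, with the outer $\Eb_1$ replaced by an alleged self-braiding of $m$, the argument produces neither the diagonal factorisation structure on $\Cl$ nor the compatibility between the two structures, and hence does not prove the lemma.
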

Then by a Borcherds twist construction as in section \ref{sec:BorcherdsTwists} one constructs a bialgebra $\Vl$ in $\Cl$, with structure maps
$$Y\ :\ \Vl\otimes^{ch}\Vl\ \stackrel{\sim}{\to}\ p^*\Vl, \hspace{15mm} \Delta\ :\ \Vl\ \to\ \Vl\otimes\Vl.$$
The fact that these structure maps need to be linear under the coaction $\Vl \stackrel{\cup^*}{\to} \Hl\otimes\Vl$ of $\Hl$ says that the diagrams
\begin{center}
  \begin{tikzcd}[row sep = {30pt,between origins}, column sep = {30pt}]
  \Vl\otimes^{ch}\Vl\ar[r,"Y"]\ar[d,"\cup^*\otimes^{ch}\cup^*"]&p^*\Vl\ar[dd,"\cup^*"]\\
  (\Hl\otimes\Vl)\otimes^{ch}(\Hl\otimes\Vl)\ar[d,"\wr"]& \\
  (\Hl\otimes^{ch}\Hl)\otimes(\Vl\otimes^{ch}\Vl)\ar[r,"m\otimes Y"]&p^*(\Hl\otimes\Vl) 
  \end{tikzcd}
  \end{center}
and
\begin{center}
  \begin{tikzcd}[row sep = {30pt,between origins}, column sep = {30pt}]
  \Vl\ar[r,"\Delta"]\ar[dd,"\cup^*"]&\Vl\otimes \Vl\ar[d,"\cup^*\otimes\cup^*"]\\
  & (\Hl\otimes\Vl)\otimes(\Hl\otimes\Vl)\ar[d,"\wr"]\\
  \Hl\otimes\Vl\ar[r,"\cup^*\otimes \Delta"]&(\Hl\otimes\Hl)\otimes(\Vl\otimes \Vl)
  \end{tikzcd}
  \end{center}
commute. If we in addition assume that both $\Hl$ and $\Vl$ are $\Zb$-graded, we may take the contragredient dual vertex coalgebra $V$, and the above diagrams are then equivalent to the commutativity of
\begin{center}
\begin{tikzcd}[row sep = {30pt,between origins}, column sep = {20pt}]
V\otimes V((z)) \ar[r,<-]\ar[d,"m^\vee(\alpha)\cup"] & V \ar[d,"\alpha\cup"]\\ 
V\otimes V((z)) \ar[r,<-] & V \ 
\end{tikzcd}
\end{center}
and
\begin{center}
\begin{tikzcd}[row sep = {30pt,between origins}, column sep = {20pt}]
V \ar[r,<-]\ar[d,"\alpha\cup"] & V \ar[d,"(\oplus^*\alpha)\cup"]\\ 
V \ar[r,<-] & V\otimes V  
\end{tikzcd}
\end{center}
commutes, for all $\alpha\in H^\vee$, where $m^\vee\alpha\in H\otimes H((z))$ and $\oplus^*\alpha\in H\otimes H$.

 \newpage

\appendix

\section{Vertex Algebras} \label{sec:RationalMultiplicativeElliptic}

\subsubsection{}  \label{sec:VAFact} In this section, we recall the equivalence--also covered in \cite{FBZ}--between vertex algebras and translation equivariant factorisation coalgebras on $\Ab^1$. We also write down analogues replacing $\Ab^1$ with $\Gm$ or an elliptic curve $E$, which is known to experts but we could not find in the literature.

\subsection{Recollections on chiral Koszul duality} 

\subsubsection{} If $A$ is a cocommutative coalgebra in $\Vect$, the subspace $\Prim A$ of primitive elements forms a Lie algebra. Conversely, if $\gk$ is a Lie algebra then its Chevalley-Eilenberg complex $\Ct(\gk)$ is a cocommtative coalgebra. These functors give an adjunction
$$\Ct_\sbt\ :\ \Lie(\Vect) \ \leftrightarrows\ \Eb_\infty\CoAg(\Vect)\ :\ \Prim[-1]$$
This Koszul duality between $\Lie$ and $\textup{CoCommCoAlg}$ was extended in \cite{FG} to the chiral setting, where they gave an adjoint equivalence
$$\Lie(\Dl\Md(\Ran^{ch} X)) \ \stackrel{\sim}{\leftrightarrows}\ \Eb_\infty\CoAg(\Dl\Md(\Ran^{ch}X))$$
which restricts to 
$$\Lie(\Dl\Md(\Ran^{ch} X)_X) \ \stackrel{\sim}{\leftrightarrows}\ \Eb_\infty\FactCoAg^{\textup{st}}(\Dl\Md(\Ran^{ch}X))$$
where $(-)_X$ denotes the category of $\Dl$-modules with support inside $X\subseteq \Ran X$, and $(-)^{\textup{st}}$ denotes  factorisation coalgebras whose coproduct is an equivalence. Here $X$ is a separated scheme of finite type over $k$, a field of characteristic zero. 

\subsubsection{Explicit description} \label{sec:ExplicitChiralKoszul} Let $\Fl$ be a strong cocommutative factorisation coalgebra over $\Ran^{ch}X$, as a D module
$$\Prim \Fl\ =\ \Fl_X$$
where we have implicitly pushed forward along $X\to \Ran X$. The map defining the Lie algebra structure
$$\mu \ :\ \Fl_X[-1]\otimes^{ch}\Fl_X[-1]\ \to\ \Fl_X[-1]$$
is thus supported on $\Ran^{\le 2}X$, and thus is uniquely determined by its restriction to $X^2$. To be explicit, this is a map
$$\mu\vert_{X^2}[1]\ =\ (\nu,-\nu)\ :\ \jmath_*\jmath^*(\Fl_X\boxtimes \Fl_X)^{\oplus 2}\ \to\ \Fl_X[1]$$
where $\iota:X\to X^2$ is the diagonal and $\jmath$ its open complement. The antisymmetry in the two factors is forced by it being a map of D modules on the Ran space. To finish: the map $\nu$ is the boundary map in the Mayer-Vietoris sequence
$$\nu\ :\ \jmath_*\jmath^!\Fl_{X^2}\ \to\ \iota_*\iota^!\Fl_{X^2}[+1],$$
where we use that $\Fl$ is a strong factorisation coalgebra to give $\jmath^*\Fl_{X^2}=\jmath^*(\Fl_X\boxtimes \Fl_X)$, and $\iota^!\Fl_{X^2}=\Fl_X$. To summarise:
\begin{prop}
  Let $\Fl$ be a strong factorisation coalgebra over $\Ran^{ch}X$ for $X$ a smooth variety over an algebraically closed field $k$ of characteristic zero. Then it is uniquely determined by the boundary map
  $$\jmath_*\jmath^*(\Fl_X\boxtimes \Fl_X)\ \stackrel{}{\to}\ \Fl_{X^2\setminus X}\ \to\ \Fl_{X}[1]$$ 
  where the first map is induced by the factorisation coproduct. 
\end{prop}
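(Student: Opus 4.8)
The plan is to read the statement off chiral Koszul duality together with the explicit computation of the Lie bracket recalled in section \ref{sec:ExplicitChiralKoszul}; the Proposition is essentially a repackaging of that computation, so the real work is in verifying that the listed boundary map is exactly the datum that survives.

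First I would invoke the restricted chiral Koszul duality equivalence
$$\Lie(\Dl\Md(\Ran^{ch} X)_X)\ \stackrel{\sim}{\leftrightarrows}\ \Eb_\infty\FactCoAg^{\textup{st}}(\Dl\Md(\Ran^{ch}X)),$$
which identifies the strong factorisation coalgebra $\Fl$ with a $\otimes^{ch}$-Lie algebra structure on $\Prim\Fl[-1]=\Fl_X[-1]$. Under this equivalence $\Fl$ is reconstructed from $\Fl_X$ together with its bracket, so it suffices to show that the bracket is equivalent data to the stated boundary map.

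Next I would analyse the bracket $\mu:\Fl_X[-1]\otimes^{ch}\Fl_X[-1]\to\Fl_X[-1]$. Being a binary operation between D-modules supported on the main diagonal $X\subseteq\Ran X$, it is supported on $\Ran^{\le 2}X$; since its target lives on $X$ and its source on the $\le 2$-point locus, $\mu$ is determined by its restriction to $X^2$. The antisymmetry forced by $\mu$ being a map of D-modules on the symmetric Ran space then makes this restriction of the form $(\nu,-\nu)$, so all the data sits in the single component $\nu$. Finally I would identify $\nu$ with the recollement (Mayer--Vietoris) boundary map $\jmath_*\jmath^!\Fl_{X^2}\to\iota_*\iota^!\Fl_{X^2}[1]$ for the decomposition of $X^2$ into the diagonal $\iota:X\hookrightarrow X^2$ and its complement $\jmath:X^2\setminus X\hookrightarrow X^2$; using strongness to rewrite $\jmath^*\Fl_{X^2}\simeq\jmath^*(\Fl_X\boxtimes\Fl_X)$ and $\iota^!\Fl_{X^2}\simeq\Fl_X$ turns $\nu$ into precisely the composite $\jmath_*\jmath^*(\Fl_X\boxtimes\Fl_X)\to\Fl_{X^2\setminus X}\to\Fl_X[1]$ whose first arrow is the factorisation coproduct. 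Since each of these passages is reversible, the boundary map and $\Fl$ determine one another.

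I expect the main obstacle to be the second step: confirming that nothing is lost in restricting the chiral bracket from $\Ran^{\le 2}X$ to $X^2$, and that the diagonal contribution is faithfully recorded by the boundary term of the recollement triangle rather than requiring separate data. Concretely, the care lies in checking that the identification $\iota^!\Fl_{X^2}\simeq\Fl_X$ is compatible with the coproduct defining the first arrow, so that the reconstruction of $\mu$ from $\nu$ is well defined and genuinely inverse to restriction; the remaining coherences (Jacobi for $\mu$, and the higher structure of the Koszul-dual coalgebra) are then automatic conditions rather than additional data.
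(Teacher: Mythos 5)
Your proposal follows the paper's own argument essentially verbatim: invoke the restricted chiral Koszul duality equivalence to reduce $\Fl$ to the $\otimes^{ch}$-Lie bracket on $\Fl_X[-1]$, observe that this bracket is supported on $\Ran^{\le 2}X$ and hence determined by its antisymmetric restriction $(\nu,-\nu)$ to $X^2$, and identify $\nu$ with the Mayer--Vietoris boundary map using strongness to rewrite $\jmath^*\Fl_{X^2}$ and $\iota^!\Fl_{X^2}$. This is exactly the route taken in the paper's discussion preceding the Proposition, so no further comparison is needed.
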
  

\subsubsection{Remark} Note that we consider $\Eb_\infty$-coalgebras in chiral Koszul duality. However, we will define commutative vertex algebras as strong factorisation $\Eb_\infty$-\textup{algebras}, because vertex algebras are unital. Because the categories of strong nonunital $\Eb_\infty$-algebras and -coalgebras are identified, nonunital vertex algebras may be viewed as elements of either category. 

\subsection{Rational, Multiplicative and Elliptic Vertex algebras}

We now write down Theorems describing strong $\Eb_n$-algebras in over the Ran space of one dimensional algebraic groups $G$ in terms of very explicit data. When 
\begin{itemize}
  \item $n=1$ these are (\textit{associative}) \textit{vertex algebras},
  \item $n=2$ these are \textit{braided commutative vertex algebras},
  \item $n=\infty$ these are \textit{commutative vertex algebras}, which are elsewhere usually just called \textit{vertex algebras}.
\end{itemize}
As a background factorisation $\Eb_n$-category we take eithe the category of weakly equivariant D-modules or the category of comodules $H\CoMd$ of a vertex quantum group, where in the latter case $n\le 2$ unless its spectral $R$-matrix is symmetric. Thus let  
$$G\ =\ \Ga,\Gm,E$$
be the affine line, multiplicative group or an elliptic curve over $k$. Choosing a formal coordinate $z$ of the origin allows us to identify the formal completion $\hat{G}$ as a formal group scheme as $\Spf k[[z]]$ with the group addition law denoted $+_\gk$. We may make a change of coordinates $u=u(z)$ which induces an isomorphism $\Spf k[[z]]\simeq \Spf k[[u]]=\hat{\Gb}_a$ as formal group schemes, see section \ref{ssec:ExamplesOfTheoremGaGmEVA}.\footnote{$u$ is called the \textit{logarithm}.} The associated vertex algebras will be called \textit{rational} (or $\varnothing$), \textit{multiplicative} or \textit{elliptic vertex algebras}. 

\begin{theorem} \label{thm:GaGmEVA} \emph{\cite{BD}}
  The category of strong commutative factorisation algebras
  $$\Eb_\infty\FactAg^{\textup{st}}(\Dl\Md(\Ran G)^{G,w})$$
  is equivalent to the category of vector spaces with endomorphism $(V,T)$ with a map 
  $$Y\ :\ V\otimes V\ \to\ V((z))$$
  satisfying $[T,Y(\alpha,z)]=\partial_z Y(\alpha, z)$, skew commutativity:
  \begin{equation} \label{eqn:SkewCommutativity}
    Y(\alpha,z)\beta\ =\ e^{u(z)T}Y(\beta,-_\gk z)\alpha,
  \end{equation}
  and  associativity: 
  \begin{equation} \label{eqn:VertexAssociativity}
    Y(Y(\alpha,z_1)\beta,z_2)\gamma\ =\ Y(\alpha,z_1+_\gk z_2)Y(\beta,z_2)\gamma.
  \end{equation}
\end{theorem}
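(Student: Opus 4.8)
The plan is to prove this by reducing the factorisation-theoretic side to the explicit chiral Koszul description of section \ref{sec:ExplicitChiralKoszul}, and then transporting the classical rational computation along the logarithm coordinate. First I would use the Remark following section \ref{sec:ExplicitChiralKoszul} to pass from strong factorisation $\Eb_\infty$-algebras to strong cocommutative factorisation coalgebras $\Fl$, which are identified in the strong nonunital setting; the unit of the algebra will correspond to the distinguished vector $|0\rangle$. The underlying vector space is produced by translation equivariance: the $\Dl$-module $\Fl_X$ on $G$ is weakly $G$-equivariant, hence determined by a single vector space $V$ together with the infinitesimal translation operator $T=\oblv(\Lie G)$, where we use the coordinate $z$ at the origin to identify $\Lie G\simeq \Ga$. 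This gives the pair $(V,T)$ and, via the $\Dl$-module structure, the relation $[T,Y(\alpha,z)]=\partial_z Y(\alpha,z)$ once $Y$ is constructed.

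Next I would construct the field map $Y$ from the explicit Koszul-dual description. By the Proposition in section \ref{sec:ExplicitChiralKoszul}, the entire structure of $\Fl$ is encoded in the composite
$$\jmath_*\jmath^*(\Fl_X\boxtimes\Fl_X)\ \to\ \Fl_{X^2\setminus X}\ \to\ \Fl_X[1]$$
built from the factorisation coproduct. Restricting to the formal neighbourhood of the diagonal in $G\times G$ and using translation equivariance to trivialise one of the two factors, the meromorphy along the diagonal (the $\jmath_*\jmath^*$) turns this boundary map into a map $V\otimes V\to V((z))$, where $z$ records the difference of the two points in the coordinate at the origin. This is the sought-after $Y(z)$, and the $!$-restriction to $X\subseteq X^2$ recovers $V=\Fl_X$ so that the expansion lands in $V((z))$ rather than a larger space.

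It then remains to match the two remaining axioms. The cocommutativity (equivalently the $\Eb_\infty$-symmetry of the factorisation product) becomes skew-commutativity (\ref{eqn:SkewCommutativity}): swapping the two points on $G$ introduces the inversion $-_\gk z$ in the formal group, while comparing the two $!$-restrictions to the diagonal through the translation action produces the prefactor $e^{u(z)T}$, with $u(z)$ the logarithm identifying $\hat{G}\simeq\hat{\Gb}_a$. The factorisation associativity over $X^3$, i.e.\ the associativity isomorphism of the correspondence together with its pentagon, becomes the operator-product associativity (\ref{eqn:VertexAssociativity}), where composing the two chiral insertions forces the group addition $z_1+_\gk z_2$. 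Conversely, given $(V,T,Y)$ satisfying the three conditions, one reverses each step to rebuild a strong factorisation coalgebra, checking that the axioms guarantee well-definedness of the boundary map and independence of the bracketing.

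The main obstacle will be the precise coordinate bookkeeping in the last paragraph: verifying that the geometric operations — the Mayer--Vietoris boundary map, the $!$-restriction to the diagonal, and the translation action — combine to yield exactly the factor $e^{u(z)T}$ and the substitutions $-_\gk z$, $z_1+_\gk z_2$, respecting the expansion conventions. For $G=\Ga$ this matching is classical and I would cite the proof of \cite[4.4.1 and Ch.~3]{FBZ} essentially verbatim. For $G=\Gm$ and $G=E$ the key observation that makes the argument uniform is that the structure sees only the formal neighbourhood $\hat{G}$, which the logarithm $u(z)$ identifies with $\hat{\Gb}_a$ as a formal group; transporting the rational structure along $u(z)$ then yields the general formulas, and the remaining work is to confirm this transport is compatible with $T$, with the $\jmath_*\jmath^*$ expansions, and with the associativity isomorphism, so that strongness is preserved throughout.
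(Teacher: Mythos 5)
Your proposal follows essentially the same route as the paper: both reduce the strong factorisation structure to its Mayer--Vietoris boundary map on the two-point locus (via the explicit chiral Koszul description), use weak $G$-equivariance to trivialise the $\Dl$-module as $V\otimes\Ol_G$ and extract $(V,T,Y)$, identify commutativity of the product with skew-commutativity and the iterated product with OPE associativity, and handle $\Gm$ and $E$ by transporting along the logarithm $\hat{G}\simeq\hat{\Gb}_a$. The only points where the paper is more specific than your sketch are that it invokes Lemma \ref{lem:BDLem} to justify that the boundary map genuinely yields a map $V\otimes V\to V((z))$, and that associativity rests on the order-independence of iterated Mayer--Vietoris boundary maps (Lemma \ref{lem:IterativeMayerVietoris}) rather than on the pentagon axiom as such.
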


In particular, it follows that the above categories for various $G$ are equivalent. We first prove

\begin{prop}\label{prop:GmGaEAssociative}
  The category of strong (associative) factorisation algebras
  $$\Eb_1\FactAg^{\textup{st}}(\Dl\Md(\Ran G)^{G,w})\ \simeq\ \Eb_1\FactAg^{\textup{st}}(\Dl\Md(\Ran_a G)^{G,w})$$
  is equivalent to the data in Theorem \ref{thm:GaGmEVA} but omitting the skew commutativity condition (\ref{eqn:SkewCommutativity}).
\end{prop}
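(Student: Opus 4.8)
The plan is to run the Beilinson--Drinfeld argument behind Theorem \ref{thm:GaGmEVA} essentially verbatim, but with $\Eb_1$ in place of $\Eb_\infty$ and the associative Ran space $\Ran_a G$ in place of the commutative Ran space, so that the only structural change is the disappearance of the commutativity constraint. First I would establish the left-hand equivalence $\Eb_1\FactAg^{\textup{st}}(\Dl\Md(\Ran G)^{G,w}) \simeq \Eb_1\FactAg^{\textup{st}}(\Dl\Md(\Ran_a G)^{G,w})$ by pulling back along the forgetful map $\Ran_a G \to \Ran G$, which respects the associative factorisation structures. Both categories consist of objects supported on the main diagonal $G \subseteq \Ran G$, so the underlying $\Dl$-module sees no difference between the ordered and unordered ambient space; the only place the ordering could enter is the factorisation product, and by strongness this product is determined by its restriction to the locus $(G\times G)_\circ$ of distinct pairs, which is the same for the two Ran spaces. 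This is the precise sense in which an associative ($\Eb_1$) structure, unlike an $\Eb_\infty$ one, does not exploit the symmetry of the unordered configuration space.

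Next I would extract the explicit data, following section \ref{sec:ExplicitChiralKoszul}. Weak $G$-equivariance together with support on the main diagonal identifies the underlying $\Dl$-module with a single vector space $V$, its fibre at the identity, equipped with the derivation $T$ induced by the one-dimensional translation action of $G$. Since the algebra is strong it is equivalent to its dual strong factorisation coalgebra, which by the Proposition of section \ref{sec:ExplicitChiralKoszul} is reconstructed uniquely from the boundary map obtained by restricting the coproduct to $(G\times G)_\circ$. Taking $G$-invariant de Rham sections and expanding along the diagonal --- the pole of $\jmath_*\jmath^*$ supplying the Laurent tail --- turns this datum into the state--field map
$$Y\ :\ V\otimes V\ \to\ V((z)).$$

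It then remains to match the two surviving conditions and to watch the third drop out. The relation $[T,Y(\alpha,z)] = \partial_z Y(\alpha,z)$ is the infinitesimal form of $G$-equivariance of the product, obtained by translating both marked points at once. The associativity identity (\ref{eqn:VertexAssociativity}) is the fibrewise incarnation of the associativity isomorphism of the product correspondence over ordered triples in $\Ran_a G$, the formal group law $+_\gk$ recording how the coordinate at the point where the first two insertions collide is read off in the chart centred at the third. The main work, and the step most in need of care, is tracking this group law and the diagonal expansion through the associativity correspondence so that the isomorphism yields precisely (\ref{eqn:VertexAssociativity}) and not some reparametrised variant; here the difference between $\Ga$, $\Gm$ and $E$ is entirely absorbed into $+_\gk$ and the logarithm. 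Finally, skew commutativity (\ref{eqn:SkewCommutativity}) is exactly the extra datum that in the $\Eb_\infty$ case encodes the $\Zb/2$-symmetry of the product under swapping the two insertions; an $\Eb_1$-algebra over the ordered Ran space carries no such symmetry, so this condition is simply absent, which is the sole difference from Theorem \ref{thm:GaGmEVA}.
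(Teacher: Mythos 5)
Your proposal is correct and follows essentially the same route as the paper: restrict the strong product to the two-point locus, use the Mayer--Vietoris boundary map and weak $G$-equivariance (writing the underlying $\Dl$-module as $V\otimes\Ol_G$ in diagonal--antidiagonal coordinates) to extract $Y:V\otimes V\to V((z))$ with $[T,Y(\alpha,z)]=\partial_z Y(\alpha,z)$, and match associativity of $m$ with equation (\ref{eqn:VertexAssociativity}) over ordered triples. The one step you flag as "most in need of care" is resolved in the paper by Lemma \ref{lem:BDLem} (to pass to the completed pushforward and obtain Laurent coefficients) together with the iterative Mayer--Vietoris identity $\delta(\delta\boxtimes\id)=\delta(\id\boxtimes\delta)$ of Lemma \ref{lem:IterativeMayerVietoris}, which is exactly what makes the two bracketings of the boundary map agree.
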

\begin{proof}
  Given a strong factorsation algebra structure $m:A\otimes^{ch}A \stackrel{\sim}{\to} \jmath^*\cup^*A$, we get from the Mayer Vieotris boundary map
  \begin{center}
  \begin{tikzcd}[row sep = {30pt,between origins}, column sep = {85pt,between origins}]
  j_*j^*(A\otimes^{ch}A)_2\ar[rd,dashed,"Y"]\ar[d,"\wr"',"m"] & \\ 
  j_*j^*A_2\ar[r,"\delta"] & i_*i^!A_2[1]&[-32pt]\ \simeq\ i_*A_1[1]
  \end{tikzcd}
  \end{center} 
  where $(-)_n$ denotes the restriction to $G^n$, and $i,j$ are the embedding of the diagonal and its complement inside $G\times G$. As discussed above, the above restriction uniquely determines $m$. Rewriting a little, we have map 
  $$Y\ :\ j_*j^*(B\boxtimes B)\ \to\ i_*B$$
  where $B=A_1[-1]$. By Lemma \ref{lem:BDLem} below, this map of D-modules is equivalent to a map 
  $$Y\ :\ B\boxtimes B\ \to \ (\hat{\iota}_*B)\otimes j_*\Ol.$$
  Now, we use that $B$ is weakly $G$-equivariant to give that 
  $$B\ =\ V\otimes\Ol_G$$
  as O-modules, and the D-module structure is determined by the action of the generating left invariant vector field $\xi$ on $G$, which acts as $T\otimes\id+\id\otimes\xi$ for $T$ some endomorphism of $V$. Thus if we take diagonal-antidiagonal coordinates on $G^2$:
  \begin{equation} \label{eqn:DiagonalAntidiagonalCoordinates}
   G_\Delta\times G_{-\Delta}\ \stackrel{\sim}{\to}\ G^2\hspace{15mm} (t,s)\ \mapsto\ (ts,ts^{-1}) 
  \end{equation}
  the data of $Y$ becomes equivalent to a $\xi$-linear map of vector spaces $Y : V\otimes V \to V((z))$. The $\xi$-linearity is equivalent to the condition $[T,Y(\alpha,z)]=\partial_z Y(\alpha,z)$. It thus remains to show that the associativity condition (\ref{eqn:VertexAssociativity}) is equivalent to the associativity of $m$. The left hand side of (\ref{eqn:VertexAssociativity}) fits into the diagram 
\begin{center}
\begin{tikzcd}[row sep = {35pt,between origins}, column sep = {120pt,between origins}]
j_3^*(A_1\boxtimes A_1\boxtimes A_1)\ar[d,"m\otimes \id","\wr"']\ar[rd,dashed,"Y\boxtimes \id"] & & \\ 
j_3^*(j^*A_2\boxtimes A_1)\ar[r,"\delta\boxtimes\id"] \ar[d,"m","\wr"']&j_3^*(j^*i_*A_1\boxtimes A_1)[1]\ar[rd,dashed,"Y"] \ar[d,"m","\wr"']& \\
j_3^*A_3\ar[r,"j_3^*(\delta\boxtimes \id)"]&j_3^*(i\times\id)_*A_2[1]\ar[r,"\delta"]&j_3^*((i\times\id)_*i_*A_1)[2]
\end{tikzcd}
\end{center}
and likewise for the right side of (\ref{eqn:VertexAssociativity}). These isomorphisms take place in the domain of 
$$j_3\ :\ G^3\setminus(\Delta_{12}\cup \Delta_{23})\ \to\ G^3.$$
We are then done from $m(\id\times m) =m(m\times\id)$, and because $\delta (\delta\boxtimes\id)  = \delta (\id\boxtimes\delta)$ from the iterative Mayer-Vietoris Lemma \ref{lem:IterativeMayerVietoris}, below. 
\end{proof}

\begin{proof}[Proof of Theorem \ref{thm:GaGmEVA}] It remains to show that skew commutativity (\ref{eqn:SkewCommutativity}) is equivalent to commutativity of $m$. Thus, assume that $m$ is commutative, i.e. that the following commutes
  \begin{equation}\label{fig:VertexCommutativity}
    \begin{tikzcd}[row sep = {30pt,between origins}, column sep = {85pt,between origins}]
      j_*j^*(A_1\boxtimes A_1)\ar[dr,"\sim","m"'] \ar[rr,"\sigma","\sim"']&[-35pt]&[-35pt]j_*j^*(A_1\boxtimes A_1)\ar[rd,dashed,"Y"]\ar[dl,"\sim"',"m"] & \\ 
    &j_*j^*A_2\ar[rr,"\delta"] && i_*i^!A_2[1]&[-32pt]\ \simeq\ i_*A_1[1]
    \end{tikzcd}
  \end{equation}
where $\sigma$ is induced by the commutative factorisation category structure on $\Dl\Md_{\Ran G}$, i.e. it exhibits that $A_1\boxtimes A_1$ is a $\Sk_2$-equivariant D-module on $(G\times G)_\circ$. 

To understand this diagram, we take $G_{-\Delta}$-invariant  sections of (\ref{fig:VertexCommutativity}) as quasicoherent sheaves to get 
\begin{center}
  \begin{tikzcd}[row sep = {20pt,between origins}, column sep = 20pt]
    V^{\otimes 2}\otimes \Gamma(G_{-\Delta},\jmath_*\Ol) \ar[r,"\sim"',"\sigma"]&V^{\otimes 2}\otimes \Gamma(G_{-\Delta},\jmath_*\Ol)\ar[r,"Y"]& V\otimes \Gamma(G_{-\Delta},\delta_1)
  \end{tikzcd}
\end{center}
where here $\jmath:G_{-\Delta}-1\to G_{-\Delta}$. It remains to understand the map $\sigma$. Let us restrict to $G=\Ga$, the other cases following similarly using the isomorphism $\hat{G}\simeq\hat{\Gb}_a$. Choosing a coordinate $z-w$ on the formal neighbourhood of the identity in $G_{-\Delta}$, and writing $\partial_{z-w}$ for the generating left invariant vector field on $G_{-\Delta}$ with $\partial_{z-w}(z-w)=2$, we have commuting diagram
\begin{center}
\begin{tikzcd}[row sep = {35pt, between origins}, column sep = {30pt}]
  V^{\otimes 2}((z-w))\ar[d,"\sigma_{V,V}"] \ar[ddd,"\sigma"',bend right = 20,{xshift=-40pt}] &\\
 V^{\otimes 2}((z-w))\ar[r,"Y"] \ar[d,":e^{(w-z)\partial_{z-w}}:"]&V((z-w))\ar[d,":e^{(w-z)\partial_{z-w}}:"] \\ 
 V^{\otimes 2}((z-w))\ar[r,"Y"]\ar[d,"e^{\frac{1}{2}(z-w)\partial_{z+w}}"] &V((z-w))\ar[d,"e^{\frac{1}{2}(z-w)\partial_{z+w}}"]\\
 V^{\otimes 2}((z-w))\ar[r,"Y"] &V((z-w))
\end{tikzcd}
\end{center}
where $:e^{(w-z)\partial_{z-w}}:=\sum_{k\ge 0}\frac{1}{k!}(w-z)^k\partial_{z-w}^k$ by Taylor's Theorem sends $(z,w)\mapsto (w,z)$. Both squares commute because $Y$ is a map of D-modules, and cell on the left commutes because 
$$\sigma\ =\ (e^{\frac{1}{2}(z-w)T}\otimes e^{\frac{1}{2}(z-w)T}) \cdot (z\leftrightarrow w)\cdot\sigma_{V,V}.$$
Finally, since on the right $\partial_z,\partial_w$ act as 
$$\partial_z\ =\ T\otimes \id + \id\otimes\partial_{z-w}, \hspace{15mm}\partial_w\ =\ T\otimes\id-\id\otimes\partial_{z-w},$$
it follows that the composite of the two vertical maps on the right is $e^{(z-w)T}\cdot(z\leftrightarrow w)$, and so 
$$Y(\alpha,z-w)\beta\ =\ Y(z-w)\cdot\sigma(\alpha\otimes\beta)\ =\ e^{(z-w)T}Y(\beta,-(z-w))\alpha.$$
\end{proof}

We now state the classification Theorem for bradided commutative vertex algebras. Let $H$ be a vertex quantum group with vertex $R$-matrix $R(z,w)=R(z-w)$, corresponding to a factorisation quantum group as in Theorem \ref{thm:VertexQuantumGroupFactorisationQuantumGroup}.  
\begin{prop} \label{prop:GaGmEVABraidedCommutative}
  The category of strong braided commutative factorisation algebras 
  $$\Eb_2\FactAg^{\textup{st}}(H\Md_{\Dl\Md}(\Ran G)^{G,w})$$
  is equivalent to the data in Theorem \ref{thm:GaGmEVA}, but all data being $H$-linear and with (\ref{eqn:SkewCommutativity}) replaced by braided skew commutativity:
  $$Y(\alpha,z)\beta\ =\  e^{u(z)T}Y(R(z)\cdot \beta\otimes\alpha,-_\gk z).$$
\end{prop}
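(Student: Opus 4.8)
The plan is to reduce Proposition~\ref{prop:GaGmEVABraidedCommutative} to the two preceding classification results, treating a strong braided commutative factorisation algebra as a strong associative one that in addition satisfies a single braided-commutativity relation. Recall that in any braided monoidal category an $\Eb_2$-algebra is the same datum as an associative algebra $A$ with $m = m\cdot\beta$, where $\beta$ is the braiding; in the factorisation setting this is precisely the notion of braided commutative factorisation algebra recalled in section~\ref{sec:ReconstructionTheorem}. Thus the forgetful map from $\Eb_2$- to $\Eb_1$-factorisation algebras is faithful, and it suffices to (i) identify the underlying associative vertex structure and (ii) translate the extra relation $m = m\cdot\beta$ into a condition on $Y$.

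First I would carry out step~(i) by applying Proposition~\ref{prop:GmGaEAssociative} verbatim, but internally to the braided factorisation category $H\Md_{\Dl\Md}(\Ran G)^{G,w}$ in place of $\Dl\Md(\Ran G)^{G,w}$. Every ingredient of that proof --- the Mayer--Vietoris boundary map, Lemma~\ref{lem:BDLem}, the change to diagonal--antidiagonal coordinates (\ref{eqn:DiagonalAntidiagonalCoordinates}), and the iterative Mayer--Vietoris argument (Lemma~\ref{lem:IterativeMayerVietoris}) establishing associativity --- is functorial and preserves $H$-linearity, since the $H$-action is carried along by all functors involved. This yields $H$-linear data $(V,T,Y)$ with $[T,Y(\alpha,z)] = \partial_z Y(\alpha,z)$ and the associativity relation (\ref{eqn:VertexAssociativity}), i.e. exactly the data of Theorem~\ref{thm:GaGmEVA} minus skew commutativity, now living in $H\Md$.

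Next I would carry out step~(ii) by re-running the second half of the proof of Theorem~\ref{thm:GaGmEVA} --- the passage from commutativity of $m$ to skew commutativity (\ref{eqn:SkewCommutativity}) --- with the commutative braiding $\sigma$ replaced by the braiding of $H\Md$. By Theorem~\ref{thm:SpectralRMatrixBraidedMonoidal}, in the Ran-space form of Corollary~\ref{cor:RanSpaceRMatrix}, this braiding is $\beta = R\cdot\sigma$, where $\sigma$ is the underlying commutative factorisation braiding used in Theorem~\ref{thm:GaGmEVA} and $R = R(z,w) = R(z -_\gk w)$ is multiplication by the spectral $R$-matrix. Taking $G_{-\Delta}$-invariant sections and passing to antidiagonal coordinates as before, the decomposition $\sigma = (e^{\frac{1}{2}(z-w)T}\otimes e^{\frac{1}{2}(z-w)T})\cdot(z\leftrightarrow w)\cdot\sigma_{V,V}$ acquires the prefactor $R(z)$, and the identical diagram chase converts $m = m\cdot\beta$ into
$$Y(\alpha,z)\beta = e^{u(z)T}\,Y(R(z)\cdot\beta\otimes\alpha,\,-_\gk z),$$
the asserted braided skew commutativity. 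Reversing each step shows conversely that any $H$-linear $(V,T,Y)$ satisfying associativity and this relation assembles into a strong braided commutative factorisation algebra, giving the claimed equivalence of categories.

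The hard part will be the bookkeeping for the $R$-matrix. I must check that the spectral $R$-matrix $R(z,w)$, a priori living on $(G\times G)_\circ$, restricts to $R(z)$ acting on $\beta\otimes\alpha$ in the antidiagonal coordinate, and that it interacts compatibly with the two translations $e^{\frac{1}{2}(z-w)T}$ appearing in $\sigma$. This is exactly where the $H_D\otimes H_D$-covariance (\ref{eqn:VertexQuantumGroupDerivation}) of the vertex $R$-matrix is needed: it guarantees that $R$ is a genuine map of $\Ga$-equivariant D-modules, hence descends through the invariant-sections functor and commutes with the $T$-flows in the required way. Once this compatibility is in hand the remaining verification is formally identical to that of Theorem~\ref{thm:GaGmEVA}, so no new analytic input is required.
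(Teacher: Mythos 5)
Your proposal is correct and follows essentially the same route as the paper: the paper's proof is exactly a rerun of Theorem \ref{thm:GaGmEVA} with the commutativity diagram (\ref{fig:VertexCommutativity}) replaced by the braided one in which $\sigma$ becomes $\sigma\cdot R$, yielding the extra factor $R(z)$ in skew commutativity. Your additional remarks on $H$-linearity persisting through the Mayer--Vietoris and coordinate-change steps, and on (\ref{eqn:VertexQuantumGroupDerivation}) being what lets $R$ descend through invariant sections and commute with the $T$-flows, are accurate elaborations of what the paper leaves implicit.
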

\begin{proof}
  Proceeds as in the proof of Theorem \ref{thm:GaGmEVA}, but using the braided commutativity of $m$
  \begin{equation}\label{fig:BraidedVertexCommutativity}
    \begin{tikzcd}[row sep = {30pt,between origins}, column sep = {85pt,between origins}]
      j_*j^*(A_1\boxtimes A_1)\ar[dr,"\sim","m"'] \ar[rr,"\sigma \cdot R","\sim"']&[-35pt]&[-35pt]j_*j^*(A_1\boxtimes A_1)\ar[rd,dashed,"Y"]\ar[dl,"\sim"',"m"] & \\ 
    &j_*j^*A_2\ar[rr,"\delta"] && i_*i^!A_2[1]&[-32pt]\ \simeq\ i_*A_1[1]
    \end{tikzcd}
  \end{equation}
  instead of commutativity (\ref{fig:VertexCommutativity}). 
\end{proof}

Finally, we write down a classification Theorem for vertex bialgebras. Remember that we view D-modules on the equivariant Ran space as a classical braided factorisation category with respect to its factorisation products $\otimes^{ch}$ and $\otimes$.

\begin{prop} \label{prop:GaGmEVABialgebra}
  The category of factorisation bialgebras 
  $$\Eb_1^{\otimes^{ch}}\FactAg^{\textup{st}}\left(\Eb_1^{\otimes}\FactCoAg(\Dl\Md(\Ran G)^{G,w})\right)\hspace{50mm}$$
  $$\hspace{50mm}\ \simeq\ \Eb_1^{\otimes^{ch}}\FactAg^{\textup{st}}\left(\Eb_1^{\otimes}\FactCoAg(\Dl\Md(\Ran_a G)^{G,w})\right)$$ 
  is equivalent to an associative vertex algebra $V$ (as in Proposition \ref{prop:GmGaEAssociative}) with a coassociative coalgebra structure 
  $$c\ :\ V\ \to\ V\otimes V, \hspace{15mm} \epsilon\ :\ V\ \to\ k,$$
  compatible as the following commuting 
  \begin{center}
  \begin{tikzcd}[row sep = {30pt,between origins}, column sep = {20pt}]
  V\otimes V\ar[rr,"Y"] \ar[d,"c\otimes c"]&[-10pt]&V((z))\ar[d,"c"] \\ 
  (V\otimes V)\otimes( V\otimes V)\ar[r,"\sim"',"\sigma_{23}"] &V\otimes V\otimes V\otimes V\ar[r,"Y\otimes Y"] & V\otimes V((z))
  \end{tikzcd}
  \end{center}
  with $|0\rangle$ grouplike and $T$ a coderivation for $c$, $\epsilon|0\rangle =1$ and $\epsilon\cdot  T = 0$.
\end{prop}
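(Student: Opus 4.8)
The plan is to bootstrap off Proposition \ref{prop:GmGaEAssociative}, which already identifies the underlying strong $\otimes^{ch}$-algebra of such an object with an associative vertex algebra $(V,T,Y)$ satisfying $[T,Y(\alpha,z)]=\partial_z Y(\alpha,z)$ and associativity, and then to account for the extra $\otimes$-coalgebra structure together with the bialgebra compatibility between the two factorisation products. First I would analyse the $\otimes$-factorisation coalgebra structure in isolation. By the discussion of section \ref{sec:DiagonalFactStruct}, the diagonal factorisation structure on $\Ran G$ has head $C\simeq \Ran G$ with $q=\Delta$ and $p=\id$, so a $\otimes$-factorisation coalgebra is simply a coalgebra object for the pointwise monoidal structure $\otimes$ on $\Dl\Md(\Ran G)^{G,w}$, i.e. a coproduct $c:A\to A\otimes A$ and counit $\epsilon:A\to \mathbf 1$ satisfying coassociativity. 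Since the ambient object is a \emph{strong} $\otimes^{ch}$-algebra, it is determined as a $\Dl$-module by its restriction $A_1=A\vert_G$ to the main diagonal, via the Mayer--Vietoris description recalled in section \ref{sec:ExplicitChiralKoszul}; taking $G$-invariant sections turns the coalgebra data into a coassociative, counital structure $c:V\to V\otimes V$, $\epsilon:V\to k$ on the underlying vector space, with $\epsilon|0\rangle=1$ by unitality.

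Next I would unwind the bialgebra axiom, namely that being an $\Eb_1^{\otimes^{ch}}$-algebra inside $\Eb_1^{\otimes}$-coalgebras forces the vertex product $Y$ to be a morphism of $\otimes$-coalgebras, where the target $A\otimes^{ch}A$ carries the coproduct twisted by the braiding between $\otimes^{ch}$ and $\otimes$ recorded in Lemma-Definition \ref{lemdefn:BraidedFactCat}. Restricting to $(G\times G)_\circ$ and taking invariant sections, the explicit form of this braiding computed in section \ref{sec:RanSpace} (the interchange of $\cup_*\jmath_*\jmath^*$ of $*$-pushforwards) identifies the twist with the symmetric interchange $\sigma_{23}$, so the compatibility becomes exactly the commuting square in the statement, $c\cdot Y=(Y\otimes Y)\cdot\sigma_{23}\cdot(c\otimes c)$. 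The residual conditions are then read off from equivariance and unitality: grouplikeness of $|0\rangle$ expresses that the unit of the $\otimes^{ch}$-algebra is a morphism of $\otimes$-coalgebras, $T$ being a coderivation for $c$ is the Leibniz compatibility of $c$ with the vector field generating the $G$-action under the $\otimes$-tensor $\Dl$-module structure (where $\xi$ acts as $T\otimes\id+\id\otimes T$ after invariants), and $\epsilon\cdot T=0$ is forced by $\epsilon$ being a map of weakly equivariant $\Dl$-modules into the unit. Finally, the equivalence of the two categories over $\Ran G$ and $\Ran_a G$ is inherited from the corresponding equivalence in Proposition \ref{prop:GmGaEAssociative}, since all the added structure is pinned down by its restriction to the first and second diagonal strata, where ordered and unordered configurations agree.

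The main obstacle I expect is the identification in the second paragraph of the braiding twist with $\sigma_{23}$: one must check that, after passing to invariant sections over $(G\times G)_\circ$, the braiding between $\otimes^{ch}$ and $\otimes$ acts as the naive symmetry on the two tensor factors and introduces no spectral ($z$-dependent) correction, so that the compatibility diagram is the undeformed one. This is where the explicit $\Dl$-module computation of section \ref{sec:RanSpace}, together with the fact that the diagonal structure contributes the pointwise tensor, does the real work; everything else is a matter of transporting coassociativity, counitality and equivariance through the dictionary already established in Proposition \ref{prop:GmGaEAssociative} and used in Proposition \ref{prop:GaGmEVABraidedCommutative}.
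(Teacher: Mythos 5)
Your overall strategy — bootstrap off Proposition \ref{prop:GmGaEAssociative}, observe that a $\otimes$-factorisation coalgebra for the diagonal structure is just a pointwise coalgebra object, and unwind the bialgebra compatibility through the lax braiding between $\otimes^{ch}$ and $\otimes$ — matches the shape of the paper's argument, and your identification of the compatibility square with the undeformed $\sigma_{23}$ interchange is indeed what the braiding computed in section \ref{sec:RanSpace} gives. However, there is a genuine gap at the step where you pass from the factorisation coalgebra $c$ to the single map $c_1$ on $V$. For the diagonal factorisation structure the coproduct is a compatible \emph{family} of maps $c_I:A_I\to A_I\otimes A_I$, one for each stratum $X^I$ of the Ran space, and the claimed equivalence of categories requires showing that this entire family is determined by its restriction $c_1$ to the main diagonal. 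You argue that ``the ambient object is determined as a $\Dl$-module by its restriction $A_1$'', but that is a statement about the underlying object $A$, not about the morphism $c$: knowing that $A_I$ is reconstructed from $A_1$ does not by itself pin down an arbitrary map $A_I\to A_I\otimes A_I$ compatible with the diagonal restrictions.

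This determination is precisely the technical content of the paper's proof. There one considers, for each partition $I=I_1\sqcup I_2$ into nonempty subsets, the Mayer--Vietoris triangle for the open stratum $(X^{I_1}\times X^{I_2})_\circ$ and its closed complement, applies it to $c_I$, and uses the \emph{strong} factorisation algebra structure to show that both $\jmath_*\jmath^!c_I$ and $\iota_*\iota^!c_I$ are determined by the $c_J$ with $|J|<|I|$; since the horizontal maps of the resulting square depend only on $A$ and its $\Dl$-module structure, $c_I$ is recovered as the fibre of the natural transformation $\jmath_*\jmath^!\to\iota_*\iota^![1]$, and induction on $|I|$ finishes the argument. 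Without some version of this induction your proposal only constructs a functor from factorisation bialgebras to the vertex-bialgebra data, not an equivalence. (Conversely, the point you flag as the main obstacle — that the braiding contributes no spectral correction — is handled by the explicit footnote computation of $\Gamma(\beta)$ in section \ref{sec:RanSpace} and is not where the difficulty lies.)
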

\begin{proof}
  Let $A$ be a factorisation bialgebra with coproduct $c:A\to A\otimes A$ with respect to the diagonal factorisation structure, which is thus determined by its restrictions $c_I:A_I\otimes A_I\to A_I$ to $X^I$. It is easy to show that $c_1$ together with the compatible associative factorisation algebra structure gives the data in the Proposition, so it remains to show that $c_1$ determines the remaining $c_I$. Thus, consider 
  \begin{equation}\label{eqn:VBialgDiag}
    \begin{tikzcd}[row sep = {30pt,between origins}, column sep = {20pt}]
    A_I\otimes A_I\ar[r] &\jmath_*\jmath^*(A_I\otimes A_I)\ar[r] &\iota_*\iota^!(A_I\otimes A_I)[1] \\ 
    A_I\otimes A_I\ar[r]\ar[u,"c_I"] &\jmath_*\jmath^*(A_I\otimes A_I)\ar[r]\ar[u,"\jmath_*\jmath^!c_I"] &\iota_*\iota^!(A_I\otimes A_I)\ar[u,"\iota_*\iota^!c_I"][1]
    \end{tikzcd}   
  \end{equation}
  where $\jmath:(X^{I_1}\times X^{I_2})_\circ\to X^{I_1\sqcup I_2}$ is the open inclusion corresponding to partition $I=I_1\sqcup I_2$ into two nonempty finite sets, and $\iota$ is its closed complement. 
  
  Using the strong factorisation algebra structure, one can show that $\jmath_*\jmath^! c_I$ and $\iota_*\iota^!c_I$ are determined by $c_{J}$ for $|J|<|I|$. Moreover, the horizontal maps in (\ref{eqn:VBialgDiag}) are determined by $A$ and its D-module structure, i.e. are independent of $c_I$. It follows that $c_I$, as the fibre of the natural transformation $\jmath_*\jmath^!\to\iota_*\iota^![1]$ applied to $c_I:A_I\otimes A_I\to A_I$, is uniquely determined by the $c_J$ with $|J|<|I|$. Thus by induction $c$ is determined by $c_1$.
\end{proof}

\subsubsection{Examples of Theorem \ref{thm:GaGmEVA}} \label{ssec:ExamplesOfTheoremGaGmEVA}
When $G=\Ga$ we have $\pm_\gk=\pm$ and the above reduces to the  ordinary notion of (commutative) vertex algebra from cite \cite{FBZ}. When $G=\Gm$ we have $u(z)=\log(1+z)$, and so writing $s=1+z$ we have
 $$Y(\alpha,s)\beta\ =\ s^T Y(\beta,s^{-1})\alpha, \hspace{10mm} Y(Y(\alpha,s_1)\beta,s_2)\gamma\ =\ Y(\alpha,s_1s_2)Y(\beta,s_2)\gamma,$$
 which matches the definition of (non-equivariant commutative) multiplicative vertex algebra as defined in \cite{Li}. When $G=E$ is an elliptic curve, an explicit formulas for $+_\gk$ may be found in section IV.1 of \cite{Si} and the logarithm in section IV.4.

\subsubsection{Completions} 
We need a technical Lemma due to \cite{BD}. Let $Z\subseteq X$ be a smooth divisor inside smooth variety. We then have adjoint functors
\begin{center}
\begin{tikzcd}[row sep = 20pt, column sep = 20pt]
\Dl\Md(Z_{\textup{et}})\ar[r,"\hat{\iota}_*",shift left] \ar[d,"\oblv"]&\ar[l,"\iota^*",shift left] \Dl\Md(X_{\textup{et}})\ar[d,"\oblv"]\\ 
\Dl\Md(Z) \ar[r,"\iota_*",shift left] &\ar[l,"\iota^*",shift left]   \Dl\Md(X)
\end{tikzcd}
\end{center}
and the rightwards functors are fully faithful, with essential images the category $\Dl\Md(X_{\textup{et}})_Z^\wedge$ of D-modules on $X_{et}$ which are complete along $Z$, and $\Dl\Md(X)_Z$ of D-modules with support inside $Z$, see section 9 of \cite{Bu}. The diagram involving $\iota^*$ commutes, but the diagram involving $\iota_*$ commutes only up to non-invertible $2$-morphism:
$$\oblv \tilde{\iota}_*\Fl\ \to\ \oblv \tilde{\iota}_*\Fl\, \otimes \iota_*k\ \simeq\ \iota_*\oblv \Fl $$
where $k$ denotes the unit D-module. Then we have as in \cite[9.1.4]{Bu},

\begin{lem}\label{lem:BDLem}
  For any $\Al_X\in \Dl\Md(X_{\textup{et}})$ and $\Bl_Z\in \Dl\Md(Z_{\textup{et}})$, the tautological maps
  $$\Hom(\Al_X\otimes\jmath_*\Ol,\iota_*\Bl_Z) \ \stackrel{\sim}{\leftarrow}\ \Hom(\Al_X\otimes\jmath_*\Ol,\hat{\iota}_*\Bl_Z\otimes\jmath_*\Ol)\ \stackrel{\sim}{\to}\ \Hom(\Al_X,\hat{\iota}_*\Bl_Z\otimes\jmath_*\Ol)$$
  are isomorphisms, where the tautological maps are induced by
  \begin{center}
  \begin{tikzcd}[row sep = {30pt,between origins}, column sep = {70pt,between origins}]
   \Al_X\otimes \jmath_*\Ol\ar[r,dashed]\ar[rd,dashed]& \iota_*\Bl_Z\\ 
  \Al_X\ar[r,dashed] \ar[u]&\hat{\iota}_*\Bl_Z\otimes\jmath_*\Ol \ar[u]
  \end{tikzcd}
  \end{center}
  All Hom spaces are taken in $\Dl\Md(X)$.  
\end{lem}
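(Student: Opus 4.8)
The plan is to treat the two maps separately, recognising the right-hand one as a localisation adjunction and the left-hand one as the vanishing of an orthogonal $\Hom$. Throughout I work in the derived (dg) category $\Dl\Md(X)$, so that all $\Hom$'s below are the full mapping complexes and triangles may be used freely.

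First I would dispatch the right-hand isomorphism. The functor $(-)\otimes\jmath_*\Ol$ is an idempotent (smashing) localisation of $\Dl\Md(X)$: since $\jmath_*\Ol$ is the localisation $\Ol_X(\ast Z)$ and $\jmath_*\Ol\otimes\jmath_*\Ol\simeq\jmath_*\Ol$, the canonical map $\Al_X\to\Al_X\otimes\jmath_*\Ol$ is the unit of this localisation, whose local objects are exactly the $\jmath_*\Ol$-modules. The target $\hat{\iota}_*\Bl_Z\otimes\jmath_*\Ol$ is visibly such a module, so the universal property of the localisation shows that precomposition with the unit gives an equivalence $\Hom(\Al_X\otimes\jmath_*\Ol,\hat{\iota}_*\Bl_Z\otimes\jmath_*\Ol)\xrightarrow{\sim}\Hom(\Al_X,\hat{\iota}_*\Bl_Z\otimes\jmath_*\Ol)$, which is the right-hand map of the Lemma.

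For the left-hand map I would first produce a fibre sequence
$$\hat{\iota}_*\Bl_Z\ \to\ \hat{\iota}_*\Bl_Z\otimes\jmath_*\Ol\ \xrightarrow{\ r\ }\ \iota_*\Bl_Z$$
whose second arrow is the natural map $r$ appearing as the right vertical of the displayed diagram. This is obtained by tensoring $\hat{\iota}_*\Bl_Z$ with the local-cohomology triangle $\Ol_X\to\jmath_*\Ol\to\jmath_*\Ol/\Ol_X$: for a smooth divisor $\operatorname{cofib}(\Ol_X\to\jmath_*\Ol)\simeq\iota_* k$, and the projection-formula identification $\hat{\iota}_*\Bl_Z\otimes\iota_* k\simeq\iota_*\oblv\Bl_Z$ recalled just above the Lemma then gives $\operatorname{cofib}(\hat{\iota}_*\Bl_Z\to\hat{\iota}_*\Bl_Z\otimes\jmath_*\Ol)\simeq\iota_*\Bl_Z$, the composite being exactly $r$. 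Applying $\Hom(\Al_X\otimes\jmath_*\Ol,-)$ to this fibre sequence, the left-hand map of the Lemma is an isomorphism as soon as $\Hom(\Al_X\otimes\jmath_*\Ol,\hat{\iota}_*\Bl_Z)\simeq 0$.

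The vanishing $\Hom(\Al_X\otimes\jmath_*\Ol,\hat{\iota}_*\Bl_Z)\simeq 0$ is the crux, and the one input I would justify with care. It expresses the orthogonality of the two subcategories: a localised object (a $\jmath_*\Ol$-module) admits no nonzero maps into a D-module complete along $Z$. Concretely, a map out of $\Al_X\otimes\jmath_*\Ol$ must land in the part of $\hat{\iota}_*\Bl_Z$ on which a local equation for $Z$ acts invertibly, but completeness forces that part (the intersection $\bigcap_n z^n(-)$) to vanish; the prototype $X=\Ab^1$, $Z=\{0\}$ is precisely $\Hom(k[z,z^{-1}],k[[z]])\simeq 0$. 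The only genuine subtlety is that this must hold for the whole mapping complex and not merely on $\pi_0$, i.e. it is the derived ``local versus complete'' orthogonality of Greenlees–May type for D-modules along the smooth divisor $Z$; granting it, the triangle argument closes and both maps are isomorphisms.
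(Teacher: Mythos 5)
The paper does not actually prove this Lemma --- it states it and defers to \cite[9.1.4]{Bu} --- so there is no in-text argument to compare against beyond the recollement setup recalled just above the statement. Your proof is correct and is the argument one would expect that citation to contain: the right-hand map is the universal property of the smashing localisation $(-)\otimes\jmath_*\Ol\simeq\jmath_*\jmath^*$ (valid since $\jmath$ is the affine complement of a Cartier divisor, so $\jmath_*\Ol$ is an idempotent algebra), and the left-hand map is handled by tensoring $\hat{\iota}_*\Bl_Z$ with the triangle $\Ol_X\to\jmath_*\Ol\to\iota_*k$ and using the identification $\hat{\iota}_*\Bl_Z\otimes\iota_*k\simeq\iota_*\Bl_Z$ that the paper records, reducing everything to the orthogonality $\Hom(\Al_X\otimes\jmath_*\Ol,\hat{\iota}_*\Bl_Z)\simeq 0$. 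You correctly flag that this vanishing must hold for the whole mapping complex, not just $\pi_0$; to close that gap cleanly, avoid the $\bigcap_n z^n(-)$ heuristic (which is the underived statement) and instead use that objects in the essential image of $\hat{\iota}_*$ are derived complete along $Z$, hence are limits $\lim_n\bigl(\hat{\iota}_*\Bl_Z\otimes\Ol/I^n\bigr)$ of objects on which a local equation $f$ for $Z$ is nilpotent; since $f$ acts invertibly on $\Al_X\otimes\jmath_*\Ol$ and the two induced actions of $f$ on each mapping complex $\Hom(\Al_X\otimes\jmath_*\Ol,\hat{\iota}_*\Bl_Z\otimes\Ol/I^n)$ coincide, each such complex vanishes and so does the limit. (Equivalently: derived completeness is precisely the vanishing of $\underline{\Homl}(\jmath_*\Ol,-)$, which gives the orthogonality by adjunction.) With that one step made precise, your argument is complete.
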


\subsubsection{Iterative Mayer Vietoris} 
If we have a complementary open and closed embedding, we obtain the boundary map of the associated Mayer Vietoris sequence
\begin{center}
\begin{tikzcd}[row sep = {30pt,between origins}, column sep = {45pt,between origins}]
Z\ar[r,"\iota"] & X & &[-25pt] &[15pt]\\[-17pt]
& & &\rightsquigarrow &  \jmath_*\jmath^!\stackrel{\delta}{\to} \iota_*\iota^![1]\\[-17pt] 
 \varnothing \ar[uu]\ar[r]& U\ar[uu,"\jmath"'] & & &
\end{tikzcd}
\end{center}
We now consider a version of this where we have sequences of closed and open subspaces 
$$Z_1\ \subseteq \ Z_2\ \subseteq\ X \hspace{15mm} U_1\ \subseteq\ U_2\ \subseteq\ X.$$
We do not assume that these are complementary. Instead, if the top left and bottom right of the following pullback squares are \textit{complementary} closed and open embeddings, we get 
\begin{center}
\begin{tikzcd}[row sep = {30pt,between origins}, column sep = {45pt,between origins}]
Z_1\ar[r] &Z_2\ar[r] & X &[10pt]&[100pt]\\ 
\varnothing\ar[r]\ar[u] & Z_2\cap U_2\ar[r,"\iota"]\ar[u,"\jmath"] & U_2\ar[u] & \rightsquigarrow & \jmath_{1*}\jmath_1^!\ \to \ \jmath_{2*}\iota_*\iota^!\jmath_2^![1]\ =\ \iota_{2*}\jmath_*\jmath^!\iota_2^![1]\ \to\ \iota_{1*}\iota_1^![2]\\
\varnothing\ar[r]\ar[u]& \varnothing\ar[r]\ar[u] & U_1\ar[u] &&     
\end{tikzcd}
\end{center}
where $\iota_i:Z_i\to X$ and $\jmath_i: U_i\to X$ are the closed and open embeddings. 

\begin{lem} \label{lem:IterativeMayerVietoris}Let $Z,Z'\subseteq X$ be two closed subspaces, with complementary opens $U,U'$. Then the functors for 
  $$Z\cap Z'\ \subseteq \ Z, \hspace{4mm} U\cap U'\ \subseteq\ U' \hspace{10mm} \text{and}\hspace{10mm}Z\cap Z'\ \subseteq \ Z', \hspace{5mm} U\cap U\ \subseteq\ U$$
  are the same.
\end{lem}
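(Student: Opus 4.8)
The plan is to reduce the statement to the symmetry of the iterated Mayer--Vietoris connecting map under interchanging the two complementary pairs $(Z,U)$ and $(Z',U')$, and to deduce that symmetry from base change along the Cartesian squares formed by intersecting $Z,Z',U,U'$ inside $X$. First I would recall that for a complementary closed--open pair $\iota:Z\hookrightarrow X$, $\jmath:U\hookrightarrow X$ the boundary $\delta$ is the connecting map of the recollement triangle
$$\iota_*\iota^!\ \to\ \id\ \to\ \jmath_*\jmath^!\ \stackrel{\delta}{\to}\ \iota_*\iota^![1],$$
which is natural in the object and in the pair, and that for the open embeddings here $\jmath^!=\jmath^*$ so the maps are unshifted as written in the displayed iterated boundary. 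Writing $\Gamma_Z=\iota_*\iota^!$ for local cohomology and $L_U=\jmath_*\jmath^!$ for localization, configuration~$1$ (the data $Z\cap Z'\subseteq Z$, $U\cap U'\subseteq U'$) builds its iterated boundary by applying the $Z'$-recollement restricted to $U'$, using the identification of the middle term $\jmath_{2*}\iota_*\iota^!\jmath_2^!$ with $\iota_{2*}\jmath_*\jmath^!\iota_2^!$, and then applying the $Z$-recollement restricted to $Z$; configuration~$2$ does the same with the roles of $Z$ and $Z'$ exchanged. Thus both composites are splicings of the two recollement triangles, performed in the two possible orders, and both run from the piece supported on the open $U\cap U'$ to the piece supported on the closed $Z\cap Z'$ with a shift by $[2]$.

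The key step is to observe that the squares obtained by intersecting $Z,Z',U,U'$ are all Cartesian (they are pullbacks of subspaces of one and the same $X$), so that the base change isomorphisms for $\Dl$-modules furnish canonical commutations
$$\Gamma_Z\Gamma_{Z'}\ \simeq\ \Gamma_{Z\cap Z'}\ \simeq\ \Gamma_{Z'}\Gamma_Z,\qquad \Gamma_Z L_{U'}\ \simeq\ L_{U'}\Gamma_Z,$$
together with the analogous commutations for the remaining mixed terms. Under these identifications the middle-term equality $\jmath_{2*}\iota_*\iota^!\jmath_2^!=\iota_{2*}\jmath_*\jmath^!\iota_2^!$ appearing in the general iterative construction is exactly the base-change isomorphism, so the two configurations factor through the two (a priori distinct) intermediate objects supported on $Z\cap U'$ and on $U\cap Z'$ respectively. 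Because each recollement triangle is natural and compatible with restriction along the complementary embeddings, applying $\delta_Z$ after $\delta_{Z'}$ and $\delta_{Z'}$ after $\delta_Z$ become the two edge-paths of a single square of connecting maps attached to the bi-recollement of $X$ by $(Z,Z')$, and the claim is precisely that this square commutes.

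The main obstacle will be to show that the two edge-paths give literally the same map rather than merely isomorphic ones, i.e.\ to control the coherence and rule out an intervening sign when splicing the two distinguished triangles in opposite orders. I would handle this with the octahedral axiom applied to the nine-term diagram of the pair of recollement triangles: the octahedron identifies both iterated connecting maps with the single connecting map of the support filtration of $\Gamma_{Z\cap Z'}\to\id$, and the two orders of splicing correspond to the two factorizations of this one total map through the two common intermediate objects, whence genuine equality (not merely up to sign), since the two paths do not pass through a common middle term but instead realize the corner-to-corner map of a commuting square. The residual verifications---that the relevant squares are Cartesian and that the base-change isomorphisms are the ones implicit in the iterative Mayer--Vietoris display---are routine, and the whole argument is local on $X$ so reduces to the affine charts where the recollement triangles are explicit.
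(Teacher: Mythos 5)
Your overall strategy matches the paper's: both arguments reduce the lemma to the commutativity of the single square whose corners are $L_{U'}L_U$, $L_{U'}\Gamma_Z[1]$, $\Gamma_{Z'}L_U[1]$ and $\Gamma_{Z'}\Gamma_Z[2]$ (writing $\Gamma_Z=\iota_*\iota^!$ and $L_U=\jmath_*\jmath^!$ as you do; in the paper's notation these are $j_{2*}j_2^!j'_{2*}j_2'^!$, etc.), with edges obtained by whiskering the two boundary maps, and both use the Cartesian intersections of $Z,Z',U,U'$ to identify the corners with the terms of the iterated Mayer--Vietoris display. The divergence --- and the gap --- is in why that square commutes.

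The paper's reason is elementary: $\delta_Z\colon L_U\Rightarrow\Gamma_Z[1]$ and $\delta_{Z'}\colon L_{U'}\Rightarrow\Gamma_{Z'}[1]$ are natural transformations of endofunctors, fixed once and for all by the two recollements, so the square is literally a naturality square for $\delta_{Z'}$ evaluated on the morphism $\delta_Z(M)\colon L_UM\to\Gamma_ZM[1]$; this is the interchange law for whiskered natural transformations, and no sign can intervene because nothing is being re-spliced. Your substitute for this step --- the octahedral axiom applied to ``the nine-term diagram'', identifying ``both iterated connecting maps with the single connecting map of the support filtration of $\Gamma_{Z\cap Z'}\to\id$'' --- does not work as stated: the iterated boundary $L_{U\cap U'}\to\Gamma_{Z\cap Z'}[2]$ is a degree-two map and cannot be the connecting map of any one distinguished triangle, and the $3\times 3$/octahedron style of argument is precisely the setting in which iterated connecting maps are known to \emph{anti}commute (the very pitfall you flag). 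So the sign concern you raise is not dispatched by the tool you invoke; it is dispatched by the observation that the two boundaries are globally defined natural transformations of functors, which your write-up never isolates. The remaining ingredients you supply (base change across the Cartesian squares, the identification of the two middle terms supported on $Z\cap U'$ and $Z'\cap U$) are fine and are implicit in the paper's setup.
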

\begin{proof}
   We have the following diagram whose rows and columns are complementary open and closed embeddings
\begin{center}
  \begin{tikzcd}[row sep = {35pt,between origins}, column sep = {55pt,between origins}]
  U\cap Z\ar[r,"\overbar{\jmath}_1'"]\ar[d,"\overbar{\iota}_2'"]&Z'\ar[d,"i_2'"]&Z\cap Z'\ar[l,"i_1'"']\ar[d,"i_1"]\\
  U\ar[r,"j_2'"]& X& Z\ar[l,"i_2"']\\
  U\cap U'\ar[u,"j_1'"']\ar[r,"j_1"]& U'\ar[u,"j_2"']& Z\cap U\ar[l,"\overbar{\iota}_2"']\ar[u,"\overbar{\jmath}_2"']
  \end{tikzcd}
  \end{center}
It follows that we have commuting diagram 
\begin{center}
\begin{tikzcd}[row sep = {35pt,between origins}, column sep = {20pt}]
   {j_2}_*{j_2}^!{i_2}_*{i_2}^![1]\ar[r] &{i_2'}_*{i_2'}^!{i_2}_*{i_2}^![2] \\ 
j_{2*}j_2^!{j_2'}_*{j_2'}^!\ar[r]\ar[u] & {i_2'}_*{i_2'}^! {j_2'}_*{j_2'}^![1]\ar[u]
\end{tikzcd}
\end{center}
Thusthe two functors under consideration are the same since this square commutes. 
\end{proof}

Likewise,
\begin{lem}
  If $Z_1,...,Z_n$ are closed subspaces with open complements $U_1,...,U_n$ then the functor 
  $$\jmath_*\jmath^!\ \to\ \iota_*\iota^![n],$$
  induced by iteratedly applying the boundary map of the Mayer Vietoris sequence, is independent of the order of the $Z_i$ chosen. Here, $\iota$ and $\jmath$ are the embeddings of $Z_1\cup...\cup Z_n$ and $U_1\cap...\cap U_n$ respectively.
\end{lem}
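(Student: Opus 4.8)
The plan is to deduce the $n$-fold statement from the two-fold commutation already proved in Lemma \ref{lem:IterativeMayerVietoris}, by an induction over the symmetric group. Write the iterated boundary map for a chosen order $Z_{\pi(1)},\dots,Z_{\pi(n)}$ as a composite $\beta_n\circ\cdots\circ\beta_1$ of one-step Mayer--Vietoris connecting maps, where $\beta_k$ is the boundary for the complementary pair $(Z_{\pi(k)},U_{\pi(k)})$ restricted to the locally closed stratum already cut out by the subspaces processed at stages $1,\dots,k-1$ and pushed back to $X$. The source and target of the whole composite are $\jmath_*\jmath^!$ and $\iota_*\iota^![n]$, with $\iota,\jmath$ the embeddings of $\bigcup_i Z_i$ and $\bigcap_i U_i$, which are manifestly independent of $\pi$; what must be shown is that the \emph{natural transformation} $V(\pi)\defeq\beta_n\circ\cdots\circ\beta_1$ itself does not depend on $\pi$.

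First I would record that $S_n$ is generated by the adjacent transpositions $s_j=(j,j+1)$, so it suffices to prove $V(\pi)=V(s_j\pi)$ for every $\pi$ and every $j$; iterating such swaps connects any order to any other and forces $V$ to be constant. Since the braid and commutation relations among the $s_j$ play no role once each generator is known to fix $V$, no coherence beyond the single swap is needed. Next I would localize the swap to stages $j$ and $j+1$: the maps $\beta_1,\dots,\beta_{j-1}$ depend only on the \emph{set} of subspaces processed before stage $j$, and the functor they produce is the input to stage $j$, while $\beta_{j+2},\dots,\beta_n$ depend only on the functor emerging from stage $j+1$, whose source and target are symmetric in $Z_{\pi(j)},Z_{\pi(j+1)}$. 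Hence $V(\pi)=V(s_j\pi)$ reduces to the equality $\beta_{j+1}\circ\beta_j=\beta'_{j+1}\circ\beta'_j$ of the two ways of processing the pair $Z_{\pi(j)},Z_{\pi(j+1)}$ in succession. After restricting to the stratum $S$ produced by stages $<j$, these two subspaces and their complements still form complementary closed--open pairs inside $S$, so this equality is exactly the content of Lemma \ref{lem:IterativeMayerVietoris} applied over $S$ and pushed forward to $X$.

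The main obstacle is the base-change bookkeeping in this last reduction: one must verify that the commuting square of four-functor composites in Lemma \ref{lem:IterativeMayerVietoris} stays commuting after being pre- and post-composed with the functors coming from the outer stages, i.e. that the $3\times 3$ grid of complementary embeddings used there may be enlarged by the strata for $Z_{\pi(<j)}$ and $Z_{\pi(>j+1)}$ without disturbing commutativity. This holds because the recollement triangles and their connecting maps are natural and compatible with $*$- and $!$-pullback along open and closed immersions, so the boundary maps at the outer stages commute past the inner swap; assembling these compatibilities is routine but is where all the verification lies. (Alternatively one could bypass the induction by identifying the degree-$n$ composite with an edge map in the \v{C}ech-type Mayer--Vietoris filtration of $\iota_*\iota^!$, which is symmetric in the $Z_i$ by construction; I would nonetheless use the inductive argument, since it reuses Lemma \ref{lem:IterativeMayerVietoris} directly.)
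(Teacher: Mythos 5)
Your proposal is correct and takes essentially the route the paper intends: the paper offers no argument for this statement beyond the word ``Likewise'', deferring entirely to the two-subspace case, and your reduction via adjacent transpositions (with the swap localised to two consecutive stages and the outer stages carried along by naturality and base change) is precisely the argument that fills in that deferral. No gaps.
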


  The relevant example for Theorem \ref{thm:GaGmEVA} is 
  \begin{center}
  \begin{tikzcd}[row sep = {30pt,between origins}, column sep = 10pt]
  X^3\setminus (\Delta_{12}\cup \Delta_{23})\ar[r] &(X^3\setminus \Delta_{23})\ar[r] &X^3 \\ 
  \varnothing \ar[u]\ar[r] & \Delta_{12}\setminus \Delta_{123}\ar[r]\ar[u] & \Delta_{12}\ar[u]\\
  &\varnothing\ar[r]\ar[u]& \Delta_{123}\ar[u]
  \end{tikzcd}
  \end{center}
  where the two closed subspaces $\Delta_{ij}\subseteq X^3$ are the closed locus of points $(x_1,x_2,x_3)$ with $x_i = x_j$. The outer square is clearly unchanged if we swap $\Delta_{12}$ and $\Delta_{23}$, so we may apply Lemma \ref{lem:IterativeMayerVietoris}.

\newpage

\end{document}